\numberwithin{equation}{section}
\numberwithin{equation}{section}
\newtheorem{proposition}{Proposition}[section]
\newtheorem{lemma}[proposition]{Lemma}
\newtheorem{corollary}[proposition]{Corollary}
\newtheorem{theorem}[proposition]{Theorem}
\theoremstyle{definition}
\newtheorem{definition}[proposition]{Definition}
\theoremstyle{remark}
\newtheorem{remark}[proposition]{Remark}
\newenvironment{customthm}[1]
  {\innercustomthm}
  {\endinnercustomthm}
\newcommand{\arxiv}[1]{\href{http://arxiv.org/abs/#1}{\tt arXiv:\nolinkurl{#1}}}
\newcommand{\Rmnum}[1]{\expandafter\@slowromancap\romannumeral #1@}
\def \g{\mathfrak{g}}
\def \dm{\diamond}
\def \sl{\mathfrak{sl}}
\def \N{\mathbb{N}}
\def \Q{\mathbb{Q}}
\def \Z{\mathbb{Z}}
\def \I{\mathbb{I}}
\def \Br{\mathrm{Br}}
\def \b{\underline{b}}
\def \B{ b}
\def \wI{\I_{\circ}}
\def \wItau{\I_{\circ,\tau}}
\def \bI{\I_{\bullet}}
\def \diag{\mathrm{diag}}
\def \tfX{\widetilde{\Upsilon}}
\def \cR{\mathcal{R}}
\def \bs{\mathbf{r}} %\mathbf{s}}
\def \bF{\mathbb{F}}
\def \bw{w_\bullet}
\def \tbU{\tU_{\bullet}}
\def \ba{\mathbf{a}}
\def \tk{\widetilde{k}}
\def \tT{\widetilde{\mathscr T}}
\def \tTD{\widetilde{T}}
\def \tTT{\widetilde{\mathbf{T}}}
\newcommand{\tTa}[1]{\tTT'_{#1,-1}}
\newcommand{\tTb}[1]{\tTT''_{#1,+1}}
\def \Id{\mathrm{Id}}
\def \tpsi{\psi} %\widetilde{\psi}}
\def \tPsi{\widetilde{\Psi}}
\newcommand{\U}{\mathbf{U}}
\newcommand{\Ui}{(\U)^\imath}
\newcommand{\tU}{\widetilde{{\mathbf U}} }
\newcommand{\tUi}{\widetilde{{\mathbf U}}^\imath}
\newcommand{\qbinom}[2]{\begin{bmatrix} #1\\#2 \end{bmatrix} }
\def \ad{\text{ad}}
\def \wad{{}^{\omega\psi}\ad}
\def \sad{{}^{\sigma}\ad}
\def \wsad{{}^{\sigma\omega\psi}\ad}
\def \ov{\overline}
\def \un{\underline}
\newcommand{\nc}{\newcommand}
\nc{\greentext}[1]{\textcolor{green}{#1}}
\nc{\redtext}[1]{\textcolor{red}{#1}}
\nc{\bluetext}[1]{\textcolor{blue}{#1}}
\nc{\brown}[1]{\browntext{ #1}}
\nc{\green}[1]{\greentext{ #1}}
\nc{\red}[1]{\redtext{ #1}}
\nc{\blue}[1]{\bluetext{ #1}}
\def \Q {\mathbb Q}
\def \TT{\mathbf T}
\def \bvs{{\boldsymbol{\varsigma}}}
\def \vs{\varsigma}
\def \U{\mathbf U}
\def \Ui{\mathbf{U}^\imath}
\def \reW{W^\circ}
\def \fwItau{\wItau^{\mathrm{fin}}}
\def \ty{\widetilde{y}}
\newcommand{\odd}{\bar{1}}
\newcommand{\dv}[2]{{B}_{#1}^{{(#2)}}}
\newcommand{\edvi}[1]{{B}_{i,\bar{0}}^{(#1)}}
\newcommand{\odvi}[1]{{B}_{i,\bar{1}}^{(#1)}}
\newcommand{\DE}[1]{E^{(#1)}}
\newcommand{\DF}[1]{F^{(#1)}}
\begin{document}
\title[Relative braid group symmetries of Kac-Moody type]{Relative braid group symmetries on $\imath$quantum groups of Kac-Moody type}

\author[Weinan Zhang]{Weinan Zhang}
\address{Department of Mathematics, University of Virginia, Charlottesville, VA 22904, USA}
\email{wz3nz@virginia.edu}

\subjclass[2020]{Primary 17B37, 17B67.}

\keywords{Braid group actions, $\imath$Quantum groups, Quantum symmetric pairs, Kac-Moody type}

\begin{abstract}
Recently, relative braid group symmetries on $\imath$quantum groups of arbitrary finite types have been constructed by Wang and the author. In this paper, generalizing that finite-type construction, we establish relative braid group symmetries on $\imath$quantum groups of locally quasi-split Kac-Moody type. We formulate root vectors for $\imath$quantum groups in both recursive forms and closed $\imath$divided power forms. The higher rank formulas of relative braid group symmetries are given by root vectors. We show that the relative braid group symmetries send root vectors to root vectors.
\end{abstract}

\maketitle
\setcounter{tocdepth}{1}
\tableofcontents

\section{Introduction}
\subsection{Background}
Lusztig's braid group actions \cite{Lus90a,Lus90b} are of crucial importance in the theory of quantum groups. These braid group actions are fundamental in the formulation of root vectors, which allows the constructions of PBW bases and canonical bases. They have further applications in various areas, ranging from geometric representation theory to categorification and Hall algebras.

Let $\U$ be the Drinfeld-Jimbo quantum group associated to a generalized Cartan matrix $(c_{ij})_{i,j\in \I}$. The quantum symmetric pair $(\U,\Ui_\bvs)$ associated to a Satake diagram $(\I=\wI\cup\bI,\tau)$ of Kac-Moody type was formulated in \cite{Ko14}, generalizing Letzter's work for finite type \cite{Let02}. We call a symmetric pair {\em quasi-split} if $\bI=\varnothing$, and {\em split} if in addition $\tau =\Id$. The $\imath$quantum group $\Ui_\bvs$, arising from the quantum symmetric pair, is a coideal subalgebra of $\U$ depending on parameters $\bvs$.

Let $\tU=\langle E_i,F_i,K_i,K_i'|i\in \I \rangle $ be the Drinfeld double quantum group, where $K_iK_i'$ are central. The Drinfeld-Jimbo quantum group $\U$ is recovered from $\tU$ by taking central reductions: $\U=\tU/(K_iK_i'-1|i\in \I)$. The {\em universal} quantum symmetric pair $(\tU,\tUi)$ naturally arises from a Hall algebra approach \cite{LW22}, where $\tUi$ is a coideal subalgebra of the Drinfeld double $\tU$. In the {\em universal} $\imath$quantum group $\tUi$, the parameters are replaced by certain central elements, and $\Ui_\bvs$ is recovered by taking central reductions on $\tUi$.

It was conjectured in \cite[Conjecture 1.2]{KP11} that there exist relative braid group actions associated to the relative Weyl group of the underlying symmetric pair on the $\imath$quantum group $\Ui_\bvs$ (of finite type). Kolb-Pellegrini {\em loc. cit.} formulated relative braid group actions via computer computation, for quasi-split finite types and type AII with some specific parameters. Relative braid group actions of type AIII and split affine rank one type were constructed in \cite{Dob20} and \cite{BK20}, respectively.

In the Hall algebra approach, Lu-Wang \cite{LW21a} used reflection functors to construct relative braid group actions on $\tUi$, for quasi-split finite types under the assumption that the Cartan integers $c_{i,\tau i}$ are all even. Since then, it has become clear that $\tUi$, rather than $\Ui_\bvs$, provides the right framework for a conceptual construction of relative braid group actions. In a recent work \cite{LW21b}, they extended the constructions to quasi-split Kac-Moody types under the same assumption.

The quasi $K$-matrix for a general quantum symmetric pair was originally formulated in \cite{BW18a}, as an intertwiner between the bar involution on $\U$ and the bar involution on $\Ui_\bvs$; a proof in greater generality was supplied in \cite{BK19}. An insightful reformulation in \cite{AV20} of quasi $K$-matrix avoided a direct use of bar involutions, and this allows one to remove restrictions on the parameter $\bvs$. A new simple reformulation of the quasi $K$-matrix for arbitrary parameters was given in \cite{WZ22} as an intertwiner between anti-involutions.

A complete answer for Kolb-Pellegrini's longstanding conjecture was given recently by Wang and the author \cite{WZ22}, where relative braid group actions on $\imath$quantum groups of arbitrary finite types were constructed.
The construction was based on new intertwining properties of quasi $K$-matrices. Properties of our relative braid group symmetries are parallel to those well-known properties for Lusztig's braid group symmetries. Compatible relative braid group actions were also constructed on $\U$-modules for the first time. This construction is generalized to the quasi-split affine rank one $\imath$quantum group whose relative Weyl group is of type $A_2^{(2)}$ in \cite{LWZ22}.

\subsection{Goal}

Let $\fwItau$ be a fixed set of representatives in $\wI$ of the finite type real rank one Satake subdiagrams; see \eqref{def:fwItau}. The relative Weyl group $W^\circ$ is a Coxeter group with Coxeter generators $\bs_i, i\in\fwItau$, which can be identified with a subgroup of the Weyl group $W$ \cite{Lus03}; see \eqref{def:Wcirc}. Let $\bw$ be the longest element in the Weyl group associated to $\bI$.

The goal of this paper is to construct relative braid group symmetries associated to any vertex $i\in\fwItau$ such that $i=\bw i$ on $\imath$quantum groups of Kac-Moody type. Here $\bw i=i$ represents that the simple root $\alpha_i$ is fixed by $\bw$, i.e., the vertex $i$ is not connected to any vertices in $\bI$. %If the condition $i=\bw i$ is satisfied for any $i\in\fwItau$, then we call the Satake diagram $(\I=\wI\cup\bI,\tau)$ {\em locally quasi-split}.
If the Satake diagram is of quasi-split type (i.e., $\bI=\varnothing$), then any $i\in\fwItau$ satisfy $\bw i=i$. In particular, we will obtain relative braid group actions on $\imath$quantum groups of quasi-split Kac-Moody type, extending the previous work \cite{WZ22} on finite type.

Unlike quantum groups, there are 8 distinct finite-type rank one quantum symmetric pairs. Structures vary significantly for $\tUi$ associated to different rank one symmetric pairs, and hence one often has to consider different rank one cases separately. Under the condition $i=\bw i$, there are three distinct rank one cases
\begin{itemize}
\item[(i)] $i=\tau i,$
\item[(ii)] $c_{i,\tau i}=0,$
\item[(iii)]$c_{i,\tau i}=-1,$
\end{itemize}
(all other types of vertices in $\fwItau$ satisfy $i\neq \bw i$).
Note that, when $c_{i,\tau i}\leq -2,i=\bw i$, the underlying Satake subdiagram $\{i,\tau i\}$ is not finite type and the relative Weyl group associated to the subdiagram $\{i,\tau i\}$ is trivial by definition.
%Precisely, we make the assumption: for any symmetric pair $(\I=\wI\cup\bI,\tau)$ in this paper, any vertex $i\in \fwItau$ belongs to one of the three cases (i)-(iii). Indeed, our scope is not narrow, since all quasi-split symmetric pairs of Kac-Moody type satisfy this assumption.

We  will construct relative braid group symmetries $\tTT'_{i,e},\tTT''_{i,e}$ for all three types (i)-(iii). Following \cite{LW21a, LW21b, WZ22}, such symmetries will be constructed on the universal $\imath$quantum groups $\tUi$ first, and then descend to $\Ui_\bvs$ by central reductions.

For type (i) and (ii), conjectural formulations for the relative braid group symmetries were given in \cite[Conjecture 6.5]{CLW21a} and \cite[Conjecture 3.7]{CLW21b}, respectively. These conjectures were confirmed via Hall algebras in \cite{LW21b} for quasi-split types. In this paper, we shall prove these conjectures in full generality, removing restrictions that $\bI= \varnothing$  and Cartan integers $c_{i,\tau i}$ are  even.
For type (iii), we are able to establish relative braid group symmetries for the first time.

\subsection{Motivation}

The major difficulty in the generalization is to establish higher rank relative braid group formulas of $\tTa{i}(B_j),\tTb{i}(B_j)$ for $j\neq i,\tau i$; cf. \cite[Conjecture 5.13]{WZ22}. Those higher rank formulas in \cite[Tables 3-4]{WZ22} for finite type were established by lengthy case-by-case computations, using a complete list of finite-type rank two Satake diagrams. %It is impractical to process such case-by-case computations for Kac-Moody types.
We will provide a uniform construction of the higher rank formulas in the Kac-Moody setting, independent of the rank two Satake diagrams.

Our construction was inspired by the higher rank formulas of Lusztig's braid group symmetries on quantum groups in the Kac-Moody setting. In \cite[37.2.1]{Lus94}, Lusztig introduced (rank two) elements including $x_{i,j;1,m,e},x'_{i,j;1,m,e}$, $y_{i,j;1,m,e},y'_{i,j;1,m,e}$ in quantum groups. %\red{These are exactly Lusztig's constructions and one doesn't need any ``lift" thanks to $\tU^+=\U^+$}
Alternatively, these root vectors are determined by recursive relations, according to Lusztig; see Lemma~\ref{lem:Lusxy}.
The uniform higher rank formulas for braid group symmetries on quantum groups are naturally given by these root vectors; see \eqref{eq:TEF}.

In this paper, we extend this picture from quantum groups to $\imath$quantum groups. We introduce root vectors in $\imath$quantum groups in both closed $\imath$divided-power formulas and recursive formulas. We show that there exist the relative braid group symmetries on $\imath$quantum groups, whose higher rank formulas are uniformally given by root vectors.
%%%%%%%%
%%%%%%%%%
\subsection{Main results}
\subsubsection{New symmetries on $\imath$quantum groups $\tUi$}

By \cite[Propositions~6.20-6.21]{LW21b}, braid group symmetries on $\U$ can be lifted to $\tU$, and we denote those lifts by $\tTD'_{i,e},\tTD''_{i,e}$ for $i\in \I,e=\pm 1$.
 We need a rescaled version $\tT'_{i,e},\tT''_{i,e}$ of $\tTD'_{i,e},\tTD''_{i,e}$ on $\tU$ following \cite{WZ22}; see \eqref{def:tT}. We have found such a rescaling necessary via examples; the rescaling is also compatible with the one appeared in the factorization of quasi $K$-matrices \cite[\S 3.4]{DK19}, which we will use later in the proof of the braid relations (see \S~\ref{sec:braidrel}). Since $\tT'_{i,e},\tT''_{i,e}$ satisfy the braid relations, we have well-defined automorphisms $\tT'_{w,e},\tT''_{w,e}$ of $\tU$, for any $w\in W$.

The $\imath$quantum group $\tUi$ is the subalgebra of the Drinfeld double $\tU$ generated by elements $B_i,\tk_i,i\in \wI$ and the subalgebra $\tbU=\langle E_j,F_j,K_j,K_j'|j\in \bI\rangle$; see \eqref{def:iQG}. We define the quasi $K$-matrix for $(\tU,\tUi)$ following \cite{WZ22}; see Proposition~\ref{prop:fX1}. Let $\tfX_i$ denote the quasi $K$-matrices associated to the rank one subdiagram $(\I_{\bullet,i}=\bI\cup\{i,\tau i\},\tau|_{\I_{\bullet,i}})$ for $i\in \wI$.

 \begin{customthm} {\bf A}
  [Theorem~\ref{thm:ibraid}]
  \label{thm:A}
Let $i\in \fwItau$ be of type (i)-(iii).
\begin{itemize}
\item[(1)] There exist automorphisms $\tTa{i},\tTb{i}$ on $\tUi$ such that
\begin{align}
\label{eq:intro2}
\tTa{i}(x) \tfX_i &=\tfX_i \tT_{\bs_i,-1}'(x),\qquad \forall x\in \tUi,\\
\tTb{i}(x) \tT''_{\bs_i,+1}(\tfX_i)^{-1} &=\tT''_{\bs_i,+1}(\tfX_i)^{-1}\tT_{\bs_i,+1}''(x),\qquad \forall x\in \tUi.
\label{eq:intro3}
\end{align}

\item[(2)] $\tTa{i},\tTb{i}$ are mutually inverses. Moreover, $\tTa{i}=\sigma^\imath \circ \tTb{i}\circ \sigma^\imath $, where $\sigma^\imath$ is the anti-involution on $\tUi$ defined in Proposition~\ref{prop:sigma}.
\end{itemize}

\end{customthm}

Two more symmetries $\tTT'_{i,+1},\tTT''_{i,-1}$ are defined by conjugations of $\tTT'_{i,-1},\tTT''_{i,+1}$ using the bar involution $\psi^\imath$ on $\tUi$; see \eqref{def:tTT2} and Corollary~\ref{thm:sigma}.

The main step in the proof of Theorem~\ref{thm:A} is finding explicit elements $\tTa{i}(x)$ and $\tTb{i}(x)$ satisfying \eqref{eq:intro2}-\eqref{eq:intro3} for generators $x$ of $\tUi$. Once this is done, we automatically obtain endomorphisms $\tTa{i},\tTb{i}$ on $\tUi$ which satisfy \eqref{eq:intro2}-\eqref{eq:intro3}. Using \eqref{eq:intro2}-\eqref{eq:intro3}, one can then show that $\tTa{i},\tTb{i}$ are mutually inverses.

 The elements $\tTa{i}(x),\tTb{i}(x)$ for generators $x=B_i,B_{\tau i}$ or generators $x=\tk_j,j\in \wI$ or $x\in \tbU$ were uniformly formulated in \cite{WZ22}. We recall those formulations in Proposition~\ref{prop:pfrkone} and Proposition~\ref{prop:rkone1}. We shall discuss the construction of higher rank formulas for $\tTa{i},\tTb{i}$ in the next subsection.

\subsubsection{Higher rank formulas for relative braid group symmetries}
\label{sec:intro2}

 We recall Lusztig's rank 2 root vectors in quantum groups in \eqref{def:f}. Since $\bs_i$ are not simple reflections in $W$ when $i$ are of types (ii)-(iii), we also need some rank 3 root vectors to describe the action of $\tTD'_{\bs_i,e} $ in $\tU$, e.g., elements $y_{i,\tau i,j;m_1,m_2}$ in Definition~\ref{def:qsxy} and elements $y_{i,\tau i,j;a,b,c}$ in Definition~\ref{def:qsqsxy}.

It turns out the recursive definitions for these (rank 2 or 3) root vectors in $\tU$ admit nontrivial generalizations to $\tUi$, which allows us to define the following root vectors

\begin{itemize}
\item[(i)] $\B_{i,j;m},\b_{i,j;m}\in \tUi$ in Definition~\ref{def:s}-\ref{def:s'} when $i=\tau i=\bw i$,
\item[(ii)] $\B_{i,\tau i,j;m_1,m_2},\b_{i,\tau i,j;m_1,m_2}\in \tUi $ in Definition~\ref{def:qsBij}-\ref{def:qsBij'} when $c_{i,\tau i}=0,i=\bw i$,
\item[(iii)] $\B_{i,\tau i,j;a,b,c},\b_{i,\tau i,j;a,b,c}\in \tUi $ in Definition~\ref{def:qsqsBij}-\ref{def:qsqsBij'} when $c_{i,\tau i}=-1,i=\bw i$.
\end{itemize}

Our root vectors admit explicit closed formulas. %For type (i), more general elements $\ty'_{i,j;n,m,\ov{p},\ov{t},e},\ty_{i,j;n,m,\ov{p},\ov{t},e}$ were introduced in \cite[\S 6.1]{CLW21a} via $\imath$divided powers.
For type (i), the $\imath$divided powers $\dv{i,\ov{p}}{m}$ for $ \ov{p}\in \Z/2\Z$ were formulated in \cite{BW18a,BeW18,CLW21a} (see \eqref{def:idv}), arising from the theory of $\imath$canonical bases.
Closed formulas of $\B_{i,j;m},\b_{i,j;m}$ are given in terms of $\imath$divided powers in Proposition~\ref{thm:idv}; that is, the elements $\B_{i,j;m},\b_{i,j;m}$ coincide with elements $\ty'_{i,j;1,m,\ov{p},\ov{t},1},\ty_{i,j;1,m,\ov{p},\ov{t},1}$ introduced in \cite[\S 6.1]{CLW21a}.

For type (ii)-(iii), these root vectors in $\tUi$ are constructed for the first time and their closed formulas are given in terms of usual divided powers $B_i^{(m)}$. The divided power formulations for $\B_{i,\tau i, j;m_1,m_2},\b_{i,\tau i, j;m_1,m_2}$ and $\B_{i,\tau i, j;a,b,c},\b_{i,\tau i, j;a,b,c}$ are respectively provided in Proposition~\ref{thm:dvij} and Theorem~\ref{thm:qsqsdv}.

The desired higher rank formulas for $\tTa{i}(B_j),\tTb{i}(B_j)$ are given by root vectors in $\tUi$, as formulated in Theorem~\ref{thm:B} below.

 \begin{customthm} {\bf B}
  \label{thm:B}
  [Theorem~\ref{thm:rktwo1}]
Let $i\in \fwItau, j\in \wI$ such that $j\neq i,\tau i$. Write $\alpha=-c_{ij},\beta=-c_{\tau i,j}$.
\begin{itemize}
\item[(i)] If $i=\tau i=\bw i,$ then $\tTa{i}(B_j)=\B_{i,j;\alpha}$ and $\tTb{i}(B_j)=\b_{i,j;\alpha}$. Explicitly, $\tTa{i}(B_j),\tTb{i}(B_j) $ are respectively given by formulas \eqref{eq:srktwo3}-\eqref{eq:srktwo4}.

\item[(ii)] If $c_{i,\tau i}=0,i=\bw i$, then $\tTa{i}(B_j)=\B_{i,\tau i,j;\alpha,\beta} $ and $\tTb{i}(B_j)=\b_{i,\tau i,j;\alpha,\beta} $. Explicitly, $\tTa{i}(B_j),\tTb{i}(B_j) $ are respectively given by formulas \eqref{eq:qsrktwo3}-\eqref{eq:qsrktwo4}.

\item[(iii)] If $c_{i,\tau i}=-1,i=\bw i$, then we have $\tTa{i}(B_j)=\B_{i,\tau i,j;\beta,\beta+\alpha,\alpha} $ and  $\tTb{i}(B_j)=\b_{i,\tau i,j;\beta,\beta+\alpha,\alpha}$. Explicitly, $\tTa{i}(B_j),\tTb{i}(B_j) $ are respectively given by formulas \eqref{eq:rktwo3}-\eqref{eq:rktwo4}.
\end{itemize}
\end{customthm}
The conjectural formulas in \cite[Conjecture 6.5]{CLW21a} and \cite[Conjecture~ 3.7]{CLW21b} for relative braid group actions are verified in full generality by Theorem~\ref{thm:B}(i)-(ii).

The recursive definitions of root vectors play an indispensable role in the proof of Theorem~\ref{thm:B}. The situations are similar for all types (i)-(iii) and we explain for type (ii). By definition, the root vectors $\B_{i,\tau i,j;m_1,m_2}$ in $\tUi$ naturally split into halves
\begin{align*}
\B_{i,\tau i,j;m_1,m_2}=\B_{i,\tau i,j;m_1,m_2}^- +\B_{i,\tau i,j;m_1,m_2}^+,
\end{align*}
and the halves satisfy the same recursions as $\B_{i,\tau i,j;m_1,m_2}$ but have different initial terms; see Definition~\ref{def:qsBij}-\ref{def:qsBij'}.
Thanks to the recursions for $\B_{i,\tau i,j;m_1,m_2}^\pm$, we can establish relations between $\B_{i,\tau i,j;m_1,m_2}^\pm$ and (rank 3) root vectors $y_{i,\tau i,j;m_1,m_2},x_{i,\tau i,j;m_1,m_2}\in \tU$ via suitable intertwiners in Proposition~\ref{prop:qsBy}-\ref{prop:qsBx}. These new intertwining relations imply that the element $\tTa{i}(B_j):=\B_{i,\tau i, j;-c_{ij},-c_{\tau i,j}}$ satisfies the desired identity \eqref{eq:intro2} and then Theorem~\ref{thm:B}(ii) is proved; see Theorem~\ref{thm:qsbraid}.

 %In our construction of relative braid group symmetries, such a divided power formulation is not needed.

%
%
\subsubsection{$\tTa{i}$ and root vectors}
The braid group symmetries $\tTD'_{i,e},\tTD''_{i,e}$ on $\tU$ send root vectors to root vectors cf. \cite[Proposition 37.2.5]{Lus94}.
We formulate the $\imath$analog of this property for the relative braid group symmetry $\tTa{i}$ and root vectors in $\tUi$, when the corresponding vertex $i$ is of type (i)-(ii).

 \begin{customthm} {\bf C}
[Theorem~\ref{thm:basic}, Theorem~\ref{thm:qsbasic}]
  \label{thm:D}
Let $i\in\fwItau,j\in \wI$ such that $j\neq i,\tau i$.
\begin{itemize}
\item[(i)] If $i=\tau i=\bw i,$, then we have, for any $m\geq 0$,
\begin{align}
\tTT'_{i,-1}(\b_{ i,j;m})=\B_{i, j;-c_{ij}-m}.
\end{align}

\item[(ii)] If $c_{i,\tau i}=0,i=\bw i,$, then we have, for any $m_1,m_2\geq 0$,
\begin{align}
\tTT'_{i,-1}(\b_{ i, \tau i, j;m_1,m_2})=\B_{i,\tau i,j;-c_{ij}-m_1, -c_{\tau i,j}-m_2}.
\end{align}
\end{itemize}
\end{customthm}

Theorem~\ref{thm:D}(i) in the $\imath$divided power formulation was conjectured by Lu-Wang in a private communication.
By analogy, we conjecture that the symmetry $\tTa{i}$ in type (iii) also preserves root vectors.

\subsubsection{Relative braid group actions on $\imath$quantum groups $\tUi$}

%In this subsection, we assume that $(\I=\wI\cup\bI,\tau)$ is a Satake diagram such that any $i\in \fwItau$ belongs to one of types (i)-(iii).

For quantum symmetric pairs (of finite type), the partial quasi $K$-matrix $\tfX_{w}$ for $w\in W^\circ$ was introduced in \cite{DK19} as a product of rank one quasi $K$-matrices. It was conjectured {\em ibid.} that $\tfX_{w}$ is independent of the choice of reduced expressions of $w$ and the conjecture was proved in \cite[\S 8]{WZ22}.

We show that partial quasi $K$-matrices $\tfX_{w}$ in the Kac-Moody setting are independent of the choice of reduced expression of $w\in W^\circ$ in Proposition~\ref{prop:factor}.
We then use this property of partial quasi $K$-matrices and intertwining relations~\eqref{eq:intro2}-\eqref{eq:intro3} to prove the relative braid relations (i.e., defining relations for $\Br(W^\circ)$) for our symmetries $\tTT'_{i,e},\tTT''_{i,e}$, following similar arguments in finite type cases \cite[Theorem 9.1]{WZ22}.

 \begin{customthm} {\bf D}
[Theorem~\ref{thm:BrW0}]
  \label{thm:E}
Fix $e\in \{\pm 1\}$.
The symmetries $\tTT'_{i,e}$ ({\em resp.} $\tTT''_{i,e}$) for vertices $i\in \fwItau$ of types (i)-(iii) satisfy the relative braid relations in $\Br(W^\circ)$.
\end{customthm}
In contrast, very few relative braid relations were verified in the Hall algebra approach \cite{LW21a,LW21b}.

Relative braid group symmetries on $\Ui_\bvs$ for arbitrary parameters $\bvs$ are obtained from symmetries $\tTT'_{i,e},\tTT''_{i,e}$ via central reductions and rescaling automorphisms cf. \cite[\S 9.4]{WZ22}. One can also construct relative braid group symmetries on $\U$-modules which are compatible with those symmetries on $\Ui_\bvs$ for arbitrary parameters $\bvs$, following arguments in \cite[\S 10]{WZ22}. These two constructions are largely formal and identical to their corresponding finite type counterparts in \cite{WZ22}, and hence we will not repeat them in this paper.

\subsection{Application}
A Drinfeld type presentation for $\imath$quantum groups of split affine ADE type was constructed in \cite{LW21a} (based on \cite{BK20}), which can be viewed as both a quantization of the loop realization of $\g^\theta$ and a generalization of the Drinfeld presentation for affine quantum groups. This construction was generalized to the split affine BCFG type by the author \cite{Z22} and further extended to a quasi-split affine rank one type in \cite{LWZ22}.

The relative braid group symmetries have been proven to be crucial in all of the above constructions. The relative braid group symmetries established in \cite{LWZ22} can be recovered as a special case from the general construction in this paper; see Remark~\ref{rmk:A22}. We expect that the relative braid symmetries established in this paper will play an important role in future constructions of the Drinfeld type presentation for general quasi-split types.

\subsection{Organization} This paper is organized as follows. In Section~\ref{sec:pre}, we review the braid group actions on Drinfeld double and the basic theory of universal quantum symmetric pairs $(\tU,\tUi)$.

In Section~\ref{sec:main}, we construct relative braid group symmetries on $\tUi$ following the strategy in \cite{WZ22}. We establish symmetries $\tTa{i},\tTb{i}$ on $\tUi$ via certain intertwining relations involving quasi $K$-matrices in Theorem~\ref{thm:ibraid}. The rank one formulas of our symmetries are recalled in \S\ref{sec:pfrk1} and higher rank formulas are provided in Theorem~\ref{thm:rktwo1}. We show that our symmetries satisfy relative braid group relations in Theorem~\ref{thm:BrW0}.

The next three parallel Sections~\ref{sec:split}-\ref{sec:qsqs} are devoted to the proof of Theorem~\ref{thm:rktwo1} when the vertex $i$ is of type (i)-(iii), respectively. For types (i)-(iii), we construct higher rank root vectors in $\tUi$ via certain recursive relations in Definition~\ref{def:s}-\ref{def:s'}, Definition~\ref{def:qsBij}-\ref{def:qsBij'}, and Definition~\ref{def:qsqsBij}-\ref{def:qsqsBij'}, respectively; we then show that higher rank formulas of $\tTa{i},\tTb{i}$ are given by root vectors. The $\imath$divided power formulas for root vectors are given in Proposition~\ref{thm:idv}, Proposition~\ref{thm:dvij} and Theorem~\ref{thm:qsqsdv}, respectively.

In the last Section~\ref{sec:basic}, we show that our relative braid group symmetries send root vectors to root vectors in $\tUi$; see Theorem~\ref{thm:basic} for type (i) and Theorem~\ref{thm:qsbasic} for type (ii).

\vspace{2mm}
\noindent {\bf Acknowledgement.}
The author would like to thank his advisor Weiqiang Wang for many helpful conversations and advices. The author thanks Yaolong Shen for helpful comments and suggestions. The author also thanks anonymous referees for many valuable comments. This work is partially supported by the GRA fellowship of Wang's NSF grant DMS-2001351.

\section{Preliminaries}\label{sec:pre}
We review the braid group action on Drinfeld double quantum groups. We also review the basic theory of quantum symmetric pairs, quasi $K$-matrices, and relative Weyl groups.
\subsection{Drinfeld doubles and quantum symmetric pairs}
Let $\g$ be a symmetrizable Kac-Moody algebra associated to a generalized Cartan matrix $C=(c_{ij})_{i,j\in \I}$. Let $D=\diag(d_i|i\in \I)$ be the diagonal matrix such that $DC$ is symmetric and $d_i$ are coprime positive integers. Denote by $\alpha_i,\alpha_i^\vee,i\in \I$ the simple roots and simple coroots of $\g$. Let $\g':=[\g,\g]$ be the derived algebra.

Let $q$ be an indeterminate and $\Q(q)$ be the field of rational functions in $q$ with coefficients in $\Q$. Let $\bF$ be the algebraic closure of $\Q(q)$ and $\bF^\times:=\bF\setminus\{0\}$.

Set $q_i:=q^{d_i}$ for $i\in\I$. Denote, for $r,m \in \N$,
\[
 [r]_i =\frac{q_i^r-q_i^{-r}}{q_i-q_i^{-1}},
 \quad
 [r]_i!=\prod_{i=1}^r [i]_i, \quad \qbinom{m}{r}_i =\frac{[m]_i [m-1]_i \ldots [m-r+1]_i}{[r]_i!}.
\]

The Drinfeld double quantum group $\tU:=\tU(\g')$ is the $\bF$-algebra generated by $E_i,F_i,K_i,K_i',i\in \I$ subject to the following relations
\begin{align*}
&[K_i, K_j]=[K_i, K_j']=[K_i', K_j']=0, \quad K_i E_j =q_i^{c_{ij}} E_j K_i,  \quad K_i F_j=q_i^{-c_{ij}} F_j K_i,
\\
&[E_i,F_j]= \delta_{ij} \frac{K_i-K_i'}{q-q^{-1}},\qquad K_i' E_j=q_i^{-c_{ij}} E_j K_i', \qquad K_i' F_j=q_i^{c_{ij}} F_j K_i',
%&[K_\mu, K_\nu]=[K_\mu, K_\nu']=[K_\mu', K_\nu']=0, \quad K_\mu E_j =q^{\langle \mu,\alpha_i\rangle} E_j K_\mu,
%\quad K_\mu F_j=q^{-\langle \mu,\alpha_i\rangle} F_j K_\mu,
%\\
%&[E_i,F_j]= \delta_{ij} \frac{K_i-K_i'}{q-q^{-1}},
%\qquad K_\mu' E_j=q^{-\langle \mu,\alpha_i\rangle} E_j K_\mu', \qquad K_\mu' F_j=q^{\langle \mu,\alpha_i\rangle} F_j K_\mu',
\\
& \sum_{s=0}^{1-c_{ij}} (-1)^s   \DE{s}_i E_j  \DE{1-c_{ij}-s}_i=0,
\qquad \sum_{s=0}^{1-c_{ij}} (-1)^s   \DF{s}_i F_j  \DF{1-c_{ij}-s}_i=0,
\qquad j\neq i.
\end{align*}
where $\DE{s}_i=\frac{E_i^s}{[s]_i!},\DF{s}_i=\frac{F_i^s}{[s]_i!}$.

\begin{remark}
Let $(Y,X,\langle \cdot, \cdot\rangle, \cdots)$ be a root datum of type $(\I, \cdot)$; cf. \cite{Lus94}. We assume that the root datum is $Y$-regular (see \cite[2.2.2]{Lus94} for the definition). Then we can identify the coroot lattice $\bigoplus_{i\in \I} \Z\alpha_i^\vee$ as a sublattice of $Y$.

The Drinfeld double quantum group $\tU(\g)=\langle E_i,F_i,\un{K}_\mu,\un{K}'_\mu|i\in \I, \mu\in Y\rangle$ associated to $\g$ can be formulated such that the Cartan part is parameterized by $Y$; this is a Drinfeld double analog of the quantum group in \cite[\S 3.1.1]{Lus94}. We have the following embedding
\begin{align*}
    \tU(\g')\rightarrow \tU(\g), \quad E_i \mapsto E_i,\quad F_i \mapsto F_i,\quad K_i\mapsto \un{K}_{\alpha_i^\vee}^{d_i},\quad K'_i\mapsto (\un{K}'_{\alpha_i^\vee})^{d_i}.
\end{align*}
The actions of braid group symmetries on the Cartan
part are the same as the Weyl group action on the corresponding lattices; cf. \cite[\S 37.1.3]{Lus94}. So there is
not much difference which versions to use for considering the braid group action, and we will work with $\tU(\g')$ for simplicity.
\end{remark}

\begin{remark}
Note that $K_i K_i'$ for $i\in \I$ are central in $\tU$. The well-known Drinfeld-Jimbo quantum group $\U:=\U(\g')$ is obtained from $\tU$ by taking a central reduction $\U=\tU/(K_i K_i'-1|i\in\I)$.
\end{remark}

\begin{proposition}[\text{cf. \cite[Proposition 2.1]{WZ22}}]
  \label{prop:QG1}
Let $\ba=(a_i)_{i\in \I} \in  ( \bF^\times )^\I$. There exist an automorphism $\tPsi_{\ba}$ on the $\bF$-algebra $\tU$ such that
\begin{align}
\label{tPsi}
\tPsi_{\ba}&: K_i\mapsto a_{i}^{1/2}K_i,\quad  K'_i\mapsto a_{i }^{1/2} K'_i,\quad  E_i\mapsto a_{i}^{1/2} E_i, \quad  F_i\mapsto F_i.
\end{align}
\end{proposition}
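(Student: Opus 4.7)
The plan is to verify that the given assignment on generators extends to a well-defined algebra endomorphism of $\tU$ by checking it preserves each defining relation, and then to exhibit a two-sided inverse. Since $\bF$ is the algebraic closure of $\Q(q)$, every $a_i \in \bF^\times$ admits a square root; we fix one choice of $a_i^{1/2}$ for each $i \in \I$.

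First I would check that the map respects all of the defining relations of $\tU$. The Cartan relations $[K_i,K_j]=[K_i,K_j']=[K_i',K_j']=0$ are obviously preserved since each generator is only rescaled by a scalar. For the relations $K_iE_j=q_i^{c_{ij}}E_jK_i$ and $K_i'E_j=q_i^{-c_{ij}}E_jK_i'$, both sides pick up the same scalar $a_i^{1/2}a_j^{1/2}$; for $K_iF_j=q_i^{-c_{ij}}F_jK_i$ and $K_i'F_j=q_i^{c_{ij}}F_jK_i'$, both sides pick up the same scalar $a_i^{1/2}$, since $F_j$ is fixed. For the $[E_i,F_j]$ relation, the left-hand side is scaled by $a_i^{1/2}\delta_{ij}$ (applied via $E_i\mapsto a_i^{1/2}E_i$), while the right-hand side is scaled by the common factor $a_i^{1/2}$ on both $K_i$ and $K_i'$, so the identity survives. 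For the Serre relation on the $E_i$'s, each monomial $E_i^{(s)}E_jE_i^{(1-c_{ij}-s)}$ is rescaled by the same overall factor
\begin{align*}
a_i^{s/2}\cdot a_j^{1/2}\cdot a_i^{(1-c_{ij}-s)/2}=a_i^{(1-c_{ij})/2}a_j^{1/2},
\end{align*}
which is independent of $s$, so the alternating sum remains zero. The $F$-Serre relation is preserved trivially as $F_i\mapsto F_i$. Hence $\tPsi_{\ba}$ defines an algebra endomorphism of $\tU$.

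Finally, to obtain an automorphism, I would fix square root choices so that $a_i^{1/2}\cdot(a_i^{-1})^{1/2}=1$ and verify directly on generators that $\tPsi_{\ba}\circ \tPsi_{\ba^{-1}}=\tPsi_{\ba^{-1}}\circ\tPsi_{\ba}=\Id$, where $\ba^{-1}=(a_i^{-1})_{i\in\I}$. This exhibits $\tPsi_{\ba^{-1}}$ as a two-sided inverse of $\tPsi_{\ba}$, completing the argument. The proof is essentially routine; the only mild subtlety is the uniform cancellation of the Serre-scaling factor in $s$, which is immediate because each term in the Serre sum involves the same total number of $E_i$'s and the single $E_j$.
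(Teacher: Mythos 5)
Your proof is correct and proceeds exactly by the standard route one would expect: verify each defining relation of $\tU$ is preserved under the generator assignment, then exhibit $\tPsi_{\ba^{-1}}$ as a two-sided inverse. The paper itself does not spell out a proof (it only cites \cite[Proposition 2.1]{WZ22}), but this is precisely the argument one would give. One tiny wording slip: the left-hand side $[E_i,F_j]$ is scaled by $a_i^{1/2}$, not by $a_i^{1/2}\delta_{ij}$; the $\delta_{ij}$ only sits on the right. For $i\neq j$ the relation reads $[E_i,F_j]=0$, and rescaling zero by $a_i^{1/2}$ is still zero, so the check is fine—just phrase it as a common nonzero scalar on both sides of the relation.
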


An {\em anti-linear} automorphism on $\tU$ is a $\Q$-automorphism which sends $q^{1/m}\mapsto q^{-1/m}$ for all $m>0,m\in \Z$.
\begin{proposition}[\text{cf. \cite[Proposition 2.2]{WZ22}}]
  \label{prop:QG4}
  {\quad}
\begin{enumerate}
\item
There exists an anti-linear involution $\tpsi$ on $\tU$, which fixes $E_i,F_i$ and swaps $K_i \leftrightarrow K_i'$ for $i\in \I$, called the bar involution;
\item
There exists an anti-involution $\sigma$ on $\tU$ which fixes $E_i,F_i$ and swaps $K_i \leftrightarrow K_i'$, for $i\in \I$;
\item
There exists an involution $\omega$ on $\tU$ which swaps $E_i$ and $F_i$ and swaps $K_i \leftrightarrow K_i'$ for $i\in \I$, called the Chevalley involution.
\end{enumerate}
\end{proposition}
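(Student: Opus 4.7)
The plan is to define each of the three maps $\tpsi$, $\sigma$, $\omega$ on the generators $E_i, F_i, K_i, K_i'$ as prescribed, and in each case verify that the images of the defining relations of $\tU$ still hold. Since $\tU$ is presented by generators and relations, this yields well-defined maps of the required type; the involution property is then immediate from the fact that the squared maps fix the generators. There is no need to invoke a triangular decomposition at this stage.

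For the bar involution $\tpsi$, the key observation is that the $q$-integers $[r]_i$, the factorials $[r]_i!$, and hence the divided powers $\DE{s}_i$, $\DF{s}_i$ are invariant under $q\mapsto q^{-1}$; this handles the quantum Serre relations immediately. The Cartan relations $K_i E_j = q_i^{c_{ij}} E_j K_i$ and $K_i' E_j = q_i^{-c_{ij}} E_j K_i'$ are exchanged under the combined operations $K_i\leftrightarrow K_i'$ and $q\mapsto q^{-1}$, and similarly for the $K_i F_j$ relations. For $[E_i,F_j]=\delta_{ij}(K_i-K_i')/(q-q^{-1})$, applying $\tpsi$ sends the right-hand side to $\delta_{ij}(K_i'-K_i)/(q^{-1}-q)$, which equals the original expression.

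For the anti-involution $\sigma$, the Cartan relation $K_i E_j = q_i^{c_{ij}} E_j K_i$ is sent by $\sigma$ (which reverses multiplication) to $E_j K_i' = q_i^{c_{ij}} K_i' E_j$, which rearranges to the defining relation $K_i' E_j = q_i^{-c_{ij}} E_j K_i'$; the $F$-side is analogous. The commutator relation holds because both $[E_i,F_j]$ and $(K_i-K_i')/(q-q^{-1})$ change sign under $\sigma$. The Serre relation $\sum_{s=0}^{1-c_{ij}} (-1)^s \DE{s}_i E_j \DE{1-c_{ij}-s}_i = 0$ is sent to $\sum_s (-1)^s \DE{1-c_{ij}-s}_i E_j \DE{s}_i = 0$, which is equivalent to the original by the reindexing $s\mapsto 1-c_{ij}-s$. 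For the Chevalley involution $\omega$, the defining relations of $\tU$ come in matched pairs under $E_i\leftrightarrow F_i$, $K_i\leftrightarrow K_i'$: the two Cartan relations of $\tU$ map to each other, the two Serre relations map to each other, and $[E_i,F_j]=\delta_{ij}(K_i-K_i')/(q-q^{-1})$ maps to $[F_i,E_j]=\delta_{ij}(K_i'-K_i)/(q-q^{-1})$, which is the same relation rewritten.

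All the verifications reduce to direct comparisons with the defining relations, so I do not anticipate a substantive obstacle. The only mild subtlety is bookkeeping with signs and the reversal of factor order in the check for $\sigma$ on the Serre relations; tracking the sign $(-1)^{1-c_{ij}}$ that arises from reindexing, together with the fact that the relation equates to zero, makes this step transparent.
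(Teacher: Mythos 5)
The paper gives no proof of Proposition~\ref{prop:QG4}, merely citing \cite[Proposition~2.2]{WZ22}; the argument there is exactly the standard generators-and-relations verification you carry out. Your proposal is correct and takes essentially the same approach.
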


Denote by $\tU^+$ the subalgebra of $\tU$ generated by $E_i,i\in \I$. The algebra $\tU^+$ admits an $\N\I$-grading $\tU^+=\bigoplus_{\mu\in \N\I} \tU^+_\mu$ by setting $\deg E_i=\alpha_i$.

\subsection{Braid group action on Drinfeld double $\tU$}

Let $W=\langle s_i|i\in \I\rangle$ be the Weyl group of $\g$, where $s_i$ are the simple reflections.

Denote the divided power $\frac{F}{[r]_i!}$ by $F_i^{(r) }$. Define %$f^\pm_{i,j;m,e}$
\begin{align}
\label{def:f}
\begin{split}
  y_{i,j;m,e} & = \sum_{r+s=m} (-1)^r q_i^{er(m+c_{ij}-1) } F_i^{(s)} F_j  F_i^{(r)},\qquad y'_{i,j;m,e}  = \sigma(y_{i,j;m,e}),\\ %\sum_{r+s=m} (-1)^r q_i^{er(m+c_{ij}-1) } F_i^{(r)} F_j  F_i^{(s)}\\
  x_{i,j;m,e} & = \sum_{r+s=m} (-1)^r q_i^{er(-m-c_{ij}+1) } E_i^{(r)} E_j  E_i^{(s)},\qquad x'_{i,j;m,e} = \sigma(x_{i,j;m,e}).%\sum_{r+s=m} (-1)^r q_i^{er(-m-c_{ij}+1) } E_i^{(s)} E_j  E_i^{(r)}.
  \end{split}
\end{align}
Note that $y_{i,j;m,e}=\sigma\omega\psi (x_{i,j;m,e})$ and $y_{i,j;m,e}=\psi(y_{i,j;m,-e})$.

\begin{remark}
In Lusztig's conventions \cite[\S 37.2.1]{Lus94}, our $y_{i,j;m,e}$ is identified with his $y_{i,j;1,m,e}$ and $x_{i,j;m,e}$ is identified with his $x_{i,j;1,m,e}$.
\end{remark}

\begin{lemma}[\text{cf. \cite[Lemma 7.1.2]{Lus94}}]
\label{lem:Lusxy}
We have, for $i\neq j\in \I$,
\begin{itemize}
\item[(1)] $ -q_i^{-c_{ij}-2m} y_{i,j;m,-1}F_i+ F_i y_{i,j;m,-1}=[m+1]_i  y_{i,j;m+1,-1}$,
\item[(2)] $ -q_i^{c_{ij}+2m}E_i x_{i,j;m,-1}+ x_{i,j;m,-1}E_i=[m+1]_i  x_{i,j;m+1,-1}$,
\item[(3)] $-  y_{i,j;m,-1} E_i+ E_i y_{i,j;m,-1}= [-c_{ij}-m+1]_i  y_{i,j;m-1,-1}K'_{i}$,
\item[(4)] $-F_i x_{i,j;m,-1} + x_{i,j;m,-1}  F_i= [-c_{ij}-m+1]_i  K_{i} x_{i,j;m-1,-1} $.
\end{itemize}
\end{lemma}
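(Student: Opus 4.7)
The plan is to verify (1) and (3) by direct computation in $\tU$, and then to deduce (2) and (4) by applying the anti-linear anti-involution $\sigma\omega\psi$. Recall from the remarks preceding the lemma that this composite sends $y_{i,j;m,-1} \mapsto x_{i,j;m,-1}$; moreover it sends $F_i \mapsto E_i$ and $K'_i \mapsto K_i$, and it negates every $q$-power. Applying $\sigma\omega\psi$ to (1) therefore converts $-q_i^{-c_{ij}-2m} y_{i,j;m,-1} F_i + F_i y_{i,j;m,-1} = [m+1]_i y_{i,j;m+1,-1}$ into $-q_i^{c_{ij}+2m} E_i x_{i,j;m,-1} + x_{i,j;m,-1} E_i = [m+1]_i x_{i,j;m+1,-1}$, which is (2); applying it to (3) yields (4) in the same way, with $y_{i,j;m-1,-1} K'_i$ transforming into $K_i\, x_{i,j;m-1,-1}$.

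For (1), I would expand $F_i\, y_{i,j;m,-1}$ and $y_{i,j;m,-1}\, F_i$ using the divided-power identities $F_i F_i^{(s)} = [s+1]_i F_i^{(s+1)}$ and $F_i^{(r)} F_i = [r+1]_i F_i^{(r+1)}$, and then re-index both sums over $r' + s' = m+1$. The coefficient of each monomial $F_i^{(s')} F_j F_i^{(r')}$ in the combination $F_i\, y_{i,j;m,-1} - q_i^{-c_{ij}-2m}\, y_{i,j;m,-1}\, F_i$ becomes a weighted sum of $[s']_i$ and $[r']_i$, and the $q$-Pascal identity
\begin{align*}
[m+1]_i = q_i^{-r'} [s']_i + q_i^{s'} [r']_i \qquad (r' + s' = m+1)
\end{align*}
collapses this to the corresponding coefficient in $[m+1]_i\, y_{i,j;m+1,-1}$. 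The specific exponent $-r(m+c_{ij}-1)$ in the definition of $y_{i,j;m,-1}$ is calibrated precisely so that this matching works.

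For (3), I would compute $[E_i, y_{i,j;m,-1}]$ term by term. Since $[E_i, F_j] = 0$ for $j \neq i$, only the commutators of $E_i$ with the two $F_i^{(\bullet)}$-factors contribute, and these are controlled by
\begin{align*}
E_i F_i^{(n)} - F_i^{(n)} E_i = F_i^{(n-1)}\, \frac{q_i^{1-n} K_i - q_i^{n-1} K'_i}{q_i - q_i^{-1}}.
\end{align*}
After substituting and pushing every resulting $K_i$ or $K'_i$ to the right using $K_i F_j = q_i^{-c_{ij}} F_j K_i$, $K_i F_i^{(r)} = q_i^{-2r} F_i^{(r)} K_i$, and the analogous rules for $K'_i$, I would re-index so that each term has the form $F_i^{(s')} F_j F_i^{(r')} K$ with $r'+s' = m-1$ and $K\in\{K_i, K'_i\}$. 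A short calculation shows that the contributions to the $K_i$-part coming from the left factor $F_i^{(s)}$ and the right factor $F_i^{(r)}$ are equal and opposite in each slot and therefore cancel, while the $K'_i$-parts combine to give exactly $[-c_{ij}-m+1]_i$ times the corresponding coefficient in $y_{i,j;m-1,-1}$, yielding (3).

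The main obstacle is the $K_i$-cancellation in step (3): the two sources of $K_i$-contributions, after being dragged through $F_j$ and the remaining $F_i^{(\bullet)}$, produce $q$-power scalars whose telescoping requires a precise identity on the exponents. This is exactly what the normalization $q_i^{-r(m+c_{ij}-1)}$ in the definition of $y_{i,j;m,-1}$ is designed to enforce; once this cancellation is checked, the $K'_i$ coefficients automatically reassemble into $[-c_{ij}-m+1]_i\, y_{i,j;m-1,-1} K'_i$, and (2), (4) follow from (1), (3) by the involutive argument above.
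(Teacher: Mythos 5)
Your plan is correct, and it supplies exactly what the paper itself leaves implicit: the paper gives no proof of this lemma, instead deferring to Lusztig's Lemma 7.1.2, and your direct computation — lifting that argument from $\U$ to $\tU$ — is precisely what is meant. The reduction of (2) and (4) to (1) and (3) by the anti-linear anti-involution $\sigma\omega\psi$ is sound (it sends $y_{i,j;m,-1}\leftrightarrow x_{i,j;m,-1}$, $F_i\leftrightarrow E_i$, $K_i\leftrightarrow K_i'$, reverses products, and negates $q$-exponents, converting each relation into its partner), the re-indexing step for (1) works, and the $K_i$-cancellation in (3) is the genuine crux and does hold when the $q$-powers from pushing $K_i$ through $F_j$ and $F_i^{(\bullet)}$ are tallied. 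One minor slip: the form of the $q$-Pascal identity you quote for (1), $[m+1]_i = q_i^{-r'}[s']_i + q_i^{s'}[r']_i$, is not the one that actually emerges from the expansion you describe; the calculation produces $[m+1]_i = q_i^{r'}[s']_i + q_i^{-s'}[r']_i$ (with $r'+s'=m+1$). Both are valid Pascal identities, so this has no effect on the argument, but it would be caught when writing out the details.
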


The recursive relations for $y_{i,j;m,+1},x_{i,j;m,+1}$ are obtained by applying $\psi$ to above relations. The recursive relations for $y'_{i,j;m,e},x'_{i,j;m,e}$ are obtained by applying $\sigma$ to above relations.

Let $\tTD'_{i,e},\tTD''_{i,e}$ be the braid group symmetries on $\tU$ formulated in \cite[Propositions 6.20-6.21]{LW21b}. Notations $\tTD'_{i,e},\tTD''_{i,e}$ here correspond to $\TT'_{i,e},\TT''_{i,e}$ there. We recall the (rank two) actions of $\tTD'_{i,e},\tTD''_{i,e}$ on generators $E_j,F_j$ of $\tU$ for $j\neq i\in \I$ %for $i\in \I$, we have
%\begin{align*}
%&\tTD'_{i,e}(K_j)=\tTD'_{i,e}(K_j)=K_j K_j^{-c_{ij}},\qquad \tTD'_{i,e}(K'_j)=\tTD'_{i,e}(K'_j)=K'_j K_j'^{-c_{ij}},
%\\
%%\end{align*}
\begin{align}
\label{eq:TEF}
\begin{split}
&\tTD'_{i,e}(F_j)=y_{i,j;-c_{ij},e},\qquad \tTD'_{i,e}(E_{j})=x_{i,j;-c_{ij},e},
\\
&\tTD''_{i,e}(F_j)=y'_{i,j;-c_{ij},-e},\qquad \tTD''_{i,e}(E_{j})=x'_{i,j;-c_{ij},-e}.
\end{split}
\end{align}
cf. \cite[Lemma 37.2.2]{Lus94}. These four symmetries are related via conjugations by the bar involution $\psi$ or the anti-involution $\sigma$
\begin{align*}
\tTD'_{i,e}=\sigma \tTD''_{i,-e} \sigma,\qquad \tTD'_{i,e}=\psi \tTD'_{i,-e} \psi,\qquad \tTD''_{i,e}=\psi \tTD''_{i,-e} \psi.
\end{align*}

\subsection{Quantum symmetric pairs}

Given a subset $\bI\subset \I$ of finite type, denote by $W_{\bullet}$ the parabolic subgroup of $W$ generated by $s_i,i\in \bI.$ Set $\bw$ to be the longest element of $W_\bullet$. Let $\cR_{\bullet}$ be the finite root system generated by simple roots $\alpha_i,i\in \bI.$ Similarly, $\cR_\bullet^\vee$ is the finite coroot system generated by $\alpha_i^\vee,i\in \bI.$
Let $\rho_\bullet$ be the half sum of positive roots in the root system $\cR_{\bullet}$, and $\rho_\bullet^\vee$ be the half sum of positive coroots in $\cR_\bullet^\vee$.

We recall the definition of admissible pairs from \cite[Definition 2.3]{Ko14}. An admissible pair $(\I=\wI\cup \bI,\tau)$ of Kac-Moody type consists of a partition $\wI\cup \bI$ of $\I$, and a Dynkin diagram involution $\tau$ of $\g$ such that
\begin{itemize}
\item[(1)] $\bI$ is of finite type,

\item[(2)]
 $\bw(\alpha_j) = - \alpha_{\tau j}$ for $j\in \bI$,
\item[(3)]

 If $j\in \wI$ and $\tau j =j$, then $\alpha_j(\rho_{\bullet}^\vee)\in \Z$.
\end{itemize}

Associated to each admissible pair, there is a Satake diagram. %Associated to a Satake diagram, there is an involution $\theta$ of the second kind on $\g$; the action of $\theta$ on the (co)-root system is given by $\theta=-\bw \tau$.
Involutions of the second kind on Kac-Moody algebras are classified by Satake diagrams \cite[Theorem 2.7]{Ko14}. In this paper, we shall interchangeably use
the terms: admissible pairs and Satake diagrams.

%We assume that $\tau$ extends to an involution on $Y$ and an involution on $X$; then $\theta$ also extends to $Y$ and $X$ by $\theta=-\bw \tau$. Following \cite{BW18b}, we define $Y^\imath:=\{\mu\in Y|\theta(\mu)=\mu\}$.

We recall the definition of (universal) quantum symmetric pairs from \cite{Let02, Ko14, LW22}. Given a symmetric pair $(\I=\wI\cup\bI,\tau )$, the {\em universal $\imath$quantum group} $\tUi$ is the subalgebra of $\tU$ generated by
\begin{align}
\label{def:iQG}
\begin{split}
&B_i=F_i+\tTD''_{\bw,+1}( E_{\tau i}) K_i', \qquad \tk_i=K_i K'_{\tau i} \qquad (i\in \wI),
\\
&E_j,F_j,K_j,K'_j\qquad (j\in \bI).
\end{split}
\end{align}
The pair $(\tU,\tUi)$ is known as the {\em universal quantum symmetric pair} associated to $(\I=\wI\cup\bI,\tau )$.

Denote $\tU^{\imath 0}$ to be the subalgebra of $\tUi$ generated by $\tk_i,K_j,K_j',i\in \wI,j\in \bI$.
Define $\tbU$ to be the subalgebra of $\tUi$ generated by $E_j,F_j,K_j,K_j',j\in \bI$. %By definition, $\tbU$ is also a subalgebra of $\tUi$.

\subsection{Quasi $K$-matrix}
A formulation of the quasi $K$-matrix associated to $(\U,\Ui_\bvs)$ of Kac-Moody types and general parameters $\bvs$ was given in \cite[Theorem 7.4]{AV20} (see also \cite[Proposition 3.3]{Ko21}), generalizing earlier work \cite{BW18a,BK19}. Appel-Vlaar's definition of quasi $K$-matrices was reformulated in \cite{WZ22} via the anti-involution $\sigma$; bar involutions and anti-involution on $\tUi$ were constructed {\em ibid.} using quasi $K$-matrices. We recall those results from \cite{WZ22}.

 \begin{proposition}[\text{cf. \cite[Theorem 3.6]{WZ22}}]
  \label{prop:fX1}
 There is a unique element $\tfX=\sum_{\mu\in \N\I}\tfX^\mu,$ for $ \tfX^\mu\in \tU^+_\mu$, such that $\tfX^0=1$ and $\tfX$ satisfies the following intertwining relations
 \begin{align}
 \label{eq:fX2}
 \begin{split}
 B_i  \tfX &=  \tfX B_i^{\sigma},\qquad (i\in \wI),\\
 x  \tfX &= \tfX x,\qquad (x\in \tU^{\imath 0}\tbU ).
 \end{split}
 \end{align}
 Moreover, $\tfX^\mu =0$ unless $\theta (\mu) =-\mu$.
 \end{proposition}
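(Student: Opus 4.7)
The plan is to produce $\tfX$ inductively on weight and obtain uniqueness from the non-degeneracy of Lusztig's skew-derivations on $\tU^+$, following the blueprint of \cite{AV20, BK19, WZ22}. Let $r_i, r'_i : \tU^+ \to \tU^+$ denote the skew-derivations characterized by $[F_i, X] = (q-q^{-1})^{-1}(r_i(X)K_i - r'_i(X)K'_i)$ for $X \in \tU^+$. The pivotal fact is that $\bigcap_{i\in\I}\ker r_i = \bigcap_{i\in\I}\ker r'_i = \bF\cdot 1$ in positive degree; this will drive both the recursion and the uniqueness.

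First I would dispatch the second intertwining family $x\tfX = \tfX x$ for $x \in \tU^{\imath 0}\tbU$. Commuting with the $\tk_j$ imposes a purely weight-theoretic constraint that forces $\tfX^\mu = 0$ unless $\theta(\mu) = -\mu$, which simultaneously yields the support statement. Commutation with $E_j, F_j, K_j, K'_j$ for $j \in \bI$ then amounts to a $W_\bullet$-type equivariance that is automatic once $\tfX$ is assembled from the recursion below, exactly as in \cite[Section 3]{WZ22}.

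The main work lies in the relations $B_i \tfX = \tfX B_i^\sigma$ for $i \in \wI$. Expanding $B_i = F_i + \tTD''_{\bw,+1}(E_{\tau i}) K'_i$ via \eqref{def:iQG} and using that $\sigma$ swaps $K_i \leftrightarrow K'_i$ while fixing $E_i, F_i$, I would match terms on each homogeneous weight piece and extract, for every $\mu$ and every $i$, identities of the schematic form
\[
r_i(\tfX^\mu) = \Lambda_i\bigl(\{\tfX^\nu\}_{\nu < \mu}\bigr), \qquad r'_i(\tfX^\mu) = \Lambda'_i\bigl(\{\tfX^\nu\}_{\nu < \mu}\bigr),
\]
where $\Lambda_i, \Lambda'_i$ are explicit polynomial expressions in strictly lower-weight components of $\tfX$ together with the known element $\tTD''_{\bw,+1}(E_{\tau i})$. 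Uniqueness of $\tfX^\mu$ is then immediate from the triviality of $\bigcap_i \ker r_i$ in positive degree.

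For existence, the obstacle is consistency: the prescribed values $r_i(\tfX^\mu)$ and $r_j(\tfX^\mu)$ must be compatible with one another and with the Serre relations of $\tU^+$. This is the genuine technical content, and rather than redo the verification I would invoke \cite[Theorem 7.4]{AV20}, which produces the quasi $K$-matrix for $(\U, \Ui_\bvs)$ with arbitrary parameters $\bvs$, and then lift to the universal setting $(\tU, \tUi)$ by promoting the scalar parameters to the appropriate central elements of $\tU^{\imath 0}$, as carried out in \cite[Section 3]{WZ22}. The support condition $\theta(\mu) = -\mu$ is preserved under this lift, closing the argument.
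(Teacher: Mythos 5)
Your proposal is correct and follows essentially the same route as the paper, which itself does not prove Proposition~\ref{prop:fX1} directly but recalls it from \cite{WZ22} (whose argument in turn rests on \cite{AV20}, as you invoke). The skew-derivation recursion for uniqueness, the weight-theoretic support constraint from commutation with $\tU^{\imath 0}$, and the lift from $(\U,\Ui_\bvs)$ to $(\tU,\tUi)$ by promoting parameters to central elements are precisely the ingredients of the cited argument; the only minor imprecision is that the support condition $\theta(\mu)=-\mu$ uses commutation with all of $\tU^{\imath 0}$ (including $K_j,K_j'$ for $j\in\bI$), not just the $\tk_j$.
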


The element $\tfX$ is known as the quasi $K$-matrix associated to $(\tU,\tUi)$.

 \begin{proposition}[\text{\cite[Proposition 3.12]{WZ22}}]
   \label{prop:sigma}
 There exists a unique anti-involution $\sigma^\imath$ of $\tUi$ such that
 \begin{align}\label{eq:newb1-2}
 \sigma^\imath(B_i) =B_i, \qquad \sigma^\imath(x) =\sigma(x),
 \quad \text{ for } i\in \wI,x\in \tU^{\imath 0}\tbU.
 \end{align}
Moreover, $\sigma^\imath$ satisfies the following intertwining relation:
 \begin{equation}
   \label{eq:newb1}
 \sigma^\imath(x)  \tfX =\tfX  \sigma(x), \qquad \text{ for all } x\in \tUi.
 \end{equation}
 \end{proposition}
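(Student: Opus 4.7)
My plan is to produce $\sigma^\imath$ as the restriction to $\tUi$ of the anti-homomorphism $\mathrm{Ad}(\tfX)\circ\sigma$, where $\mathrm{Ad}(\tfX)$ is the inner automorphism defined in a suitable completion $\widehat{\tU}$ of $\tU$ with respect to the $\Z\I$-grading induced by $\deg E_i=\alpha_i$, $\deg F_i=-\alpha_i$, $\deg K_i=\deg K_i'=0$. Since $\tfX=1+\sum_{\mu>0}\tfX^\mu$ has only strictly positive higher components, it is a unit in $\widehat{\tU}$, and so $\mathrm{Ad}(\tfX)\colon u\mapsto\tfX u\tfX^{-1}$ is a well-defined algebra automorphism of $\widehat{\tU}$. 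Composing with the anti-involution $\sigma$ yields an anti-homomorphism $\widetilde{\sigma^\imath}\colon\tU\to\widehat{\tU}$.

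The key step is to evaluate $\widetilde{\sigma^\imath}$ on the generators of $\tUi$. For $B_i$ with $i\in\wI$, the first intertwining of Proposition~\ref{prop:fX1} reads $B_i\tfX=\tfX\,\sigma(B_i)$ (the element $B_i^\sigma$ appearing there is precisely $\sigma(B_i)$), and therefore $\widetilde{\sigma^\imath}(B_i)=\tfX\,\sigma(B_i)\,\tfX^{-1}=B_i$. For $x\in\tU^{\imath 0}\tbU$, I first observe that $\sigma$ stabilises this subalgebra: $\sigma(\tk_i)=\tk_{\tau i}$, $\sigma$ swaps $K_j\leftrightarrow K_j'$ for $j\in\bI$, and fixes $E_j,F_j$. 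Thus $\sigma(x)\in\tU^{\imath 0}\tbU$, and applying the second intertwining of Proposition~\ref{prop:fX1} to $\sigma(x)$ yields $\sigma(x)\tfX=\tfX\,\sigma(x)$, whence $\widetilde{\sigma^\imath}(x)=\sigma(x)$. Both values land in $\tUi$.

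Since $\tUi$ is generated by the $B_i$'s together with $\tU^{\imath 0}\tbU$ and $\widetilde{\sigma^\imath}$ is an anti-homomorphism, a short induction on word length in the generators shows that $\widetilde{\sigma^\imath}(\tUi)\subseteq\tUi$; in particular, the a priori completion-valued expression $\tfX\,\sigma(x)\,\tfX^{-1}$ automatically collapses to an honest element of $\tU$ whenever $x\in\tUi$. Setting $\sigma^\imath:=\widetilde{\sigma^\imath}|_{\tUi}$, this is an anti-endomorphism of $\tUi$ realising \eqref{eq:newb1-2}; involutivity is then verified on generators ($B_i\mapsto B_i\mapsto B_i$ and $x\mapsto\sigma(x)\mapsto\sigma^2(x)=x$) and extends to all of $\tUi$ by the anti-homomorphism property. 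The intertwining relation \eqref{eq:newb1} holds by construction, and uniqueness is immediate since any anti-involution of $\tUi$ is determined by its values on generators.

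The main technical care required is setting up the completion $\widehat{\tU}$ so that $\mathrm{Ad}(\tfX)$ is genuinely an algebra automorphism (one has to check that only finitely many terms of $\tfX$ and $\tfX^{-1}$ contribute to each weight component of a given product), and verifying that $\sigma$ preserves $\tU^{\imath 0}\tbU$. Both points are routine once the $\N\I$-grading is made explicit; the heart of the argument is then the one-line calculation on each class of generators using the two intertwining identities of Proposition~\ref{prop:fX1}.
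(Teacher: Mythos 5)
Your proposal is correct and is the natural construction given that Proposition~\ref{prop:fX1} precedes it: one defines $\sigma^\imath$ as the restriction to $\tUi$ of $\mathrm{Ad}(\tfX)\circ\sigma$ acting on a suitable completion, then uses the two intertwining identities to compute its effect on generators and to see that the image stays in $\tUi$. The paper cites \cite[Proposition 3.12]{WZ22} rather than re-proving the statement, but your argument matches the expected line of proof there, and the two points you flag as needing care — setting up the completion so that $\tfX^{\pm 1}$-conjugation is legal, and checking $\sigma$ preserves $\tU^{\imath 0}\tbU$ (via $\sigma(\tk_i)=\tk_{\tau i}$) — are exactly the right ones; your involutivity argument via checking the algebra homomorphism $(\sigma^\imath)^2$ on generators neatly avoids having to compute $\sigma(\tfX)$ itself.
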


Define the parameter $\bvs_\star=(\vs_{i,\star})_{i\in \wI}$ by setting
\begin{align}
 \label{eq:bvs star}
\vs_{i,\star} =(-1)^{\alpha_i( 2\rho^\vee_\bullet)}q^{(\alpha_i, w_\bullet \alpha_{\tau i}+2\rho_\bullet)},\qquad (i\in \wI).
\end{align}
Set $\tpsi_\star:= \tPsi_{\bvs_\star}\circ\tpsi$. %One can show that $\tpsi_\star$ is an anti-linear automorphism on $\tU$.

\begin{proposition}[\text{\cite[Proposition 3.4]{WZ22}}]
   \label{prop:psi}
There exists a unique anti-linear involution $\tpsi^\imath$ of $\tUi$ such that
 \begin{align}\label{eq:newb9}
 \tpsi^\imath(B_i) =B_i, \qquad \tpsi^\imath(x) =\tpsi_\star(x),
\qquad
\text{ for } i\in \wI, x \in \tU^{\imath 0} \tbU.
 \end{align}
Moreover, $\tpsi^\imath$ satisfies the following intertwining relation,
 \begin{equation}\label{eq:newb10}
 \tpsi^\imath(x)   \tfX =\tfX  \tpsi_\star(x),
 \qquad
  \text{ for all }x\in \tUi.
 \end{equation}
($\tpsi^\imath$ is called a bar involution on $\tUi.$)
\end{proposition}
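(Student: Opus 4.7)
The plan is to proceed in close analogy with the construction of the anti-involution $\sigma^\imath$ in Proposition~\ref{prop:sigma}, replacing the anti-involution $\sigma$ of $\tU$ by the anti-linear involution $\tpsi_\star=\tPsi_{\bvs_\star}\circ\tpsi$. The rescaling $\tPsi_{\bvs_\star}$ is tuned precisely so that $\tpsi_\star$ plays, in the anti-linear setting, the same role for the quasi $K$-matrix intertwiner as $\sigma$ does in Proposition~\ref{prop:fX1}; this is exactly the content of the definition \eqref{eq:bvs star}.

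First I would define a candidate $\tpsi^\imath$ on $\tUi$ by the conjugation formula
\[
\tpsi^\imath(x) := \tfX\,\tpsi_\star(x)\,\tfX^{-1},\qquad x\in\tUi,
\]
interpreted inside a suitable completion of $\tU$ (the same one used to house $\tfX$ in Proposition~\ref{prop:fX1}). Uniqueness is then automatic, since $\tUi$ is generated by the $B_i$ and $\tU^{\imath 0}\tbU$, and any anti-linear map with the prescribed values on these generators is determined. The real work is to show that this candidate takes values in $\tUi$ and satisfies the prescribed formulas on generators: $\tpsi^\imath(x)=\tpsi_\star(x)$ for $x\in\tU^{\imath 0}\tbU$ and $\tpsi^\imath(B_i)=B_i$ for $i\in\wI$.

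For $x\in\tU^{\imath 0}\tbU$, I would first observe that $\tpsi_\star$ preserves $\tU^{\imath 0}\tbU$ (the rescaling only touches Cartan elements and $E_j,K_j,K_j'$ for $j\in\bI$, while $\tpsi$ fixes $E_j,F_j$ and swaps $K_j\leftrightarrow K_j'$), so the second intertwining identity in \eqref{eq:fX2} applied to $\tpsi_\star(x)$ gives $\tfX\,\tpsi_\star(x)\,\tfX^{-1}=\tpsi_\star(x)$ as required. For $x=B_i$, the identity $\tpsi^\imath(B_i)=B_i$ reduces, via the first intertwining identity $B_i\tfX=\tfX\,\sigma(B_i)$ of \eqref{eq:fX2}, to the key pointwise identity
\[
\tpsi_\star(B_i)=\sigma(B_i)\qquad (i\in\wI).
\]
Unpacking the right-hand side using \eqref{def:iQG} and the fact that $\sigma$ fixes $F_i$ and swaps $K_i\leftrightarrow K_i'$, and unpacking the left-hand side using $\tpsi(\tTD''_{\bw,+1}(E_{\tau i}))=\tTD''_{\bw,-1}(\tpsi(E_{\tau i}))=\tTD''_{\bw,-1}(E_{\tau i})$ (from $\tTD''_{i,e}=\tpsi\circ\tTD''_{i,-e}\circ\tpsi$) together with the action of $\tPsi_{\bvs_\star}$, this reduces to tracking the normalization constants; the precise exponents in \eqref{eq:bvs star} are exactly those that balance the two sides.

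Once the values of $\tpsi^\imath$ on generators are pinned down, the intertwining relation \eqref{eq:newb10} for arbitrary $x\in\tUi$ follows immediately from the conjugation definition. Multiplicativity (as an anti-algebra map) and anti-linearity transfer from $\tpsi_\star$ since conjugation by $\tfX$ is an $\bF$-algebra automorphism, and $(\tpsi^\imath)^2=\Id$ follows by checking on generators (where it is obvious on $\tU^{\imath 0}\tbU$ from $(\tpsi_\star)^2=\Id$, and trivial on $B_i$). The main obstacle is the pointwise identity $\tpsi_\star(B_i)=\sigma(B_i)$: matching the $q$-powers and signs produced by $\tPsi_{\bvs_\star}\circ\tpsi$ against those produced by $\sigma\circ\tTD''_{\bw,+1}$ is the delicate computation that determines, and is determined by, the choice of $\bvs_\star$.
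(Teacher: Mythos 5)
Your strategy is the right one and is surely the same as that of the cited reference: define $\tpsi^\imath(x):=\tfX\,\tpsi_\star(x)\,\tfX^{-1}$ in a completion and then pin it down on generators, using the two halves of the intertwining identity \eqref{eq:fX2} exactly as you describe. The reduction of $\tpsi^\imath(B_i)=B_i$ to the pointwise identity $\tpsi_\star(B_i)=\sigma(B_i)$ is correct, since $B_i\tfX=\tfX\,\sigma(B_i)$ gives $B_i=\tfX\,\sigma(B_i)\,\tfX^{-1}$. Your arguments for multiplicativity, anti-linearity, and $(\tpsi^\imath)^2=\Id$ (checked on generators once $\tpsi^\imath$ is known to be an algebra endomorphism of $\tUi$) are also sound.

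The one step you leave undone is precisely the crux: verifying $\tpsi_\star(B_i)=\sigma(B_i)$. Unpacking both sides gives $\tpsi_\star(B_i)=F_i+\tPsi_{\bvs_\star}\big(\tTD''_{\bw,-1}(E_{\tau i})\,K_i\big)$ and $\sigma(B_i)=F_i+K_i\,\tTD'_{\bw,-1}(E_{\tau i})$. Matching these is not purely a bookkeeping exercise in the coefficient $\vs_{i,\star}$: it also requires knowing how $\tTD'_{\bw,-1}(E_{\tau i})$ and $\tTD''_{\bw,-1}(E_{\tau i})$ compare (they differ by a sign and a $q$-power on the weight space of weight $\bw\alpha_{\tau i}$, in the spirit of \cite[37.2.4]{Lus94}), and then commuting $K_i$ through that element, which contributes $q^{(\alpha_i,\bw\alpha_{\tau i})}$. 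Only after those two ingredients are in place does the definition \eqref{eq:bvs star} of $\vs_{i,\star}$ (including the sign $(-1)^{\alpha_i(2\rho_\bullet^\vee)}$ and the $q^{(\alpha_i,2\rho_\bullet)}$ factor coming from the $T'/T''$ comparison after applying $\tTD_{\bw}$) close the gap. You correctly identify that "the precise exponents in \eqref{eq:bvs star} are exactly those that balance the two sides," but that assertion is the whole content of the proof and should be carried out; as written it is a placeholder, not a verification.
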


\subsection{Relative Weyl groups} We recall the construction of relative Weyl groups cf. \cite{Lus03,DK19}. Given a symmetric pair $(\I=\wI\cup\bI,\tau)$, set $\wItau$ to be a fixed set of representatives of $\tau$-orbits on $\wI$. The cardinality of $\wItau$ is called the (real) rank of the symmetric pair $(\I=\wI\cup\bI,\tau)$. For each $i\in \wItau$, set $\I_{\bullet,i}:= \{i,\tau i\}\cup \I_{\bullet}$ and there is a rank one Satake subdiagram $(\I_{\bullet,i}=\{i,\tau i\}\cup\I_\bullet,\tau|_{\I_{\bullet,i}})$. Let $\fwItau$ be the subset of $\wItau$ given by
\begin{align}
\label{def:fwItau}
\fwItau=\{i\in \wItau| \; \I_{\bullet,i} \text{ is of finite type}\}.
\end{align}
Then $\fwItau$ parametrizes finite type rank one Satake subdiagrams of $(\I=\wI\cup\bI,\tau)$.

 Define the subgroup $W_{\bullet,i}:=\langle s_i|i\in \I_{\bullet,i}\rangle$ of $W$ for $i\in \fwItau$. Let $w_{\bullet,i}$ be the longest element of $W_{\bullet,i}$ and define $\bs_i:=w_{\bullet,i}\bw=\bw w_{\bullet,i}$.  The {\em relative Weyl group} $W^\circ$ associated to $(\I=\wI\cup\bI,\tau)$ is defined to be
\begin{align}
\label{def:Wcirc}
W^\circ:=\langle \bs_i | i\in \fwItau\rangle.
\end{align}
It is well-known that $W^\circ $ is a Coxeter group with simple reflections $\bs_i,i\in \fwItau$ \cite{Lus03}. We denote by $\Br(W^\circ)$ the braid group associated to $W^\circ$ and call it {\em the relative braid group}.

\begin{remark}
In the Kac-Moody case, there are nontrivial symmetric pairs with trivial relative Weyl groups. For instance, consider the Satake diagram below (the underlying Dynkin diagram is of untwisted affine rank one type).
\begin{center}
\begin{tikzpicture}[baseline=0,scale=1.2]
		\node  at (-0.65,0) {$\circ$};
		\node  at (0.65,0) {$\circ$};
		\draw[-] (-0.6,0.05) to (0.6, 0.05);
		\draw[-] (-0.6,-0.05) to (0.6,- 0.05);
        \draw[bend left,<->,red] (-0.65,0.15) to (0.65,0.15);
        \node at (0,0.18) {$\textcolor{red}{\tau}$};
        %\node at (-1.5,-0.05) {AIII$_3$};
	\end{tikzpicture}
\end{center}
In this case, $\fwItau=\varnothing$ and then the relative Weyl group is trivial.

The corresponding $\imath$quantum group is also known as the augmented $q$-Onsager algebra \cite{BB12}.
\end{remark}

%%%%%%%%%%%
%%%%%%%%%%%%
\section{Main results}\label{sec:main}
We construct relative braid group symmetries $\tTa{i},\tTb{i}$ on $\imath$quantum groups $\tUi$ in Theorem~\ref{thm:ibraid}, generalizing the finite-type construction in \cite{WZ22}. We recall closed formulas for their rank one actions from \cite{WZ22} in \S~\ref{sec:pfrk1}. The higher rank formulas of these symmetries are presented in \S~\ref{sec:pfrk2}, whose proofs occupy the coming sections. We show that these symmetries satisfy relative braid relations in Theorem~\ref{thm:BrW0}.

\subsection{New symmetries on $\tUi$}\label{sec:sym}

By the definition of $W^\circ$, it suffices to construct relative braid group symmetries associated to vertices $i\in \fwItau$.
Write $\bw i= i$ for $\bw(\alpha_i)=\alpha_i$. In this paper, we will construct such symmetries for any $i\in \fwItau$ such that $\bw i=i$. Any vertex $i\in \fwItau$ satisfying $\bw i=i$ belongs to exactly one of the following three types
\begin{itemize}
\item[(i)] $i=\tau i=\bw i,$
\item[(ii)] $c_{i,\tau i}=0,i=\bw i,$
\item[(iii)]$c_{i,\tau i}=-1,i=\bw i$.
\end{itemize}

We need a renormalization on symmetries $\tTD'_{i,-1},\tTD''_{i,+1}$, analogous to \cite[\S 4.1]{WZ22}. Let $\bvs_{\dm}=(\vs_{\dm,i})_{i\in \wI}$ be the distinguished parameters such that
\begin{align}
\label{def:bvs}
\vs_{\dm,i}= - q^{-(\alpha_i , \alpha_i+\bw\alpha_{\tau i})/2} .
\end{align}
We extend $\bvs_{\dm}$ to an $\I$-tuple of scalars by setting $\vs_{\dm,j}=1$ for $j\in \bI$.

Recall the scaling automorphism $\tPsi_{\ba}$ from Proposition~\ref{prop:QG1}. Define
\begin{align}
  \label{def:tT}
  \begin{split}
& \tT_{i,+1}'':= \tPsi_{\bvs_\diamond}^{-1} \circ \tTD''_{i,+1} \circ \tPsi_{\bvs_\diamond},
\\
 & \tT_{i,-1}':= \tPsi_{\bvs_\diamond}^{-1} \circ \tTD'_{i,-1} \circ \tPsi_{\bvs_\diamond}.
 \end{split}
\end{align}

We will mainly use the rescaled symmetries $\tT_{i,+1}'',\tT_{i,-1}'$ rather than the original ones $\tTD_{i,+1}'',\tTD_{i,-1}'$  in most parts of this paper.  We introduce the short notations
\begin{align*}
\tT_i:=\tT_{i,+1}'', \qquad \tT_i^{-1}:= \tT_{i,-1}'.
\end{align*}

The rank one quasi $K$-matrices $\tfX_i$ are defined to be the quasi $K$-matrix associated to rank one subdiagram $(\I_{\bullet,i}=\{i,\tau i\}\cup\I_\bullet,\tau|_{\I_{\bullet,i}})$.

\begin{theorem}
\label{thm:ibraid}
Let $i\in \fwItau$ be a vertex of type (i)-(iii).
\begin{itemize}
\item[(1)]
For any $x\in \tUi$, there exists a unique element $x'\in \tUi$ such that
\begin{align}\label{eq:inter}
x' \tfX_i =\tfX_i \tT_{\bs_i,-1}'(x).
\end{align}
Moreover, the map $x \mapsto x'$ is an automorphism of $\tUi$, denoted by $\tTa{i}$.

\item[(2)] For any $x\in \tUi$, there exists a unique element $x''\in \tUi$ such that
\begin{align}\label{eq:inter'}
x'' \tT_{\bs_i}(\tfX_i)^{-1} =\tT_{\bs_i}(\tfX_i)^{-1}\tT_{\bs_i,+1}''(x).
\end{align}
Moreover, the map $x \mapsto x''$ is an automorphism of $\tUi$, denoted by $\tTb{i}$.

\item[(3)] Automorphisms $\tTa{i}$ and $\tTb{i}$ are mutually inverse and they satisfy $\tTT'_{i,-1}=\sigma^\imath \circ \tTT''_{i,+1} \circ \sigma^\imath$.
\end{itemize}
\end{theorem}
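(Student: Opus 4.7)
The plan is to follow the finite-type blueprint of \cite{WZ22}, adapted to the Kac-Moody setting. I will treat the intertwining identity \eqref{eq:inter} as both defining $\tTa{i}(x)$ from $x$ and forcing $\tTa{i}$ to respect every relation of $\tUi$; the whole proof then reduces to producing explicit candidates $\tTa{i}(x)$ and $\tTb{i}(x)$ for each generator $x$ of $\tUi$ and verifying \eqref{eq:inter}--\eqref{eq:inter'} pointwise. For uniqueness I will use that $\tfX_i = 1 + \sum_{\mu > 0} \tfX_i^\mu$ with the higher-weight pieces lying in $\tU^+$, so in the weight-wise completion of $\tU$ the element $\tfX_i$ is invertible; hence at most one $x' \in \tUi$ can satisfy \eqref{eq:inter}. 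Multiplicativity will then propagate for free: if $\tTa{i}(x)$ and $\tTa{i}(y)$ exist, then $\tTa{i}(x)\tTa{i}(y)\tfX_i = \tfX_i \tT'_{\bs_i,-1}(xy)$, and any defining relation $r(x_1,\ldots,x_n)=0$ of $\tUi$ yields $r(\tTa{i}(x_1),\ldots,\tTa{i}(x_n))\,\tfX_i = \tfX_i \tT'_{\bs_i,-1}(r) = 0$, which vanishes by uniqueness. So the entire construction is formally forced once it is set up on generators.

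The generators of $\tUi$ are $B_j$ and $\tk_j$ for $j\in\wI$ together with the subalgebra $\tbU$. For $x \in \{B_i, B_{\tau i}\} \cup \{\tk_j : j \in \wI\} \cup \tbU$, I will take $\tTa{i}(x)$ to be the element given by the rank one formulas recalled in Propositions~\ref{prop:pfrkone} and~\ref{prop:rkone1}; verifying \eqref{eq:inter} in these cases requires only the rank one subdata $\I_{\bullet,i}$ together with the intertwining property of $\tfX_i$ from Proposition~\ref{prop:fX1}. For $x = B_j$ with $j \neq i, \tau i$, I will take $\tTa{i}(B_j)$ to be the $\imath$root vector supplied by Theorem~\ref{thm:rktwo1}, with \eqref{eq:inter} verified by exploiting the recursive structure of the $\imath$root vectors. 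The analogous construction yields $\tTb{i}$, giving parts (1) and (2).

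For part (3), I will apply the $\tU$-automorphism $\tT''_{\bs_i,+1}$ to \eqref{eq:inter}; since $\tT''_{\bs_i,+1}$ is the inverse of $\tT'_{\bs_i,-1}$ on $\tU$, this produces
\begin{align*}
\tT''_{\bs_i,+1}(\tTa{i}(x)) \, \tT''_{\bs_i,+1}(\tfX_i) = \tT''_{\bs_i,+1}(\tfX_i) \, x,
\end{align*}
and comparison with \eqref{eq:inter'} together with uniqueness will give $\tTb{i}\circ\tTa{i} = \Id$, whence symmetrically $\tTa{i}\circ\tTb{i} = \Id$. For the $\sigma^\imath$ identity, I will apply $\sigma^\imath$ to \eqref{eq:inter} and combine the intertwining property of $\sigma^\imath$ from Proposition~\ref{prop:sigma} (specialised to the rank one subdatum at $i$) with the rescaled analog of the relation $\tTD'_{i,e} = \sigma\tTD''_{i,-e}\sigma$ from Proposition~\ref{prop:QG4}; the outcome is that $\sigma^\imath\circ\tTb{i}\circ\sigma^\imath$ satisfies the same defining intertwining identity as $\tTa{i}$, so the two coincide by uniqueness.

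The hard part will be the off-diagonal construction underlying the previous paragraph: producing the higher rank formulas for $\tTa{i}(B_j)$ with $j \neq i, \tau i$ splits into the three subcases (i)--(iii) for $i$, each governed by its own family of recursively defined $\imath$root vectors ($\B_{i,j;m}$, $\B_{i,\tau i,j;m_1,m_2}$, $\B_{i,\tau i,j;a,b,c}$). The key new input will be that each $\imath$root vector decomposes naturally into two halves whose recursions can be matched, via the rank one quasi $K$-matrix $\tfX_i$, with Lusztig's rank two or rank three root vectors of $\tU$; verifying \eqref{eq:inter} for these candidates is postponed to Theorem~\ref{thm:rktwo1} and occupies Sections~\ref{sec:split}--\ref{sec:qsqs}.
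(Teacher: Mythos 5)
Your overall architecture matches the paper exactly: uniqueness from invertibility of $\tfX_i$ (in the weight-completion), multiplicativity propagating from the generator case, existence on $\tU^{\imath 0}\tbU$ and $B_i,B_{\tau i}$ via the rank one Propositions~\ref{prop:pfrkone}--\ref{prop:rkone1}, existence on $B_j$ for $j\neq i,\tau i$ deferred to the root-vector machinery of Theorem~\ref{thm:rktwo1}, and mutual inverse deduced by hitting \eqref{eq:inter} with $\tT''_{\bs_i,+1}$ and comparing with \eqref{eq:inter'}. For parts (1)--(2) and the mutual-inverse half of (3), this is the paper's argument.

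The step that does not go through as sketched is the $\sigma^\imath$ identity in part (3). You propose to apply $\sigma^\imath$ to \eqref{eq:inter} and invoke the rank-one specialisation of the intertwining relation from Proposition~\ref{prop:sigma}, concluding that $\sigma^\imath\circ\tTb{i}\circ\sigma^\imath$ satisfies the same intertwining identity as $\tTa{i}$ and hence equals it by uniqueness. There are two obstructions. First, $\sigma^\imath$ is an anti-involution of $\tUi$ only; applying it to a relation involving $\tfX_i\in\tU\setminus\tUi$ is not meaningful, so one must work with the $\tU$-anti-involution $\sigma$ instead. But then tracking through \eqref{eq:inter'} with $\sigma$ (using $\sigma\circ\tT''_{\bs_i,+1}=\tT'_{\bs_i,-1}\circ\sigma$ and $\sigma(\tfX_i)=\tfX_i$) one lands on an identity of the form $\sigma(\tTb{i}(z))\,\tT'_{\bs_i,-1}(\tfX_i)=\tT'_{\bs_i,-1}(\tfX_i)\,\tT'_{\bs_i,-1}(\sigma(z))$, which has $\sigma$ where one wants $\sigma^\imath$ and $\tT'_{\bs_i,-1}(\tfX_i)$ where one wants $\tfX_i$ --- not the defining relation for $\tTa{i}$. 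Second, the rank-one intertwining $\sigma^\imath(x)\,\tfX_i=\tfX_i\,\sigma(x)$ holds only for $x$ in the rank-one subalgebra supported on $\I_{\bullet,i}$, and \emph{not} for $B_j$ with $j\neq i,\tau i$; passing to the global relation $\sigma^\imath(x)\,\tfX=\tfX\,\sigma(x)$ replaces $\tfX_i$ by the full $\tfX$, which cannot be traded back for $\tfX_i$ in Kac--Moody type. So the ``coincide by uniqueness'' step is not justified. The paper sidesteps this by establishing $\tTb{i}(x)=(\sigma^\imath\circ\tTa{i}\circ\sigma^\imath)(x)$ generator by generator --- Proposition~\ref{prop:pfrkone}(3), Proposition~\ref{prop:rkone1}(3), and Theorem~\ref{thm:rktwo1}(3), the last of which uses the recursive definitions of the root vectors (Propositions~\ref{prop:bB}, \ref{prop:qsbB}, \ref{prop:qsqsbB}) --- and then extending to all of $\tUi$ by the algebra-homomorphism property. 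You should replace your intertwining argument for this step with that generator-level check.
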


Therefore, we have the following two intertwining relations
\begin{align}
\label{eq:interT}
\begin{split}
\tTa{i}(x) \tfX_i &=\tfX_i \tT_{\bs_i,-1}'(x),\\
\tTb{i}(x) \tT_{\bs_i}(\tfX_i)^{-1} &=\tT_{\bs_i}(\tfX_i)^{-1}\tT_{\bs_i,+1}''(x).
\end{split}
\end{align}

\begin{proof}
An outline of the proof is given below and a complete proof requires the developments in the coming part of this paper.

Fist of all, since $\tfX_i$ is invertible, the uniqueness in either theorem is clear.

In the next two subsections~\ref{sec:pfrk1}-\ref{sec:pfrk2}, we will show the existence of $x', x''$ for any generator $x$ of $\tUi$. Precisely, we show the existence of $x', x''$ when $x\in \tU^{\imath 0}\tbU$ or  $x=B_i,B_{\tau i}$ in Propositions~\ref{prop:pfrkone}-\ref{prop:rkone1}; we show the existence of $x', x''$ when $x=B_j$ for $j\neq i,\tau i,j\in \wI$ in Theorem~\ref{thm:rktwo1}.

Assume that $x', y' \in \tUi$ satisfy \eqref{eq:inter}, for $x,y \in \tUi$. Then it follows that $(xy)':=x'y' \in \tUi$ satisfies the identity in \eqref{eq:inter} for $xy$. Hence we have obtained a well-defined endomorphism $\tTa{i}$ on $\tUi$ which sends $x \mapsto x'$. Similarly, we have obtained a well-defined endomorphism $\tTb{i}$ on $\tUi$ which sends $x \mapsto x''$.

%By the construction, endomorphisms $\tTa{i},\tTb{i}$ satisfy relations \eqref{eq:interT}.
Finally, we show that the endomorphisms $\tTa{i},\tTb{i}$ are mutually inverses. Recall that $\tT'_{i,-1},\tT''_{i,+1}$ are mutually inverses. For any $z\in \tUi$, set $y=\tTb{i}(z)$. The second relation in \eqref{eq:interT} is written as $y \tT_{\bs_i}(\tfX_i)^{-1} =\tT_{\bs_i}(\tfX_i)^{-1}\tT_{\bs_i,+1}''(z)$, which is equivalent to
\begin{align*}
z \tfX_i &=\tfX_i \tT_{\bs_i,-1}'(y).
\end{align*}
Using the first relation \eqref{eq:interT} and the uniqueness, it must be $z=\tTa{i}(y)$. Therefore, we have proved that $\tTa{i},\tTb{i}$ are mutually inverses, which means that they are both automorphisms on $\tUi$.

The property $\tTT'_{i,-1}=\sigma^\imath \circ \tTT''_{i,+1} \circ \sigma^\imath$ is a consequence of Proposition~\ref{prop:pfrkone}(3), Proposition~\ref{prop:rkone1}(3), and Theorem~\ref{thm:rktwo1}(3).
\end{proof}

Using the bar involution $\psi^\imath$ on $\tUi$ (see Proposition~\ref{prop:psi}), we define other two variants of the symmetries
\begin{align}
\label{def:tTT2}
\tTT'_{i,+1}:= \psi^\imath\circ \tTa{i}\circ \psi^\imath,
\qquad
\tTT''_{i,-1}:= \psi^\imath\circ \tTb{i}\circ \psi^\imath.
\end{align}

Recall that $\sigma^\imath$ commutes with $\psi^\imath$. Then we have the next corollary of Theorem~\ref{thm:ibraid}(3).
\begin{corollary}
\label{thm:sigma}
Let $e=\pm 1$. The symmetries $\tTT'_{i,e}$ and $\tTT''_{i,-e}$ for $i\in \fwItau$ are mutually inverse. Moreover, we have
\begin{align}
    \tTT'_{i,e}=\sigma^\imath \circ \tTT''_{i,-e} \circ \sigma^\imath.
\end{align}
\end{corollary}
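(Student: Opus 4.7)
The plan is to deduce both assertions purely formally from Theorem~\ref{thm:ibraid}(3), the definitions in \eqref{def:tTT2}, and the already-noted fact that $\sigma^\imath$ commutes with $\psi^\imath$. I will first dispose of the case $e=-1$ (which is essentially a restatement of Theorem~\ref{thm:ibraid}(3) after unpacking notation), and then conjugate by $\psi^\imath$ to handle $e=+1$.

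Specifically, with the convention of the theorem, $\tTT'_{i,-1}=\tTa{i}$ and $\tTT''_{i,+1}=\tTb{i}$, so the $e=-1$ statements read $\tTa{i}\circ\tTb{i}=\tTb{i}\circ\tTa{i}=\Id$ and $\tTa{i}=\sigma^\imath\circ\tTb{i}\circ\sigma^\imath$, both of which are in Theorem~\ref{thm:ibraid}(3). For $e=+1$, the definitions give $\tTT'_{i,+1}=\psi^\imath\circ\tTa{i}\circ\psi^\imath$ and $\tTT''_{i,-1}=\psi^\imath\circ\tTb{i}\circ\psi^\imath$. Since $\psi^\imath$ is an involution (Proposition~\ref{prop:psi}), I compute
\begin{align*}
\tTT'_{i,+1}\circ\tTT''_{i,-1}
&=\psi^\imath\circ\tTa{i}\circ\psi^\imath\circ\psi^\imath\circ\tTb{i}\circ\psi^\imath
=\psi^\imath\circ(\tTa{i}\circ\tTb{i})\circ\psi^\imath=\Id,
\end{align*}
and symmetrically $\tTT''_{i,-1}\circ\tTT'_{i,+1}=\Id$, giving the mutually-inverse claim.

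For the conjugation identity in the case $e=+1$, I use that $\sigma^\imath$ and $\psi^\imath$ commute (recalled just before the corollary) together with $\tTa{i}=\sigma^\imath\circ\tTb{i}\circ\sigma^\imath$ from Theorem~\ref{thm:ibraid}(3):
\begin{align*}
\sigma^\imath\circ\tTT''_{i,-1}\circ\sigma^\imath
&=\sigma^\imath\circ\psi^\imath\circ\tTb{i}\circ\psi^\imath\circ\sigma^\imath
=\psi^\imath\circ(\sigma^\imath\circ\tTb{i}\circ\sigma^\imath)\circ\psi^\imath
=\psi^\imath\circ\tTa{i}\circ\psi^\imath
=\tTT'_{i,+1}.
\end{align*}
This completes the two cases $e=\pm 1$ uniformly. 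There is no real obstacle here; the only thing worth double-checking is that $\sigma^\imath$ and $\psi^\imath$ indeed commute on $\tUi$ (which is an earlier result), and that \eqref{def:tTT2} is applied in the right order — these are both routine verifications.
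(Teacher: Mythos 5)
Your proof is correct and is essentially the paper's own argument: the paper states the corollary with only the remark that it follows from Theorem~\ref{thm:ibraid}(3) and the commutativity of $\sigma^\imath$ and $\psi^\imath$, and your computation simply writes out the routine conjugation by $\psi^\imath$ that the paper leaves implicit.
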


%\begin{proof}
%These two statements for $\tTT'_{i,-1}$ and $\tTT''_{i,+1}$ are verified in the proof of Theorem~\ref{thm:ibraid}. The statements for $\tTT'_{i,+1}$ and $\tTT''_{i,-1}$ follow by their definitions and $\psi^\imath\sigma^\imath=\sigma^\imath\psi^\imath$.
%\end{proof}

\subsection{Rank one formulation}\label{sec:pfrk1}
We prove the existence of $x',x''$ in Theorem~\ref{thm:ibraid} for $x\in \tU^{\imath 0}\tbU$ or $x=B_i,B_{\tau i}$ in this subsection. This is achieved by Proposition~\ref{prop:pfrkone}-\ref{prop:rkone1} below, whose formulations and proofs are essentially the same as corresponding statements in \cite{WZ22}. For completeness and convenience, we still sketch proofs for them.

 \begin{proposition}[\text{cf. \cite[Proposition 4.11 and Proposition 6.2]{WZ22}}]
 \label{prop:pfrkone}
Let $i\in \fwItau$.
\begin{itemize}
\item[(1)] For $x\in \tU^{\imath 0}\tbU$, the element $x':=\tT'_{\bs_i,-1}(x)\in \tUi$  satisfies \eqref{eq:inter}.

\item[(2)] For $x\in \tU^{\imath 0}\tbU$, the element $x'':=\tT''_{\bs_i,+1}(x)\in \tUi$  satisfies  \eqref{eq:inter'}.

\item[(3)] We have $\tTb{i}(x)=(\sigma^\imath \circ \tTa{i}\circ \sigma^\imath)(x)$ for any $x\in \tU^{\imath 0}\tbU$.
\end{itemize}
 \end{proposition}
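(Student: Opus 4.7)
The plan is to verify the required intertwining identity on each type of generator of the subalgebra $\tU^{\imath 0}\tbU$, reducing everything to the rank one version of Proposition~\ref{prop:fX1}. Since $\tfX_i$ is by definition the quasi $K$-matrix associated to the rank one Satake subdiagram $(\I_{\bullet,i},\tau|_{\I_{\bullet,i}})$, that proposition tells us $\tfX_i$ already commutes with the rank one analog $\tU^{\imath 0}_i\tbU$ generated by $\tk_i,\tk_{\tau i}$ together with $E_l,F_l,K_l,K_l'$ for $l\in\bI$. Thus part (1), where the candidate $x'$ is declared to be $\tT'_{\bs_i,-1}(x)$, amounts to showing that $\tT'_{\bs_i,-1}(x)$ commutes with $\tfX_i$ for each generator $x$ of $\tU^{\imath 0}\tbU$.

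For $x\in\tbU$ and for $x=K_l,K_l'$ with $l\in\bI$, I would use that $\bs_i=w_{\bullet,i}\bw=\bw w_{\bullet,i}$ normalizes the parabolic $W_\bullet$ and maps the simple roots in $\bI$ back into the root system of $\I_{\bullet,i}$; consequently $\tT'_{\bs_i,-1}$ carries these generators into the rank one subalgebra $\tU^{\imath 0}_i\tbU$, and the rank one Proposition~\ref{prop:fX1} yields commutation with $\tfX_i$. For the remaining Cartan generators $\tk_j=K_jK'_{\tau j}$ with $j\in\wI\setminus\{i,\tau i\}$, the image $\tT'_{\bs_i,-1}(\tk_j)$ remains in the Cartan torus, so its commutation with $\tfX_i$ reduces to verifying the weight identity $(\bs_i(\alpha_j-\alpha_{\tau j}),\mu)=0$ for every weight $\mu$ of $\tfX_i$. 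Using that $\bs_i$ commutes with $\tau$ (as both $\bw$ and $w_{\bullet,i}$ do) and that weights of $\tfX_i$ lie in $\N[\I_{\bullet,i}]$ and are constrained by the rank one Satake involution, this orthogonality can be read off from the structure of the rank one Satake subdiagram.

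Part (2) follows by a parallel argument applied to $\tT''_{\bs_i,+1}$ and the inverse quasi $K$-matrix $\tT''_{\bs_i,+1}(\tfX_i)^{-1}$; alternatively, one can apply $\tT''_{\bs_i,+1}$ to the identity of part (1) and use that $\tT'_{\bs_i,-1}$ and $\tT''_{\bs_i,+1}$ are mutually inverse on $\tU$. Part (3) reduces, via the fact that $\sigma^\imath=\sigma$ on $\tU^{\imath 0}\tbU$, to the relation $\tT''_{\bs_i,+1}=\sigma\circ\tT'_{\bs_i,-1}\circ\sigma$ on $\tU^{\imath 0}\tbU$, which follows from the Lusztig-type identity $\tTD''_{i,+1}=\sigma\circ\tTD'_{i,-1}\circ\sigma$ on $\tU$ together with the compatibility of the rescaling automorphism $\tPsi_{\bvs_\diamond}$ with $\sigma$. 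I expect the main technical obstacle to be the weight-orthogonality computation in part (1) for $\tk_j$, since this is where the specific combinatorics of the rank one Satake subdiagram $(\I_{\bullet,i},\tau|_{\I_{\bullet,i}})$ must be genuinely invoked; the other verifications are essentially formal consequences of the rank one Proposition~\ref{prop:fX1}.
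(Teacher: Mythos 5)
Your proposal is correct and takes essentially the same route as the paper's (very terse) proof: the paper likewise reduces everything to showing that $\tT'_{\bs_i,-1}$ and $\tT''_{\bs_i,+1}$ carry $\tU^{\imath 0}\tbU$ into itself via $\bs_i = w_{\bullet,i}\bw$ and then invokes Proposition~\ref{prop:fX1}, deferring the details to the finite-type argument in [WZ22]. You are somewhat more explicit in separating the cases ($\tbU$ and $K_l,K'_l$ with $l\in\bI$ versus the Cartan generators $\tk_j$), and your identification of the weight-orthogonality $(\bs_i(\alpha_j-\alpha_{\tau j}),\mu)=0$ as the crux of the $\tk_j$ case is exactly the point that the paper also leaves implicit.
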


In other word, on $ \tU^{\imath 0}\tbU$, the action of our symmetry $\tTa{i}$ ($resp.$ $\tTb{i}$) is the same as the action of $\tT'_{\bs_i,-1}$ ($resp.$ $\tT''_{\bs_i,+1}$).

\begin{proof}
Using $\bs_i=w_{\bullet,i} \bw$, one can show that both $\tT'_{\bs_i,-1}(x),\tT''_{\bs_i,+1}(x)\in \tU^{\imath 0}\tbU$ for any $x\in \tU^{\imath 0}\tbU$. The proposition is then a straightforward consequence of Proposition~\ref{prop:fX1}.
\end{proof}

 \begin{proposition}[\text{cf. \cite[Theorem 4.14 and Proposition 6.3]{WZ22}}]
  \label{prop:rkone1}
Let $i\in \fwItau$.
\begin{itemize}
\item[(1)] There exists a unique element $\tTa{i}(B_i)\in \tUi$ which satisfies the following intertwining relation %(see \eqref{eq:newb0})
\begin{align}
   \label{eq:rkone1}
\tTa{i}(B_i) \tfX_i =\tfX_i \tT'_{\bs_i,-1}(B_i).
\end{align}
More explicitly, we have $ \tTa{i}(B_i)=-q_i^{- c_{i,\tau i} }   B_{\tau_{\bullet,i} \tau i} \tk_{\tau_{\bullet,i} \tau i}^{ -1}.$

\item[(2)]  There exists a unique element $\tTb{i}(B_i)\in \tUi$ which satisfies the following intertwining relation %(see \eqref{eq:newb0-1})
\begin{align}
   \label{eq:rkone1'}
\tTb{i}(B_i) \, \tT_{\bs_i}(\tfX_i)^{-1} = \tT_{\bs_i}(\tfX_i)^{-1} \, \tT''_{\bs_i,+1}(B_i).
\end{align}
More explicitly, we have $\tTb{i}(B_i)=-q_i^{-2}  B_{\tau_{\bullet,i} \tau i}  \tk_{\tau_{\bullet,i}  i}^{-1}.$

\item[(3)] We have $\tTb{i}(B_i)=(\sigma^\imath \circ \tTa{i}\circ \sigma^\imath)(B_i)$.
\end{itemize}
 \end{proposition}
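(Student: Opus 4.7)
The plan is to reduce the proposition to a computation entirely within the rank one subalgebra $\tUi_i\subset \tUi$ associated to the (finite-type) subdiagram $\I_{\bullet,i}=\{i,\tau i\}\cup\bI$, so that the finite-type calculations already performed in \cite[\S 4,\S 6]{WZ22} can be applied verbatim. Uniqueness of $\tTa{i}(B_i)$ and $\tTb{i}(B_i)$ is immediate: $\tfX_i$ has the form $1+\sum_{\mu>0}\tfX_i^\mu$ with $\tfX_i^\mu\in\tU^+_\mu$, hence is invertible in a suitable weight-completion, and therefore so is $\tT_{\bs_i}(\tfX_i)^{-1}$. Consequently the identities \eqref{eq:rkone1}--\eqref{eq:rkone1'} determine $\tTa{i}(B_i)$ and $\tTb{i}(B_i)$ uniquely once they exist.

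For existence, I would first observe that all the ingredients sit inside the rank one setting: by construction $\tfX_i$ lies in the (finite-type) subalgebra generated by $E_j,F_j,K_j,K_j'$ for $j\in \I_{\bullet,i}$, the element $\bs_i=w_{\bullet,i}\bw$ belongs to $W_{\bullet,i}$, and the condition $\bw i = i$ (built into all three types (i)--(iii)) ensures that $\tT'_{\bs_i,-1}(B_i)$ and $\tT''_{\bs_i,+1}(B_i)$ can be computed by restricting the braid action to $W_{\bullet,i}$, landing inside $\tUi_i$. Consequently the intertwining relations \eqref{eq:rkone1}--\eqref{eq:rkone1'} are identities inside the rank one Kac-Moody $\imath$quantum group $\tUi_i$, whose underlying Satake diagram is of finite type by the definition of $\fwItau$. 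This is precisely the setting of \cite[Theorem 4.14, Proposition 6.3]{WZ22}.

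The verification itself then proceeds case by case along types (i)--(iii): one proposes the candidate $\tTa{i}(B_i)=-q_i^{-c_{i,\tau i}}B_{\tau_{\bullet,i}\tau i}\tk_{\tau_{\bullet,i}\tau i}^{-1}$, respectively $\tTb{i}(B_i)=-q_i^{-2}B_{\tau_{\bullet,i}\tau i}\tk_{\tau_{\bullet,i} i}^{-1}$, which manifestly lies in $\tUi$; expands the left and right hand sides of \eqref{eq:rkone1} using the explicit rank one formula for $\tfX_i$ available from Proposition~\ref{prop:fX1} in each of the three types (split, quasi-split with $c_{i,\tau i}=0$, and quasi-split with $c_{i,\tau i}=-1$); and matches weight components using the commutation relations of $B_i,B_{\tau i}$ with the generators of $\tbU$ and the Cartan part. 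In each case the computation is identical to the corresponding finite-type case in \cite{WZ22}, since only the finite-type subalgebra indexed by $\I_{\bullet,i}$ is involved.

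Finally, for part (3), I would apply the anti-involution $\sigma^\imath$ to \eqref{eq:rkone1}. Using $\sigma^\imath(B_i)=B_i$, the intertwining property $\sigma^\imath(x)\tfX_i = \tfX_i \sigma(x)$ from Proposition~\ref{prop:sigma} restricted to the rank one setting, and the compatibility $\sigma\circ\tT'_{\bs_i,-1}\circ\sigma=\tT''_{\bs_i,+1}$ (after checking the corresponding compatibility holds for the rescaled symmetries $\tT'_{\bs_i,-1}$, $\tT''_{\bs_i,+1}$, which follows from the definition \eqref{def:tT} since $\tPsi_{\bvs_\diamond}$ is an algebra automorphism), one transforms \eqref{eq:rkone1} applied to $B_i$ into the defining identity \eqref{eq:rkone1'} for $(\sigma^\imath\circ\tTa{i}\circ\sigma^\imath)(B_i)$; uniqueness in part (2) then forces this to equal $\tTb{i}(B_i)$. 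The main obstacle is purely bookkeeping: one must keep careful track of the renormalization by $\tPsi_{\bvs_\diamond}$ and verify that the rank one formulas for $\tfX_i$ and for $\tT'_{\bs_i,-1}(B_i),\tT''_{\bs_i,+1}(B_i)$ used in \cite{WZ22} carry through unchanged once the ambient algebra is enlarged to the Kac-Moody $\tUi$, which they do precisely because $\I_{\bullet,i}$ is of finite type.
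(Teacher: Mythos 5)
Your overall strategy — reduce everything to the rank one finite-type subalgebra associated to $\I_{\bullet,i}$ and invoke the corresponding statements from \cite{WZ22} — is the same one the paper uses, and your uniqueness argument at the outset is correct. Where you diverge from the paper is in the mechanics of the verification. For parts (1)--(2), you propose expanding an \emph{explicit} rank-one formula for $\tfX_i$ (case-by-case across types (i)--(iii)) and matching weight components. That would work, but it is considerably heavier than what the paper actually does: after directly computing $\tT'_{\bs_i,-1}(B_i)=-q_i^{-c_{i,\tau i}}B_{\tau_{\bullet,i}\tau i}^{\sigma}\tk_{\tau_{\bullet,i}\tau i}^{-1}$, one never needs any explicit form of $\tfX_i$. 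Instead one uses only two structural facts: the intertwining relation from Proposition~\ref{prop:sigma}, which gives $\tfX_i B_{\tau_{\bullet,i}\tau i}^{\sigma}=B_{\tau_{\bullet,i}\tau i}\tfX_i$, and the fact that $\tk_{\tau_{\bullet,i}\tau i}\in\tU^{\imath 0}$ commutes with $\tfX_i$. Together these immediately move $\tfX_i$ from right to left and produce the identity, with $\tTa{i}(B_i)\in\tUi$ manifest. For part (3), your plan — apply $\sigma^\imath$ via the conjugation formula $\sigma^\imath=\mathrm{Ad}_{\tfX_i}\circ\sigma$, use $\sigma(\tfX_i)=\tfX_i$ and $\sigma\circ\tT'_{\bs_i,-1}\circ\sigma=\tT''_{\bs_i,+1}$ (which indeed survives the rescaling since $\sigma$ commutes with $\tPsi_{\bvs_\diamond}$), and invoke uniqueness in (2) — is correct; the paper instead simply reads (3) off from the explicit formulas obtained in (1) and (2), which is shorter but equivalent. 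One small imprecision worth flagging: you write that $\tT'_{\bs_i,-1}(B_i)$ lands inside the rank one $\imath$quantum group, but it does not — it contains the $\sigma$-twisted element $B_{\tau_{\bullet,i}\tau i}^{\sigma}$ and hence lives only in the rank one Drinfeld double $\tU_{\I_{\bullet,i}}$; only the resulting $\tTa{i}(B_i)$ sits in $\tUi$. That does not break your argument, but it is the point where the intertwining relation of Proposition~\ref{prop:sigma} is doing the real work.
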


 \begin{proof}
 We give the proof for (1) here; the proof for (2) is similar and omitted; (3) follows by the explicit formulas of $ \tTa{i}(B_i),\tTb{i}(B_i)$ in (1)(2).

 When $\bw i=i$, $B_i=F_i +E_{\tau i} K_i'$ and $\bs_i$ is the longest element of the subgroup $\langle s_i,s_{\tau i}\rangle\leqslant W$. By a direct computation, we have
 \begin{align}
 \label{eq:TBi}
 \tT'_{\bs_i,-1}(B_i) =-q_i^{-c_{i,\tau i}}  B_{\tau_{\bullet,i} \tau i}^{\sigma} \tk_{\tau_{\bullet,i} \tau i}^{-1}.
 \end{align}
 By Proposition~\ref{prop:sigma}, we have $\tfX_i B_{\tau_{\bullet,i} \tau i}^{\sigma} = B_{\tau_{\bullet,i} \tau i} \tfX_i$. Since $\tk_{\tau_{\bullet,i} \tau i} \in \tU^{\imath 0}$,  $\tk_{\tau_{\bullet,i} \tau i}$ commutes with $\tfX_i$. Using these properties and \eqref{eq:TBi}, we have
 \begin{equation}
  -q_i^{- c_{i,\tau i} }   B_{\tau_{\bullet,i} \tau i} \tk_{\tau_{\bullet,i} \tau i}^{ -1}  \tfX_i
  =\tfX_i  \tT'_{\bs_i,-1}(B_i).
 \end{equation}
 Clearly, $B_{\tau_{\bullet,i} \tau i} \tk_{\tau_{\bullet,i} \tau i}^{ -1} \in \tUi$. Hence, we have proved (1).
 \end{proof}

\begin{remark}\label{rmk:rkone}
Since $\bs_i$ commutes with $\tau$, we have $\tau \tT'_{\bs_i,-1}=\tT'_{\bs_i,-1}\tau$. Recall that $\tau \tfX_i = \tfX_i$ from \cite[Proposition 3.8]{WZ22}. Then the following elements
\begin{align*}
\tTa{i}(B_{\tau i}):=\tau \tTa{i}(B_{i}),\qquad \tTb{i}(B_{\tau i}):=\tau \tTb{i}(B_{i})
\end{align*}
satisfy \eqref{eq:inter}-\eqref{eq:inter'} respectively for $x=B_{\tau i}$ cf. \cite[Remark 4.8 and \S 4.7]{WZ22}.
\end{remark}

\subsection{Higher rank formulation}\label{sec:pfrk2}

We sketch the proof of the existence of $x',x''$ in Theorem~\ref{thm:ibraid} for $x= B_j,j\neq i,\tau i,j\in \wI$ in this subsection. We need to find elements $\tTa{i}(B_j),\tTb{i}(B_j)\in \tUi$ such that
\begin{align}
\label{eq:inter2}
\tTa{i}(B_j) \tfX_i &=\tfX_i \tT_{\bs_i,-1}'(B_j),\\
\tTb{i}(B_j) \tT_{\bs_i}(\tfX_i)^{-1} &=\tT_{\bs_i}(\tfX_i)^{-1}\tT_{\bs_i,+1}''(B_j).
\label{eq:inter3}
\end{align}
The proofs depend on which of types (i)-(iii) the vertex $i\in \fwItau$ belongs to. We will construct root vectors in $\tUi$ for each of these three types in Sections~\ref{sec:split}-\ref{sec:qsqs}:
\begin{itemize}
\item[(i)] For $i=\tau i=\bw i$, we define root vectors $\B_{i,j;m},\b_{i,j;m}\in \tUi$ for $m\geq 0$ in Definition~\ref{def:s}-\ref{def:s'}.

\item[(ii)] For $c_{i,\tau i}=0,i=\bw i$, we define root vectors $\B_{i,\tau i,j;m_1,m_2},\b_{i,\tau i,j;m_1,m_2} \in \tUi$  in Definition~\ref{def:qsBij}-\ref{def:qsBij'}.

\item[(iii)] For $c_{i,\tau i}=-1,i=\bw i$, we define root vectors $\B_{i,\tau i,j;a,b,c},\b_{i,\tau i,j;a,b,c}\in\tUi$ in Definition~\ref{def:qsqsBij}-\ref{def:qsqsBij'}.
\end{itemize}

It turns out that the desired elements $\tTa{i}(B_j),\tTb{i}(B_j)$ are given by these root vectors, as formulated in the Theorem~\ref{thm:rktwo1} below.

The following lemma will be useful later for all three types (i)-(iii).
\begin{lemma}[\text{cf. \cite[Lemma 5.1]{WZ22}}]
  \label{lem:rktwo1}
For $i,j\in \wI, j\neq i,\tau i$, we have
\begin{align}
 F_j  \tfX_i &=\tfX_i  F_j,
 \label{eq:FjUp}\\
\tT'_{\bs_i,-1} (\tT_{\bw}(E_{\tau j}) K_j' )  \cdot \tfX_i
&= \tfX_i \cdot  \tT'_{\bs_i,-1} (\tT_{\bw}(E_{\tau j})  K_j' ).
 \label{eq:TEjUp}
\end{align}
\end{lemma}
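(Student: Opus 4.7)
The plan is to handle the two commutation statements separately.

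For \eqref{eq:FjUp}, I would invoke Proposition~\ref{prop:fX1} applied to the rank-one Satake subdiagram $(\I_{\bullet,i},\tau|_{\I_{\bullet,i}})$: the quasi $K$-matrix $\tfX_i$ lies in $\tU^+$ with weight support confined to $\N\I_{\bullet,i}$, so it may be written as a (noncommutative) polynomial in $\{E_l : l\in\I_{\bullet,i}\}$. Since $j\in\wI$ satisfies $j\neq i,\tau i$ and hence $j\notin\I_{\bullet,i}$, the defining relation $[F_j,E_l]=\delta_{jl}(K_j-K'_j)/(q-q^{-1})$ vanishes for every $l\in\I_{\bullet,i}$. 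Consequently $F_j$ commutes with every generator of the subalgebra containing $\tfX_i$, and therefore with $\tfX_i$ itself.

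For \eqref{eq:TEjUp}, the first key observation is that $\bs_i$ and $\bw$ commute in $W$, with $\bs_i\bw=\bw\bs_i=w_{\bullet,i}$ and $\ell(w_{\bullet,i})=\ell(\bs_i)+\ell(\bw)$. Hence the braid relations give $\tT_{\bs_i}\tT_\bw=\tT_\bw\tT_{\bs_i}=\tT_{w_{\bullet,i}}$, whence $\tT_\bw$ commutes with $\tT'_{\bs_i,-1}$ as automorphisms of $\tU$. Rewriting the left-hand side of \eqref{eq:TEjUp} then gives
\[
\tT'_{\bs_i,-1}\bigl(\tT_\bw(E_{\tau j})K'_j\bigr) = \tT_\bw\bigl(\tT'_{\bs_i,-1}(E_{\tau j})\bigr)\cdot\tT'_{\bs_i,-1}(K'_j).
\]
To verify that this element commutes with $\tfX_i$, I plan to argue by a weight-compatibility analysis: since $\bs_i\in W_{\bullet,i}$ preserves positivity of roots outside $\I_{\bullet,i}$, the image $\bs_i(\alpha_{\tau j})$ is a positive root in $\alpha_{\tau j}+\N\I_{\bullet,i}$, so $\tT'_{\bs_i,-1}(E_{\tau j})$ lies in $\tU^+_{\I_{\bullet,i}\cup\{\tau j\}}$; applying $\tT_\bw$ (a word in the $\tT_k$, $k\in\bI$) keeps the image in this subalgebra. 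The Cartan prefactor $\tT'_{\bs_i,-1}(K'_j)$ is a Laurent monomial in the $K'_l$. The two $q$-power corrections produced by commuting the Cartan factor and the $E$-polynomial factor past a fixed weight component $\tfX_i^\mu$ cancel precisely, this cancellation being exactly the balance condition encoded by Proposition~\ref{prop:fX1} for the rank-one pair.

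The main obstacle I anticipate is the careful bookkeeping of the $q$-power corrections produced by the Cartan prefactor, particularly when $\bs_i(\alpha_{\tau j})$ acquires nontrivial components along $\alpha_i$ or $\alpha_{\tau i}$. Resolving this balance cleanly will likely require unpacking the explicit braid formula for $\tT'_{\bs_i,-1}(E_{\tau j})$ and combining it with the characterization of $\tfX_i$ as the intertwiner for $B_i$ and $B_{\tau i}$ supplied by Proposition~\ref{prop:fX1}, in the spirit of the finite-type argument of \cite[Lemma 5.1]{WZ22}.
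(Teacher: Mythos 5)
The paper states this lemma only by citing \cite[Lemma 5.1]{WZ22}, without re-proving it in the Kac--Moody setting, so there is no in-paper proof to compare against; the question is whether your outline could be completed.

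Your argument for \eqref{eq:FjUp} is correct: by Proposition~\ref{prop:fX1} applied to $(\I_{\bullet,i},\tau|_{\I_{\bullet,i}})$, each $\tfX_i^\mu$ lies in $\tU^+_\mu$ with $\mu$ supported on $\I_{\bullet,i}$, and since $j\notin\I_{\bullet,i}$ one has $[F_j,E_l]=0$ for every $l\in\I_{\bullet,i}$, hence $F_j$ commutes with $\tfX_i$.

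For \eqref{eq:TEjUp} the overall shape (separate the $E$-factor from the Cartan factor and cancel $q$-powers) is reasonable, but there is a genuine gap at the key step. You treat the commutation of $\tT_\bw\bigl(\tT'_{\bs_i,-1}(E_{\tau j})\bigr)$ past $\tfX_i^\mu$ as producing only a $q$-power, as if it $q$-commutes with $\tfX_i^\mu$, and you justify this by the inclusion into $\tU^+_{\I_{\bullet,i}\cup\{\tau j\}}$. That inclusion is not enough: $E_{\tau j}$ itself lies in $\tU^+_{\I_{\bullet,i}\cup\{\tau j\}}$ yet does not $q$-commute with $E_i$. The ingredient you are missing is the weight-extremality of the root vector. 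Using $\tT_{w_{\bullet,i}}=\tT_{\bw}\tT_{\bs_i}$ and the commutativity you established, one can rewrite $\tT_\bw\tT'_{\bs_i,-1}=\tT_{w_{\bullet,i}}^{-1}\tT_\bw^{2}$; applied to $E_{\tau j}$ this gives, up to Cartan conjugation by $\tT_\bw^{2}$, the vector $\tT_{w_{\bullet,i}}^{-1}(E_{\tau j})$ of the extremal weight $w_{\bullet,i}(\alpha_{\tau j})$, which is annihilated by $\ad(E_l)$ for every $l\in\I_{\bullet,i}$. That $\ad$-annihilation is precisely what yields $E_l\,u=q^{(\alpha_l,\gamma)}u\,E_l$ and hence $q$-commutation with every $\tfX_i^\mu$; without it the whole cancellation scheme cannot be set up. Likewise, the assertion that applying $\tT_\bw$ ``keeps the image in this subalgebra'' is not true in general (for $l\in\bI$, $\tT_\bw$ sends $E_l$ out of $\tU^+$); it holds for the particular element only because of the same extremality rewriting. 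Finally, the $q$-power cancellation is not an automatic consequence of Proposition~\ref{prop:fX1}: it requires the support condition $\theta(\mu)=-\mu$ together with the compatibilities $\tau w_{\bullet,i}=w_{\bullet,i}\tau$ and $w_{\bullet,i}\bw=\bw w_{\bullet,i}$, and this computation must actually be carried out --- a point you correctly identify as the remaining obstacle, but which means the proposal as written stops short of a proof.
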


For type (i), the $\imath$divided powers were formulated in \cite[(2.20)-(2.21)]{CLW21a}, generalizing \cite{BW18a, BeW18}. We recall definitions {\em loc. cit.} of $\imath$divided powers $\dv{i,\ov{p}}{m}\in\tUi$ for $m\geq 0,p\in \Z/2\Z$ as follows
\begin{align}
\label{def:idv}
\begin{split}
&\edvi{m}=\frac{1}{[m]_i!}
\begin{cases}
B_i \prod_{r=1}^{k} (B_i^2-q_i \tk_i [2r]_i^2),& \text{ if } m=2k+1,
\\
\prod_{r=1}^{k} (B_i^2-q_i \tk_i [2r-2]_i^2),& \text{ if } m=2k;
\end{cases}
\\
&\odvi{m}=\frac{1}{[m]_i!}
\begin{cases}
B_i \prod_{r=1}^{k} (B_i^2-q_i \tk_i [2r-1]_i^2),& \text{ if } m=2k+1,
\\
\prod_{r=1}^{k} (B_i^2-q_i \tk_i [2r-1]_i^2),& \text{ if } m=2k.
\end{cases}
\end{split}
\end{align}

For type (ii)-(iii), the $\imath$divided powers are the same as usual divided powers
\begin{align*}
    B_i^{(m)}=\frac{B_i^m}{[m]_i!}.
\end{align*}
These $\imath$divided powers have appeared in conjectural formulas for relative braid group actions cf. \cite[Conjecture 6.5]{CLW21a} for type (i) and in \cite[Conjecture 3.7]{CLW21b} for type (ii). These conjectures were confirmed via Hall algebras in \cite{LW21b} under assumptions that $\bI=\varnothing$ and $c_{j,\tau j}$ are even for all $j\in \I$. We shall prove these conjectures in full generality in Theorem~\ref{thm:rktwo1}(i)(ii) respectively.

\begin{theorem}
  \label{thm:rktwo1}
Let $i\in \fwItau, j\in \wI$ such that $j\neq i,\tau i$. Write $\alpha=-c_{ij},\beta=-c_{\tau i,j}$.
\begin{itemize}
\item[(i)] If $i=\tau i=\bw i,$ then the element $\tTa{i}(B_j):=\B_{i,j;\alpha}$ satisfies \eqref{eq:inter2} and the element $\tTb{i}(B_j):=\b_{i,j;\alpha}$ satisfies \eqref{eq:inter3}. Explicitly, we have
\begin{align}
\notag
\tTa{i}(B_j)=&\sum_{r+s=\alpha} (-1)^{r} q_i^{r} \dv{i,\ov{p+\alpha}}{s} B_j \dv{i,\ov{p}}{r} 
\\\label{eq:srktwo3}
&+\sum_{u\geq 1}\sum_{\substack{r+s+2u=\alpha\\ \ov{r}=\ov{p}}} (-1)^{r+u} q_i^{r+2u} \dv{i,\ov{p+\alpha}}{s} B_j \dv{i,\ov{p}}{r} \tk_i^u,
\\\notag
\tTb{i}(B_j)=&\sum_{r+s=\alpha} (-1)^{r} q_i^{r} \dv{i,\ov{p}}{r} B_j \dv{i,\ov{p+\alpha}}{s}
\\\label{eq:srktwo4}
&+\sum_{u\geq 1}\sum_{\substack{r+s+2u=\alpha\\ \ov{r}=\ov{p}}} (-1)^{r+u} q_i^{r+2u} \dv{i,\ov{p}}{r} B_j \dv{i,\ov{p+\alpha}}{s} \tk_i^u.
\end{align}

\item[(ii)] If $c_{i,\tau i}=0,i=\bw i$, then the element $\tTa{i}(B_j):=\B_{i,\tau i,j;\alpha,\beta} $ satisfies \eqref{eq:inter2} and the element $\tTb{i}(B_j):=\b_{i,\tau i,j;\alpha,\beta} $ satisfies \eqref{eq:inter3}. Explicitly, we have
\begin{align}\notag
\tTa{i}(B_j)&=\sum_{u=0}^{ \min(\alpha,\beta)} \sum_{r=0}^{\alpha-u}\sum_{s=0}^{\beta-u} (-1)^{r+s+u}q_i^{r(-u+1)+s(u+1)+u }
\\\label{eq:qsrktwo3}
  &\qquad \times B_i^{(\alpha-r-u)}B_{\tau i}^{(\beta-s-u)}B_j B_{\tau i}^{(s)}B_i^{(r)} \tk_i^u ,
  \\\notag
\tTb{i}(B_j)&=\sum_{u=0}^{ \min(\alpha,\beta)} \sum_{r=0}^{\alpha-u}\sum_{s=0}^{\beta-u} (-1)^{r+s+u}q_i^{r( -u+1)+s( u+1)+u }
\\\label{eq:qsrktwo4}
  &\qquad \times \tk_{\tau i}^u B_i^{(r)}B_{\tau i}^{(s)}B_j B_{\tau i}^{(\beta-s-u)}B_i^{(\alpha-r-u)}.
\end{align}

\item[(iii)] If $c_{i,\tau i}=-1,i=\bw i$, then the element $\tTa{i}(B_j):=\B_{i,\tau i,j;\beta,\beta+\alpha,\alpha } $ satisfies \eqref{eq:inter2} and the element $\tTb{i}(B_j):=\b_{i,\tau i,j;\beta,\beta+\alpha,\alpha } $ satisfies \eqref{eq:inter3}. Explicitly, we have
    \begin{align}\notag
&\tTa{i}(B_j)=\sum_{u,v\geq 0} \sum_{t=0}^{\beta-v} \sum_{s=0}^{\beta+\alpha-v-u} \sum_{r=0}^{\alpha-u} (-1)^{t+v+r+s+u}
 q_i^{t(-2v+1) + r(u+1)+s(v-2u+1)+uv}
 \\\label{eq:rktwo3}
&\qquad\times q_i^{-\frac{u(u-1)+v(v-1)}{2}} B_i^{(\beta-v-t)}B_{\tau i}^{(\alpha+\beta-v-u-s)}B_i^{(\alpha-u-r)} B_j B_i^{(r)} B_{\tau i}^{(s)}\tk_{\tau i}^u B_i^{(t)} \tk_i^v,
\\\notag
&\tTb{i}(B_j)=\sum_{u,v\geq 0} \sum_{t=0}^{\beta-v} \sum_{s=0}^{\beta+\alpha-v-u} \sum_{r=0}^{\alpha-u} (-1)^{t+v+r+s+u}
 q_i^{t(-2v+1) + r(u+1)+s(v-2u+1)+uv}
 \\\label{eq:rktwo4}
&\qquad\times q_i^{-\frac{u(u-1)+v(v-1)}{2}} \tk_{\tau i}^v B_i^{(t)}\tk_{i}^u B_{\tau i}^{(s)} B_i^{(r)} B_j B_i^{(\alpha-u-r)}B_{\tau i}^{(\alpha+\beta-v-u-s)} B_i^{(\beta-v-t)}.
\end{align}
\end{itemize}
\end{theorem}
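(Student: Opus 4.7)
The plan is to verify the intertwining identity \eqref{eq:inter2} at $x = B_j$ with the claimed root vector as $\tTa{i}(B_j)$; the identity \eqref{eq:inter3} at $x = B_j$ with $\tTb{i}(B_j)$ then follows by the parallel mirror argument (or, in hindsight, by applying $\sigma^\imath$ via Theorem~\ref{thm:ibraid}(3) and the intertwining relation \eqref{eq:newb1}). The strategy is uniform across the three cases (i)-(iii), with complexity increasing with the length of $\bs_i \in W$, which equals $1$, $2$, $3$ respectively. The central idea is to split the candidate $\B_{\bullet}$ mirroring the decomposition $B_j = F_j + \tT_{\bw}(E_{\tau j}) K_j'$.

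More precisely, for each case I would take the recursive definition of $\B_{\bullet} \in \tUi$ (from Definition~\ref{def:s}, \ref{def:qsBij}, or \ref{def:qsqsBij}) and simultaneously introduce a decomposition $\B_{\bullet} = \B_{\bullet}^- + \B_{\bullet}^+$ whose two summands satisfy the same recursion but with initial terms set so that $\B_{\bullet}^-$ matches $F_j$ and $\B_{\bullet}^+$ matches $\tT_{\bw}(E_{\tau j}) K_j'$. Then \eqref{eq:inter2} reduces to the two separate intertwining relations
\begin{equation*}
  \B_{\bullet}^-\,\tfX_i \;=\; \tfX_i\, \tT'_{\bs_i,-1}(F_j),
  \qquad
  \B_{\bullet}^+\,\tfX_i \;=\; \tfX_i\, \tT'_{\bs_i,-1}\bigl(\tT_{\bw}(E_{\tau j})K_j'\bigr),
\end{equation*}
each of which I would establish by induction on the recursion indices: the right-hand sides satisfy the Lusztig-type recursions of Lemma~\ref{lem:Lusxy} (in case (i)) or their rank three analogues (in cases (ii)-(iii)), while the left-hand sides satisfy parallel $\imath$-recursions obtained from commuting $B_i$ and $B_{\tau i}$ through $\B_{\bullet}^\pm$. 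The base of the induction is supplied by Lemma~\ref{lem:rktwo1}, which passes $F_j$ and the appropriate $E$-side past $\tfX_i$ without correction. Summing the two identities yields \eqref{eq:inter2}.

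The main obstacle is case (iii). Since $c_{i,\tau i} = -1$ and $i = \bw i$, the element $\bs_i$ has length three, so $\tT'_{\bs_i,-1}(F_j)$ is a rank three Lusztig root vector $y_{i,\tau i, j;\beta,\beta+\alpha,\alpha}$ and the matching $\imath$-root vector $\B_{i,\tau i, j; a,b,c}$ carries three indices. Its recursion mixes $B_i$ with $B_{\tau i}$ and produces cross terms in both central elements $\tk_i$ and $\tk_{\tau i}$; the induction must therefore be organized along a carefully chosen total order on $(a,b,c)$, and at each step the Cartan contributions arising on the $\imath$-side (from the relations hidden inside $B_i, B_{\tau i}$) must be matched against the corresponding contributions on the $\U$-side arising when commutators are pushed through $\tfX_i$. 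Cases (i) and (ii) serve as progressively simpler warm-ups: case (ii) essentially decouples because $c_{i,\tau i} = 0$ forces $B_i$ and $B_{\tau i}$ to $q$-commute modulo $\tU^{\imath 0}\tbU$, so the recursions on $m_1$ and $m_2$ separate; case (i) involves only a single index $m$, with all Cartan corrections packaged uniformly into the $\imath$-divided powers $\dv{i,\bar{p}}{m}$ of \eqref{def:idv}.

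Once the intertwining identities are in place, the explicit divided-power formulas \eqref{eq:srktwo3}, \eqref{eq:qsrktwo3}, \eqref{eq:rktwo3} for $\tTa{i}(B_j)$ still require justification; I would verify them by a direct induction checking that the displayed sums satisfy the defining recursion and initial condition of $\B_{\bullet}$. This is the content of Proposition~\ref{thm:idv}, Proposition~\ref{thm:dvij} and Theorem~\ref{thm:qsqsdv} respectively. The companion formulas \eqref{eq:srktwo4}, \eqref{eq:qsrktwo4}, \eqref{eq:rktwo4} for $\tTb{i}(B_j)$ then follow by the mirror argument, with $\tT''_{\bs_i,+1}(\tfX_i)^{-1}$ in place of $\tfX_i$ and $x'$-type Lusztig root vectors in place of $x$-type; equivalently, they are obtained by applying $\sigma^\imath$ to the $\tTa{i}$ formulas, using that $\sigma^\imath(B_j) = B_j$ and that $\sigma^\imath$ swaps $\tk_i \leftrightarrow \tk_{\tau i}$ while reversing products.
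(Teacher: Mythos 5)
Your proposal matches the paper's proof essentially step for step: the paper splits each root vector into $\B_\bullet^- + \B_\bullet^+$ with the same initial terms you name, proves the two halves separately via Lusztig-type recursions (Propositions~\ref{prop:By}--\ref{prop:Bx}, \ref{prop:qsBy}--\ref{prop:qsBx}, \ref{prop:qsqsBy}--\ref{prop:qsqsBx} for $\tTa{i}$, and the primed versions for $\tTb{i}$), with the base case supplied by Lemma~\ref{lem:rktwo1}, and obtains the explicit formulas by verifying that the closed divided-power expressions satisfy the defining recursions (Propositions~\ref{thm:idv}, \ref{thm:dvij} and Theorem~\ref{thm:qsqsdv}). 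The paper also records the $\sigma^\imath$-symmetry you mention (Propositions~\ref{prop:bB}, \ref{prop:qsbB}, \ref{prop:qsqsbB}), though it proves the $\tTb{i}$ relation directly in parallel rather than as a formal corollary.
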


\begin{proof}
We only outline the proof here; details will be included in later Sections~\ref{sec:split}-\ref{sec:qsqs}. The first statements in (i)-(iii) are respectively proved in Theorem~\ref{thm:split}, Theorem~\ref{thm:qsbraid}, and Theorem~\ref{thm:qsqs}. The explicit formulas in (i) are obtained by specializing $\imath$divided power formulations for $\B_{i,j;m},\b_{i,j;m}$ in Proposition~\ref{thm:idv} at $m=\alpha$. The explicit formulas in (ii) are obtained by specializing $\imath$divided power formulations for $\B_{i,\tau i,j;m_1,m_2},\b_{i,\tau i,j;m_1,m_2}$ in Proposition~\ref{thm:dvij} at $m_1=\alpha,m_2=\beta$. The explicit formulas in (iii) are obtained by specializing $\imath$divided power formulations for $\B_{i,\tau i,j;a,b,c},\b_{i,\tau i,j;a,b,c}$ in Theorem~\ref{thm:qsqsdv} at $a=\beta,b=\alpha+\beta,c=\alpha$.
\end{proof}

\begin{remark}
In \cite[Theorem 6.10-6.11]{LW21b}, Lu-Wang formulated four (relative) braid group symmetries $\TT'_{i,e},\TT''_{i,e}$ on $\tUi$ for $c_{i,\tau i}=0$. In fact, our symmetries can be related to theirs via a rescaling automorphism $\Phi$ on $\tUi$, which sends
\begin{align*}
\Phi: B_i \mapsto -q_i^{-1} B_i ,\quad B_{\tau i}\mapsto B_{\tau i}, \quad \tk_i\mapsto -q_i^{-1} \tk_i,\quad \tk_{\tau i}\mapsto -q_i^{-1} \tk_{\tau i},
\end{align*}
and fixes $B_j,\tk_j$ if $j\neq i,\tau i$.

One can then show that $\TT''_{i,-1}=\Phi \tTT'_{i,-1}\Phi^{-1}$ and $\TT'_{i,+1}=\Phi \tTT''_{i,+1}\Phi^{-1}$.
\end{remark}

\begin{remark}\label{rmk:A22}
Lu, Wang and the author in \cite{LWZ22} constructed relative braid group symmetries $\TT_1,\TT_0$ associated to the following quasi-split Satake diagram
\begin{center}\setlength{\unitlength}{0.7mm}
		\begin{equation}
			\label{eq:satakerank1}
			\begin{picture}(50,13)(0,-10)
				\put(-0.5,-2){\small $1$}
				\put(20,-2){\small $2$}
				
				\put(12.5,-19){\line(1,2){8}}
				\put(9.5,-19){\line(-1,2){8}}
				\put(9.5,-23){\small $0$}
				\put(3,-.5){\line(1,0){16}}
				\color{purple}
				\qbezier(11,4)(15,3.7)(19.5,1)
				\qbezier(11,4)(7,3.7)(2.5,1)
				\put(19,1.1){\vector(2,-1){0.5}}
				\put(2,1.1){\vector(-2,-1){0.5}}
				\put(10,-.5){\small $^{\tau}$}
			\end{picture}
		\end{equation}
		\vspace{.5cm}
	\end{center}
\end{remark}
The complicated formula $\TT_1(B_0)$ in \cite[Theorem 2.8]{LWZ22} can be recovered by setting $\alpha=\beta=1$ in \eqref{eq:rktwo4}. Then the construction of relative braid group symmetries in that paper can be viewed as a special case of this paper.

\subsection{Relative braid group actions on $\tUi$}\label{sec:braidrel}

 %In this subsection, we only consider Satake diagrams $(\I=\wI\cup \bI,\tau )$ such that any vertex $i\in \fwItau$ belongs to one of types (i)-(iii). Note that any quasi-split Satake diagram satisfies this condition.

 We show that our symmetries $\tTT'_{i,-1},\tTT''_{i,+1}$ lead to relative braid group actions on $\tUi$.

 Let $w$ be any element in the relative Weyl group $\reW$ with a reduced expression
\[
\underline{w} =\bs_{i_1}\bs_{i_2}\cdots \bs_{i_m}.
\]

Following \cite{DK19,WZ22} (who worked in finite type), for $1\leq k\leq m$, we define partial quasi $K$-matrix $\tfX_{\underline{w}}$ by
\begin{align}
  \label{eq:PK}
\begin{split}
\tfX^{[k]} %= \tfX^{[k]}_{\underline{w}}
&= \tT_{\bs_{i_1}} \tT_{\bs_{i_2}} \cdots\tT_{\bs_{i_{k-1}}}(\tfX_{i_k}),
\\
\tfX_{\underline{w}} &=\tfX^{[m]}\tfX^{[m-1]}\cdots\tfX^{[1]}.
\end{split}
\end{align}
(In the notation $\tfX^{[k]}$ above, we have suppressed the dependence on $\underline{w}$.)

We formulate a property for partial quasi $K$-matrices, which generalizes \cite[Theorem 8.1(1)]{WZ22} from finite type to Kac-Moody type.
\begin{proposition}
\label{prop:factor}
Let $(\I=\wI\cup\bI,\tau)$ be a Satake diagram of Kac-Moody type. For any $w \in \reW$, the partial quasi $K$-matrix $\tfX_{\underline{w}}$ is independent of the choice of reduced expressions of $w$ and hence can be denoted by $\tfX_w$.
\end{proposition}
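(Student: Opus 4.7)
The plan is to follow the Matsumoto-style reduction used in \cite[\S 8]{WZ22} and then reduce the Kac-Moody situation to the already-established finite-type case. Concretely, by Matsumoto's theorem for the Coxeter group $W^\circ$, any two reduced expressions of $w$ are connected by a sequence of relative braid moves $\underbrace{\bs_i\bs_j\bs_i\cdots}_{m_{ij}}=\underbrace{\bs_j\bs_i\bs_j\cdots}_{m_{ij}}$ with $m_{ij}<\infty$. After expressing a general reduced word as a sequence of braid moves on two-letter sub-words (flanked by the same prefix and suffix), the cocycle-type definition \eqref{eq:PK} together with the fact that $\tT_{\bs_{i_1}}\cdots\tT_{\bs_{i_{k-1}}}$ are algebra automorphisms of $\tU$ reduces the claim to the rank two identity
\begin{align*}
\tfX_i\,\tT_{\bs_i}(\tfX_j)\,\tT_{\bs_i}\tT_{\bs_j}(\tfX_i)\cdots
=\tfX_j\,\tT_{\bs_j}(\tfX_i)\,\tT_{\bs_j}\tT_{\bs_i}(\tfX_j)\cdots
\end{align*}
(with $m_{ij}$ factors on each side) for each pair $i,j\in\fwItau$ admitting a finite relative braid relation.

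The second step is to localize the rank two problem to a finite-type Satake subdiagram. Let $\I'\subseteq \I$ be the $\tau$-stable subset consisting of $\{i,\tau i,j,\tau j\}$ together with the connected components of $\bI$ meeting this set, and let $\tau'=\tau|_{\I'}$. Since $m_{ij}<\infty$, the relative Weyl group $\langle\bs_i,\bs_j\rangle$ is finite; this forces $(\I',\tau')$ to be a Satake diagram of finite type, because a rank two Satake diagram whose relative Weyl group is finite must itself be of finite type (all infinite rank two cases in the classification have infinite relative Weyl group). Writing $\tUi_{\I'}$ for the corresponding universal $\imath$quantum group, we have an embedding $\tUi_{\I'}\hookrightarrow \tUi$.

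The third step is a compatibility check: the rank one quasi $K$-matrix $\tfX_i$ is by construction the quasi $K$-matrix for the rank one subdiagram $(\I_{\bullet,i},\tau|_{\I_{\bullet,i}})$, hence by Proposition~\ref{prop:fX1} it actually lies in the positive part of $\tUi_{\I'}$; likewise for $\tfX_j$. Moreover, the rescaled braid symmetries $\tT_{\bs_i},\tT_{\bs_j}$ preserve $\tU_{\I'}\subset \tU$ since $\bs_i,\bs_j$ lie in the parabolic subgroup $W_{\I'}$. Consequently, each factor $\tfX^{[k]}$ of \eqref{eq:PK} for a word in $\{\bs_i,\bs_j\}$ lies in $\tUi_{\I'}$, and the whole computation of $\tfX_{\underline{w}}$ takes place in $\tUi_{\I'}$ and matches the partial quasi $K$-matrix defined intrinsically there.

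Finally, since $(\I',\tau')$ is of finite type, the finite-type theorem \cite[Theorem~8.1(1)]{WZ22} applies inside $\tUi_{\I'}$ and gives the desired equality of the two rank two partial quasi $K$-matrices. Pushing this equality back up through $\tUi_{\I'}\hookrightarrow \tUi$ completes the braid-move invariance and, via Matsumoto, the independence of reduced expression. The main obstacle is the compatibility check in the third step, in particular verifying that no ``spurious'' nodes of $\bI$ outside $\bI\cap \I'$ contribute to $\tT_{\bs_i}(\tfX_j)$, which boils down to showing that $\tfX_j$ and its image under $\tT_{\bs_i}$ are supported on roots in the sub-root system generated by $\I'$; this will follow from the $\N\I$-grading assertion of Proposition~\ref{prop:fX1} combined with the explicit action of $\bs_i$ on simple roots.
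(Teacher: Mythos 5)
Your proposal follows essentially the same two-step strategy as the paper: reduce the general Kac-Moody case to rank two braid relations in the relative Weyl group (the paper cites \cite[Theorem 3.17]{DK19} for this; you unpack it via Matsumoto's theorem and the cocycle structure of \eqref{eq:PK}), and then invoke the finite-type result \cite[Theorem 8.1]{WZ22} for the rank two subdiagrams. The one place you assert but do not verify — that $m_{ij}<\infty$ forces the rank two Satake subdiagram to be of finite type — is precisely the content folded into the paper's citation of the DK19 reduction, so the two arguments rely on the same unexpanded point.
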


\begin{proof}
We claim that Proposition~\ref{prop:factor} holds for $(\I=\wI\cup\bI,\tau)$ if it holds for all finite type rank two subdiagrams of $(\I=\wI\cup\bI,\tau)$.
The proof for this claim is essentially the same as \cite[Theorem 3.17]{DK19} (Even though only finite type Satake diagrams are considered in that paper, the proof therein can be adopted for our proposition with no difficulty).

It remains to show that, for any finite type rank two subdiagrams of $(\I=\wI\cup\bI,\tau)$, Proposition~\ref{prop:factor} holds. Indeed, this claim is a consequence of \cite[Theorem 8.1]{WZ22}, where this property of partial quasi $K$-matrices is proved for any finite type Satake diagrams.
\end{proof}

For any $i\neq j\in \fwItau$, let $m_{ij}$ denote the order of $\bs_i\bs_j$ in $W^\circ$. Then the following relative braid relation holds in $\Br(W^\circ)$
\begin{align*}
\underbrace{\bs_i\bs_j \bs_i \cdots }_{m_{ij}}=\underbrace{\bs_j\bs_i\bs_j \cdots}_{m_{ij}}.
\end{align*}

\begin{theorem}
\label{thm:BrW0}
Fix $e\in \{\pm 1\}$. Let $i,j\in \fwItau$ of types (i)-(iii) such that $i\neq j$.
We have
\begin{align}
\begin{split}
\underbrace{\tTT'_{i,e} \tTT'_{j,e} \tTT'_{i,e}  \cdots }_{m_{ij}}
&=\underbrace{\tTT'_{j,e} \tTT'_{i,e} \tTT'_{j,e}  \cdots}_{m_{ij}},
\\
\underbrace{\tTT''_{i,e} \tTT''_{j,e} \tTT''_{i,e}  \cdots }_{m_{ij}}
&=\underbrace{\tTT''_{j,e} \tTT''_{i,e} \tTT''_{j,e}  \cdots}_{m_{ij}}.
\end{split}
\end{align}
\end{theorem}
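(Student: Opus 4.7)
The plan is to reduce the braid relations to two ingredients already in hand: the intertwining relations \eqref{eq:interT}, rewritten in the conjugation form
\[
\tTa{i}(x)=\tfX_i\,\tT'_{\bs_i,-1}(x)\,\tfX_i^{-1},\qquad \tTb{i}(x)=\tT_{\bs_i}(\tfX_i)^{-1}\,\tT''_{\bs_i,+1}(x)\,\tT_{\bs_i}(\tfX_i),
\]
together with the fact that the partial quasi $K$-matrix $\tfX_w$ depends only on $w\in W^\circ$ and not on the reduced expression used to define it (Proposition~\ref{prop:factor}). This is exactly the strategy of \cite[Theorem 9.1]{WZ22} in finite type, and it now carries over to Kac--Moody type precisely because Proposition~\ref{prop:factor} has been extended.

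I would first treat the family $\tTT'_{i,-1}=\tTa{i}$. Fix $w\in W^\circ$ with reduced expression $\underline{w}=\bs_{i_1}\cdots\bs_{i_m}$ and iterate the first conjugation formula one factor at a time; a straightforward induction on $m$ gives
\[
\bigl(\tTa{i_1}\cdots\tTa{i_m}\bigr)(x)=P_{\underline{w}}\,\tT'_{w,-1}(x)\,P_{\underline{w}}^{-1},\qquad x\in\tUi,
\]
where
\[
P_{\underline{w}}=\tfX_{i_1}\cdot \tT'_{\bs_{i_1},-1}(\tfX_{i_2})\cdots \tT'_{\bs_{i_1},-1}\cdots\tT'_{\bs_{i_{m-1}},-1}(\tfX_{i_m}).
\]
A direct re-bracketing identifies $P_{\underline{w}}$ with the image of $\tfX_{\underline{w}}$ from \eqref{eq:PK} under an overall braid symmetry of $\tU$, so Proposition~\ref{prop:factor} forces $P_{\underline{w}}$ to depend only on $w$. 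Since the $\tT'_{\bs_i,-1}$ already satisfy the ordinary braid relations of $W$, the automorphism $\tT'_{w,-1}$ of $\tU$ also depends only on $w$, and hence the composition $\tTa{i_1}\cdots\tTa{i_m}$ is a function of $w$ alone. Specializing to the two reduced expressions $\underbrace{\bs_i\bs_j\bs_i\cdots}_{m_{ij}}$ and $\underbrace{\bs_j\bs_i\bs_j\cdots}_{m_{ij}}$ of one element of $W^\circ$ yields the braid relation for $\tTT'_{i,-1}$. The braid relation for $\tTT''_{i,+1}$ is proved identically, starting from \eqref{eq:inter'} in place of \eqref{eq:inter}. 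The remaining two families $\tTT'_{i,+1}$ and $\tTT''_{i,-1}$ are then free: by the definition \eqref{def:tTT2} they are the $\psi^\imath$-conjugates of $\tTT'_{i,-1}$ and $\tTT''_{i,+1}$, and $\psi^\imath$ is an involution of $\tUi$, so conjugation by $\psi^\imath$ transports the braid relations just established.

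The main obstacle is the bookkeeping in the inductive step: carefully identifying the element $P_{\underline{w}}$ produced by the iteration with the partial quasi $K$-matrix $\tfX_{\underline{w}}$ of \eqref{eq:PK} up to an overall braid symmetry of $\tU$, so that Proposition~\ref{prop:factor} is applicable. Once this identification is clean the rest is purely formal. The genuinely new Kac--Moody content is concentrated in Proposition~\ref{prop:factor} itself, whose proof reduces the claim to finite-type rank-two Satake subdiagrams in the spirit of \cite[Theorem 3.17]{DK19} and then invokes \cite[Theorem 8.1]{WZ22}; but for the present theorem Proposition~\ref{prop:factor} is used as a black box.
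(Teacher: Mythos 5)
Your strategy is the same as the paper's: the paper's own proof is a two-sentence deferral ("adapt [WZ22, Theorem 9.1] to the Kac--Moody setting; the statement then follows from Proposition~\ref{prop:factor} and \eqref{eq:interT}"), which is exactly the conjugation-kernel argument you sketch, together with the transfer to the other two families via $\psi^\imath$-conjugation and Corollary~\ref{thm:sigma}. You have correctly identified Proposition~\ref{prop:factor} as the only genuinely new Kac--Moody ingredient.

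However, the crucial step in your sketch is stated too loosely and, as phrased, does not hold. You claim a ``direct re-bracketing identifies $P_{\underline{w}}$ with the image of $\tfX_{\underline{w}}$ under an overall braid symmetry of $\tU$.'' But an overall braid symmetry applied to the product $\tfX_{\underline{w}}=\tfX^{[m]}\cdots\tfX^{[1]}$ acts on every factor uniformly and preserves the order, whereas the two objects differ more deeply. Concretely, from \eqref{eq:PK} the $k$-th factor of $\tfX_{\underline{w}}$ is $\tT''_{\bs_{i_1},+1}\cdots\tT''_{\bs_{i_{k-1}},+1}(\tfX_{i_k})$, arranged in decreasing order of $k$, while the $k$-th factor of your $P_{\underline{w}}$ is $\tT'_{\bs_{i_1},-1}\cdots\tT'_{\bs_{i_{k-1}},-1}(\tfX_{i_k})$, arranged in increasing order of $k$. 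The operators $\tT''_{\bs_i,+1}$ and $\tT'_{\bs_i,-1}$ are mutual inverses, not the same, and the product orders are reversed; a single ambient braid symmetry of $\tU$ cannot interchange $\tT'$ for $\tT''$ on each factor nor reverse the multiplication order. (The same mismatch appears if one instead iterates $\tTb{i}$ via \eqref{eq:inter'}: the resulting kernel has factors $\tT_{\bs_{i_1}}\cdots\tT_{\bs_{i_k}}(\tfX_{i_k})$, i.e.\ one extra inner $\tT_{\bs_{i_k}}$ relative to $\tfX^{[k]}$, so again it does not equal $\tfX_{\underline{w}}$ or $\Phi(\tfX_{\underline{w}})$ for a fixed $\Phi$.) The passage from the iterated kernel to an object covered by Proposition~\ref{prop:factor} is precisely where the cited \cite[Theorem 9.1]{WZ22} argument does real work --- it uses the $\sigma^\imath$-relation between $\tTT'_{i,-1}$ and $\tTT''_{i,+1}$ (Theorem~\ref{thm:ibraid}(3)) and the behavior of $\sigma$ on rank-one quasi $K$-matrices to convert one product type into the other. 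Your proposal flags the ``bookkeeping'' as the main obstacle, which is the right instinct, but the remark that it is a ``direct re-bracketing'' underestimates it; that identification is a lemma in its own right and is what makes Proposition~\ref{prop:factor} applicable.
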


\begin{proof}
Thanks to \eqref{def:tTT2} and Theorem~\ref{thm:sigma}, it suffices to show that $\tTT'_{i,-1}$ satisfy the relative braid relations. Indeed, one can adapt \cite[Proof of Theorem 9.1]{WZ22} to the Kac-Moody setting and then the desired statement is a consequence of Proposition~\ref{prop:factor} and relation~\eqref{eq:interT}.
\end{proof}

Recall that any vertex $i\in \fwItau$ is of types (i)-(iii) for a quasi-split Satake diagram.
%We call a Satake diagram {\em locally quasi-split}, if $i=\bw i$ for any $i\in \fwItau$. Any quasi-split Satake diagrams are locally quasi-split. 
By Theorem~\ref{thm:BrW0}, we can now construct a relative braid group action for all quasi-split quantum symmetric pairs.

\begin{corollary}\label{cor:braid}
Fix $e\in \{\pm1\}$. Let $(\tU,\tUi )$ be a quantum symmetric pair of quasi-split type.
Then there exists a relative braid group $\Br(W^\circ)$-action on $\tUi$ as automorphisms of algebras, such that the action of $\bs_i$ is given by $\tTT'_{i,e}$ ({\em resp.}  $\tTT''_{i,e}$)  for $i\in \fwItau$.
\end{corollary}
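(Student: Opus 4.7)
The plan is to deduce Corollary~\ref{cor:braid} directly from Theorem~\ref{thm:BrW0} together with the universal property of the braid group $\Br(W^\circ)$. The essential observation is that in the quasi-split setting $\bI=\varnothing$, so $W_\bullet$ is trivial and $\bw$ is the identity of $W$; consequently the condition $\bw i=i$ appearing throughout \S\ref{sec:sym} is automatically satisfied for every $i\in\wI$.

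First I would verify that in the quasi-split case every $i\in\fwItau$ belongs to exactly one of the three types (i)-(iii). If $i=\tau i$ we are in type (i). Otherwise $\I_{\bullet,i}=\{i,\tau i\}$ must be of finite type, and since $\tau$ is a diagram involution one has $d_i=d_{\tau i}$ and $c_{i,\tau i}=c_{\tau i,i}$; the only integer solutions to $c_{i,\tau i}c_{\tau i,i}\leq 3$ with $c_{i,\tau i}\leq 0$ are then $c_{i,\tau i}\in\{0,-1\}$, placing $i$ in type (ii) or (iii). Consequently, Theorem~\ref{thm:ibraid} together with \eqref{def:tTT2} produces well-defined algebra automorphisms $\tTT'_{i,e},\tTT''_{i,e}\in\Aut(\tUi)$ for every $i\in\fwItau$ and every $e\in\{\pm 1\}$.

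Next, fixing a sign $e$, Theorem~\ref{thm:BrW0} states that the family $\{\tTT'_{i,e}\}_{i\in\fwItau}$ (resp. $\{\tTT''_{i,e}\}_{i\in\fwItau}$) satisfies the full set of relative braid relations
\[
\underbrace{\tTT'_{i,e}\tTT'_{j,e}\tTT'_{i,e}\cdots}_{m_{ij}}
=\underbrace{\tTT'_{j,e}\tTT'_{i,e}\tTT'_{j,e}\cdots}_{m_{ij}}
\]
for all distinct $i,j\in\fwItau$. Since $\Br(W^\circ)$ admits the standard presentation with generators $\{\sigma_i:i\in\fwItau\}$ subject to precisely these braid relations, the universal property yields a unique group homomorphism $\Br(W^\circ)\to\Aut(\tUi)$ sending $\sigma_i\mapsto\tTT'_{i,e}$ (resp. $\sigma_i\mapsto\tTT''_{i,e}$), and this is the desired relative braid group action.

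The only step requiring verification is the type classification in the quasi-split case, which is elementary; all the substantive work has already been absorbed into Theorems~\ref{thm:ibraid}, \ref{thm:rktwo1}, and \ref{thm:BrW0}. In particular, there is no genuine obstacle beyond assembling the ingredients, and this explains why the result is formulated as a corollary rather than a theorem.
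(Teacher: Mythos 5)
Your proof is correct and follows the same route as the paper: it observes that in the quasi-split case $\bI=\varnothing$ forces $\bw=\Id$, so every $i\in\fwItau$ lands in one of types (i)--(iii) (a fact the paper simply asserts and you verify explicitly), then invokes Theorem~\ref{thm:ibraid}/\eqref{def:tTT2} to get the automorphisms and Theorem~\ref{thm:BrW0} plus the presentation of $\Br(W^\circ)$ to get the action. The only thing you add beyond the paper's one-sentence justification is the elementary finite-type rank-two classification, which is a harmless elaboration.
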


%%%%%%%%%
%%%%%%%
\section{Higher rank formulas for $\tau i=i=\bw i$}
\label{sec:split}
 In this section, we fix an $i\in \fwItau$ such that $\tau i=i=\bw i $. In this case, $B_i=F_i+E_i K_i',\tk_i=K_i K_{i}',$ and $\bs_i=s_i$.

We define root vectors $\B_{i,j;m},\b_{i,j;m}\in \tUi$ for $j\in\wI, j\neq i$ in Definitions~\ref{def:s}-\ref{def:s'} via recursive relations.
The $\imath$divided power formulations for these elements are obtained in Proposition~\ref{thm:idv}.
We show that $\B_{i,j;-c_{ij}},\b_{i,j;-c_{ij}}$ provide the higher rank formulas for $\tTa{i}(B_j),\tTb{i}(B_j)$ in Theorem~\ref{thm:split} and complete the proof for Theorem~\ref{thm:rktwo1}(i).

\subsection{Definitions of root vectors}

Note that, in this case, $c_{ij}=c_{i,\tau j}$.
\begin{definition}
\label{def:s}
Let $j\in \wI,j\neq i$. Let $\B_{i,j;m}^\pm $ be the elements in $\tU$ defined recursively for $m\geq-1$ as follows
 \begin{align}\notag
 &\B_{i,j;0}^-=F_j, \quad \B_{i,j;0}^+=\tT_{\bw}(E_{\tau j}) K_j', \quad \B_{i,j;-1}^\pm=0,\\\notag
 &-q_i^{-(c_{ij}+2m)} \B_{i,j;m}^\pm B_i +B_i \B_{i,j;m}^\pm\\
 =&[m+1]_i  \B_{i,j;m+1}^\pm +[- c_{ij}-m+1]_i q_i^{-2m-c_{ij}+2} \B_{i,j;m-1}^\pm \tk_i,\qquad m\geq 0.
  \label{def:Bij}
 \end{align}
\end{definition}

Set $\B_{i,j;m}:=\B_{i,j;m}^- +\B_{i,j;m}^+$. Since $\B_{i,j;0}=B_j\in \tUi$, one can recursively show that $\B_{i,j;m}\in \tUi$ for any $m\geq -1$.

\begin{definition}
\label{def:s'}
Let $j\in \wI,j\neq i$. Let $\b_{i,j;m}^\pm$ be the elements in $\tU$ defined recursively for $m\geq-1$ as follows
 \begin{align}\notag
 &\b_{i,j;0}^-=F_j,\quad \b_{i,j;0}^+=\tT_{\bw}(E_{\tau j}) K_j', \quad \b_{i,j;-1}^\pm=0,\\\notag
 &-q_i^{-(c_{ij}+2m)}B_i \b_{i,j;m}^\pm + \b_{i,j;m}^\pm B_i\\
 =&[m+1]_i  \b_{i,j;m+1}^\pm +[- c_{ij}-m+1]_i q_i^{-2m-c_{ij}+2} \b_{i,j;m-1}^\pm \tk_i,\qquad m\geq 0.
  \label{def:Bij2}
 \end{align}
\end{definition}

Set $\b_{i,j;m}:=\b_{i,j;m}^- +\b_{i,j;m}^+$. One can recursively show that $\b_{i,j;m}\in \tUi$ for any $m\geq -1$.

Recall the anti-involution $\sigma^\imath$ on $\tUi$ from Proposition~\ref{prop:sigma}.
\begin{proposition}
\label{prop:bB}
Let $j\in \wI,j\neq i$. Then $\b_{i,j;m}=\sigma^\imath ( \B_{i,j;m})$ for $m\geq 0$.
\end{proposition}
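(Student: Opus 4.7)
The plan is induction on $m$, exploiting the fact that applying the anti-involution $\sigma^\imath$ to the recursion of Definition~\ref{def:s} converts it into the recursion of Definition~\ref{def:s'}. First I would record what $\sigma^\imath$ does to the relevant ingredients: by Proposition~\ref{prop:sigma}, $\sigma^\imath(B_i)=B_i$ and $\sigma^\imath(B_j)=B_j$; since $\tk_i=K_iK'_i\in\tU^{\imath 0}$ and $\sigma$ swaps $K_i\leftrightarrow K'_i$, we have $\sigma^\imath(\tk_i)=\sigma(K'_i)\sigma(K_i)=K_iK'_i=\tk_i$; and $\tk_i$ is central in $\tU$. The base cases $\sigma^\imath(\B_{i,j;-1})=0=\b_{i,j;-1}$ and $\sigma^\imath(\B_{i,j;0})=\sigma^\imath(B_j)=B_j=\b_{i,j;0}$ are then immediate.

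For the inductive step, I would combine the two halves in \eqref{def:Bij} to view it as a three-term linear recursion for the sum $\B_{i,j;m}\in\tUi$, and apply $\sigma^\imath$ term by term. Because $\sigma^\imath$ reverses products, the left-hand side transforms into $-q_i^{-(c_{ij}+2m)} B_i\,\sigma^\imath(\B_{i,j;m}) + \sigma^\imath(\B_{i,j;m})\,B_i$, which is precisely the left-hand side of \eqref{def:Bij2} in the unknown $\sigma^\imath(\B_{i,j;m})$. The right-hand side becomes $[m+1]_i\,\sigma^\imath(\B_{i,j;m+1}) + [-c_{ij}-m+1]_i\, q_i^{-2m-c_{ij}+2}\,\tk_i\,\sigma^\imath(\B_{i,j;m-1})$, and I use centrality of $\tk_i$ to move it to the right, matching \eqref{def:Bij2}. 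Since $[m+1]_i\in\bF^\times$, the recursion uniquely determines each next term from its previous two, so the inductive hypotheses $\sigma^\imath(\B_{i,j;m-1})=\b_{i,j;m-1}$ and $\sigma^\imath(\B_{i,j;m})=\b_{i,j;m}$ force $\sigma^\imath(\B_{i,j;m+1})=\b_{i,j;m+1}$.

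There is no serious obstacle; the argument is essentially a symbolic matching of two recursions after an anti-involution, with the centrality of $\tk_i$ the only structural fact used beyond the defining relations. The only minor point to be careful about is that $\sigma^\imath$ can indeed be applied, i.e., that the whole recursion lives inside $\tUi$ rather than in the ambient $\tU$; this is guaranteed by the remark $\B_{i,j;m}\in\tUi$ recorded immediately after Definition~\ref{def:s}, together with $B_i,\tk_i\in\tUi$.
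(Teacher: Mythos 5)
Your argument is correct and matches the paper's proof: both proceed by induction, applying $\sigma^\imath$ to the (summed) recursion of Definition~\ref{def:s} and observing that the result is the recursion of Definition~\ref{def:s'}, with the base case $\sigma^\imath(B_j)=B_j$. You are a bit more explicit than the paper about why one must first sum the $\pm$ halves (so that the recursion lives in $\tUi$ where $\sigma^\imath$ is defined) and about $\sigma^\imath(\tk_i)=\tk_i$, but these are the same underlying ideas.
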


\begin{proof}
The recursive relation~\eqref{def:Bij} implies that $\sigma^\imath ( \B_{i,j;m})$ satisfies the same relation~\eqref{def:Bij2} as $\b_{i,j;m}$. Since $\b_{i,j;0}=B_j=\sigma^\imath ( \B_{i,j;0})$, this proposition follows by induction.
\end{proof}

\begin{lemma}
\label{lem:BB}
We have, for $m\geq 0,,j\in \wI, j\neq i$,
\begin{align}\label{eq:BB}
\b^-_{i, j;m}&=\sigma \big( \tfX_i^{-1} \B^-_{i,j;m} \tfX_i \big).
\end{align}
\end{lemma}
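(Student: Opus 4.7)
The plan is to prove \eqref{eq:BB} by induction on $m \geq -1$, showing that the elements $Y_m := \sigma(\tfX_i^{-1} \B^-_{i,j;m} \tfX_i)$ satisfy the same initial conditions and recursion as $\b^-_{i,j;m}$ in Definition~\ref{def:s'}. The base cases are immediate: $Y_{-1} = \sigma(0) = 0 = \b^-_{i,j;-1}$, while for $m=0$ the relation \eqref{eq:FjUp} of Lemma~\ref{lem:rktwo1} gives $\tfX_i^{-1} F_j \tfX_i = F_j$ (applicable since $j \neq i = \tau i$), so $Y_0 = \sigma(F_j) = F_j = \b^-_{i,j;0}$.

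For the inductive step I will conjugate the recursion \eqref{def:Bij} for $\B^-_{i,j;m}$ by $\tfX_i$ and then apply $\sigma$. Setting $Z_m := \tfX_i^{-1} \B^-_{i,j;m} \tfX_i$, two observations drive the calculation. First, the rank one instance of the intertwining relation of Proposition~\ref{prop:fX1} gives $B_i \tfX_i = \tfX_i \sigma(B_i)$, so $\tfX_i^{-1} B_i \tfX_i = \sigma(B_i)$. Second, $\tk_i \in \tU^{\imath 0}$ commutes with $\tfX_i$ by the same proposition, so $\tfX_i^{-1} \tk_i \tfX_i = \tk_i$. Conjugating \eqref{def:Bij} thus produces
\begin{equation*}
-q_i^{-(c_{ij}+2m)} Z_m \sigma(B_i) + \sigma(B_i) Z_m = [m+1]_i Z_{m+1} + [-c_{ij}-m+1]_i q_i^{-2m-c_{ij}+2} Z_{m-1} \tk_i.
\end{equation*}

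Next I apply $\sigma$, which is an anti-involution of order two on $\tU$ fixing $q$, satisfying $\sigma(\sigma(B_i)) = B_i$, and fixing $\tk_i = K_i K_i'$. Reversing the order of products yields
\begin{equation*}
-q_i^{-(c_{ij}+2m)} B_i Y_m + Y_m B_i = [m+1]_i Y_{m+1} + [-c_{ij}-m+1]_i q_i^{-2m-c_{ij}+2} \tk_i Y_{m-1}.
\end{equation*}
At this final step I use that in type (i), where $\tau i = i$, the element $\tk_i = K_i K_i'$ is central in $\tU$ (a direct check from the Cartan relations), so $\tk_i Y_{m-1} = Y_{m-1} \tk_i$. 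The resulting identity is precisely \eqref{def:Bij2}, and induction yields $Y_m = \b^-_{i,j;m}$ for all $m \geq 0$.

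I do not expect any serious obstacle: the argument is essentially a clean "conjugate by $\tfX_i$, then apply $\sigma$" manipulation, and the only points requiring verification are the stability of $\tk_i$ under $\sigma$ and its centrality in $\tU$ in the type (i) setting. Both facts are immediate from the Drinfeld double relations, so the structural core of the proof is the pairing of the quasi $K$-matrix intertwiner with the anti-involution $\sigma$ that converts the "$B_i$ on the right" recursion for $\B^-_{i,j;m}$ into the "$B_i$ on the left" recursion for $\b^-_{i,j;m}$.
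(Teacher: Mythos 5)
Your proof is correct and follows essentially the same strategy as the paper's. The paper packages the key step by defining an anti-automorphism $\sigma_{ij}\colon x \mapsto \sigma(\tfX_i^{-1}x\tfX_i)$ on the subalgebra $\tU_{[i;j]}^-$ generated by $B_i,\tk_i,F_j$ (fixing all three generators), and then observes that applying $\sigma_{ij}$ to the recursion \eqref{def:Bij} gives the recursion \eqref{def:Bij2}; you unfold the same computation element-by-element. Your explicit observation that $\tk_i = K_iK_i'$ is central (true here since $\tau i = i$) is a harmless addition — in the paper it is implicit, since the $\sigma_{ij}$-image of $\B^-_{i,j;m-1}\tk_i$ is $\tk_i\,\sigma_{ij}(\B^-_{i,j;m-1})$ and centrality is needed to recognize this as matching \eqref{def:Bij2}.
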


\begin{proof}
Consider the subalgebra $\tU_{[i;j]}^-$ of $\tU$ generated by $B_i,\tk_i,F_j$. It is clear from the above definitions that $\B^-_{i, j;m},\b^-_{i,j;m}\in \tU_{[i;j]}^-$. By Proposition~\ref{prop:fX1} and Lemma~\ref{lem:rktwo1}, there is a well-defined anti-automorphism $\sigma_{ij}$ on $\tU_{[i;j]}^-$, which is given by
\begin{align*}
\sigma_{ij}: x \mapsto \sigma \big( \tfX_i^{-1} x \tfX_i \big).
\end{align*}
Moreover, $\sigma_{ij}$ fixes $B_i,F_j,\tk_i$. Applying $\sigma_{ij}$ to \eqref{def:Bij}, it is clear that $ \sigma_{ij} (\B^-_{i,j;m})$ satisfies the same recursive relation as $\b^-_{i,j;m}$. Then the desired identity follows by induction.
\end{proof}

\begin{remark}
One can also formulate the relation between $\b^+_{i,j;m},\B^+_{i,j;m}$. However, it is much more complicated than \eqref{eq:BB} and hard to prove directly. We do not need the relation between $\b^+_{i,j;m},\B^+_{i,j;m}$ in this paper. The situation is similar when the vertex $i$ is of type (ii)-(iii).
\end{remark}

%\begin{remark}
%\label{rmk:qsBB}
%One may be curious about the relation between $\B^+_{i,\tau i, j;a,b,c}$ and $B^+_{i,\tau i,j;a,b,c}$. In fact, they are related by
%\begin{align}
%\b^+_{i,\tau i, j;a,b,c}&= \tT_{\bs_i}(\tfX_i)^{-1} K_i^{-a-c}K_{\tau i}^{-b}\sigma (\B^+_{i,\tau i, j;a,b,c}) K_i'^{a+c}K_{\tau i}'^{b}\tT_{\bs_i}(\tfX_i).
%\end{align}
%However, it is hard to prove this one directly. We will prove it at the end of the next subsection.
%\end{remark}
%
%
\subsection{An $\imath$divided power formulation}
\label{sec:idv}
In \cite[\S 6.1]{CLW21a}, elements $\ty_{i,j;n,m,\ov{p},\ov{t},e},\ty'_{i,j;n,m,\ov{p},\ov{t},e}$ in $\tUi$ were defined via $\imath$divided powers.
 Comparing Definition~\ref{def:s} and \cite[Theorem 6.2]{CLW21a}, it is clear that our $\B_{i,j;m}$ ({\em resp.} $\b_{i,j;m}$) and their $\ty'_{i,j;1,m,\ov{p},\ov{t},1}$ ({\em resp.} $\ty_{i,j;1,m,\ov{p},\ov{t},1}$) satisfy the same recursive relations, which implies that
\begin{align}
\B_{i,j;m}=\ty'_{i,j;1,m,\ov{p},\ov{t},1},\qquad \b_{i,j;m}=\ty_{i,j;1,m,\ov{p},\ov{t},1},\qquad (m\geq 0,\ov{p},\ov{t}\in \Z/2\Z).
\end{align}
We thus obtained $\imath$divided power formulations for $\B_{i,j;m},\b_{i,j;m}$ in the next proposition, by specializing $\imath$divided power formulas of $\ty_{i,j;n,m,\ov{p},\ov{t},e},\ty'_{i,j;n,m,\ov{p},\ov{t},e} $ at $n=1,e=1$.

\begin{proposition}
\label{thm:idv}
Let $i\in \fwItau,j\in \wI$ such that $\tau i=i=\bw i,j\neq i$.
\begin{itemize}
\item[(1)] The element $\B_{i,j;m}$ in Definition~\ref{def:s} admits an $\imath$divided power formulation:
for $m+c_{ij}$ odd,
\begin{align*}
\B_{i,j;m}=&\sum_{u\geq 0} (q_i \tk_i)^u \Big\{ \sum_{\substack{r+s+2u=m\\ \ov{r}=\ov{p}+\odd}} (-1)^r q_i^{-(m+c_{ij})(r+u)+r}\qbinom{\frac{m+c_{ij}-1}{2}}{u}_{q_i^2} \dv{i,\ov{p}}{s} B_j \dv{i,\ov{p+c_{ij}}}{r}+
\\
&+\sum_{\substack{r+s+2u=m\\\ov{r}=\ov{p} }} (-1)^r q_i^{-(m+c_{ij}-2)(r+u)-r}\qbinom{\frac{m+c_{ij}-1}{2}}{u}_{q_i^2} \dv{i,\ov{p}}{s} B_j \dv{i,\ov{p+c_{ij}}}{r}\Big\},
\end{align*}
and for $m+c_{ij}$ even,
\begin{align*}
\B_{i,j;m}=&\sum_{u\geq 0} (q_i \tk_i)^u \Big\{ \sum_{\substack{r+s+2u=m \\ \ov{r}=\ov{p}+\odd}} (-1)^r q_i^{-(m+c_{ij}-1)(r+u)}\qbinom{\frac{m+c_{ij}}{2}}{u}_{q_i^2} \dv{i,\ov{p}}{s} B_j \dv{i,\ov{p+c_{ij}}}{r}+
\\
&+\sum_{\substack{r+s+2u=m\\\ov{r}=\ov{p} }} (-1)^r q_i^{-(m+c_{ij}-1)(r+u)}\qbinom{\frac{m+c_{ij}-2}{2}}{u}_{q_i^2} \dv{i,\ov{p}}{s} B_j \dv{i,\ov{p+c_{ij}}}{r}\Big\}.
\end{align*}
\item[(2)] The element $\b_{i,j;m}$ in Definition~\ref{def:s'} admits an $\imath$divided power formulation:
for $m+c_{ij}$ odd,
\begin{align*}
\b_{i,j;m}=&\sum_{u\geq 0} (q_i \tk_i)^u \Big\{ \sum_{\substack{r+s+2u=m\\ \ov{r}=\ov{p}+\odd}} (-1)^r q_i^{-(m+c_{ij})(r+u)+r}\qbinom{\frac{m+c_{ij}-1}{2}}{u}_{q_i^2} \dv{i,\ov{p}}{r} B_j \dv{i,\ov{p+c_{ij}}}{s}+
\\
&+\sum_{\substack{r+s+2u=m\\\ov{r}=\ov{p} }} (-1)^r q_i^{-(m+c_{ij}-2)(r+u)-r}\qbinom{\frac{m+c_{ij}-1}{2}}{u}_{q_i^2} \dv{i,\ov{p}}{r} B_j \dv{i,\ov{p+c_{ij}}}{s}\Big\},
\end{align*}
and for $m+c_{ij}$ even,
\begin{align*}
\b_{i,j;m}=&\sum_{u\geq 0} (q_i \tk_i)^u \Big\{ \sum_{\substack{r+s+2u=m \\ \ov{r}=\ov{p}+\odd}} (-1)^r q_i^{-(m+c_{ij}-1)(r+u)}\qbinom{\frac{m+c_{ij}}{2}}{u}_{q_i^2} \dv{i,\ov{p}}{r} B_j \dv{i,\ov{p+c_{ij}}}{s}+
\\
&+\sum_{\substack{r+s+2u=m\\\ov{r}=\ov{p} }} (-1)^r q_i^{-(m+c_{ij}-1)(r+u)}\qbinom{\frac{m+c_{ij}-2}{2}}{u}_{q_i^2} \dv{i,\ov{p}}{r} B_j \dv{i,\ov{p+c_{ij}}}{s}\Big\}.
\end{align*}
\end{itemize}
\end{proposition}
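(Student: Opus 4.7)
The plan is to identify $\B_{i,j;m}$ and $\b_{i,j;m}$ with elements already constructed via $\imath$divided powers in \cite[\S 6.1]{CLW21a}, and then read off the closed formulas from the literature. Specifically, I will show that
\[
\B_{i,j;m}=\ty'_{i,j;1,m,\ov{p},\ov{t},1},\qquad \b_{i,j;m}=\ty_{i,j;1,m,\ov{p},\ov{t},1},
\]
for all $m\geq 0$ and any choice of $\ov{p},\ov{t}\in\Z/2\Z$. Once this identification is established, parts (1) and (2) follow by specializing the $\imath$divided power formulas for $\ty'_{i,j;n,m,\ov{p},\ov{t},e}$ and $\ty_{i,j;n,m,\ov{p},\ov{t},e}$ given in \cite[\S 6.1]{CLW21a} at $n=1, e=1$.

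The identification itself will be a straightforward induction on $m$. First, I will verify the base case: at $m=0$, both $\B_{i,j;0}$ and $\ty'_{i,j;1,0,\ov{p},\ov{t},1}$ reduce to $B_j=F_j+\tT_{\bw}(E_{\tau j}) K_j'$, and similarly for $\b_{i,j;0}$ versus $\ty_{i,j;1,0,\ov{p},\ov{t},1}$. Next, I will invoke \cite[Theorem 6.2]{CLW21a}, which supplies the recursive characterization of $\ty'_{i,j;1,m,\ov{p},\ov{t},1}$ and $\ty_{i,j;1,m,\ov{p},\ov{t},1}$. A direct comparison shows this recursion is exactly \eqref{def:Bij} (respectively \eqref{def:Bij2}) under the substitution $n=1, e=1$, where the auxiliary parameters $\ov{p},\ov{t}$ disappear from the recursion itself and appear only in the initial labeling. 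Since the recursions uniquely determine the sequences from their initial values, the identification follows for all $m\geq 0$.

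Finally, to obtain the four explicit formulas displayed in the statement, I will substitute the known $\imath$divided power expressions from \cite[\S 6.1]{CLW21a}. These expressions naturally split into two cases depending on the parity of $m+c_{ij}$, arising from the two branches in \eqref{def:idv}. I will separately handle the cases $\ov{r}=\ov{p}$ and $\ov{r}=\ov{p}+\odd$, and collect the resulting terms with the appropriate $q_i$-powers and Gaussian binomial coefficients. Part (2) then follows from part (1) by applying the anti-involution $\sigma^\imath$ (Proposition~\ref{prop:bB}), which fixes $B_j$, fixes $\tk_i$, and reverses the order of factors in each monomial—thereby swapping the positions of $\dv{i,\ov{p}}{s}$ and $\dv{i,\ov{p+c_{ij}}}{r}$ in each summand, which is precisely the difference between the two displayed formulas.

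The main (minor) obstacle is purely bookkeeping: keeping track of the parity conditions $\ov{r}=\ov{p}$ versus $\ov{r}=\ov{p}+\odd$ and the subtle shifts in the exponents of $q_i$ between the even and odd parity cases. Since all the nontrivial algebraic content is already contained in \cite[Theorem 6.2]{CLW21a} and the definition \eqref{def:idv}, no new structural argument is required; the proposition is essentially a dictionary between the recursive presentation natural for our intertwining arguments and the closed $\imath$divided power presentation already developed in \cite{CLW21a}.
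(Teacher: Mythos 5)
Your proposal matches the paper's own argument almost verbatim: the paper establishes the identification $\B_{i,j;m}=\ty'_{i,j;1,m,\ov{p},\ov{t},1}$ and $\b_{i,j;m}=\ty_{i,j;1,m,\ov{p},\ov{t},1}$ by observing that Definition~\ref{def:s}-\ref{def:s'} and \cite[Theorem 6.2]{CLW21a} encode the same recursive relations with the same initial values, and then reads off the closed formulas by specializing the $\imath$divided power expressions of \cite[\S 6.1]{CLW21a} at $n=1,e=1$, exactly as you describe. The one small deviation is your suggestion to derive (2) from (1) via $\sigma^\imath$ (Proposition~\ref{prop:bB}); this is a valid shortcut, though note that applying $\sigma^\imath$ to a summand $\dv{i,\ov{p}}{s}B_j\dv{i,\ov{p+c_{ij}}}{r}\tk_i^u$ yields $\dv{i,\ov{p+c_{ij}}}{r}B_j\dv{i,\ov{p}}{s}\tk_i^u$, so one must additionally relabel $\ov{p}\to\ov{p+c_{ij}}$ (permissible since $\ov{p}$ is a free parameter) to land on the exact expression displayed in part (2) — a bookkeeping point you should make explicit. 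Otherwise the proof is correct and complete.
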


\subsection{Intertwining properties}
\label{sec:sinter}

We write $y_{i,j;m},x_{i,j;m}$ for $y_{i,j;m,-1},x_{i,j;m,-1}$ respectively.
\begin{proposition}
\label{prop:By}
We have for any $m\geq 0, j\in \wI, j\neq i$,
\begin{align}
\label{eq:By1}
\B_{i,j;m}^- \tfX_i = \tfX_i   y_{i,j;m}
\end{align}
\end{proposition}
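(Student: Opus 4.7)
I would prove Proposition~\ref{prop:By} by induction on $m \geq 0$.

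For the base case $m=0$, we have $\B_{i,j;0}^- = F_j = y_{i,j;0}$, and the identity $F_j \tfX_i = \tfX_i F_j$ is precisely \eqref{eq:FjUp} in Lemma~\ref{lem:rktwo1} (since $j \neq i = \tau i$).

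For the inductive step, assume the identity holds through index $m$, and aim to show it for $m+1$. My strategy is to start from the recursion~\eqref{def:Bij} defining $\B_{i,j;m+1}^-$, right-multiply by $\tfX_i$, and then systematically push $\tfX_i$ to the left using two intertwining relations from Proposition~\ref{prop:fX1}: namely $B_i \tfX_i = \tfX_i B_i^\sigma$ and $\tk_i \tfX_i = \tfX_i \tk_i$ (the latter because $\tk_i \in \tU^{\imath 0}$). Combined with the inductive hypothesis applied to the indices $m$ and $m-1$, this yields
\[
[m+1]_i\, \B_{i,j;m+1}^- \tfX_i \;=\; \tfX_i \Bigl( B_i^\sigma y_{i,j;m} - q_i^{-c_{ij}-2m} y_{i,j;m} B_i^\sigma \Bigr) - \tfX_i\,[-c_{ij}-m+1]_i q_i^{-2m-c_{ij}+2} y_{i,j;m-1}\tk_i .
\]
It then suffices to show that the bracketed combination equals $[m+1]_i y_{i,j;m+1} + [-c_{ij}-m+1]_i q_i^{-2m-c_{ij}+2} y_{i,j;m-1}\tk_i$.

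To establish this $\tU$-identity, I would split $B_i^\sigma = F_i + q_i^2 E_i K_i$ (using $\sigma(E_i K_i') = K_i E_i = q_i^2 E_i K_i$) and analyze the $F_i$- and $E_i K_i$-parts separately. The $F_i$-contribution is exactly $[m+1]_i y_{i,j;m+1}$ by Lemma~\ref{lem:Lusxy}(1). For the $E_i K_i$-contribution, I would first commute $K_i$ past $y_{i,j;m}$ using the weight $-m\alpha_i - \alpha_j$ to obtain the factor $q_i^{-2m-c_{ij}}$, pull $K_i$ out on the right, and then apply Lemma~\ref{lem:Lusxy}(3) to the commutator $E_i y_{i,j;m} - y_{i,j;m} E_i = [-c_{ij}-m+1]_i y_{i,j;m-1} K_i'$. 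Combining $K_i' K_i = \tk_i$ with the accumulated $q$-powers yields precisely the correction term $q_i^{-2m-c_{ij}+2} [-c_{ij}-m+1]_i y_{i,j;m-1} \tk_i$, which cancels the last summand of the displayed equation and leaves $[m+1]_i \tfX_i y_{i,j;m+1}$.

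The main obstacle is bookkeeping: one must track the $q$-powers coming from moving $K_i$ past $y_{i,j;m}$ and verify they combine correctly with the factor $q_i^2$ from $B_i^\sigma$ to produce exactly the coefficient $q_i^{-2m-c_{ij}+2}$ appearing in the recursion \eqref{def:Bij}. This compatibility of coefficients is precisely what makes the rescaling $\tT_i$ in~\eqref{def:tT} and the normalization of $\B^\pm_{i,j;m}$ in Definition~\ref{def:s} "the right ones," and it explains why the correction term in~\eqref{def:Bij} takes exactly the form it does.
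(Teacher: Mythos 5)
Your proof is correct and follows essentially the same route as the paper: the paper also proceeds by induction, conjugating the recursion~\eqref{def:Bij} by $\tfX_i$ via $B_i\tfX_i = \tfX_i B_i^\sigma$ and $\tk_i\tfX_i=\tfX_i\tk_i$, splitting $B_i^\sigma = F_i + K_iE_i$ (you write the equivalent $F_i + q_i^2 E_i K_i$), and invoking Lemma~\ref{lem:Lusxy}(1) and (3) for the two halves. The only cosmetic difference is that the paper applies $\tfX_i^{-1}(\cdot)\tfX_i$ to both sides of the recursion whereas you right-multiply by $\tfX_i$ and push it leftward; the underlying computation is identical.
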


\begin{proof}
We use an induction on $m$. The base cases $m=0$ is a consequence of Lemma~\ref{lem:rktwo1}.

Suppose that \eqref{eq:By1} is true for $1,2,\cdots, m $. Note that $B_i^\sigma=F_i+K_i E_i.$ We have, by induction hypothesis and Proposition~\ref{prop:fX1},
\begin{align*}
&\tfX_i^{-1} (-q_i^{-(c_{ij}+2m)}\B_{i,j;m}^- B_i +B_i \B_{i,j;m}^-)\tfX_i\\
=
 & -q_i^{-(c_{ij}+2m)} y_{i,j;m}B_i^\sigma + B_i^\sigma y_{i,j;m}\\
% & -q_i^{-(c_{ij}+2m)} x_{i,j;m,-1} K'_{j}(K_i')^m B_i^\sigma +B_i^\sigma x_{i,j;m,-1} K'_{j}(K_i')^m \\
=
 & -q_i^{-(c_{ij}+2m)} y_{i,j;m} F_i + F_i y_{i,j;m} -q_i^{-(c_{ij}+2m)} y_{i,j;m}K_i E_i + K_i E_i y_{i,j;m}\\
% &  -q_i^{-(c_{ij}+2m)} x_{i,j;m,-1} K'_{j}(K_i')^m (F_i+ K_i E_i)+(F_i+ K_i E_i) x_{i,j;m,-1} K'_{j}(K_i')^m \\
=
 & -q_i^{-(c_{ij}+2m)} y_{i,j;m} F_i + F_i y_{i,j;m} +K_i (-  y_{i,j;m}E_i+   E_i y_{i,j;m}).
% & - ( x_{i,j;m,-1}F_i-F_i x_{i,j;m,-1}) K'_{j}(K_i')^m  +  K_i E_i  x_{i,j;m,-1} K'_{j}(K_i')^m
\end{align*}
Using Lemma~\ref{lem:Lusxy} to simplify the RHS of above formula, we obtain
\begin{align*}
\text{RHS}=&[m+1]_i  y_{i,j;m+1}+[-c_{ij}-m+1]_i K_i y_{i,j;m-1}K'_{i}\\
=&[m+1]_i  y_{i,j;m+1}+[- c_{ij}-m+1]_i q_i^{-2m-c_{ij}+2}y_{i,j;m-1} K_i K'_{i}.
\end{align*}
Combining the above two computations, we have the following identity
\begin{align}\notag
&-q_i^{-(c_{ij}+2m)}\B_{i,j;m}^- B_i +B_i \B_{i,j;m}^- \\
=&\tfX_i ([m+1]_i  y_{i,j;m+1}+[- c_{ij}-m+1]_i q_i^{-2m-c_{ij}+2}y_{i,j;m-1} K_i K'_{i})\tfX_i^{-1}.
\label{eq:By2}
\end{align}
On the other hand, by definition \eqref{def:Bij}, we have
\begin{align}
\label{eq:By3}
&-q_i^{-(c_{ij}+2m)}\B_{i,j;m}^- B_i +B_i \B_{i,j;m}^-\\\notag
 =&[m+1]_i  \B_{i,j;m+1}^- +[- c_{ij}-m+1]_i q_i^{-2m-c_{ij}+2} \B_{i,j;m-1}^- \tk_i.
\end{align}
Comparing \eqref{eq:By2} with \eqref{eq:By3} and then using the induction hypothesis, we deduce that \eqref{eq:By1} is true for $m+1$ as desired.
\end{proof}

\begin{proposition}
\label{prop:Bx}
We have for any $m\geq 0, j\in \wI, j\neq i$,
\begin{align}
\label{eq:Bx}
\B_{i,j;m}^+=(-1)^m q_i^{ -2m(c_{ij}+m-1)} \tT_{\bw}( x_{i,\tau j;m}) K'_{j}(K_i')^m.
\end{align}
\end{proposition}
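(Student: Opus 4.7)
The plan is to prove Proposition~\ref{prop:Bx} by inductive verification, exploiting that in type (i) the vertex $i\in\fwItau$ is not connected to $\bI$, so every $s_k$ with $k\in\bI$ commutes with $s_i$. Hence $\tT_{\bw}$ fixes $E_i, F_i, K_i, K_i'$, and therefore fixes both $B_i=F_i+E_iK_i'$ and $\tk_i=K_iK_i'$. Applying $\tT_{\bw}^{-1}$ to the recursion \eqref{def:Bij} for $\B^+_{i,j;m}$ then yields the \emph{same} recursion for $\widehat{Z}_m:=\tT_{\bw}^{-1}(\B^+_{i,j;m})\in\tU$, with initial condition $\widehat{Z}_0=E_{\tau j}K_j'$. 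Hence the proposition reduces to proving, purely inside $\tU$,
\begin{equation*}
\widehat{Z}_m \;=\; (-1)^m q_i^{-2m(c_{ij}+m-1)}\, x_{i,\tau j;m,-1}\, K_j'(K_i')^m.
\end{equation*}

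Denote the right-hand side by $W_m$. The base cases $m=-1,0$ are immediate (using $x_{i,\tau j;0,-1}=E_{\tau j}$ and $c_{i,\tau j}=c_{ij}$, which holds since $\tau i=i$). For the inductive step, assuming $\widehat{Z}_k=W_k$ for $k\le m$, it suffices to verify
\begin{equation*}
-q_i^{-(c_{ij}+2m)}W_m B_i + B_i W_m \;=\; [m+1]_i W_{m+1}+[-c_{ij}-m+1]_i q_i^{-2m-c_{ij}+2}\, W_{m-1}\tk_i,
\end{equation*}
from which $\widehat{Z}_{m+1}=W_{m+1}$ follows by comparison with the recursion. I would split $B_i=F_i+E_iK_i'$ and apply Lemma~\ref{lem:Lusxy}(1),(3) to $x_{i,\tau j;m,-1}$. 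For the $F_i$-part: commute $F_i$ past $K_j'(K_i')^m$ and invoke Lemma~\ref{lem:Lusxy}(3); the $K_i$ it produces combines with the trailing $K_i'$ to form $\tk_i$, yielding the $[-c_{ij}-m+1]_i q_i^{-2m-c_{ij}+2}W_{m-1}\tk_i$ term. For the $E_iK_i'$-part: commute $K_i'$ past $x_{i,\tau j;m,-1}$ via the weight relation $K_i'x_{i,\tau j;m,-1}=q_i^{-(2m+c_{ij})}x_{i,\tau j;m,-1}K_i'$ (computed from the weight $m\alpha_i+\alpha_{\tau j}$ of $x_{i,\tau j;m,-1}$), then apply Lemma~\ref{lem:Lusxy}(1) to obtain the $[m+1]_iW_{m+1}$ term.

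The main obstacle is the $q_i$-power bookkeeping. The prefactor shift from $q_i^{-2m(c_{ij}+m-1)}$ in $W_m$ to $q_i^{-2(m\pm 1)(c_{ij}+m\pm 1-1)}$ in $W_{m\pm 1}$ must combine with scalars arising from moving $K_i'$ and $K_j'$ across weight-homogeneous elements and with the coefficients $[m+1]_i, [-c_{ij}-m+1]_i$ from Lemma~\ref{lem:Lusxy} to produce exactly the scalars demanded by the recursion. The structural reduction via $\tT_{\bw}^{-1}$ is what makes this tractable: it converts the claim about $\imath$quantum-group elements into a rank-two quantum-group identity where Lusztig's recursions apply directly, and only the arithmetic of $q_i$-exponents remains to be verified.
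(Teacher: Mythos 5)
Your strategy—split $B_i = F_i + E_i K_i'$, apply Lusztig's recursions from Lemma~\ref{lem:Lusxy}, and verify that the right-hand side of \eqref{eq:Bx} satisfies the defining recursion \eqref{def:Bij}—is precisely the paper's proof. However, the reduction via $\tT_\bw^{-1}$ contains an error. You assert $\widehat{Z}_0 := \tT_\bw^{-1}(\B_{i,j;0}^+) = E_{\tau j}K_j'$; in fact $\B_{i,j;0}^+=\tT_\bw(E_{\tau j})K_j'$, so $\widehat{Z}_0 = E_{\tau j}\,\tT_\bw^{-1}(K_j')$. The hypothesis $\bw i = i$ guarantees that $\tT_\bw$ fixes the $i$-th generators, but it says nothing about $j$: a vertex $j\in\wI$ with $j\neq i$ may well be adjacent to $\bI$, in which case $\tT_\bw(K_j')\neq K_j'$. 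As stated, your reduced identity $\widehat{Z}_m = (-1)^mq_i^{-2m(c_{ij}+m-1)}x_{i,\tau j;m,-1}K_j'(K_i')^m$ is false in that generality, and applying $\tT_\bw$ to it would produce \eqref{eq:Bx} with $\tT_\bw(K_j')$ in place of $K_j'$, which is not the proposition.

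The error is localized: since $c_{ki}=0$ for all $k\in\bI$, the Cartan factor $\tT_\bw^{-1}(K_j')$ commutes with $E_i,F_i$ exactly as $K_j'$ does, so all your $q_i$-power bookkeeping survives unchanged once you carry the correct factor through and apply $\tT_\bw$ at the end. Alternatively—this is what the paper does—do not invert $\tT_\bw$: set $x_{i,\bw\tau j;m}:=\tT_\bw(x_{i,\tau j;m})$, note that the recursions in Lemma~\ref{lem:Lusxy} transfer from $x_{i,\tau j;m}$ to $x_{i,\bw\tau j;m}$ precisely because $\tT_\bw$ fixes $E_i, F_i, K_i$, and verify directly that $R_m=(-1)^mq_i^{-2m(c_{ij}+m-1)}x_{i,\bw\tau j;m}K_j'(K_i')^m$ satisfies \eqref{def:Bij}. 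One further indexing slip: the recursions you need for the $x$-vectors are Lemma~\ref{lem:Lusxy}(2) and (4); parts (1) and (3) are the $y$-versions.
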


\begin{proof}
We use an induction on $m$. The base cases $m=0,1$ are verified by straightforward computations.

%Suppose that \eqref{eq:Bx} is true for $1,2,\cdots, m $. Note that $B_i^\sigma=F_i+K_i E_i.$ We have, by induction hypothesis

We denote $\tT_{\bw}(x_{i,\tau j;m})$ by $x_{i,\bw\tau j;m}$ in the proof. Let $R_m$ denote the RHS \eqref{eq:Bx}, i.e., $R_m=(-1)^m q_i^{ -2m(c_{ij}+m-1)} x_{i,\bw\tau j;m}K'_{j}(K_i')^m$. It suffices to show that $R_m$ satisfies the same recursive relation as $\B_{i,j;m}^+$.

Let $Q_m$ denote the element $Q_m=  x_{i,\bw\tau j;m} K'_{j}(K_i')^m$. We first obtain the recursive relation for $Q_m$ as follows
\begin{align*}
& -q_i^{-(c_{ij}+2m)}Q_m B_i +B_i Q_m  \\
=&-q_i^{ -(c_{ij}+2m)} x_{i,\bw\tau j;m}K'_{j}(K_i')^m( F_i +E_i K_i')+ (F_i +E_i K_i') x_{i,\bw\tau j;m}K'_{j}(K_i')^m \\
=&-\big(x_{i,\bw\tau j;m}F_i- F_i  x_{i,\bw\tau j;m}\big)K'_{j}(K_i')^m \\
 &- q_i^{- 2(c_{ij}+2m)}(x_{i,\bw\tau j;m} E_i -q_i^{c_{ij}+2m}E_i x_{i,\bw\tau j;m}) K'_{j}(K_i')^{m+1}.
\end{align*}
Recall that $c_{i,\tau j}=c_{ij}$ in this case. Applying Lemma~\ref{lem:Lusxy} to the above formula, we have
\begin{align*}
RHS=&-[-c_{ij}-m+1]_i K_i x_{i,\bw\tau j;m-1} K'_{j}(K_i')^m\\
    &- q_i^{-2(c_{ij}+2m)}[m+1]_i x_{i,\bw\tau j;m+1} K'_{j}(K_i')^{m+1}\\
=&-[-c_{ij}-m+1]_i q_i^{c_{ij}+2m-2}x_{i,\bw\tau j;m-1}  K'_{j}(K_i')^{m-1}\tk_i\\
 &- q_i^{-2(c_{ij}+2m)}[m+1]_i x_{i,\bw\tau j;m+1} K'_{j}(K_i')^{m+1}.
\end{align*}
Combining the above two formulas, we have
\begin{align}\notag
 &-q_i^{-(c_{ij}+2m)}Q_m B_i +B_i Q_m \\\label{eq:Bx2}
 =&-[-c_{ij}-m+1]_i q_i^{c_{ij}+2m-2} Q_{m-1}\tk_i- q_i^{-2(c_{ij}+2m)}[m+1]_i Q_{m+1}.
\end{align}
Note that $R_m=(-1)^m q_i^{ -2m(c_{ij}+m-1)} Q_m$. Then, by \eqref{eq:Bx2}, we have
\begin{align}
 -q_i^{-(c_{ij}+2m)}R_m B_i +B_i R_m =&[m+1]_i R_{m+1}+[-c_{ij}-m+1]_i q_i^{-(c_{ij}+2m-2)} R_{m-1}\tk_i. \label{eq:Bx3}
\end{align}
Comparing \eqref{eq:Bx3} with \eqref{def:Bij}, it is clear that $R_m$ satisfies the same recursive relation as $\B_{i,j;m}^+$.

Therefore, $\B_{i,j;m}^+=R_m$ for $m\geq 0$.
\end{proof}

We next formulate the relations between $\b_{i,j;m}^\pm$ and $y'_{i,j;m},x'_{i,j;m}$.
\begin{proposition}
\label{prop:B'y}
We have, for $m\geq 0, j\in \wI, j\neq i$,
\begin{align}
\label{eq:B'y}
\b_{i,j;m}^- = y'_{i,j;m}.
\end{align}
\end{proposition}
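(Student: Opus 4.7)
The plan is to deduce this essentially for free from the two results immediately preceding the statement. By Proposition~\ref{prop:By} we have $\B_{i,j;m}^- \tfX_i = \tfX_i y_{i,j;m}$, which I would rewrite as $\tfX_i^{-1} \B_{i,j;m}^- \tfX_i = y_{i,j;m}$. Applying the anti-involution $\sigma$ to both sides yields $\sigma\bigl(\tfX_i^{-1} \B_{i,j;m}^- \tfX_i\bigr) = \sigma(y_{i,j;m}) = y'_{i,j;m}$, by the definition of $y'_{i,j;m}$. Combining this with Lemma~\ref{lem:BB}, which asserts $\b_{i,j;m}^- = \sigma\bigl(\tfX_i^{-1} \B_{i,j;m}^- \tfX_i\bigr)$, gives the desired equality $\b_{i,j;m}^- = y'_{i,j;m}$.

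Should a more self-contained argument be preferred, the same identity can be established by a direct induction on $m$ that parallels the proof of Proposition~\ref{prop:By}. The base case $m=0$ is immediate since $\b_{i,j;0}^- = F_j = y'_{i,j;0}$ (noting $\sigma(F_j)=F_j$). For the inductive step, apply $\sigma$ to the two recursions in Lemma~\ref{lem:Lusxy}(1),(3) to obtain
\begin{align*}
-q_i^{-c_{ij}-2m} F_i y'_{i,j;m} + y'_{i,j;m} F_i &= [m+1]_i \, y'_{i,j;m+1},\\
-E_i y'_{i,j;m} + y'_{i,j;m} E_i &= [-c_{ij}-m+1]_i \, K_i y'_{i,j;m-1}.
\end{align*}
Write $B_i=F_i+E_iK_i'$ and expand $-q_i^{-(c_{ij}+2m)}B_i y'_{i,j;m}+y'_{i,j;m}B_i$. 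Using that $y'_{i,j;m}$ has weight $-m\alpha_i - \alpha_j$ (so that $K_i'$ commutes past $y'_{i,j;m}$ with factor $q_i^{2m+c_{ij}}$, canceling the prefactor in the $E_iK_i'$-term), the $F_i$-term gives $[m+1]_i y'_{i,j;m+1}$ and the $E_iK_i'$-term simplifies via a further commutation $K_iy'_{i,j;m-1}=q_i^{-2(m-1)-c_{ij}}y'_{i,j;m-1}K_i$ to $[-c_{ij}-m+1]_i q_i^{-2m-c_{ij}+2}y'_{i,j;m-1}\tk_i$. This is exactly the defining recursion \eqref{def:Bij2} for $\b_{i,j;m}^-$, so the induction closes.

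I expect no real obstacle here: the first approach is a one-line consequence of results already in place, and the second is a routine verification that the two recursions coincide. The only points requiring any care are the weight bookkeeping for $K_i'$ (and $K_i$) commuting past $y'_{i,j;m}$, and remembering that $\sigma$ reverses order while fixing $E_i,F_i,F_j$ and swapping $K_i\leftrightarrow K_i'$.
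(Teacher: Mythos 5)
Your first argument is exactly the paper's proof: Proposition~\ref{prop:B'y} is stated there to be a direct consequence of Proposition~\ref{prop:By} and Lemma~\ref{lem:BB}, precisely as you derived. Your alternative inductive argument is also correct and self-contained (the weight bookkeeping for moving $K_i'$ and $K_i$ past $y'_{i,j;m}$ checks out), but it is not needed given the lemma already in place.
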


\begin{proof}
This proposition is a consequence of Proposition~\ref{prop:By} and Lemma~\ref{lem:BB}.
\end{proof}

Set $\widehat{B}_i:=-q_i^{-2}\tT_{i}(B_i \tk_i^{-1})=q_i^{-2} F_i K_i K_i'^{-1}+E_i K_i'$, and then we have $\widehat{B}_i\tT_i(\tfX_i)=\tT_i(\tfX_i) B_i$, following \cite[\S 6.4]{WZ22}.
.

\begin{proposition}
\label{prop:B'x}
We have, for $m\geq 0, j\in \wI, j\neq i$,
\begin{align}
\label{eq:B'x}
\b_{i,j;m}^+ \tT_i(\tfX_i)^{-1} =(-1)^m  q_i^{ -2m(c_{ij}+m-1)} \tT_i(\tfX_i)^{-1} \tT_{\bw}(x'_{i,\tau j;m}) K_j' (K_i')^m.
\end{align}
\end{proposition}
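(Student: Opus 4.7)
My plan is to prove Proposition~\ref{prop:B'x} by induction on $m$, in direct analogy with the proof of Proposition~\ref{prop:Bx}, but transported to the $\tT_i(\tfX_i)$-side via the intertwiner $\widehat{B}_i$. Set
\begin{align*}
R_m := (-1)^m q_i^{-2m(c_{ij}+m-1)} \tT_{\bw}(x'_{i,\tau j;m}) K_j' (K_i')^m,
\end{align*}
so that the desired identity reads $\b_{i,j;m}^+\,\tT_i(\tfX_i)^{-1}=\tT_i(\tfX_i)^{-1} R_m$, or equivalently $\widetilde R_m := \tT_i(\tfX_i)^{-1}R_m\,\tT_i(\tfX_i)=\b_{i,j;m}^+$. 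Note that from $\widehat{B}_i\tT_i(\tfX_i)=\tT_i(\tfX_i)B_i$ we obtain $B_i\tT_i(\tfX_i)^{-1}=\tT_i(\tfX_i)^{-1}\widehat{B}_i$, and since $\tk_i=K_iK_i'$ is central when $\tau i=i$, the element $\tk_i$ commutes with $\tT_i(\tfX_i)$.

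For the base case $m=0$, we have $\b_{i,j;0}^+=\tT_{\bw}(E_{\tau j})K_j'$ and $R_0=\tT_{\bw}(E_{\tau j})K_j'$ (since $x'_{i,\tau j;0}=\sigma(E_{\tau j})=E_{\tau j}$). Applying the automorphism $\tT_i=\tT''_{\bs_i,+1}$ to the intertwining relation~\eqref{eq:TEjUp} of Lemma~\ref{lem:rktwo1} (noting $\tT'_{\bs_i,-1}=\tT_i^{-1}$ since $\bs_i=s_i$ here), we obtain that $\tT_{\bw}(E_{\tau j})K_j'$ commutes with $\tT_i(\tfX_i)$ and hence with its inverse, which proves the base case.

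For the inductive step, assume the identity for $m-1$ and $m$ (with the convention $\b^+_{i,j;-1}=0=R_{-1}$). Multiply the defining recursion~\eqref{def:Bij2} for $\b_{i,j;m+1}^+$ on the right by $\tT_i(\tfX_i)^{-1}$, and apply the rewriting $B_i\tT_i(\tfX_i)^{-1}=\tT_i(\tfX_i)^{-1}\widehat{B}_i$ together with the induction hypothesis. This reduces the inductive step to verifying the ``hatted recursion''
\begin{align*}
-q_i^{-(c_{ij}+2m)}\widehat{B}_i R_m + R_m \widehat{B}_i
=[m+1]_i R_{m+1}+[-c_{ij}-m+1]_i q_i^{-2m-c_{ij}+2} R_{m-1}\,\tk_i.
\end{align*}

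The verification of this identity is the main technical step, and closely parallels the computation \eqref{eq:Bx2}--\eqref{eq:Bx3} in Proposition~\ref{prop:Bx}. Writing $\widehat{B}_i=q_i^{-2}F_i K_i (K_i')^{-1}+E_i K_i'$, one splits the bracket into $F_i$ and $E_i$ contributions; the $F_i$-part is computed by commuting $F_i$ past $\tT_{\bw}(x'_{i,\tau j;m})K_j'(K_i')^m$ using the $\sigma$-image of Lemma~\ref{lem:Lusxy}(4), namely $-x'_{i,\tau j;m}F_i+F_i x'_{i,\tau j;m}=[-c_{ij}-m+1]_i x'_{i,\tau j;m-1}K_i'$, while the $E_i$-part uses the $\sigma$-image of Lemma~\ref{lem:Lusxy}(2). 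After tracking the $q$-powers arising from moving $K_i,K_i'$ past $\tT_{\bw}(x'_{i,\tau j;m})K_j'(K_i')^m$ and collecting the factor $(-1)^m q_i^{-2m(c_{ij}+m-1)}$, the two contributions combine into exactly the right-hand side. The main obstacle is purely bookkeeping: keeping the $q$-exponents straight through the rescaling $\widehat{B}_i$ vs.\ $B_i$ and matching the factor shifts $(m+1)\leftrightarrow (m-1)$, but no new conceptual input beyond Lemma~\ref{lem:Lusxy} and the intertwining property of $\tT_i(\tfX_i)$ is required.
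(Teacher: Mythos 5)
Your proposal is correct and takes essentially the same approach as the paper: both reduce the inductive step to the same ``hatted recursion'' for $\widehat{B}_i$ acting on $\tT_{\bw}(x'_{i,\tau j;m})K_j'(K_i')^m$ (you verify it for the unconjugated element, while the paper phrases it as a $B_i$-recursion for the $\tT_i(\tfX_i)$-conjugated element, which is equivalent), and both verify it by splitting $\widehat{B}_i$ into its $F_i$- and $E_i$-parts and invoking the $\sigma$-images of Lemma~\ref{lem:Lusxy}(2)(4). Your explicit treatment of the base case via \eqref{eq:TEjUp} is a small point of added clarity over the paper, which leaves that step implicit, but the substance of the argument is identical.
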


\begin{proof}
We denote $\tT_{\bw}(x'_{i,\tau j;m})$ by $x'_{i,\bw\tau j;m}$ in the proof. Let $R_m$ denote RHS \eqref{eq:B'x} i.e.,
\begin{align*}
R_m=(-1)^m  q_i^{ -2m(c_{ij}+m-1)} \tT_i(\tfX_i)^{-1} x'_{i,\bw \tau j;m} K_j' (K_i')^m \tT_i(\tfX_i).
\end{align*}
We use an induction on $m$. By definition, $\b_{i,j;0}^+=\tT_{\bw}(E_{\tau j})K_j'=R_0$ and $\b_{i,j;-1}^+=0=R_{-1}$. Hence, it suffices to show that $R_m$ satisfies the same recursive relation as $\b_{i,j;m}^+$.

The recursive relation for $x'_{i,\tau j;m}$ is obtained by applying $\sigma$ to Lemma~\ref{lem:Lusxy}(2)(4). Since $\bw i=i$, both $E_i,F_i$ are fixed by $\tT_{\bw}$ and hence $x'_{i,\bw\tau j;m}$ satisfy the same recursive relation as $x'_{i,\tau j;m}$. Thus, we have
\begin{align}
\label{eq:Lusx'}
\begin{split}
-q_i^{c_{ij}+2m} x'_{i,\bw\tau j;m}E_i+ E_i x'_{i,\bw\tau j;m} &=[m+1]_i  x'_{i,\bw\tau j;m+1},\\
- x'_{i,\bw\tau j;m} F_i +  F_i x'_{i,\bw\tau j;m} &= [-c_{ij}-m+1]_i  x'_{i, \bw\tau j;m-1} K_{i}'.
\end{split}
\end{align}

Let $Q_m:=\tT_i(\tfX_i)^{-1} x'_{i,\bw\tau j;m} K_j' (K_i')^m \tT_i(\tfX_i)$.
We first formulate the recursive relation for $Q_m$ as follows
\begin{align*}
&\tT_i(\tfX_i) \big(-q_i^{-(c_{ij}+2m)} B_i Q_m + Q_m B_i\big) \tT_i(\tfX_i)^{-1}\\
=&-q_i^{-(c_{ij}+2m)} \widehat{B}_i x'_{i,\bw\tau j;m} K_j' (K_i')^m  +  x'_{i,\bw\tau j;m} K_j' (K_i')^m  \widehat{B}_i \\
=&-q_i^{-(c_{ij}+2m)} (q_i^{-2} F_i K_i K_i'^{-1}+E_i K_i') x'_{i,\bw\tau j;m} K_j' (K_i')^m \\
 & + x'_{i,\bw\tau j;m} K_j' (K_i')^m  (q_i^{-2} F_i K_i K_i'^{-1}+E_i K_i')\\
=&-q_i^{ c_{ij}+2m -2} (-F_i x'_{i,\bw\tau j;m}+ x'_{i,\bw\tau j;m} F_i)K_i K_j' (K_i')^{m-1}\\
 & -q_i^{-2 c_{ij}-4m} (E_i x'_{i,\bw\tau j;m} -q_i^{c_{ij}+2m} x'_{i,\bw\tau j;m}E_i )K_j' (K_i')^{m+1}.
\end{align*}
Now applying \eqref{eq:Lusx'} to the RHS, we obtain
\begin{align*}
\text{RHS }&=-q_i^{ c_{ij}+2m -2}  [-c_{ij}-m+1]_i  x'_{i,\bw\tau j;m-1} K_j' (K_i')^{m-1} \tk_i\\
&\quad +  -q_i^{-2 c_{ij}-4m} [m+1]_i  x'_{i,\bw\tau j;m+1} K_j' (K_i')^{m+1} .
\end{align*}
Combining the above two formulas, we conclude that $Q_m$ satisfies
\begin{align}\notag
-q_i^{-(c_{ij}+2m)} B_i Q_m + Q_m B_i=&-q_i^{ c_{ij}+2m -2}  [-c_{ij}-m+1]_i  Q_{m-1} \tk_i \\
&-q_i^{-2 c_{ij}-4m} [m+1]_i  Q_{m+1}.
\end{align}
Hence, by definition, $R_m$ satisfies recursive relation
\begin{align}
-q_i^{-(c_{ij}+2m)} B_i R_m + R_m B_i= q_i^{- c_{ij}-2m +2}  [-c_{ij}-m+1]_i  R_{m-1} \tk_i+  [m+1]_i  R_{m+1},
\end{align}
which is the same as the defining recursive relation for $\b_{i,j;m}^+$. Therefore, $\b_{i,j;m}^+=R_m$ for $m\geq 0$.
\end{proof}

\subsection{Proof of Theorem~\ref{thm:rktwo1}(i)}
By \eqref{eq:TEF}, the actions for $\tTD'_{i,-1}$ are given by
\begin{align}
\tTD'_{i,-1}(F_j)=y_{i,j;-c_{ij}},\qquad\tTD'_{i,-1}(E_{\tau j})=x_{i,\tau j;-c_{ij}}\qquad \tTD'_{i,e}( K_j') = K_j' (K_i')^{-c_{ij}}.
\end{align}
Recall the rescaled symmetries $\tT'_{i,-1}$ from \eqref{def:tT}. In this case, since $\tau i=i=\bw i$, $\vs_{i,\dm}=-q_i^{-2}$. Then we have
\begin{align}
\label{eq:tTBj}
\begin{split}
\tT'_{i,-1}(F_j)&=y_{i,j;-c_{ij}},\\
 \tT'_{i,-1}\big(\tT_{\bw}(E_{\tau j}) K_j'\big)&=(-1)^{c_{ij}} q_i^{ -2c_{ij}} \tT_{\bw}(x_{i,\tau j;-c_{ij}})K_j' (K_i')^{-c_{ij}},
 \end{split}
\end{align}
 where the second formula follows from that $\tT_{\bw} \tT'_{i,-1}=\tT'_{i,-1}\tT_{\bw}$.

 We also have analogous formulas for $\tT''_{i,+1}$
\begin{align}
\label{eq:tTBj'}
\begin{split}
\tT''_{i,+1}(F_j)&=y'_{i,j;-c_{ij}},\\
 \tT''_{i,+1}\big(\tT_{\bw}(E_{\tau j}) K_j'\big)&=(-1)^{c_{ij}} q_i^{ -2c_{ij}} \tT_{\bw}(x'_{i,\tau j;-c_{ij}})K_j' (K_i')^{-c_{ij}}.
 \end{split}
\end{align}

Recall elements $\B_{i,j;m},\b_{i,j;m}$ defined in Definitions~\ref{def:s}-\ref{def:s'}.

\begin{theorem}
\label{thm:split}
Let $ j\neq i\in \wI$.
\begin{itemize}
\item[(1)] The element $\B_{i,j;-c_{ij}}\in \tUi$ satisfies
\begin{align}
\B_{i,j;-c_{ij}} \tfX_i=\tfX_i \tT'_{i,-1}(B_j).
\end{align}

\item[(2)] The element $\b_{i,j;-c_{ij}}\in \tUi$ satisfies
\begin{align}
\b_{i,j;-c_{ij}}\tT_i(\tfX_i)^{-1} =\tT_i(\tfX_i)^{-1} \tT''_{i,+1}(B_j).
\end{align}

\item[(3)] $\b_{i,j;-c_{ij}}=\sigma^\imath(\B_{i,j;-c_{ij}})$.
\end{itemize}
\end{theorem}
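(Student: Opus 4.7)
The plan is to assemble Theorem~\ref{thm:split} from the four intertwining identities already established in Propositions~\ref{prop:By}, \ref{prop:Bx}, \ref{prop:B'y}, and \ref{prop:B'x}, combined with the explicit formulas \eqref{eq:tTBj}–\eqref{eq:tTBj'} for $\tT'_{i,-1}(B_j)$ and $\tT''_{i,+1}(B_j)$. The key observation driving the proof is that the decomposition $B_j=F_j+\tT_{\bw}(E_{\tau j})K_j'$ matches precisely with the splittings $\B_{i,j;m}=\B_{i,j;m}^-+\B_{i,j;m}^+$ and $\b_{i,j;m}=\b_{i,j;m}^-+\b_{i,j;m}^++$, with the minus parts corresponding to $F_j$ and the plus parts to $\tT_{\bw}(E_{\tau j})K_j'$. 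Part (3) will be immediate from Proposition~\ref{prop:bB} by specializing $m=-c_{ij}$.

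For part (1), I will multiply \eqref{eq:tTBj} on the left by $\tfX_i$ to obtain
\begin{align*}
\tfX_i\tT'_{i,-1}(B_j)=\tfX_i\, y_{i,j;-c_{ij}}+(-1)^{c_{ij}}q_i^{-2c_{ij}}\tfX_i\,\tT_{\bw}(x_{i,\tau j;-c_{ij}})K_j'(K_i')^{-c_{ij}}.
\end{align*}
Proposition~\ref{prop:By} at $m=-c_{ij}$ rewrites the first summand as $\B_{i,j;-c_{ij}}^-\tfX_i$. For the second summand, Proposition~\ref{prop:Bx} at $m=-c_{ij}$ identifies the coefficient of $\tfX_i$ on the right with $\B_{i,j;-c_{ij}}^+$, while the intertwining identity \eqref{eq:TEjUp} of Lemma~\ref{lem:rktwo1} precisely says that the element $\tT'_{i,-1}(\tT_{\bw}(E_{\tau j})K_j')$ commutes with $\tfX_i$. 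This lets me move $\tfX_i$ to the right of the second summand, producing $\B_{i,j;-c_{ij}}^+\tfX_i$. Summing the two halves yields $\B_{i,j;-c_{ij}}\tfX_i$ as required.

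For part (2), the same strategy applies after applying $\tT_i=\tT''_{i,+1}$ to \eqref{eq:FjUp}: this shows that $\tT_i(F_j)=y'_{i,j;-c_{ij}}$ commutes with $\tT_i(\tfX_i)$, hence with $\tT_i(\tfX_i)^{-1}$. Combining with $\b_{i,j;-c_{ij}}^-=y'_{i,j;-c_{ij}}$ from Proposition~\ref{prop:B'y} handles the minus half, while Proposition~\ref{prop:B'x} at $m=-c_{ij}$ gives exactly the plus-half identity needed to produce $\b_{i,j;-c_{ij}}^+\tT_i(\tfX_i)^{-1}$; adding them yields the desired relation. Part (3) is then Proposition~\ref{prop:bB} evaluated at $m=-c_{ij}$. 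I expect no serious obstacle here: the substantive work—the recursive definition of the root vectors and the verification of the four half-intertwiner propositions—has already been carried out, and the theorem is essentially a bookkeeping combination of these inputs with the explicit formulas for $\tT'_{i,-1}(B_j)$ and $\tT''_{i,+1}(B_j)$.
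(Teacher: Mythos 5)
Your proposal is correct and matches the paper's argument step for step: decompose $B_j = F_j + \tT_{\bw}(E_{\tau j})K_j'$, identify each half with $\B^{\mp}_{i,j;-c_{ij}}$ (resp.\ $\b^{\mp}_{i,j;-c_{ij}}$) via Propositions~\ref{prop:By}--\ref{prop:Bx} (resp.\ \ref{prop:B'y}--\ref{prop:B'x}) at $m=-c_{ij}$, use \eqref{eq:TEjUp} (resp.\ $\tT_i$ applied to \eqref{eq:FjUp}) for the requisite commutation with the quasi $K$-matrix, and invoke Proposition~\ref{prop:bB} for (3). This is exactly the paper's proof.
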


In other word, the element $\tTa{i}(B_j):=\B_{i,j;-c_{ij}}$ satisfies \eqref{eq:inter2} and $\tTb{i}(B_j):=\b_{i,j;-c_{ij}}$ satisfies \eqref{eq:inter3}. Hence, we have proved the first statement in Theorem~\ref{thm:rktwo1}(i).

\begin{proof}
We prove (1). By Lemma~\ref{lem:rktwo1} and \eqref{eq:tTBj}, we have
\begin{align*}
\tfX_i \tT'_{i,-1}(B_j)=&  \tfX_i \tT'_{i,-1}(F_j)+\tfX_i \tT'_{i,-1}(E_j K_j')\\
=& \tfX_i \tT'_{i,-1}(F_j)+\tT'_{i,-1}(E_j K_j') \tfX_i\\
=&\tfX_i y_{i,j;-c_{ij}}+(-1)^{c_{ij}} q_i^{ -2c_{ij}} \tT_{\bw}(x_{i,\tau j;-c_{ij}})K_j' (K_i')^{-c_{ij}}\tfX_i.
\end{align*}
On the other hand, setting $m=-c_{ij}$ in Proposition~\ref{prop:By}-\ref{prop:Bx}, we have
\begin{align*}
\B_{i,j;-c_{ij}}^- \tfX_i=\tfX_i y_{i,j;-c_{ij}},\qquad  \B_{i,j;-c_{ij}}^+ =(-1)^{c_{ij}} q_i^{ -2c_{ij}} \tT_{\bw}(x_{i,\tau j;-c_{ij}})K_j' (K_i')^{-c_{ij}}.
\end{align*}
Therefore, by the above two formulas, we have
\begin{align*}
\tfX_i \tT'_{i,-1}(B_j)=& \B_{i,j;-c_{ij}}^- \tfX_i+ \B_{i,j;-c_{ij}}^+ \tfX_i = \B_{i,j;-c_{ij}} \tfX_i.
\end{align*}

(2) is proved by similar arguments above, using intertwining relations in Proposition~\ref{prop:B'y}-\ref{prop:B'x} and \eqref{eq:tTBj'}.

(3) is a consequence of Proposition~\ref{prop:bB}.
\end{proof}

%%%%%%----------------------------------
%
%
\section{Higher rank formulas for $c_{i,\tau i}=0,\bw i=i$}
\label{sec:qs}
Throughout this section, we fix $i\in \fwItau$ such that $c_{i,\tau i}=0,\bw i=i$. Since $\tau $ commutes with $\bw$, we also have $\bw \tau i=\tau i$. In this case,
  we have $q_i=q_{\tau i}, B_i=F_i+E_{\tau i} K_i', \bs_i=s_i s_{\tau i}.$

%We first formulate rank 3 root vectors in $\tU$ in Definitions~\ref{def:qsxy}-\ref{def:xy'} and study their recursive relations.
We define higher rank root vectors $\B_{i,\tau i,j;m_1,m_2},\b_{i,\tau i,j;m_1,m_2}\in \tUi$ in Definitions~\ref{def:qsBij}-\ref{def:qsBij'} via recursive relations.
The divided power formulations for these elements are obtained in Proposition~\ref{thm:dvij}.
We show that $\B_{i,\tau i,j;-c_{ij},-c_{\tau i,j}},\b_{i,\tau i,j;-c_{ij},-c_{\tau i,j}}$ provide the higher rank formulas for $\tTa{i}(B_j),\tTb{i}(B_j)$ in Theorem~\ref{thm:qsbraid} and complete the proof for Theorem~\ref{thm:rktwo1}(ii).

\subsection{Definitions of root vectors}

\begin{definition}
\label{def:qsxy}
Define elements $y_{i,\tau i, j;m_1,m_2},x_{i,\tau i, j;m_1,m_2}$ for $m_1,m_2\geq 0,j\neq i,\tau i, j\in \wI$ as follows
\begin{align*}
y_{i,\tau i, j;m_1,m_2}&=\sum_{r=0}^{m_1}\sum_{s=0}^{m_2} (-1)^{r+s}  q_i^{-r(m_1 +c_{ij}-1) }q_{\tau i}^{-s(m_2 +c_{\tau i,j}-1) } F_i^{(m_1 -r)}F_{\tau_i}^{(m_2 -s)} F_j  F_{\tau i}^{(s)}F_i^{(r)}\\
x_{i,\tau i, j;m_1,m_2}&=\sum_{r=0}^{m_1}\sum_{s=0}^{m_2} (-1)^{r+s} q_i^{r(m_1+c_{ij}-1) }q_{\tau i}^{s(m_2+c_{\tau i,j}-1) } E_i^{(r)}E_{\tau i}^{(s)} E_j  E_{\tau i}^{(m_2-s)}E_i^{(m_1-r)}.
\end{align*}
\end{definition}

\begin{remark}
\label{rmk:qsxy}
Recall the elements $y_{i,j;m},x_{i,j;m}$ in Section~\ref{sec:split}. We have
\begin{align*}
y_{i,\tau i, j;m_1,m_2}&=\sum_{r=0}^{m_1} (-1)^{r}  q_i^{-r(m_1 +c_{ij}-1) }F_i^{(m_1 -r)} y_{\tau i,j;m_2} F_i^{(r)}\\
&=\sum_{s=0}^{m_2} (-1)^{s}  q_i^{-s(m_2 +c_{\tau i,j}-1) }F_{\tau i}^{(m_2 -s)} y_{  i,j;m_1} F_{\tau i}^{(s)}.
\end{align*}
In particular, $y_{i,\tau i, j;m ,0}=y_{i,j;m}$ and $y_{i,\tau i, j; 0,m }=y_{\tau i,j;m}$.
Similarly, we have $x_{i,\tau i, j;m,0}=x_{i,j;m}$ and $x_{i,\tau i, j; 0,m}=x_{ \tau i,j;m}$.
\end{remark}

\begin{definition}
\label{def:xy'}
Define elements $y'_{i,\tau i, j;m_1,m_2},x'_{i,\tau i, j;m_1,m_2}$ for $m_1,m_2\geq 0,j\neq i,\tau i,j\in \wI$ as follows
\begin{align*}
y'_{i,\tau i, j;m_1,m_2}&=\sigma(y_{i,\tau i, j;m_1,m_2}),\qquad x'_{i,\tau i, j;m_1,m_2}= \sigma(x_{i,\tau i, j;m_1,m_2}).
\end{align*}
\end{definition}

Set $y_{i,\tau i, j;m_1,m_2}=0$ and $x_{i,\tau i, j;m_1,m_2}=0$, if $m_1<0$ or $m_2<0$. Similar for $y'_{i,\tau i, j;m_1,m_2},x'_{i,\tau i, j;m_1,m_2}$.

\begin{lemma}
\label{lem:Lus-qs1}
We have, for $j\neq i,\tau i,j\in \wI$,$m_1,m_2\in \Z$
\begin{itemize}
\item[(1)] $ -q_i^{-(c_{ij}+2m_1)} y_{i,\tau i, j;m_1,m_2} F_i+ F_i y_{i,\tau i, j;m_1,m_2}=[m_1+1]_i\; y_{i,\tau i, j;m_1+1,m_2}$.
\item[(2)] $ -q_{\tau i}^{-(c_{\tau i,j}+2m_2)} y_{i,\tau i, j;m_1,m_2} F_{\tau i}+ F_{\tau i} y_{i,\tau i, j;m_1,m_2}=[m_2+1]_{\tau i}\; y_{i,\tau i, j;m_1,m_2+1}$.
\item[(3)] $ -y_{i,\tau i,j;m_1,m_2} E_i+ E_i y_{i,\tau i,j;m_1,m_2}= [-c_{ij}-m_1+1]_i \;  y_{i,\tau i,j;m_1 -1, m_2} K'_{i}$.
\item[(4)] $ -y_{i,\tau i,j;m_1,m_2} E_{\tau i}+ E_{\tau i} y_{i,\tau i,j;m_1,m_2}= [-c_{\tau i,j}-m_2+1]_{\tau i} \; y_{i,\tau i,j;m_1, m_2-1} K'_{\tau i}$.
\end{itemize}
\end{lemma}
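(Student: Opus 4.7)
The plan is to exploit the $\tau$-symmetry $i\leftrightarrow\tau i$ together with the one-sided inductive expressions from Remark~\ref{rmk:qsxy}, reducing everything to the known Lusztig identities in Lemma~\ref{lem:Lusxy}. The crucial structural input is that $c_{i,\tau i}=0$ forces $[F_i,F_{\tau i}]=0$ and $[E_i,E_{\tau i}]=0$ in $\tU$; combined with $[E_i,F_{\tau i}]=0$ (valid for $i\neq\tau i$), this means that $F_i$ and $E_i$ treat the $F_{\tau i}$-letters inside $y_{\tau i,j;m_2}$ as inert symbols.

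A direct comparison of the defining double sums, using $[F_i,F_{\tau i}]=0$ to rearrange a typical monomial $F_i^{(m_1-r)}F_{\tau i}^{(m_2-s)}F_jF_{\tau i}^{(s)}F_i^{(r)}$, yields
\begin{align*}
y_{i,\tau i,j;m_1,m_2}=y_{\tau i,i,j;m_2,m_1}.
\end{align*}
Hence identity (2) follows from (1) by swapping $i\leftrightarrow\tau i$, and (4) from (3) in the same way; it is therefore enough to prove (1) and (3).

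For (1), I would start from the first inductive form of Remark~\ref{rmk:qsxy},
\begin{align*}
y_{i,\tau i,j;m_1,m_2}=\sum_{r=0}^{m_1}(-1)^r q_i^{-r(m_1+c_{ij}-1)}F_i^{(m_1-r)}\,y_{\tau i,j;m_2}\,F_i^{(r)},
\end{align*}
and compute $-q_i^{-(c_{ij}+2m_1)}y_{i,\tau i,j;m_1,m_2}F_i+F_iy_{i,\tau i,j;m_1,m_2}$ term by term. Since $F_i$ commutes with every $F_{\tau i}$ inside $y_{\tau i,j;m_2}$, the calculation is formally identical to the proof of Lemma~\ref{lem:Lusxy}(1) with $F_j$ replaced by $y_{\tau i,j;m_2}$; the relevant $q$-exponent depends only on $(\alpha_i,\wt(y_{\tau i,j;m_2}))=(\alpha_i,m_2\alpha_{\tau i}+\alpha_j)=d_ic_{ij}$, the $\alpha_{\tau i}$-contribution vanishing. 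This matches Lusztig's formula exactly and produces $[m_1+1]_i y_{i,\tau i,j;m_1+1,m_2}$. For (3), the same expression is used; now the additional input is that $E_i$ commutes with every letter in $y_{\tau i,j;m_2}$ (with $F_{\tau i}$ because $i\neq\tau i$, and with $F_j$ because $i\neq j$), so $[E_i,y_{\tau i,j;m_2}]=0$. The commutator then reduces to a sum of $[E_i,F_i^{(m_1-r)}]\,y_{\tau i,j;m_2}\,F_i^{(r)}+F_i^{(m_1-r)}\,y_{\tau i,j;m_2}\,[E_i,F_i^{(r)}]$; applying the rank-one identity $[E_i,F_i^{(r)}]=F_i^{(r-1)}(q_i^{1-r}K_i-q_i^{r-1}K_i')/(q_i-q_i^{-1})$, and commuting the resulting $K_i'$ past $y_{\tau i,j;m_2}$ (which produces only an $\alpha_j$-weight factor since $(\alpha_i,\alpha_{\tau i})=0$), reproduces Lemma~\ref{lem:Lusxy}(3) with the correct right-hand side $[-c_{ij}-m_1+1]_i\,y_{i,\tau i,j;m_1-1,m_2}K_i'$.

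The only potential obstacle is the bookkeeping of $q$-exponents: although each of the identities is morally immediate from Lemma~\ref{lem:Lusxy}, one must verify that the ``extra'' $F_{\tau i}^{(\cdot)}$ factors surrounding the core $F_j$ contribute no new $q$-shifts. This is guaranteed precisely by $(\alpha_i,\alpha_{\tau i})=0$, so once this observation is made explicit the identities fall out by inspection, with no further algebraic input beyond Lemma~\ref{lem:Lusxy}.
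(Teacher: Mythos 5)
Your proposal is correct and uses essentially the same approach as the paper: the commutativity relations forced by $c_{i,\tau i}=0$ (and $i\neq\tau i$) let you treat the $y_{\tau i,j;m_2}$ core as a ``frozen'' element, so the one-sided inductive form from Remark~\ref{rmk:qsxy} reduces each identity to the corresponding part of Lemma~\ref{lem:Lusxy}. The paper's proof is terser (it simply asserts the identities are immediate consequences of Lemma~\ref{lem:Lusxy} and Remark~\ref{rmk:qsxy} once the vanishing commutators are noted); your added symmetry observation $y_{i,\tau i,j;m_1,m_2}=y_{\tau i,i,j;m_2,m_1}$, cutting four cases to two, is a small but genuine refinement, and your explicit weight-pairing check that $(\alpha_i,\alpha_{\tau i})=0$ produces no extra $q$-shifts is precisely the verification the paper leaves implicit.
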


\begin{proof}
Clearly, $[F_{\tau i}, E_i]=[F_i,E_{\tau i}]=0$. Since $c_{i,\tau i}=0$, we have $[F_i,F_{\tau i}]=0$. Then these four identities are immediate consequences of Lemma~\ref{lem:Lusxy} and Remark~\ref{rmk:qsxy}.
\end{proof}

\begin{lemma}
\label{lem:Lus-qs2}
We have, for $j\neq i,\tau i,j\in \wI$,$m_1,m_2\in \Z$
\begin{itemize}
\item[(1)] $ -q_i^{c_{ij}+2m_1} E_i x_{i,\tau i, j;m_1,m_2}+ x_{i,\tau i,j;m_1,m_2}E_i=[m_1+1]_i  x_{i,\tau i,j;m_1+1,m_2}$.
\item[(2)] $ -q_{\tau i}^{c_{\tau i,j}+2m_2} E_{\tau i} x_{i,\tau i,j;m_1,m_2}+ x_{i,\tau i,j;m_1,m_2}E_{\tau i}=[m_2+1]_{\tau i}  x_{i,\tau i,j;m_1,m_2+1}$.
\item[(3)] $ -F_i  x_{i,\tau i,j;m_1,m_2} +  x_{i,\tau i,j;m_1,m_2}  F_i= [-c_{ij}-m_1+1]_i \; K_{i}  x_{i,\tau i,j;m_1 -1,m_2} $.
\item[(4)] $ -F_{\tau i}  x_{i,\tau i,j;m_1,m_2} +  x_{i,\tau i,j;m_1,m_2}  F_{\tau i}= [-c_{\tau i,j}-m_2+1]_i \; K_{\tau i}  x_{i,\tau i,j;m_1,m_2-1} $.
\end{itemize}
\end{lemma}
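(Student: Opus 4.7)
The plan is to mimic the proof of Lemma~\ref{lem:Lus-qs1} and deduce the four identities from Lemma~\ref{lem:Lusxy}(2)(4) together with the commutation relations afforded by the hypotheses $c_{i,\tau i}=0$ and $j\neq i,\tau i$. The first step is to record the analogue of Remark~\ref{rmk:qsxy} for the $x$-elements: by factoring the double sum in Definition~\ref{def:qsxy} over one of the two indices, one obtains
\begin{align*}
x_{i,\tau i,j;m_1,m_2}
&=\sum_{r=0}^{m_1}(-1)^r q_i^{r(m_1+c_{ij}-1)} E_i^{(r)}\, x_{\tau i,j;m_2}\, E_i^{(m_1-r)}\\
&=\sum_{s=0}^{m_2}(-1)^s q_{\tau i}^{s(m_2+c_{\tau i,j}-1)} E_{\tau i}^{(s)}\, x_{i,j;m_1}\, E_{\tau i}^{(m_2-s)}.
\end{align*}
This is immediate from the definition once one observes that, since $c_{i,\tau i}=0$, the generators $E_i$ and $E_{\tau i}$ commute, so the ordering of $E_i^{(r)} E_{\tau i}^{(s)}$ versus $E_{\tau i}^{(s)} E_i^{(r)}$ is irrelevant.

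Next I would prove (1) and (2) using the first and second decomposition above, respectively. For (1), write $x_{i,\tau i,j;m_1,m_2}$ in the first form and commute $E_i$ past each factor $E_i^{(r)}$ and $x_{\tau i,j;m_2}$: the former commute trivially, and the latter is handled precisely by Lemma~\ref{lem:Lusxy}(2) applied to the pair $(i,\tau i,j)\to (i,\tau i\text{ and }j$ role) — more carefully, one applies Lemma~\ref{lem:Lusxy}(2) with the roles $i=i$, $j=\tau i$ or $j=j$ depending on which appears, but since $c_{i,\tau i}=0$ and $E_i$ commutes with $E_{\tau i}$, only the contribution through $E_j$ survives. The resulting sum reassembles as $[m_1+1]_i\, x_{i,\tau i,j;m_1+1,m_2}$. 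Identity (2) is symmetric, obtained from the second decomposition.

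For (3) and (4), I would argue similarly but use Lemma~\ref{lem:Lusxy}(4). In (3), commute $F_i$ through $x_{i,\tau i,j;m_1,m_2}$ written in the first decomposition; since $[F_i,E_{\tau i}]=0$ (because $i\neq\tau i$) and $[F_i, E_j]$ yields no rank-one contribution when $j\neq i$, the only nontrivial bracket comes from the iteration of Lemma~\ref{lem:Lusxy}(4) on the inner $x_{\tau i,j;m_2}$-factor, producing a $K_i$ on the left as in the desired formula. Identity (4) follows the same pattern with the second decomposition.

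The potential obstacle — really the only point that requires care — is tracking the prefactors $q_i^{c_{ij}+2m_1}$, $q_{\tau i}^{c_{\tau i,j}+2m_2}$, and confirming that the extra weight coming from moving $E_i$ (resp.\ $F_i$) past $E_{\tau i}^{(s)}$ (resp.\ $F_{\tau i}^{(s)}$) contributes trivially thanks to $c_{i,\tau i}=0$. Since no such cross term appears, the computation collapses cleanly to the rank-two identities of Lemma~\ref{lem:Lusxy}, and the lemma follows. Alternatively, one can simply apply the anti-involution $\sigma$ to Lemma~\ref{lem:Lus-qs1}: since $\sigma(y_{i,\tau i,j;m_1,m_2})=x'_{i,\tau i,j;m_1,m_2}$ and $\sigma$ swaps $E_i\leftrightarrow F_i$ and $K_i\leftrightarrow K_i'$, this yields the corresponding identities for $x'_{i,\tau i,j;m_1,m_2}$, and then applying $\omega\psi$ (which swaps $E\leftrightarrow F$ and preserves the $x\leftrightarrow y$ structure appropriately) converts them into the stated identities for $x_{i,\tau i,j;m_1,m_2}$. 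This symmetry route is the most economical and avoids repeating the computation.
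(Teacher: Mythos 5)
Your main route---decompose $x_{i,\tau i,j;m_1,m_2}$ as in Remark~\ref{rmk:qsxy} and rerun the argument of Lemma~\ref{lem:Lusxy}---does work, but it is a genuinely different proof from the paper's, and your description of it is inaccurate in places. The paper's own proof is a two-line appeal to symmetry: since $x_{i,\tau i,j;m_1,m_2}=\sigma\omega\psi\bigl(y_{i,\tau i,j;m_1,m_2}\bigr)$, one simply applies the anti-linear anti-automorphism $\sigma\omega\psi$ to the four identities of Lemma~\ref{lem:Lus-qs1}. So your ``economical alternative'' is the correct mechanism, but you have mis-stated the ingredients. In the conventions of Proposition~\ref{prop:QG4}, $\sigma$ \emph{fixes} $E_i$ and $F_i$ and only swaps $K_i\leftrightarrow K_i'$; it is the Chevalley involution $\omega$, not $\sigma$, that swaps $E_i\leftrightarrow F_i$. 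Likewise $\sigma(y_{i,\tau i,j;m_1,m_2})=y'_{i,\tau i,j;m_1,m_2}$, not $x'_{i,\tau i,j;m_1,m_2}$; the map that sends $y$ to $x$ is $\sigma\omega\psi$, and $x\mapsto x'$ is the role of $\sigma$ alone. Applying $\sigma$ to Lemma~\ref{lem:Lus-qs1} therefore gives identities for $y'$, not for $x'$; you only arrive at $x$ after the further application of $\omega\psi$ (using that $\sigma,\omega,\psi$ pairwise commute). The endpoint is right, but the intermediate claims would not survive scrutiny.

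A smaller issue in the direct route: for items (3) and (4) you say ``the only nontrivial bracket comes from the iteration of Lemma~\ref{lem:Lusxy}(4) on the inner $x_{\tau i,j;m_2}$-factor.'' This is backwards. Precisely because $c_{i,\tau i}=0$ and $j\neq i$, the operator $F_i$ commutes with both $E_{\tau i}$ and $E_j$, hence with the entire \emph{inner} factor $x_{\tau i,j;m_2}$; the nontrivial interaction is with the \emph{outer} $E_i^{(r)},E_i^{(m_1-r)}$. What you are really doing is running the index-shift calculation underlying Lemma~\ref{lem:Lusxy}(4) with $E_j$ replaced by the ``pseudo-generator'' $x_{\tau i,j;m_2}$, whose $\alpha_i$-weight component is $m_2c_{i,\tau i}+c_{ij}=c_{ij}$, which is why the same Cartan integer enters. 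Similarly for (1) and (2) the mechanism is index shifting, not commutation past $x_{\tau i,j;m_2}$ (indeed $E_i$ and $E_j$ do not commute in general). These points do not invalidate the approach---the argument can be made precise and works---but as written the explanation misattributes where the terms come from.
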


\begin{proof}
By definition $x_{i,\tau i,j;m_1,m_2}=\sigma \omega\psi (y_{i,\tau i,j;m_1,m_2})$. These four identities are obtained by applying $\sigma \omega\psi$ to those four identities in Lemma~\ref{lem:Lus-qs1}.
\end{proof}

\begin{definition}
\label{def:qsBij}
Let $j\neq i,\tau i,j\in \wI$. Define $\B_{i,\tau i, j;m_1,m_2}^\pm $ for $m_1,m_2 \geq -1$ to be the elements in $\tU$ determined by the following two recursive relations,
 \begin{align}\notag
&-q_i^{-(c_{ij}+2m_1)} \B_{i,\tau i, j;m_1,m_2}^\pm B_i + B_i \B_{i,\tau i, j;m_1,m_2}^\pm\\
=&[m_1+1]_i\; \B_{i,\tau i, j;m_1+1,m_2}^\pm+q_i^{-(c_{ij}+2m_1)} [-c_{\tau i,j}-m_2+1]_{i} \; \B_{i,\tau i, j;m_1,m_2-1}^\pm \tk_i,
  \label{def:qsBij1}
  \\\notag
&-q_i^{-(c_{\tau i,j}+2m_2)}\B_{i,\tau i, j;m_1,m_2}^\pm B_{\tau i} + B_{\tau i} \B_{i,\tau i, j;m_1,m_2}^\pm \\
=&[m_2+1]_{i}\; \B_{i,\tau i, j;m_1,m_2+1}^\pm +q_i^{-(c_{\tau i,j}+2m_2)} [-c_{ i,j}-m_1+1]_{i} \; \B_{i,\tau i, j;m_1-1,m_2}^\pm \tk_{\tau i},
\label{def:qsBij2}
 \end{align}
 where we set
\begin{align}
\label{def:qsBij3}
 \B_{i,\tau i, j;m_1,-1}^\pm=\B_{i,\tau i, j;-1,m_2}^\pm=0,\qquad \B_{i,\tau i, j;0,0}^-= F_j,\qquad \B_{i,\tau i, j;0,0}^+=\tT_{\bw}(E_{\tau j}) K_j'.
\end{align}
 %$\B_{i,j;m_1,-1}^\pm,\B_{\tau i,j;m_2,-1}^\pm$ on the RHS are the elements defined in \eqref{def:Bij}.
\end{definition}

Set $\B_{i,\tau i,j;m_1,m_2}=\B_{i,\tau i,j;m_1,m_2}^- +\B_{i,\tau i,j;m_1,m_2}^+$. Since $\B_{i,\tau i,j,0,0}=B_j\in \tUi$, one can inductively show that $\B_{i,\tau i,j;m_1,m_2}\in \tUi$ for $m_1,m_2\geq 0$.

\begin{definition}
\label{def:qsBij'}
Let $j\neq i,\tau i,j\in \wI$. Define $\b_{i,\tau i, j;m_1,m_2}^\pm $ for $m_1,m_2\geq -1$ to be the elements in $\tU$ determined by the following two recursive relations,
 \begin{align}\notag
&-q_i^{-(c_{ij}+2m_1)} B_i\b_{i,\tau i, j;m_1,m_2}^\pm + \b_{i,\tau i, j;m_1,m_2}^\pm B_i\\
=&[m_1+1]_i\; \b_{i,\tau i, j;m_1+1,m_2}^\pm + q_i^{-(c_{\tau i,j}+2m_2-2)} [-c_{\tau i,j}-m_2+1]_{i} \;\b_{i,\tau i, j;m_1,m_2-1}^\pm\tk_{\tau i},
  \label{def:qsBij'1}
  \\\notag
&-q_i^{-(c_{\tau i,j}+2m_2)}B_{\tau i} \b_{i,\tau i, j;m_1,m_2}^\pm + \b_{i,\tau i, j;m_1,m_2}^\pm B_{\tau i}\\
=&[m_2+1]_{i}\; \b_{i,\tau i, j;m_1,m_2+1}^\pm +q_i^{-(c_{ij}+2m_1-2)} [-c_{ i,j}-m_1+1]_{i} \;\b_{i,\tau i, j;m_1-1,m_2}^\pm \tk_i,
\label{def:qsBij'2}
 \end{align}
 where we set
\begin{align}
\label{def:qsBij'3}
 \b_{i,\tau i, j;m_1,-1}^\pm=\b_{i,\tau i, j;-1,m_2}^\pm=0,\qquad \b_{i,\tau i, j;0,0}^-= F_j,\qquad \b_{i,\tau i, j;0,0}^+=\tT_{\bw}(E_{\tau j}) K_j'.
\end{align}
 %$\B_{i,j;m_1,-1}^\pm,\B_{\tau i,j;m_2,-1}^\pm$ on the RHS are the elements defined in \eqref{def:Bij}.
\end{definition}

Set $\b_{i,\tau i,j;m_1,m_2}=\b_{i,\tau i,j;m_1,m_2}^- +\b_{i,\tau i,j;m_1,m_2}^+$. One can also inductively show that $\b_{i,\tau i,j;m_1,m_2}\in \tUi$ for $m_1,m_2\geq 0$.

Recall the anti-involution $\sigma^\imath$ on $\tUi$ from Proposition~\ref{prop:sigma}.
\begin{proposition}
\label{prop:qsbB}
Let $j\in \wI,j\neq i,\tau i$. Then $\b_{i,j;m_1,m_2}=\sigma^\imath ( \B_{i,j;m_1,m_2})$ for $m_1,m_2\geq 0$.
\end{proposition}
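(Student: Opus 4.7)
The plan is to mirror the proof of Proposition~\ref{prop:bB}: apply the anti-involution $\sigma^\imath$ to the two recursive relations \eqref{def:qsBij1}--\eqref{def:qsBij2} defining $\B_{i,\tau i, j;m_1,m_2}$ and verify that the images $\sigma^\imath(\B_{i,\tau i, j;m_1,m_2})$ satisfy exactly the recursions \eqref{def:qsBij'1}--\eqref{def:qsBij'2} defining $\b_{i,\tau i, j;m_1,m_2}$, then conclude by double induction on $(m_1,m_2)$. The base cases are immediate: $\sigma^\imath(\B_{i,\tau i, j;0,0}) = \sigma^\imath(B_j) = B_j = \b_{i,\tau i, j;0,0}$ since $\sigma^\imath$ fixes $B_j$ for $j\in\wI$ by Proposition~\ref{prop:sigma}, and both sides vanish whenever one index equals $-1$.

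For the inductive step, recall from Proposition~\ref{prop:sigma} that $\sigma^\imath$ is an anti-involution fixing $B_i,B_{\tau i},B_j$ and satisfying $\sigma^\imath(\tk_i)=\sigma(K_iK'_{\tau i})=K_{\tau i}K'_i=\tk_{\tau i}$, and similarly $\sigma^\imath(\tk_{\tau i})=\tk_i$. Applying $\sigma^\imath$ to \eqref{def:qsBij1}, the left-hand side transforms directly into the left-hand side of \eqref{def:qsBij'1}, while the right-hand side becomes
\[
[m_1+1]_i\,\sigma^\imath(\B_{i,\tau i, j;m_1+1,m_2}) + q_i^{-(c_{ij}+2m_1)}[-c_{\tau i,j}-m_2+1]_i\,\tk_{\tau i}\,\sigma^\imath(\B_{i,\tau i, j;m_1,m_2-1}).
\]
To match \eqref{def:qsBij'1}, one must move $\tk_{\tau i}$ past $\sigma^\imath(\B_{i,\tau i, j;m_1,m_2-1})=\b_{i,\tau i, j;m_1,m_2-1}$ (by induction) and check that the resulting $q$-power equals $q_i^{-(c_{\tau i,j}+2m_2-2)}$.

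The key calculation is the $\tk_{\tau i}$-weight of $\b_{i,\tau i, j;m_1,m_2-1}$. A direct computation using $c_{i,\tau i}=0$ gives $\tk_{\tau i}B_i\tk_{\tau i}^{-1}=q_i^2 B_i$, $\tk_{\tau i}B_{\tau i}\tk_{\tau i}^{-1}=q_i^{-2}B_{\tau i}$, and $\tk_{\tau i}B_j\tk_{\tau i}^{-1}=q_i^{c_{ij}-c_{\tau i,j}}B_j$. Since $\b_{i,\tau i, j;m_1,m_2-1}$ is built inductively from $B_j$ via $m_1$ bracketings against $B_i$ and $m_2-1$ bracketings against $B_{\tau i}$, it is homogeneous of $\tk_{\tau i}$-weight $q_i^{c_{ij}-c_{\tau i,j}+2m_1-2(m_2-1)}$. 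Combining this factor with $q_i^{-(c_{ij}+2m_1)}$ indeed produces $q_i^{-(c_{\tau i,j}+2m_2-2)}$, completing the match with \eqref{def:qsBij'1}. An entirely symmetric computation, swapping the roles of $i$ and $\tau i$, handles \eqref{def:qsBij2} versus \eqref{def:qsBij'2}.

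The main obstacle is this weight/$q$-factor bookkeeping; once it is verified, the double induction on $(m_1,m_2)$ closes routinely (consistency between the two recursions follows from the well-definedness of $\b_{i,\tau i,j;m_1,m_2}$ noted in Definition~\ref{def:qsBij'}). As a sanity check, the same argument specializes, under the identifications $i=\tau i$ and $\tk_i=\tk_{\tau i}$, to the argument used in Proposition~\ref{prop:bB}.
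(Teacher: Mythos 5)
Your proof is correct and takes the same basic route as the paper: apply the anti-involution $\sigma^\imath$ to the recursions \eqref{def:qsBij1}--\eqref{def:qsBij2}, check that the images satisfy \eqref{def:qsBij'1}--\eqref{def:qsBij'2}, and close by induction (with base case from $\sigma^\imath(B_j)=B_j$). Where you differ from the paper's write-up is in level of detail: the paper states "it is clear that $\sigma^\imath(\B_{i,\tau i, j;m_1,m_2})$ satisfies the same recursive relations as $\b_{i,\tau i, j;m_1,m_2}$", but as you observe this is not a purely formal match. Applying $\sigma^\imath$ to the RHS of \eqref{def:qsBij1} produces the term $q_i^{-(c_{ij}+2m_1)}[-c_{\tau i,j}-m_2+1]_i\,\tk_{\tau i}\,\sigma^\imath(\B_{i,\tau i,j;m_1,m_2-1})$, with $\tk_{\tau i}$ on the left and a $q$-power that does not literally equal the factor $q_i^{-(c_{\tau i,j}+2m_2-2)}$ appearing in \eqref{def:qsBij'1} (where $\tk_{\tau i}$ sits on the right). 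One must commute $\tk_{\tau i}$ through, using the $\tk_{\tau i}$-weight of the root vector, to convert one $q$-power into the other. This subtlety is absent in the type (i) case (Proposition~\ref{prop:bB}) because there $\tk_i = K_iK_i'$ is central, which is presumably why the paper regarded the type (ii) verification as equally "clear"; your explicit weight computation $\tk_{\tau i}B_i\tk_{\tau i}^{-1}=q_i^2B_i$, $\tk_{\tau i}B_{\tau i}\tk_{\tau i}^{-1}=q_i^{-2}B_{\tau i}$, $\tk_{\tau i}B_j\tk_{\tau i}^{-1}=q_i^{c_{ij}-c_{\tau i,j}}B_j$ and the resulting weight $q_i^{c_{ij}-c_{\tau i,j}+2m_1-2(m_2-1)}$ of $\b_{i,\tau i,j;m_1,m_2-1}$ correctly supplies the missing step, and the arithmetic $-(c_{ij}+2m_1)+\big(c_{ij}-c_{\tau i,j}+2m_1-2(m_2-1)\big)=-(c_{\tau i,j}+2m_2-2)$ indeed closes the gap. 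So your proof is a more careful version of the paper's own argument rather than a different method.
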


\begin{proof}
By Definition~\ref{def:qsBij}-\ref{def:qsBij'}, it is clear that $\sigma^\imath ( \B_{i,j;m_1,m_2})$ satisfies the same recursive relations as $\b_{i,j;m_1,m_2}$. Since $\b_{i,j;0,0}=B_j=\sigma^\imath ( \B_{i,j;0,0})$, this proposition follows by induction.
\end{proof}

\begin{lemma}
\label{lem:qsBB}
Let $j\in \wI, j\neq i,\tau i$. We have, for $m_1,m_2\geq 0,$
\begin{align*}
\b^-_{i,\tau i, j;m_1,m_2}&=\sigma \big( \tfX_i^{-1} \B^-_{i,\tau i, j;m_1,m_2} \tfX_i \big).
\end{align*}
\end{lemma}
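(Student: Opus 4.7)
The plan is to adapt the proof of Lemma~\ref{lem:BB} to this two-variable setting. First, I would introduce the subalgebra $\tU_{[i,\tau i;j]}^-\subset\tU$ generated by $B_i$, $B_{\tau i}$, $\tk_i$, $\tk_{\tau i}$, and $F_j$. The defining recursions \eqref{def:qsBij1}--\eqref{def:qsBij2} and \eqref{def:qsBij'1}--\eqref{def:qsBij'2}, together with the common initial value $\B_{i,\tau i,j;0,0}^-=F_j=\b_{i,\tau i,j;0,0}^-$, imply by induction on $m_1+m_2$ that both $\B_{i,\tau i,j;m_1,m_2}^-$ and $\b_{i,\tau i,j;m_1,m_2}^-$ lie in $\tU_{[i,\tau i;j]}^-$.

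Next, I would introduce the map $\sigma_{ij}(x):=\sigma(\tfX_i^{-1}x\tfX_i)$ and verify that it restricts to a well-defined anti-automorphism of $\tU_{[i,\tau i;j]}^-$. Applying Proposition~\ref{prop:fX1} to the rank-one subdiagram $\I_{\bullet,i}$ gives $B_k\tfX_i=\tfX_i B_k^\sigma$ and $\tk_l\tfX_i=\tfX_i\tk_l$ for $k,l\in\{i,\tau i\}$, while Lemma~\ref{lem:rktwo1} gives $F_j\tfX_i=\tfX_i F_j$; unwinding $\sigma$ then yields
\begin{align*}
\sigma_{ij}(B_i)=B_i,\quad \sigma_{ij}(B_{\tau i})=B_{\tau i},\quad \sigma_{ij}(F_j)=F_j,\quad \sigma_{ij}(\tk_i)=\tk_{\tau i},\quad \sigma_{ij}(\tk_{\tau i})=\tk_i.
\end{align*}

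Finally, I would apply $\sigma_{ij}$ to the defining recursions \eqref{def:qsBij1}--\eqref{def:qsBij2} of $\B_{i,\tau i,j;m_1,m_2}^-$ and show that the result matches \eqref{def:qsBij'1}--\eqref{def:qsBij'2}. Since $\sigma_{ij}$ reverses products and swaps $\tk_i\leftrightarrow\tk_{\tau i}$, applying it to the $\tk_i$-term on the right-hand side of \eqref{def:qsBij1} places a $\tk_{\tau i}$-factor on the \emph{left} of $\sigma_{ij}(\B_{i,\tau i,j;m_1,m_2-1}^-)$, whereas in \eqref{def:qsBij'1} the corresponding $\tk_{\tau i}$-factor appears on the \emph{right}. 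Reconciling these two positions is the main obstacle. I would overcome it by a short separate induction, using the base commutations $\tk_{\tau i}F_j=q_i^{c_{ij}-c_{\tau i,j}}F_j\tk_{\tau i}$, $\tk_{\tau i}B_i=q_i^{2}B_i\tk_{\tau i}$, and $\tk_{\tau i}B_{\tau i}=q_i^{-2}B_{\tau i}\tk_{\tau i}$ (all of which hinge on $c_{i,\tau i}=0$ and $d_i=d_{\tau i}$), to establish
\begin{align*}
\tk_{\tau i}\,\B_{i,\tau i,j;m_1,m_2}^-&=q_i^{c_{ij}-c_{\tau i,j}+2m_1-2m_2}\,\B_{i,\tau i,j;m_1,m_2}^-\,\tk_{\tau i},\\
\tk_i\,\B_{i,\tau i,j;m_1,m_2}^-&=q_i^{c_{\tau i,j}-c_{ij}-2m_1+2m_2}\,\B_{i,\tau i,j;m_1,m_2}^-\,\tk_i.
\end{align*}
Moving $\tk_{\tau i}$ across $\B_{i,\tau i,j;m_1,m_2-1}^-$ then generates the scalar $q_i^{c_{ij}-c_{\tau i,j}+2m_1-2m_2+2}$, which precisely converts the coefficient $q_i^{-(c_{ij}+2m_1)}$ from \eqref{def:qsBij1} into $q_i^{-(c_{\tau i,j}+2m_2-2)}$ in \eqref{def:qsBij'1}. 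A symmetric calculation handles \eqref{def:qsBij2} versus \eqref{def:qsBij'2}. Induction on $m_1+m_2$ then delivers $\sigma_{ij}(\B_{i,\tau i,j;m_1,m_2}^-)=\b_{i,\tau i,j;m_1,m_2}^-$, which is the asserted identity.
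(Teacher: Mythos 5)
Your proposal follows the paper's proof exactly in outline: pass to the subalgebra $\tU_{[i;j]}^-$ generated by $B_i,B_{\tau i},\tk_i,\tk_{\tau i},F_j$, observe that $\sigma_{ij}(x):=\sigma(\tfX_i^{-1}x\tfX_i)$ is a well-defined anti-automorphism there fixing $B_i,B_{\tau i},F_j$ and swapping $\tk_i\leftrightarrow\tk_{\tau i}$, then compare the image of \eqref{def:qsBij1}--\eqref{def:qsBij2} under $\sigma_{ij}$ to \eqref{def:qsBij'1}--\eqref{def:qsBij'2} and induct. Where you add value is in making precise the step the paper dispatches with ``it is clear'': applying the anti-automorphism $\sigma_{ij}$ to \eqref{def:qsBij1} places $\tk_{\tau i}$ on the \emph{left} of the degree-lowered root vector with the scalar $q_i^{-(c_{ij}+2m_1)}$, whereas \eqref{def:qsBij'1} has $\tk_{\tau i}$ on the \emph{right} with the scalar $q_i^{-(c_{\tau i,j}+2m_2-2)}$. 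Your weight-commutation identity
\[
\tk_{\tau i}\,\B_{i,\tau i,j;m_1,m_2}^-=q_i^{c_{ij}-c_{\tau i,j}+2m_1-2m_2}\,\B_{i,\tau i,j;m_1,m_2}^-\,\tk_{\tau i}
\]
(and its companion for $\tk_i$), proved from the base commutations you list by induction on the recursion, exactly reconciles these two, since $q_i^{-(c_{ij}+2m_1)}\cdot q_i^{c_{ij}-c_{\tau i,j}+2m_1-2(m_2-1)}=q_i^{-(c_{\tau i,j}+2m_2-2)}$; the argument for \eqref{def:qsBij2} versus \eqref{def:qsBij'2} is the symmetric counterpart. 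One minor imprecision: after applying $\sigma_{ij}$ and invoking the inductive hypothesis, the element you need to move $\tk_{\tau i}$ across is $\sigma_{ij}(\B^-_{i,\tau i,j;m_1,m_2-1})=\b^-_{i,\tau i,j;m_1,m_2-1}$ rather than $\B^-_{i,\tau i,j;m_1,m_2-1}$ itself; but the identical weight-commutation relation holds for $\b^-$ by the same induction on the recursions \eqref{def:qsBij'1}--\eqref{def:qsBij'2} with the same initial term $F_j$, so the conclusion stands. In short, your proof is correct, same strategy, with a worthwhile explication of what the paper leaves implicit.
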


\begin{proof}
Consider the subalgebra $\tU_{[i;j]}^-$ of $\tU$ generated by $B_i,B_{\tau i},\tk_i,\tk_{\tau i},F_j$. It is clear from the above definitions that $\B^-_{i,\tau i, j;m_1,m_2},\b^-_{i,\tau i, j;m_1,m_2}\in \tU_{[i;j]}^-$. By Proposition~\ref{prop:fX1} and Lemma~\ref{lem:rktwo1}, there is a well-defined anti-automorphism $\sigma_{ij}$ on $\tU_{[i;j]}^-$, which is given by
\begin{align*}
\sigma_{ij}: x \mapsto \sigma \big( \tfX_i^{-1} x \tfX_i \big).
\end{align*}
Moreover, $\sigma_{ij}$ fixes $B_i,B_{\tau i},F_j$ and sends $\tk_i\leftrightarrow \tk_{\tau i}$. Applying $\sigma_{ij}$ to \eqref{def:qsBij1}-\eqref{def:qsBij2}, it is clear that $ \sigma_{ij} (\B^-_{i,\tau i, j;m_1,m_2})$ satisfies the same recursive relations as $\b^-_{i,\tau i, j;m_1,m_2}$. Then the desired identity follows by induction.
\end{proof}

\subsection{A divided power formulation}

In this subsection, we derive the formulas of root vectors $\B_{i,\tau i,j; m_1,m_2},\b_{i,\tau i,j; m_1,m_2}$, in terms of divided powers of generators of $\tUi$, from their recursive Definitions~\ref{def:qsBij}-\ref{def:qsBij'}.
Denote $$[\tk_i;a]:=\frac{\tk_i q_i^a-\tk_{\tau i} q_i^{-a}}{q_i -q_i^{-1}}.$$

\begin{lemma}
\label{lem:J95B}
We have for any $m\geq 0$,
\begin{align*}
B_{\tau i} B_i^{(m)}-B_i^{(m)} B_{\tau i}= B_i^{(m-1)} [\tk_i;1-m].
\end{align*}
\end{lemma}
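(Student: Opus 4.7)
The plan is to prove the identity by induction on $m$, reducing the general case to the base case $m=1$ (which is essentially the defining commutator $[B_{\tau i}, B_i]$) together with a quasi-commutation rule between the toral elements $\tk_i, \tk_{\tau i}$ and $B_i$.

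First I would verify the base case $m=1$. Writing $B_i = F_i + E_{\tau i} K_i'$ and $B_{\tau i} = F_{\tau i} + E_i K_{\tau i}'$, the hypothesis $c_{i,\tau i}=0$ forces $[F_i, F_{\tau i}] = 0 = [E_i, E_{\tau i}]$ and makes $K_i'$ commute with $F_{\tau i}$ and $K_{\tau i}'$ commute with $F_i$. Consequently the commutator $[B_{\tau i}, B_i]$ collapses to the two nontrivial pieces $[F_{\tau i}, E_{\tau i}] K_i'$ and $[E_i, F_i] K_{\tau i}'$, which combine into $(\tk_i - \tk_{\tau i})/(q_i - q_i^{-1}) = [\tk_i; 0]$, matching the right-hand side with $B_i^{(0)} = 1$.

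Next I would record the elementary quasi-commutation rules $\tk_i B_i = q_i^{-2} B_i \tk_i$ and $\tk_{\tau i} B_i = q_i^{2} B_i \tk_{\tau i}$ (these come directly from the Drinfeld-double relations using $c_{ii}=2$ and $c_{i,\tau i}=0$). From them one obtains
\begin{align*}
[\tk_i; n]\, B_i \;=\; B_i\, [\tk_i; n-2], \qquad B_i^{(k)}\,[\tk_i; n] \;=\; [\tk_i; n+2k]\, B_i^{(k)}.
\end{align*}
Assuming the lemma for $m$, I would compute $B_{\tau i} B_i^{(m+1)}$ by writing $B_i^{(m+1)} = \tfrac{1}{[m+1]_i} B_i^{(m)} B_i$, substituting the inductive hypothesis, and then applying the base case $B_{\tau i} B_i = B_i B_{\tau i} - [\tk_i;0]$ together with the commutation rule above. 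After collecting terms, everything reduces to a single power $B_i^{(m)}$ multiplied by an expression in $[\tk_i; -m]$, $[\tk_i; -m-1]$, and $[\tk_i; 0]$.

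The remaining work is a numerical check: verifying that
\begin{align*}
[m]_i\,[\tk_i; -1-m] \;-\; [\tk_i; 0] \;=\; [m+1]_i\,[\tk_i;-m],
\end{align*}
which separates into two scalar identities on the $\tk_i$- and $\tk_{\tau i}$-coefficients. Both reduce to the standard $q$-integer identity $q_i^{-m}[m+1]_i - q_i^{-m-1}[m]_i = 1$ (and its companion with signs reversed). This closes the induction. I expect the main bookkeeping obstacle to be keeping the $q_i$-powers consistent when pushing $[\tk_i; n]$ past $B_i$, since the two summands $\tk_i$ and $\tk_{\tau i}$ of $[\tk_i; n]$ satisfy opposite quasi-commutation rules with $B_i$; everything else is routine induction.
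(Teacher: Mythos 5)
Your approach---establish $[B_{\tau i}, B_i]=[\tk_i;0]$, record the quasi-commutations $\tk_i B_i=q_i^{-2}B_i\tk_i$ and $\tk_{\tau i}B_i=q_i^{2}B_i\tk_{\tau i}$, and induct on $m$ by peeling off one factor of $B_i$ at a time---is precisely what the paper intends (it cites Jantzen \S 1.3 for the induction). There is one sign slip to correct. Your base-case computation correctly yields $[B_{\tau i},B_i]=[\tk_i;0]$, that is, $B_{\tau i}B_i=B_iB_{\tau i}+[\tk_i;0]$; but in the inductive step you substitute $B_{\tau i}B_i=B_iB_{\tau i}-[\tk_i;0]$, and the scalar identity you display,
\[
[m]_i\,[\tk_i;-1-m]-[\tk_i;0]=[m+1]_i\,[\tk_i;-m],
\]
inherits the wrong sign: comparing $\tk_i$-coefficients it reduces to $-1=1$. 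The identity actually needed is
\[
[\tk_i;0]+[m]_i\,[\tk_i;-1-m]=[m+1]_i\,[\tk_i;-m],
\]
which is equivalent to the $q$-integer identity $q_i^{-m}[m+1]_i-q_i^{-m-1}[m]_i=1$ that you cite (together with its sign-reversed companion for the $\tk_{\tau i}$-coefficient), so your closing verification is already verifying the correct version. Once the sign is fixed, the induction closes and the argument coincides with the one the paper sketches.
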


\begin{proof}
For $c_{i,\tau i}=0$, $B_i,B_{\tau i}$ satisfy the following relation
\begin{align*}
[B_{\tau i},B_i] = \frac{\tk_i  -\tk_{\tau i} }{q_i -q_i^{-1}}.
\end{align*}
Using this relation, one can then prove this lemma by induction on $m$ c.f. \cite[\S 1.3]{Ja95}.
\end{proof}

\begin{proposition}
\label{thm:dvij}
Let $j\in \wI, j\neq i,\tau i$. The elements $\B_{i,\tau i,j; m_1,m_2},\b_{i,\tau i,j; m_1,m_2}$ defined in Definitions~\ref{def:qsBij}-\ref{def:qsBij'} admit following formulas
\begin{align}\notag
\B_{i,\tau i,j; m_1,m_2}=&\sum_{u=0}^{\min(m_1,m_2)} \sum_{r=0}^{m_1-u}\sum_{s=0}^{m_2-u} (-1)^{r+s+u}q_i^{r(\alpha-m_1-u+1)+s(\beta-m_2+u+1)+u(\alpha-m_1+1)}\times%q_i^{s(\beta-m_2-u+1)+r(\alpha-m_1+u+1)+u(\beta-m_2+1)}
\\& \times \qbinom{\beta-m_2+u}{u}_i B_i^{(m_1-r-u)}B_{\tau i}^{(m_2-s-u)}B_j B_{\tau i}^{(s)}B_i^{(r)} \tk_i^u .
\label{def:dvij}
\\\notag
\b_{i,\tau i,j; m_1,m_2}=&\sum_{u=0}^{\min(m_1,m_2)} \sum_{r=0}^{m_1-u}\sum_{s=0}^{m_2-u} (-1)^{r+s+u}q_i^{r(\alpha-m_1-u+1)+s(\beta-m_2+u+1)+u(\alpha-m_1+1)}\times%q_i^{s(\beta-m_2-u+1)+r(\alpha-m_1+u+1)+u(\beta-m_2+1)}
\\& \times \qbinom{\beta-m_2+u}{u}_i \tk_{\tau i}^u B_i^{(r)}B_{\tau i}^{(s)}B_j B_{\tau i}^{(m_2-s-u)}B_i^{(m_1-r-u)}.
\label{def:dvij2}
\end{align}
\end{proposition}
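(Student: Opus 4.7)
The plan is to first establish \eqref{def:dvij} for $\B_{i,\tau i,j;m_1,m_2}$ by double induction on $(m_1,m_2)$ and then to deduce \eqref{def:dvij2} at once from Proposition~\ref{prop:qsbB}. Indeed, since $\sigma^\imath$ is an anti-automorphism fixing $B_i,B_{\tau i},B_j$ and swapping $\tk_i \leftrightarrow \tk_{\tau i}$ (by \eqref{eq:newb1-2} and the explicit action of $\sigma$ on the Cartan part), applying $\sigma^\imath$ to the right-hand side of \eqref{def:dvij} reverses each monomial and sends $\tk_i^u \mapsto \tk_{\tau i}^u$, producing exactly the right-hand side of \eqref{def:dvij2}. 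Hence only \eqref{def:dvij} requires a direct argument.

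Denote by $R_{m_1,m_2}$ the proposed closed-form expression on the right-hand side of \eqref{def:dvij}. The base case $m_1=m_2=0$ gives $R_{0,0}=B_j=F_j+\tT_{\bw}(E_{\tau j})K_j'=\B^-_{i,\tau i,j;0,0}+\B^+_{i,\tau i,j;0,0}$, matching \eqref{def:qsBij3}. For the inductive step, I will verify that $R_{m_1,m_2}$ satisfies the recursion \eqref{def:qsBij1}; verification of \eqref{def:qsBij2} then follows by the manifest $i \leftrightarrow \tau i$ symmetry of both $R_{m_1,m_2}$ (under the simultaneous swap $r \leftrightarrow s$, $m_1\leftrightarrow m_2$) and the pair of recursions. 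Together with the initial conditions and the evident uniqueness of the solution to Definition~\ref{def:qsBij}, this will force $R_{m_1,m_2}=\B_{i,\tau i,j;m_1,m_2}$.

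The core computation is to expand $-q_i^{-(c_{ij}+2m_1)} R_{m_1,m_2} B_i + B_i R_{m_1,m_2}$ monomial-by-monomial and identify it with $[m_1+1]_i\,R_{m_1+1,m_2}+q_i^{-(c_{ij}+2m_1)}[-c_{\tau i,j}-m_2+1]_{i}\,R_{m_1,m_2-1}\tk_i$. Moving $B_i$ in from either end of a typical monomial $B_i^{(a)}B_{\tau i}^{(b)}B_j B_{\tau i}^{(s)}B_i^{(r)}\tk_i^u$ uses three ingredients: the standard divided-power identity $B_iB_i^{(r)}=[r+1]_iB_i^{(r+1)}$; the $i\leftrightarrow\tau i$ analogue of Lemma~\ref{lem:J95B}, which commutes $B_i$ past $B_{\tau i}^{(s)}$ at the cost of a Cartan correction $B_{\tau i}^{(s-1)}[\tk_{\tau i};1-s]$; and the scalar commutation of $\tk_i$ past $B_j$, which produces a factor $q_i^{-c_{ij}}$ (the condition $c_{i,\tau i}=0$ ensures that $\tk_i=K_iK_{\tau i}'$ pairs with $B_j$ only through the $i$-row of the Cartan matrix). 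After expansion and re-indexing via $u \mapsto u+1$, $s \mapsto s-1$, the terms without Cartan corrections feed into $[m_1+1]_iR_{m_1+1,m_2}$, while the $\tk_i$-component of $[\tk_{\tau i};1-s]$ feeds into $R_{m_1,m_2-1}\tk_i$; the $\tk_{\tau i}$-component cancels globally thanks to the symmetric roles of $B_i R_{m_1,m_2}$ and $R_{m_1,m_2} B_i$.

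The main obstacle will be the bookkeeping of $q$-power exponents generated by these commutations. After collecting, one reduces to verifying standard $q$-identities of Pascal type such as $\qbinom{\beta-m_2+u+1}{u}_{q_i} = q_i^{a}\qbinom{\beta-m_2+u}{u}_{q_i}+q_i^{b}\qbinom{\beta-m_2+u}{u-1}_{q_i}$ together with rank-one $q$-bracket identities $q_i^{c}[r+1]_i+q_i^{d}[m_1-r-u+1]_i=q_i^{e}[m_1+1]_i$. The calculation is entirely parallel to Lusztig's derivation of divided-power expansions for $y_{i,j;m,e}$ in \cite[Lemma 7.1.2]{Lus94}, the only new feature being the emergence of $\tk_i$-shifted terms from Lemma~\ref{lem:J95B}. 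Once the exponents are correctly tracked, all remaining identities reduce to these standard $q$-binomial manipulations and the induction closes, completing the proof of the proposition.
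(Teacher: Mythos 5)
Your overall strategy — prove the closed formula for $\B_{i,\tau i,j;m_1,m_2}$ by checking that it satisfies the defining recursions, then apply $\sigma^\imath$ to get the formula for $\b_{i,\tau i,j;m_1,m_2}$ — is exactly what the paper does, and your treatment of $\sigma^\imath$ (it fixes $B_i,B_{\tau i},B_j$, swaps $\tk_i\leftrightarrow\tk_{\tau i}$, and reverses monomials) is correct. However, there is a genuine gap: you claim that once \eqref{def:qsBij1} is verified for the proposed expression $R_{m_1,m_2}$, relation \eqref{def:qsBij2} follows automatically ``by the manifest $i\leftrightarrow\tau i$ symmetry of $R_{m_1,m_2}$ under $r\leftrightarrow s$, $m_1\leftrightarrow m_2$.'' This is false. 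The closed formula is not invariant under that swap: the $q$-binomial $\qbinom{\beta-m_2+u}{u}_i$ depends on $\beta-m_2$ only (not $\alpha-m_1$), the Cartan factor is $\tk_i^u$ (not $\tk_{\tau i}^u$), and the word order of each monomial is $B_i^{(\ast)}B_{\tau i}^{(\ast)}B_j B_{\tau i}^{(\ast)}B_i^{(\ast)}\tk_i^u$, which is not the same element as the swapped word since $B_i$ and $B_{\tau i}$ do not commute in $\tUi$ (Lemma~\ref{lem:J95B}). One cannot invoke a symmetry of $\B_{i,\tau i,j;m_1,m_2}$ that would only be a consequence of the identity you are trying to prove. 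In the paper, \eqref{def:qsBij2} is verified by a separate and in fact harder computation: moving $B_{\tau i}$ past $B_i^{(m_1-r-u)}$ requires Lemma~\ref{lem:J95B} and produces a $[\tk_i;\ast]$ correction term, and one must check the vanishing of two distinct families of scalars $\xi_{r,s,u}$ and $\xi'_{r,s,u}$ (as opposed to the single family $\varepsilon_{r,s,u}$ for \eqref{def:qsBij1}). Relatedly, your use of Lemma~\ref{lem:J95B} in the description of the \eqref{def:qsBij1} check is misplaced — there $B_i$ only needs to be moved past $B_i^{(\ast)}$ (the divided-power identity) and past $\tk_i^u$ (a scalar $q_i^{-2u}$); no cross-commutation with $B_{\tau i}^{(s)}$ occurs. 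Lemma~\ref{lem:J95B} is precisely the new input required for \eqref{def:qsBij2}, which your proposal does not actually carry out.
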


\begin{proof}
The second formula \eqref{def:dvij2} is obtained by applying $\sigma_\imath$ to the first formula. Hence, it suffices to show that the elements $\B_{i,\tau i,j; m_1,m_2}$ defined by \eqref{def:dvij} satisfy the recursive relations \eqref{def:qsBij1}-\eqref{def:qsBij2}.

(1) Indeed, we have %(up to $q$-powers in the coefficients; \red{ red * means some undetermined $q$-powers}) using this divided power formula
\begin{align*}
& \text{LHS }\eqref{def:qsBij1}-\text{ RHS }\eqref{def:qsBij1}
\\
=&-q_i^{\alpha-2m_1} \B_{i,\tau i, j;m_1,m_2}  B_i + B_i \B_{i,\tau i, j;m_1,m_2}
 \\
 & -[m_1+1]_i\; \B_{i,\tau i, j;m_1+1,m_2}-q_i^{\alpha-2m_1} [\beta-m_2+1]_{i} \; \B_{i,\tau i, j;m_1,m_2-1} \tk_i
\\
= &\sum_{u=0}^{ \min(m_1,m_2)} \sum_{r=0}^{m_1-u}\sum_{s=0}^{m_2-u} (-1)^{r+s+u} q_i^{r(\alpha-m_1-u+1)+s(\beta-m_2+u+1)+u(\alpha-m_1+1)} \varepsilon_{r,s,u}
\\& \times \qbinom{\beta-m_2+u}{u}_i B_i^{(m_1-r-u)}B_{\tau i}^{(m_2-s-u)}B_j B_{\tau i}^{(s)}B_i^{(r)} \tk_i^u .
\end{align*}
where the scalar $ \varepsilon_{r,s,u}$ is given by
\begin{align*}
\varepsilon_{r,s,u}=&[r]_i q_i^{-2u-(\alpha-m_1-u+1)+\alpha-2m_1} +[m_1-r-u+1]_i-[m_1+1]_i q_i^{-r-u}
\\
&+[u]_i q_i^{-(\alpha-m_1+1)+r}
\\
=&0.
\end{align*}
Hence, we have proved that $\B_{i,\tau i,j; m_1,m_2}$ defined by \eqref{def:dvij} satisfy \eqref{def:qsBij1}.

(2) We next show that $\B_{i,\tau i,j; m_1,m_2}$  satisfy \eqref{def:qsBij2}. By Lemma~\ref{lem:J95B}, we have
\begin{align*}
& \text{LHS }\eqref{def:qsBij2}-\text{ RHS }\eqref{def:qsBij2}
\\
=&-q_i^{\beta-2m_2 }\B_{i,\tau i, j;m_1,m_2}  B_{\tau i} + B_{\tau i} \B_{i,\tau i, j;m_1,m_2}
\\
 &-[m_2+1]_{ i}\; \B_{i,\tau i, j;m_1,m_2+1} -q_i^{\beta- 2m_2} [\alpha-m_1+1]_{i} \; \B_{i,\tau i, j;m_1-1,m_2}  \tk_{\tau i}
\\
=&\sum_{u\geq 0}  \sum_{r=0}^{m_1-u}\sum_{s=0}^{m_2+1-u} (-1)^{r+s+u}q_i^{r(\alpha-m_1-u+1)+s(\beta-m_2+u+1)+u(\alpha-m_1+1)} \xi_{r,s,u}
\\
& \qquad\times \qbinom{\beta-m_2+u}{u}_i B_i^{(m_1-r-u)}B_{\tau i}^{(m_2+1-s-u)}B_j B_{\tau i}^{(s)}B_i^{(r)} \tk_i^u
\\
&+\sum_{u\geq 0}  \sum_{r=0}^{m_1-1-u}\sum_{s=0}^{m_2-u} (-1)^{r+s+u}q_i^{r(\alpha-m_1-u+1)+s(\beta-m_2+u+1)+u(\alpha-m_1+1)} \xi'_{r,s,u}
\\& \qquad\times \qbinom{\beta-m_2+u}{u}_i B_i^{(m_1-1-r-u)}B_{\tau i}^{(m_2 -s-u)}B_j B_{\tau i}^{(s)}B_i^{(r)} \tk_i^u \tk_{\tau i}.
\end{align*}
where the scalars $\xi_{r,s,u},\xi'_{r,s,u}$ are given by
\begin{align*}
 \xi_{r,s,u}=& [s]_i q_i^{-(\beta-m_2+u+1)+2u+\beta-2m_2}+[m_2+1-s-u]_i -q_i^{-s} [m_2+1]_i \frac{[\beta-m_2]_i}{[\beta-m_2+u ]_i}
 \\
 &+\frac{q_i^{\beta+u-2m_2-s-1}-q_i^{-\beta-u+2m_2-s+1}}{q_i-q_i^{-1}}\frac{[u]_i}{[\beta-m_2+u]_i}
 \\
 =&q_i^{-s}\frac{[m_2+1-u]_i[\beta-m_2+u]_i -[m_2+1]_i[\beta-m_2]_i+[\beta+u-2m_2-1]_i[u]_i}{[\beta-m_2+u ]_i}
 \\
 =&0,
 \\
 \xi'_{r,s,u}=&-q_i^{\beta- 2m_2+r+u} [\alpha-m_1+1]_i +\frac{q_i^{\beta+\alpha+r+u-m_1-2m_2+1}-q_i^{\beta-\alpha+r+u+m_1-2m_2-1}}{q_i -q_i^{-1}}
 \\
 =&0.
\end{align*}
Hence, we have proved that $\B_{i,\tau i,j; m_1,m_2}$ defined by \eqref{def:dvij} satisfy \eqref{def:qsBij2}.
\end{proof}

%Specializing $m_1=\alpha,m_2=\beta$, by Theorem~\ref{thm:qsbraid}, we obtain a divided power formulation of relative braid group actions.

%\begin{corollary}
%\label{cor:dvij}
%Let $j\in \wI, j\neq i,\tau i$. Then the actions of $\tTa{i},\tTb{i}$ on $B_j$ are given by
%\begin{align}\notag
% \tTT'_{i,-1}(B_j)=&\sum_{u=0}^{ \min(\alpha,\beta)} \sum_{r=0}^{\alpha-u}\sum_{s=0}^{\beta-u} (-1)^{r+s+u}q_i^{r(-u+1)+s(u+1)+u }
%  B_i^{(\alpha-r-u)}B_{\tau i}^{(\beta-s-u)}B_j B_{\tau i}^{(s)}B_i^{(r)} \tk_i^u .
%\\\notag
%\tTT''_{i,+1}(B_j)=&\sum_{u=0}^{ \min(\alpha,\beta)} \sum_{r=0}^{\alpha-u}\sum_{s=0}^{\beta-u} (-1)^{r+s+u}q_i^{r( -u+1)+s( u+1)+u }
%  \tk_{\tau i}^u B_i^{(r)}B_{\tau i}^{(s)}B_j B_{\tau i}^{(\beta-s-u)}B_i^{(\alpha-r-u)}.
%\end{align}
%\end{corollary}

\begin{remark}
By Remark~\ref{rmk:qsxy} and Propositions~\ref{prop:qsBy}-\ref{prop:qsBx},\ref{prop:qsBy'}-\ref{prop:qsBx'}, we have
\begin{align*}
\B_{i,\tau i,j;m_1,m_2}=0, \quad \b_{i,\tau i,j;m_1,m_2}=0,\quad \text{ if } m_1>-c_{ij}, \quad \text{or } m_2>-c_{\tau i,j}.
\end{align*}
Furthermore, according to the divided power formulations, the Serre relations in $\tUi$ are given by
\begin{align}
\B_{i,\tau i,j;-c_{ij}+1,0}=0,\qquad \B_{i,\tau i,j;0,-c_{\tau i,j}+1}=0,\qquad j\neq i,\tau i.
\end{align}
\end{remark}

\subsection{Intertwining properties}
\label{sec:qsinter}
We establish precise intertwining relations between those elements $\B_{i,\tau i,j;m_1,m_2}^\pm$ ({\em resp.} $\b_{i,\tau i,j;m_1,m_2}^\pm$) and elements $y_{i,\tau i, j;m_1,m_2},x_{i,\tau i, j;m_1,m_2}$ ({\em resp.} $y'_{i,\tau i, j;m_1,m_2},x'_{i,\tau i, j;m_1,m_2}$). These relations will be the key for the construction of relative braid group action on $\tUi$.

\begin{proposition}
\label{prop:qsBy}
Let $j\in \wI$ such that $j\neq i,\tau i$. We have, for $m_1,m_2\geq 0, $
\begin{align}
\label{eq:qsBy1}
\B_{i,\tau i, j;m_1,m_2}^- \tfX_i=\tfX_i y_{i,\tau i, j;m_1,m_2}.
\end{align}
\end{proposition}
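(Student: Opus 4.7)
The proof naturally proceeds by induction on $m_1+m_2$, parallel to Proposition~\ref{prop:By} in the split case but now exploiting both defining recursions~\eqref{def:qsBij1} and~\eqref{def:qsBij2}. The base case $m_1=m_2=0$ reduces to the identity $F_j\tfX_i=\tfX_i F_j$ supplied by Lemma~\ref{lem:rktwo1}, and the degenerate cases $m_1=-1$ or $m_2=-1$ are vacuous since both sides vanish.

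For the inductive step raising $m_1$, I would conjugate the recursion~\eqref{def:qsBij1} by $\tfX_i$. Applying the intertwining relation $B_i\tfX_i=\tfX_i B_i^\sigma$ from Proposition~\ref{prop:fX1} together with the inductive hypothesis yields
\begin{align*}
\tfX_i^{-1}\bigl(-q_i^{-(c_{ij}+2m_1)}\B_{i,\tau i,j;m_1,m_2}^{-}B_i+B_i\B_{i,\tau i,j;m_1,m_2}^{-}\bigr)\tfX_i=-q_i^{-(c_{ij}+2m_1)}\,y\, B_i^\sigma + B_i^\sigma\, y,
\end{align*}
where $y:=y_{i,\tau i,j;m_1,m_2}$. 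Writing $B_i^\sigma=F_i+K_iE_{\tau i}$, the $F_i$-part is handled by Lemma~\ref{lem:Lus-qs1}(1) and produces $[m_1+1]_i\,y_{i,\tau i,j;m_1+1,m_2}$. For the $K_iE_{\tau i}$-part, the key point is that $c_{i,\tau i}=0$ forces the weight identity $K_i\,y=q_i^{-2m_1-c_{ij}}\,y\,K_i$, whose exponent matches exactly the coefficient in the recursion. Hence the cross terms collapse to $K_i[E_{\tau i},y]$, and by Lemma~\ref{lem:Lus-qs1}(4) together with $\tk_i=K_iK'_{\tau i}$ this equals $q_i^{-2m_1-c_{ij}}[-c_{\tau i,j}-m_2+1]_{i}\,y_{i,\tau i,j;m_1,m_2-1}\,\tk_i$.

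Matching this expression against the right-hand side of~\eqref{def:qsBij1} conjugated by $\tfX_i$, and using the inductive hypothesis on the lower-index term (noting $\tk_i$ commutes with $\tfX_i$ by Proposition~\ref{prop:fX1}), all terms except those proportional to $\B_{i,\tau i,j;m_1+1,m_2}^{-}\tfX_i$ and $\tfX_i\,y_{i,\tau i,j;m_1+1,m_2}$ cancel; dividing by $[m_1+1]_i$ yields~\eqref{eq:qsBy1} for $(m_1+1,m_2)$. A symmetric argument based on~\eqref{def:qsBij2}, Lemma~\ref{lem:Lus-qs1}(2),(3), and $B_{\tau i}^\sigma=F_{\tau i}+K_{\tau i}E_i$ delivers the inductive step for $(m_1,m_2+1)$, which in particular covers the initial column $m_1=0$ starting from $(0,0)$.

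The main obstacle is the precise scalar bookkeeping: one must verify that every $q_i$-shift arising from commuting $K_i$ (or $K_{\tau i}$) past $y_{i,\tau i,j;m_1,m_2}$ coincides exactly with the coefficient appearing in the corresponding recursion. This rests crucially on $c_{i,\tau i}=0$, which eliminates any weight contribution from the $F_{\tau i}$-factors (respectively $F_i$-factors) inside $y$. Once this alignment is checked, the argument is a formal double-induction generalization of the proof of Proposition~\ref{prop:By}.
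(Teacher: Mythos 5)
Your argument is correct and follows essentially the same route as the paper: conjugating the defining recursion~\eqref{def:qsBij1} by $\tfX_i$, writing $B_i^\sigma=F_i+K_iE_{\tau i}$, invoking Lemma~\ref{lem:Lus-qs1}(1),(4), and using $c_{i,\tau i}=0$ to commute $K_i$ across $y_{i,\tau i,j;m_1,m_2}$ (the paper packages this as the identity $K_iE_{\tau i}=E_{\tau i}K_i$ followed by pulling $K_i$ out to the right, but the resulting scalar is the same as in your $K_i[E_{\tau i},y]$ formulation). The only cosmetic difference is that the paper phrases the induction as showing $R_{m_1,m_2}:=\tfX_i y_{i,\tau i,j;m_1,m_2}\tfX_i^{-1}$ satisfies both defining recursions and hence coincides with $\B^-_{i,\tau i,j;m_1,m_2}$, which is the same double induction you describe.
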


\begin{proof}

Let $R_{m_1,m_2} $ denote $\tfX_i y_{i,\tau i, j;m_1,m_2}\tfX_i^{-1}$. By Lemma~\ref{lem:rktwo1}, $R_{0,0}=F_j=\B_{i,\tau i, j;0,0}^-$. Moreover, by definition, $y_{i,\tau i, j;m_1,-1}=y_{i,\tau i, j;-1,m_2}=0=R_{m_1,-1}=R_{-1,m_2}$. Hence, it suffices to prove that $R_{m_1,m_2}$ satisfies the same recursive relations as $\B_{i,\tau i, j;m_1,m_2}^-$.

Recall that $B_i^\sigma=F_i+K_i E_{\tau i}$ . We have, by Proposition~\ref{prop:fX1},
\begin{align*}
&\tfX_i^{-1}\big(-q_i^{-(c_{ij}+2m_1)} R_{m_1,m_2} B_i + B_i R_{m_1,m_2} \big)\tfX_i\\
=& -q_i^{-(c_{ij}+2m_1)} y_{i,\tau i, j;m_1,m_2} B_i^\sigma + B_i^\sigma y_{i,\tau i, j;m_1,m_2}\\
=& -q_i^{-(c_{ij}+2m_1)} y_{i,\tau i, j;m_1,m_2} (F_i+K_i E_{\tau i}) + (F_i+K_i E_{\tau i}) y_{i,\tau i, j;m_1,m_2}\\
=& -q_i^{-(c_{ij}+2m_1)} y_{i,\tau i, j;m_1,m_2}  F_i + F_i  y_{i,\tau i, j;m_1,m_2}\\
 & +q_i^{-(c_{ij}+2m_1)} (-y_{i,\tau i, j;m_1,m_2} E_{\tau i}+ E_{\tau i} y_{i,\tau i, j;m_1,m_2})K_i.
\end{align*}
Now using Lemma~\ref{lem:Lus-qs1} to simplify the RHS of above formula, we have
\begin{align*}
\text{RHS}=& [m_1+1]_i\; y_{i,\tau i, j;m_1+1,m_2}+\\
 & +q_i^{-(c_{ij}+2m_1)} [-c_{\tau i,j}-m_2+1]_{\tau i} \; y_{i,\tau i,j;m_1, m_2-1} K_i K'_{\tau i}.
\end{align*}
Combining the above two formulas, we have
\begin{align}\notag
&-q_i^{-(c_{ij}+2m_1)} R_{m_1,m_2} B_i + B_i R_{m_1,m_2}\\
=&[m_1+1]_i\; R_{m_1+1,m_2}+q_i^{-(c_{ij}+2m_1)} [-c_{\tau i,j}-m_2+1]_{i} \; R_{m_1,m_2-1} K_i K'_{\tau i}.
\label{eq:qsBy2}
\end{align}
The following variant of \eqref{eq:qsBy2} can be obtained by a similar strategy
\begin{align}\notag
&-q_i^{-(c_{\tau i,j}+2m_2)} R_{m_1,m_2} B_{\tau i} + B_{\tau i} R_{m_1,m_2}\\
=&[m_2+1]_{i}\; R_{m_1,m_2+1}+q_i^{-(c_{\tau i,j}+2m_2)} [-c_{i,j}-m_1+1]_{i} \; R_{m_1-1,m_2} K'_i K_{\tau i}.
\label{eq:qsBy3}
\end{align}
Comparing \eqref{eq:qsBy2}-\eqref{eq:qsBy3} with \eqref{def:qsBij1}-\eqref{def:qsBij2}, it is clear that $R_{m_1,m_2}$ satisfies the same recursive relations as $\B_{i,\tau i, j;m_1,m_2}^-$. Therefore, we have proved \eqref{eq:qsBy1}.
\end{proof}

\begin{proposition}
\label{prop:qsBx}
Let $j\in \wI$ such that $j\neq i,\tau i$. We have, for $m_1,m_2\geq 0$,
\begin{align}\notag
\B_{ i, \tau i, j;m_1,m_2}^+ =&(-1)^{m_1+m_2} q_i^{-(m_1+m_2)(c_{ij}+c_{\tau i,j}+m_1+m_2-1)} \times
\\
&\qquad\times\tT_{\bw}(x_{\tau i,i, \tau j;m_1,m_2}) K'_j (K_i')^{m_1} (K_{\tau i}')^{m_2}.
\label{eq:qsBx1}
\end{align}
(Note the shift of indices on the right-hand side.)
\end{proposition}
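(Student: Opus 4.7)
The plan is to mirror the argument used for Proposition~\ref{prop:Bx} in the split case, using a double induction on $(m_1,m_2)$ to show that the right-hand side of \eqref{eq:qsBx1} satisfies the same recursive relations \eqref{def:qsBij1}-\eqref{def:qsBij2} and boundary conditions \eqref{def:qsBij3} as $\B^+_{i,\tau i, j;m_1,m_2}$. The base cases are immediate: by Remark~\ref{rmk:qsxy} we have $x_{\tau i, i, \tau j;0,0}=E_{\tau j}$, so the right-hand side at $(0,0)$ reduces to $\tT_{\bw}(E_{\tau j})K'_j=\B^+_{i,\tau i, j;0,0}$, and the conventions $x_{\tau i, i, \tau j;m_1,-1}=x_{\tau i, i, \tau j;-1,m_2}=0$ match the vanishing boundaries of $\B^+_{i,\tau i, j;m_1,m_2}$.

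For the inductive step, I would introduce the auxiliary element
\[
Q_{m_1,m_2}:=\tT_{\bw}(x_{\tau i, i, \tau j;m_1,m_2})\,K'_j(K'_i)^{m_1}(K'_{\tau i})^{m_2},
\]
so that the right-hand side of \eqref{eq:qsBx1} is $R_{m_1,m_2}:=c_{m_1,m_2}\,Q_{m_1,m_2}$ with the scalar $c_{m_1,m_2}:=(-1)^{m_1+m_2}q_i^{-(m_1+m_2)(c_{ij}+c_{\tau i,j}+m_1+m_2-1)}$. Expanding $B_i=F_i+E_{\tau i}K'_i$ and commuting $F_i$, $E_{\tau i}$ through the Cartan factors $(K'_i)^{m_1}(K'_{\tau i})^{m_2}$, one rewrites
\[
-q_i^{-(c_{ij}+2m_1)}Q_{m_1,m_2}B_i+B_iQ_{m_1,m_2}
\]
as a sum of a commutator $[F_i,\tT_{\bw}(x_{\tau i, i, \tau j;m_1,m_2})]$-type term and a $q$-commutator of $E_{\tau i}$ against $\tT_{\bw}(x_{\tau i, i, \tau j;m_1,m_2})$. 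Since $\bw i=i$ and $\bw\tau i=\tau i$, the automorphism $\tT_{\bw}$ fixes $E_i,F_i,E_{\tau i},F_{\tau i}$, so these two commutators are evaluated directly by parts (3) and (1) of Lemma~\ref{lem:Lus-qs2} applied with $(i,\tau i, j)$ replaced by $(\tau i, i,\tau j)$, using $c_{\tau i,\tau j}=c_{ij}$ and $c_{i,\tau j}=c_{\tau i,j}$. After converting $K_iK'_{\tau i}$ to $\tk_i$, this produces a recursion of the form \eqref{def:qsBij1} but with certain explicit $q_i$-scalars in front of $Q_{m_1+1,m_2}$ and $Q_{m_1,m_2-1}\tk_i$; these scalars are then absorbed by the ratios $c_{m_1+1,m_2}/c_{m_1,m_2}$ and $c_{m_1,m_2-1}/c_{m_1,m_2}$, yielding \eqref{def:qsBij1} verbatim for $R_{m_1,m_2}$. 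The second recursion \eqref{def:qsBij2} follows by the symmetric computation starting from $B_{\tau i}=F_{\tau i}+E_iK'_{\tau i}$ and using parts (4) and (2) of Lemma~\ref{lem:Lus-qs2}.

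The main obstacle is the scalar bookkeeping: one must check that the quadratic exponent $-(m_1+m_2)(c_{ij}+c_{\tau i,j}+m_1+m_2-1)$ in $c_{m_1,m_2}$ is precisely the one for which $c_{m_1+1,m_2}/c_{m_1,m_2}$ cancels the stray $q_i$-factors accumulated from pushing $F_i$ and $E_{\tau i}$ past $(K'_i)^{m_1}(K'_{\tau i})^{m_2}$ and from the shift appearing in Lemma~\ref{lem:Lus-qs2}(3), and similarly for the $B_{\tau i}$-recursion. This is the same phenomenon as in Proposition~\ref{prop:Bx} but now with two "rank one pieces" entangled; once done in one of the two recursions, the other is forced by the $\tau$-symmetry of the formula.
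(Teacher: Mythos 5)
Your proposal follows the paper's proof essentially verbatim: introduce $Q_{m_1,m_2}$, check the base cases, expand $B_i=F_i+E_{\tau i}K_i'$, use the $\tT_{\bw}$-invariance of $E_i,F_i,E_{\tau i},F_{\tau i}$ to apply Lemma~\ref{lem:Lus-qs2} to the $\tT_{\bw}$-twisted element, and then absorb the resulting scalars into the quadratic exponent to match \eqref{def:qsBij1}; the $B_{\tau i}$-recursion is symmetric. One small slip: after substituting $(\tau i,i,\tau j)$ for $(i,\tau i,j)$, Lemma~\ref{lem:Lus-qs2}'s $F_{\tau i}$ becomes our $F_i$ and its $E_i$ becomes our $E_{\tau i}$, so the $B_i$-recursion uses parts (4) and (1) of the lemma, not (3) and (1), and correspondingly the $B_{\tau i}$-recursion uses parts (3) and (2); the bookkeeping you defer to is carried out explicitly in the paper and indeed closes.
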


\begin{proof}
Let $P_{m_1,m_2}$ denote the RHS \eqref{eq:qsBx1} and $x_{\tau i,i, \bw\tau j;m_1,m_2}$ denote $\tT_{\bw}(x_{\tau i,i, \tau j;m_1,m_2})$. By definition \eqref{def:qsBij3}, $\B_{ i, \tau i, j;0,0}^+=\tT_{\bw}(E_{\tau j} )K_j'= x_{\tau i,i, \bw\tau j;0,0}  K'_j=P_{0,0}$. Moreover, by definition, $x_{\tau i,i, \tau j;-1,m_2}=x_{\tau i,i, \tau j;m_1,-1}=0$ and $\B_{ i, \tau i, j;-1,m_2}^+=\B_{ i, \tau i, j;m_1,-1}^+=0$. Thus, it suffices to show that $P_{m_1,m_2}$ satisfies the same recursive relations as $\B_{ i, \tau i, j;m_1,m_2}^+ $.

Let $Q_{m_1,m_2}$ denote $x_{\tau i,i, \bw\tau j;m_1,m_2} K'_j (K_i')^{m_1} (K_{\tau i}')^{m_2}$. We first formulate the recursive relations for $Q_{m_1,m_2}$. We have
\begin{align*}
&\quad-q_i^{-(c_{ij}+2m_1)} Q_{m_1,m_2} B_i + B_i Q_{m_1,m_2} \\
&=-q_i^{-(c_{ij}+2m_1)}x_{\tau i,i, \bw\tau j;m_1,m_2} K_j' (K_i')^{m_1} (K_{\tau i}')^{m_2} (F_i+E_{\tau i} K_i')\\
 &\quad+(F_i+E_{\tau i} K_i') x_{\tau i,i, \bw\tau j;m_1,m_2} K'_j (K_i')^{m_1} (K_{\tau i}')^{m_2}\\
&=-(x_{\tau i,i, \bw\tau j;m_1,m_2}F_i - F_i x_{\tau i,i, \bw\tau j;m_1,m_2})K_j' (K_i')^{m_1} (K_{\tau i}')^{m_2}\\
 &\quad-q_i^{-(c_{ij}+c_{\tau i,j}+2m_1+2m_2)}\big[x_{\tau i,i, \bw\tau j;m_1,m_2}, E_{\tau i} \big]_{q_i^{c_{i,j}+2 m_1}} K_j' (K_i')^{m_1+1} (K_{\tau i}')^{m_2}.
\end{align*}
Since $\bw i=i$, both $F_i,E_{\tau i}$ are fixed by $\tT_{\bw}$. Then the recursion involving $x_{\tau i,i, \bw\tau j;m_1,m_2}$ and $F_i$ ($resp.$ $ E_{\tau i}$) is the same as the recursion involving $x_{\tau i,i,\tau j;m_1,m_2}$ and  $F_i$ ($resp.$ $E_{\tau i}$). By Lemma~\ref{lem:Lus-qs2}, one can obtain those recursions for $x_{\tau i,i, \bw\tau j;m_1,m_2}$ and then the RHS of the above formula is simplified as below
\begin{align*}
\text{RHS} &=-[-c_{\tau i,j} -m_2+1]_i K_i x_{\tau i,i, \bw\tau j;m_1,m_2-1} K_j' (K_i')^{m_1} (K_{\tau i}')^{m_2}\\
&\quad -q_i^{-(c_{ij}+c_{\tau i,j}+2m_1+2m_2)}[m_1+1]_i x_{\tau i,i, \bw\tau j;m_1+1,m_2} K_j' (K_i')^{m_1+1} (K_{\tau i}')^{m_2}\\
&=-q_i^{2m_2-2+c_{\tau i,j}}[-c_{\tau i,j} -m_2+1]_i  x_{\tau i,i, \bw\tau j;m_1,m_2-1} K_j' (K_i')^{m_1} (K_{\tau i}')^{m_2-1}\tk_{ i}\\
&\quad -q_i^{-(c_{ij}+c_{\tau i,j}+2m_1+2m_2)}[m_1+1]_i x_{\tau i,i,\bw \tau j;m_1+1,m_2} K_j' (K_i')^{m_1+1} (K_{\tau i}')^{m_2}.
\end{align*}
Combining the above two formulas, we have
\begin{align*}
&-q_i^{-(c_{ij}+2m_1)} Q_{m_1,m_2} B_i + B_i Q_{m_1,m_2} \\
=&-q_i^{-(c_{ij}+c_{\tau i,j}+2m_1+2m_2)}[m_1+1]_i Q_{m_1+1,m_2}+q_i^{2m_2-2+c_{\tau i,j}}[c_{\tau i,j} +m_2-1]_i Q_{m_1,m_2-1}\tk_{ i}.
\end{align*}
Hence, $P_{m_1,m_2}$ satisfies the following recursive relation,
\begin{align}\notag
&-q_i^{-(c_{ij}+2m_1)} P_{m_1,m_2} B_i + B_i P_{m_1,m_2} \\
=& [m_1+1]_i P_{m_1+1,m_2}+q_i^{-(c_{ij}+2m_1) }[-c_{\tau i,j} -m_2+1]_i P_{m_1,m_2-1}\tk_{ i}.
\label{eq:qsBx2}
\end{align}
Comparing \eqref{eq:qsBx2} and \eqref{def:qsBij1}, it is clear that $P_{m_1,m_2}$ satisfies the defining recursive relation \eqref{def:qsBij1} for $\B_{ i, \tau i, j;m_1,m_2}^+ $. Using a similar strategy, one can show that $P_{m_1,m_2}$ satisfies the other defining recursive relation \eqref{def:qsBij2} for $\B_{ i, \tau i, j;m_1,m_2}^+ $. Therefore, $\B_{ i, \tau i, j;m_1,m_2}^+=P_{m_1,m_2}$ for any $m_1,m_2\geq 0$.
\end{proof}

We next formulate the relation between $y'_{i,\tau i, j;m_1,m_2},x'_{i,\tau i, j;m_1,m_2}$ in Definition~\ref{def:xy'} and $\b_{i,\tau i, j;m_1,m_2}^\pm$ in Definition~\ref{def:qsBij'}.
\begin{proposition}
\label{prop:qsBy'}
Let $j\in \wI$ such that $j\neq i,\tau i$. We have, for $m_1,m_2\geq 0$,
\begin{align}
\label{eq:qsBy'1}
\b_{i,\tau i, j;m_1,m_2}^-  =  y'_{i,\tau i, j;m_1,m_2}.
\end{align}
\end{proposition}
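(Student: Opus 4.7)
The plan is to derive Proposition~\ref{prop:qsBy'} as an immediate formal consequence of the two intertwining results already established for the ``minus'' halves of the root vectors, namely Proposition~\ref{prop:qsBy} and Lemma~\ref{lem:qsBB}. Concretely, I would first rewrite Proposition~\ref{prop:qsBy} as
\[
\tfX_i^{-1} \, \B^-_{i,\tau i, j;m_1,m_2} \, \tfX_i \;=\; y_{i,\tau i, j;m_1,m_2},
\]
which is legitimate because $\tfX_i$ is invertible. Applying the anti-involution $\sigma$ on $\tU$ to both sides and invoking Lemma~\ref{lem:qsBB} on the left-hand side yields
\[
\b^-_{i,\tau i, j;m_1,m_2} \;=\; \sigma\bigl(y_{i,\tau i, j;m_1,m_2}\bigr),
\]
and by Definition~\ref{def:xy'} the right-hand side is precisely $y'_{i,\tau i, j;m_1,m_2}$, finishing the proof.

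There is no genuine obstacle here: the two nontrivial inputs have both been secured by inductions on $(m_1,m_2)$ earlier in this section. Proposition~\ref{prop:qsBy} was obtained by matching the defining recursions of $\B^-_{i,\tau i,j;m_1,m_2}$ against those satisfied by $y_{i,\tau i,j;m_1,m_2}$ from Lemma~\ref{lem:Lus-qs1}, after conjugating through the intertwining rule $B_i^{\sigma}=F_i+K_i E_{\tau i}$. Lemma~\ref{lem:qsBB} in turn was obtained by observing that $x\mapsto \sigma(\tfX_i^{-1} x \tfX_i)$ restricts to an anti-automorphism of the subalgebra $\tU^-_{[i;j]}$ and permutes the initial data of the recursions for $\B^-$ and $\b^-$. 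Consequently the present statement is a one-line deduction, and it is the exact type (ii) analogue of Proposition~\ref{prop:B'y} from Section~\ref{sec:split}, with the same argument transplanted verbatim.
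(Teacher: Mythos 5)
Your proposal is correct and matches the paper's own proof, which simply cites Proposition~\ref{prop:qsBy} and Lemma~\ref{lem:qsBB}; you have merely unpacked the one-line deduction — conjugate by $\tfX_i$, apply $\sigma$, identify $\sigma(y_{i,\tau i,j;m_1,m_2})=y'_{i,\tau i,j;m_1,m_2}$ — that the paper leaves implicit. This is the same type (ii) transplant of Proposition~\ref{prop:B'y}, as you note.
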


\begin{proof}
This proposition is a consequence of Proposition~\ref{prop:qsBy} and Lemma~\ref{lem:qsBB}.
\end{proof}

\begin{proposition}
\label{prop:qsBx'}
Let $j\in \wI$ such that $j\neq i,\tau i$. We have, for $m_1,m_2\geq 0$,
\begin{align}\notag
\b_{ i, \tau i, j;m_1,m_2}^+=&(-1)^{m_1+m_2} q_i^{-(m_1+m_2)(c_{ij}+c_{\tau i,j}+m_1+m_2-1)} \times\\
&\times\tT_{\bs_i}(\tfX_i)^{-1} \tT_{\bw}(x'_{\tau i,i, \tau j;m_1,m_2}) K'_j (K_i')^{m_1} (K_{\tau i}')^{m_2}\tT_{\bs_i}(\tfX_i).
\label{eq:qsBx'1}
\end{align}
\end{proposition}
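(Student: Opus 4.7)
The plan is to follow the strategy used in Proposition~\ref{prop:B'x}, extended from the split rank-one to the quasi-split rank-two situation. Let $R_{m_1,m_2}$ denote the right-hand side of \eqref{eq:qsBx'1} and set $Q_{m_1,m_2} := \tT_{\bw}(x'_{\tau i,i,\tau j;m_1,m_2}) K_j'(K_i')^{m_1}(K_{\tau i}')^{m_2}$, so that $R_{m_1,m_2}$ equals $(-1)^{m_1+m_2}q_i^{-(m_1+m_2)(c_{ij}+c_{\tau i,j}+m_1+m_2-1)}$ times $\tT_{\bs_i}(\tfX_i)^{-1}Q_{m_1,m_2}\tT_{\bs_i}(\tfX_i)$. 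First I would verify the initial conditions: $R_{-1,m_2}=R_{m_1,-1}=0$ follows from $x'_{\tau i,i,\tau j;\cdot,\cdot}$ vanishing in those ranges, and $R_{0,0}=\tT_{\bw}(E_{\tau j})K_j'=\b_{i,\tau i,j;0,0}^+$ because the $\tT_{\bs_i}$-image of Lemma~\ref{lem:rktwo1} shows $Q_{0,0}$ commutes with $\tT_{\bs_i}(\tfX_i)$. By induction on $m_1+m_2$, it then suffices to show $R_{m_1,m_2}$ satisfies the same two recursive relations \eqref{def:qsBij'1}--\eqref{def:qsBij'2} as $\b_{i,\tau i,j;m_1,m_2}^+$.

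The key technical input is the rank-one intertwining relation of Proposition~\ref{prop:rkone1}(2), applied to the subdiagram $\{i,\tau i\}$. This produces ``hat'' elements $\widehat{B}_i$ and $\widehat{B}_{\tau i}$ satisfying $\widehat{B}_i\,\tT_{\bs_i}(\tfX_i)=\tT_{\bs_i}(\tfX_i)\,B_i$ and $\widehat{B}_{\tau i}\,\tT_{\bs_i}(\tfX_i)=\tT_{\bs_i}(\tfX_i)\,B_{\tau i}$, each admitting an explicit closed expression in the generators $E_i,F_i,E_{\tau i},F_{\tau i}$ together with Cartan elements. Conjugating $-q_i^{-(c_{ij}+2m_1)}B_i R_{m_1,m_2}+R_{m_1,m_2}B_i$ by $\tT_{\bs_i}(\tfX_i)$ then reduces the recursion to a computation of $-q_i^{-(c_{ij}+2m_1)}\widehat{B}_i Q_{m_1,m_2}+Q_{m_1,m_2}\widehat{B}_i$ (up to the scalar prefactor). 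Since $\bw i=i$ and $\bw\tau i=\tau i$, the elements $E_i,F_i,E_{\tau i},F_{\tau i}$ are all fixed by $\tT_{\bw}$, so Lemma~\ref{lem:Lus-qs2} applies verbatim to $\tT_{\bw}(x'_{\tau i,i,\tau j;m_1,m_2})$ in place of $x'_{\tau i,i,\tau j;m_1,m_2}$. Plugging in these commutators yields a recursion for $Q_{m_1,m_2}$; reinstating the scalar prefactor then produces \eqref{def:qsBij'1}, and the parallel calculation using $\widehat{B}_{\tau i}$ together with the $F_{\tau i}$--$E_i$ commutators in Lemma~\ref{lem:Lus-qs2} yields \eqref{def:qsBij'2}.

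The main obstacle is the lengthy $q$-power and sign bookkeeping. One must verify that the prefactor $(-1)^{m_1+m_2}q_i^{-(m_1+m_2)(c_{ij}+c_{\tau i,j}+m_1+m_2-1)}$, the extra $K_i',K_{\tau i}'$ powers in $Q_{m_1,m_2}$, and the explicit corrections introduced when passing from $B_i,B_{\tau i}$ to $\widehat{B}_i,\widehat{B}_{\tau i}$ all conspire to yield exactly the coefficients appearing in Definition~\ref{def:qsBij'}. This must be checked not only for the leading $m_1{+}1$ (resp.\ $m_2{+}1$) term but also for the $m_2{-}1$ (resp.\ $m_1{-}1$) tail carrying $\tk_{\tau i}$ (resp.\ $\tk_i$). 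Structurally this is parallel to the computations already performed in Propositions~\ref{prop:qsBx} and \ref{prop:B'x}, so no new conceptual difficulty is expected; the novelty lies only in that two separate recursions must now be verified in tandem.
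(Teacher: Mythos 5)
Your plan follows the paper's proof closely: it defines the same $Q_{m_1,m_2}$ and $R_{m_1,m_2}$, checks the same base cases, conjugates the recursion through $\tT_{\bs_i}(\tfX_i)$ via the hat element, and reduces the claim to matching the coefficients in Definition~\ref{def:qsBij'}. Two small corrections to the sourcing: the paper cites \cite[\S 6.4]{WZ22} (not Proposition~\ref{prop:rkone1}(2)) for the intertwining identity $\widehat{B}_i\,\tT_{\bs_i}(\tfX_i)=\tT_{\bs_i}(\tfX_i)\,B_i$, and the recursions for $x'_{\tau i,i,\tau j;m_1,m_2}$ are obtained by applying $\sigma$ to Lemma~\ref{lem:Lus-qs2}(1)(4) and shifting indices, not by applying that lemma verbatim; once the $\sigma$-conjugated form is in hand, $\tT_{\bw}$-invariance of $F_i,E_{\tau i}$ then lets it transfer to $\tT_{\bw}(x'_{\tau i,i,\tau j;m_1,m_2})$ as you describe.
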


\begin{proof}
Let $P_{m_1,m_2}$ denote the RHS \eqref{eq:qsBx'1} and $x'_{\tau i,i, \bw\tau j;m_1,m_2}$ denote $\tT_{\bw}(x'_{\tau i,i, \tau j;m_1,m_2}) $. By Lemma~\ref{lem:rktwo1}, $P_{0,0} =\tT_{\bw}(E_{\tau j}) K_j'=\b_{ i, \tau i, j;0,0}^+$. Moreover, by definition, we have $P_{-1,m}=P_{m,-1}=0$ and $\b_{ i, \tau i, j;-1,m_2}^+=\b_{ i, \tau i, j;m_1,-1}^+=0$. Hence, it suffices to show that $P_{m_1,m_2}$ satisfies the defining recursive relations for $\b_{ i, \tau i, j;m_1,m_2}^+$.

Applying $\sigma$ to Lemma~\ref{lem:Lus-qs2}(1)(4) and then shifting the indices $i,j$ to $\tau i,\tau j$, we obtain the recursions for $ x'_{\tau i,i,\tau j;m_1,m_2}$. Since $F_i,E_{\tau i}$ are fixed by $\tT_{\bw}$, recursions for $ x'_{\tau i,i,\bw\tau j;m_1,m_2}$ are the same as recursions for $ x'_{\tau i,i,\tau j;m_1,m_2}$. Thus, we have
\begin{align}
\label{eq:x'}
\begin{split}
&-  x'_{\tau i,i,\bw\tau j;m_1,m_2} F_{ i}+ F_{i} x'_{\tau i,i,\bw\tau j;m_1,m_2}= [-c_{\tau i,j}-m_2+1]_i \; x'_{\tau i,i,\bw\tau j;m_1,m_2-1} K'_{ i},\\
&-q_i^{c_{ij}+2m_1} x'_{\tau i, i,\bw\tau j;m_1,m_2} E_{\tau i}+ E_{\tau i} x'_{\tau i, i,\bw\tau j;m_1,m_2}=[m_1+1]_i  x'_{\tau i,i,\bw\tau j;m_1+1,m_2} .
\end{split}
\end{align}

Let $Q_{m_1,m_2}$ denote $\tT_{\bs_i}(\tfX_i)^{-1} x'_{\tau i,i, \bw\tau j;m_1,m_2} K'_j (K_i')^{m_1} (K_{\tau i}')^{m_2}\tT_{\bs_i}(\tfX_i)$. We first formulate the recursive relation for $Q_{m_1,m_2}$.

In the case $c_{i,\tau i}=0$, set
\begin{align}
\widehat{B}_i=-\tT_{\bs_i}(B_{\tau i}\tk_{\tau i}^{-1})=F_i K_{\tau i} K_{\tau i}'^{-1} + E_{\tau i} K_i'.
\end{align}
Then, due to \cite[\S 6.4]{WZ22}, $\widehat{B}_i$ satisfies $\widehat{B}_i\tT_{\bs_i}(\tfX_i)=\tT_{\bs_i}(\tfX_i) B_i$.

We compute
\begin{align*}
 &\tT_{\bs_i}(\tfX_i)\big(-q_i^{-(c_{ij}+2m_1)} B_i Q_{m_1,m_2}+Q_{m_1,m_2} B_i\big) \tT_{\bs_i}(\tfX_i)^{-1}\\
=&-q_i^{-(c_{ij}+2m_1)} \widehat{B}_i x'_{\tau i,i, \bw\tau j;m_1,m_2} K'_j (K_i')^{m_1} (K_{\tau i}')^{m_2}\\
 &+x'_{\tau i,i, \bw\tau j;m_1,m_2} K'_j (K_i')^{m_1} (K_{\tau i}')^{m_2} \widehat{B}_i\\
=&-q_i^{c_{ij}+2m_1} (F_i x'_{\tau i,i,\bw \tau j;m_1,m_2} -x'_{\tau i,i,\bw \tau j;m_1,m_2}F_i) K'_j (K_i')^{m_1-1} (K_{\tau i}')^{m_2-1} \tk_{\tau i}\\
 &-q_i^{-c_{\tau i,j}-2m_2} (q_i^{-c_{ij}-2m_1} E_{\tau i} x'_{\tau i,i, \bw\tau j;m_1,m_2}-x'_{\tau i,i, \bw\tau j;m_1,m_2} E_{\tau i}) K'_j (K_i')^{m_1+1} (K_{\tau i}')^{m_2}\\
=&-q_i^{c_{ij}+2m_1}[-c_{\tau i,j}-m_2+1]_i \; x'_{\tau i,i,\bw\tau j;m_1,m_2-1}  K'_j (K_i')^{m_1} (K_{\tau i}')^{m_2-1} \tk_{\tau i}\\
 &-q_i^{-c_{ij}-c_{\tau i,j}-2m_1-2m_2}[m+1]_i x'_{\tau i,i,\bw \tau j;m_1+1,m_2} K'_j (K_i')^{m_1+1} (K_{\tau i}')^{m_2},
\end{align*}
where the last equality follows by applying \eqref{eq:x'}.

The above computation shows that $Q_{m_1,m_2}$ satisfies the following recursive relation
\begin{align}
&-q_i^{-(c_{ij}+2m_1)} B_i Q_{m_1,m_2}+Q_{m_1,m_2} B_i\\\notag
&=-q_i^{c_{ij}+2m_1}[-c_{\tau i,j}-m_2+1]_i Q_{m_1,m_2-1}\tk_{\tau i}-q_i^{-c_{ij}-c_{\tau i,j}-2m_1-2m_2}[m+1]_i Q_{m_1+1,m_2}.
\end{align}
By definition, $P_{m_1,m_2}=(-1)^{m_1+m_2} q_i^{-(m_1+m_2)(c_{ij}+c_{\tau i,j}+m_1+m_2-1)} Q_{m_1,m_2}$. Hence, $P_{m_1,m_2}$ satisfies the following relation
\begin{align}
\label{eq:qsBx'2}
&-q_i^{-(c_{ij}+2m_1)} B_i P_{m_1,m_2}+P_{m_1,m_2} B_i\\\notag
&=[m+1]_i P_{m_1+1,m_2}+q_i^{-(c_{\tau i,j}+2m_2-2)}[-c_{\tau i,j}-m_2+1]_i P_{m_1,m_2-1}\tk_{\tau i}.
\end{align}
Comparing \eqref{eq:qsBx'2} with \eqref{def:qsBij'1}, it is clear that $P_{m_1,m_2}$ satisfies the defining recursive relation \eqref{def:qsBij'1} for $\b_{ i, \tau i, j;m_1,m_2}^+$. Using a similar strategy, one can show that $P_{m_1,m_2}$ satisfies the other defining recursive relation \eqref{def:qsBij'2} for $\b_{ i, \tau i, j;m_1,m_2}^+$. Therefore, we conclude that $\b_{ i, \tau i, j;m_1,m_2}^+=P_{m_1,m_2}$ for $m_1,m_2 \geq 0$.
\end{proof}

\subsection{Proof of Theorem~\ref{thm:rktwo1}(ii)}
In the case $c_{i,\tau i}=0$, $\bs_i=s_i s_{\tau i}$. It follows by \cite[\S 37.2]{Lus94} that, for $j\neq i,\tau i$,
\begin{align}
\label{eq:qstTF}
&\tTD'_{\bs_i,-1}(F_j)=y_{i,\tau i,j;-c_{ij},-c_{\tau i,j}},\qquad\tTD'_{\bs_i,-1}(E_{\tau j})=x_{\tau i, i,\tau j;-c_{ij},-c_{\tau i,j}},
\\
&\tTD''_{\bs_i,+1}(F_j)=y'_{i,\tau i,j;-c_{ij},-c_{\tau i,j}},\qquad\tTD''_{\bs_i,+1}(E_{\tau j})=x'_{\tau i, i,\tau j;-c_{ij},-c_{\tau i,j}}.
\label{eq:qstTF'}
\end{align}
Recall the rescaled symmetries $\tT'_{i,-1},\tT''_{i,+1}$ from \eqref{def:tT}. In the case $c_{i,\tau i}=0$, $\vs_{i,\dm}=-q_i^{-1}$. By \eqref{eq:qstTF}, we have
\begin{align}
\label{eq:qstTBj}
\begin{split}
&\tT'_{\bs_i,-1}(F_j)=y_{i,\tau i,j;-c_{ij},-c_{\tau i,j}},\\
&\tT'_{\bs_i,-1}( \tT_{\bw}(E_{\tau j}) K_j')=(-q_i)^{ -c_{ij}-c_{\tau i,j}} \tT_{\bw}(x_{\tau i, i,\tau j;-c_{ij},-c_{\tau i,j}}) K_j' (K_i')^{-c_{ij}}(K_{\tau i}')^{-c_{\tau i,j}},
 \end{split}
\end{align}
 where the second formula follows from $ \tT'_{\bs_i,-1}\tT_{\bw}=\tT_{\bw}\tT'_{\bs_i,-1}$.

By \eqref{eq:qstTF'}, we have analogous formulas for the symmetry $\tT''_{\bs_i,+1}$
\begin{align}
\label{eq:qstTBj'}
\begin{split}
&\tT''_{\bs_i,+1}(F_j)=y'_{i,\tau i,j;-c_{ij},-c_{\tau i,j}},\\
&\tT''_{\bs_i,+1}( \tT_{\bw}(E_{\tau j}) K_j')=(-q_i)^{ -c_{ij}-c_{\tau i,j}} \tT_{\bw}(x'_{\tau i, i,\tau j;-c_{ij},-c_{\tau i,j}}) K_j' (K_i')^{-c_{ij}}(K_{\tau i}')^{-c_{\tau i,j}}.
 \end{split}
\end{align}

Recall elements $ \B_{i,\tau i, j;m_1,m_2}, \b_{i,\tau i, j;m_1,m_2}$ defined in Definitions~\ref{def:qsBij}-\ref{def:qsBij'}.

\begin{theorem}
\label{thm:qsbraid}
Let $ j\in \wI, j\neq i,\tau i$.
\begin{itemize}
\item[(1)] The element $\B_{i,\tau i, j;-c_{ij},-c_{\tau i,j}}\in \tUi$ satisfies
\begin{align}
 \B_{i,\tau i, j;-c_{ij},-c_{\tau i,j}}\tfX_i=\tfX_i \tT'_{\bs_i,-1}(B_j).
\end{align}

\item[(2)] The element $\b_{i,\tau i, j;-c_{ij},-c_{\tau i,j}}\in \tUi$ satisfies
\begin{align}
 \b_{i,\tau i, j;-c_{ij},-c_{\tau i,j}}\tT_{\bs_i}(\tfX_i)^{-1}=\tT_{\bs_i}(\tfX_i)^{-1} \tT''_{\bs_i,+1}(B_j).
\end{align}

\item[(3)] $ \b_{i,\tau i, j;-c_{ij},-c_{\tau i,j}}=\sigma^\imath(\B_{i,\tau i, j;-c_{ij},-c_{\tau i,j}})$.
\end{itemize}
\end{theorem}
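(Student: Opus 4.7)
The plan is to assemble the three parts directly from the intertwining properties established in Section~\ref{sec:qsinter}, together with the explicit formulas \eqref{eq:qstTBj}--\eqref{eq:qstTBj'} for $\tT'_{\bs_i,-1}(B_j)$ and $\tT''_{\bs_i,+1}(B_j)$ and the elementary Lemma~\ref{lem:rktwo1}. The key observation is that the decomposition $B_j=F_j+\tT_{\bw}(E_{\tau j})K'_j$ matches the decomposition $\B_{i,\tau i,j;m_1,m_2}=\B^-_{i,\tau i,j;m_1,m_2}+\B^+_{i,\tau i,j;m_1,m_2}$ term by term after intertwining with $\tfX_i$.

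For part (1), I would first use \eqref{eq:qstTBj} to write
\begin{align*}
\tfX_i\,\tT'_{\bs_i,-1}(B_j)
&=\tfX_i\,y_{i,\tau i,j;-c_{ij},-c_{\tau i,j}}\\
&\quad+\tfX_i\cdot (-q_i)^{-c_{ij}-c_{\tau i,j}}\tT_{\bw}(x_{\tau i,i,\tau j;-c_{ij},-c_{\tau i,j}})K'_j(K'_i)^{-c_{ij}}(K'_{\tau i})^{-c_{\tau i,j}}.
\end{align*}
The first summand equals $\B^-_{i,\tau i,j;-c_{ij},-c_{\tau i,j}}\tfX_i$ by Proposition~\ref{prop:qsBy} with $(m_1,m_2)=(-c_{ij},-c_{\tau i,j})$. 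For the second summand, I invoke Lemma~\ref{lem:rktwo1}\eqref{eq:TEjUp} (which says that $\tT'_{\bs_i,-1}(\tT_{\bw}(E_{\tau j})K'_j)$ commutes with $\tfX_i$) to move $\tfX_i$ past this expression, and then recognize the resulting element as $\B^+_{i,\tau i,j;-c_{ij},-c_{\tau i,j}}$ via Proposition~\ref{prop:qsBx}. Summing yields the desired identity.

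Part (2) follows the same pattern but uses the primed ingredients: \eqref{eq:qstTBj'} to expand $\tT''_{\bs_i,+1}(B_j)$, then Proposition~\ref{prop:qsBy'} for the $F_j$-half (giving $\b^-_{i,\tau i,j;-c_{ij},-c_{\tau i,j}}$), and Proposition~\ref{prop:qsBx'} together with the fact that $\tT''_{\bs_i,+1}(\tT_{\bw}(E_{\tau j})K'_j)$ commutes with $\tT_{\bs_i}(\tfX_i)^{-1}$ (a straightforward $\sigma$-twist of Lemma~\ref{lem:rktwo1}) for the $E_{\tau j}$-half. Conjugation by $\tT_{\bs_i}(\tfX_i)^{-1}$ then reassembles everything to $\b_{i,\tau i,j;-c_{ij},-c_{\tau i,j}}$. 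Part (3) is immediate from Proposition~\ref{prop:qsbB} applied at $(m_1,m_2)=(-c_{ij},-c_{\tau i,j})$.

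The only nontrivial work in this theorem is the matching of scalar prefactors when one compares $\tfX_i\,\tT'_{\bs_i,-1}(\tT_{\bw}(E_{\tau j})K'_j)$ with the right-hand side of Proposition~\ref{prop:qsBx}; the exponent $(-q_i)^{-c_{ij}-c_{\tau i,j}}$ coming from the rescaling $\tT'_{\bs_i,-1}=\tPsi_{\bvs_{\diamond}}^{-1}\tTD'_{\bs_i,-1}\tPsi_{\bvs_{\diamond}}$ must agree precisely with $(-1)^{m_1+m_2}q_i^{-(m_1+m_2)(c_{ij}+c_{\tau i,j}+m_1+m_2-1)}$ evaluated at $(m_1,m_2)=(-c_{ij},-c_{\tau i,j})$, which simplifies since $m_1+m_2-1+c_{ij}+c_{\tau i,j}=-1$. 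I expect this scalar verification to be the one place where care is needed; all structural work has already been carried out in Propositions~\ref{prop:qsBy}--\ref{prop:qsBx'}.
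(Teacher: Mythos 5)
Your proposal for part (1) and part (3) is correct and follows the paper's argument exactly: use \eqref{eq:qstTBj} to split $\tT'_{\bs_i,-1}(B_j)$ into the $y$-half and the $x$-half, identify each via Propositions~\ref{prop:qsBy}--\ref{prop:qsBx}, using Lemma~\ref{lem:rktwo1}\eqref{eq:TEjUp} to slide $\tfX_i$ past the $x$-half, and invoke Proposition~\ref{prop:qsbB} for (3). The scalar identity you flagged at the end is indeed the right check: with $m_1+m_2=-c_{ij}-c_{\tau i,j}$ and $m_1+m_2-1+c_{ij}+c_{\tau i,j}=-1$, the prefactor in Proposition~\ref{prop:qsBx} collapses to $(-q_i)^{-c_{ij}-c_{\tau i,j}}$ as required.

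In part (2), however, you have attached the commutation claim to the wrong half. What is needed, and what the paper uses, is that the $F_j$-half commutes: one applies the automorphism $\tT_{\bs_i}$ to $F_j\tfX_i=\tfX_i F_j$ (Lemma~\ref{lem:rktwo1}\eqref{eq:FjUp}) to conclude $\tT_{\bs_i}(\tfX_i)^{-1}\tT_{\bs_i}(F_j)\tT_{\bs_i}(\tfX_i)=\tT_{\bs_i}(F_j)=y'_{i,\tau i,j;-c_{ij},-c_{\tau i,j}}$, which is then identified with $\b^-$ via Proposition~\ref{prop:qsBy'}. Your proposal omits this step. Conversely, your assertion that $\tT''_{\bs_i,+1}(\tT_{\bw}(E_{\tau j})K'_j)$ commutes with $\tT_{\bs_i}(\tfX_i)^{-1}$ is not true and would in fact make Proposition~\ref{prop:qsBx'} contradict Proposition~\ref{prop:qsBx}: if the conjugation in \eqref{eq:qsqsBx1'} were trivial, $\b^+$ would equal a scalar multiple of $\tT_{\bw}(x'_{\tau i,i,\tau j;m_1,m_2})K'_j(K_i')^{m_1}(K_{\tau i}')^{m_2}$, which it is not. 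The correct argument simply leaves the $E_{\tau j}$-half conjugated by $\tT_{\bs_i}(\tfX_i)^{-1}$ and recognizes that precise conjugate expression as $\b^+$ by Proposition~\ref{prop:qsBx'}; no commutation is invoked (or available) for that half. Once this attribution is fixed, your argument lines up with the paper's proof.
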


In other word, the element $\tTa{i}(B_j):=\B_{i,\tau i, j;-c_{ij},-c_{\tau i,j}}$ satisfies \eqref{eq:inter2} and the element $\tTb{i}(B_j):=\b_{i,\tau i, j;-c_{ij},-c_{\tau i,j}}$ satisfies \eqref{eq:inter3}. Hence, we have proved the first statement in Theorem~\ref{thm:rktwo1}(ii).

\begin{proof}
We prove (1). By Lemma~\ref{lem:rktwo1} and \eqref{eq:qstTBj}, we have
\begin{align*}
\tfX_i \tT'_{\bs_i,-1}(B_j)=&  \tfX_i \tT'_{\bs_i,-1}(F_j)+\tfX_i \tT'_{\bs_i,-1}\big(\tT_{\bw}(E_j) K_j'\big)\\
=& \tfX_i \tT'_{\bs_i,-1}(F_j)+\tT'_{\bs_i,-1}\big(\tT_{\bw}(E_j) K_j'\big) \tfX_i\\
=&\tfX_i y_{i,\tau i,j;-c_{ij},-c_{\tau i,j}}\\
&+(-q_i)^{ -c_{ij}-c_{\tau i,j}} \tT_{\bw}(x_{\tau i, i,\tau j;-c_{ij},-c_{\tau i,j}}) K_j' (K_i')^{-c_{ij}}(K_{\tau i}')^{-c_{\tau i,j}}\tfX_i.
\end{align*}
On the other hand, setting $m_1=-c_{ij},m_2=-c_{\tau i,j}$ in Proposition~\ref{prop:qsBy}-\ref{prop:qsBx}, we have
\begin{align*}
& \B_{i,\tau i,j;-c_{ij},-c_{\tau i,j}}^- \tfX_i=\tfX_i y_{i,\tau i,j;-c_{ij},-c_{\tau i,j}},\\
& \B_{i,\tau i,j;-c_{ij},-c_{\tau i,j}}^+ = (-q_i)^{ -c_{ij}-c_{\tau i,j}} \tT_{\bw}(x_{\tau i, i,\tau j;-c_{ij},-c_{\tau i,j}}) K_j' (K_i')^{-c_{ij}}(K_{\tau i}')^{-c_{\tau i,j}}.
\end{align*}
Therefore, by the above formulas, we obtain the desired identity
\begin{align*}
\tfX_i \tT'_{\bs_i,-1}(B_j)=&  \B_{i,\tau i,j;-c_{ij},-c_{\tau i,j}}^-  \tfX_i+  \B_{i,\tau i,j;-c_{ij},-c_{\tau i,j}}^+  \tfX_i =  \B_{i,\tau i,j;-c_{ij},-c_{\tau i,j}} \tfX_i.
\end{align*}

We prove (2). By Lemma~\ref{lem:rktwo1} and \eqref{eq:qstTBj'}, we have
\begin{align*}
&\tT_{\bs_i}(\tfX_i)^{-1} \tT''_{\bs_i,+1}(B_j)\tT_{\bs_i}(\tfX_i)
\\
=&\tT_{\bs_i}(\tfX_i)^{-1} \tT_{\bs_i}(F_j)\tT_{\bs_i}(\tfX_i)+ \tT_{\bs_i}(\tfX_i)^{-1} \tT_{\bs_i}\big(\tT_{\bw}(E_j) K_j'\big)\tT_{\bs_i}(\tfX_i)
\\
=&\tT_{\bs_i}(F_j) + \tT_{\bs_i}(\tfX_i)^{-1} \tT_{\bs_i}\big(\tT_{\bw}(E_j) K_j'\big)\tT_{\bs_i}(\tfX_i)
\\
=&y'_{i,\tau i,j;-c_{ij},-c_{\tau i,j}}
\\
&+(-q_i)^{ -c_{ij}-c_{\tau i,j}}\tT_{\bs_i}(\tfX_i)^{-1} \tT_{\bw}(x'_{\tau i, i,\tau j;-c_{ij},-c_{\tau i,j}}) K_j' (K_i')^{-c_{ij}}(K_{\tau i}')^{-c_{\tau i,j}}\tT_{\bs_i}(\tfX_i).
\end{align*}
On the other hand, setting $m_1=-c_{ij},m_2=-c_{\tau i,j}$ in Proposition~\ref{prop:qsBy'}-\ref{prop:qsBx'}, we have
\begin{align*}
&\b_{i,\tau i,j;-c_{ij},-c_{\tau i,j}}^-=y'_{i,\tau i,j;-c_{ij},-c_{\tau i,j}},
\\
& \b_{i,\tau i,j;-c_{ij},-c_{\tau i,j}}^+\\
=&(-q_i)^{ -c_{ij}-c_{\tau i,j}}\tT_{\bs_i}(\tfX_i)^{-1} \tT_{\bw}(x'_{\tau i, i,\tau j;-c_{ij},-c_{\tau i,j}}) K_j' (K_i')^{-c_{ij}}(K_{\tau i}')^{-c_{\tau i,j}}\tT_{\bs_i}(\tfX_i).
\end{align*}
Therefore, by above two formulas, we obtain the desired identity
\begin{align*}
&\tT_{\bs_i}(\tfX_i)^{-1} \tT''_{\bs_i,+1}(B_j)\tT_{\bs_i}(\tfX_i)=\b_{i,\tau i,j;-c_{ij},-c_{\tau i,j}}^- +\b_{i,\tau i,j;-c_{ij},-c_{\tau i,j}}^+=\b_{i,\tau i,j;-c_{ij},-c_{\tau i,j}}.
\end{align*}

The statement (3) is a consequence of Proposition~\ref{prop:qsbB}.
\end{proof}

%%%%%%%%%%%%%
%%%%%%%%%%%%%
\section{Higher rank formulas for $c_{i,\tau i}=-1,\bw i=i$}
\label{sec:qsqs}
In this section, we fix $i\in \fwItau$ such that $c_{i,\tau i}=-1,\bw i=i$. Since $\bw$ commutes with $\tau$, $\bw\tau i=\tau i$.
In this case, we have $B_i=F_i+E_{\tau i}K_i'$ and $\bs_i=s_i s_{\tau i} s_i = s_{\tau i} s_i s_{\tau i}$. %

We define higher rank root vectors $\B_{i,\tau i,j;a,b,c},\b_{i,\tau i,j;a,b,c}\in \tUi$ in Definitions~\ref{def:qsqsBij}-\ref{def:qsqsBij'} via recursive relations.
We show that the higher rank formulas $\tTa{i}(B_j),\tTb{i}(B_j)$ are given by these root vectors in Theorem~\ref{thm:qsqs} and complete the proof for Theorem~\ref{thm:rktwo1}(iii).
The divided power formulations for $\B_{i,\tau i,j;a,b,c},\b_{i,\tau i,j;a,b,c}$ are obtained in Theorem~\ref{thm:qsqsdv}.

\subsection{Definitions of root vectors}
Let $\ad$ be the adjoint action on $\tU$, explicitly given by
\begin{align*}
\ad(E_i) u & = E_i u - K_i u K_i^{-1}E_i,
\\
\ad(F_i) u & = (F_i u -u F_i) K_i'^{-1},
\\
\ad(K_i) u & = K_i u K_i^{-1}.
\end{align*}
Set $\wad:=\omega\psi \circ \ad \circ \omega\psi$ and $\sad:=\sigma \circ \ad \circ \sigma$, where $\omega,\psi,\sigma$ are the Chevalley involution, the bar involution, and the anti-involution on $\tU$.

\begin{definition}
\label{def:qsqsxy}
Define elements $y_{i,\tau i, j;a,b,c},x_{i,\tau i, j;a,b,c},y'_{i,\tau i, j;a,b,c},x'_{i,\tau i, j;a,b,c}$ for $a,b,c\geq 0,  j \neq i,\tau i$ as follows
\begin{align*}
y_{i,\tau i, j;a,b,c}&=\wad(E_{ i}^{(a)}E_{\tau i}^{(b)}E_i^{(c)})F_j, \qquad y'_{i,\tau i, j;a,b,c}=\wsad(E_{ i}^{(a)}E_{\tau i}^{(b)}E_i^{(c)})F_j,
\\
x_{i,\tau i, j;a,b,c}&=\sad (E_{ i}^{(a)}E_{\tau i}^{(b)}E_i^{(c)})E_j, \qquad x'_{i,\tau i, j;a,b,c}=\ad (E_{ i}^{(a)}E_{\tau i}^{(b)}E_i^{(c)})E_j.
\end{align*}
\end{definition}

Denote $E_{\tau i}^{(a,b,c)}:=E_{ i}^{(a)}E_{\tau_i}^{(b)}E_i^{(c)}$ and $[K_i;x]:=\frac{K_i q_i^x-K_i^{-1} q_i^{-x}}{q_i-q_i^{-1}}$.
\begin{lemma}
\label{lem:EE}
We have, for $a,b,c\geq 0$,
\begin{align}
\begin{split}
    E_{ i} E_{\tau i}^{(a,b,c)}
    &= [a+1]E_{\tau i}^{(a+1,b,c)},
    \\
    E_{\tau i} E_{\tau i}^{(a,b,c)}
    &= [b-a+1 ] E_{\tau i}^{(a,b+1,c )} + [c+1] E_{\tau i}^{(a-1,b+1,c+1)}.
\end{split}
    \label{eq:EE}
\end{align}
\end{lemma}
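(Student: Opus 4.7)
The first identity is immediate: since $E_i$ commutes with itself, $E_i \cdot E_i^{(a)} E_{\tau i}^{(b)} E_i^{(c)} = [a+1] E_i^{(a+1)} E_{\tau i}^{(b)} E_i^{(c)} = [a+1] E_{\tau i}^{(a+1,b,c)}$ via the divided-power rule $E_i E_i^{(a)} = [a+1] E_i^{(a+1)}$. My plan therefore concentrates on the second identity, whose content is precisely to commute $E_{\tau i}$ past the leading divided power $E_i^{(a)}$.

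My first step will be to establish two auxiliary commutation lemmas, derived from the divided-power form
\begin{equation*}
E_i^{(2)} E_{\tau i} - E_i E_{\tau i} E_i + E_{\tau i} E_i^{(2)} = 0
\end{equation*}
of the quantum Serre relation (valid since $c_{i,\tau i} = -1$) together with its $i \leftrightarrow \tau i$ counterpart. Specifically, I will prove by induction on $n \geq 1$ that
\begin{align*}
E_{\tau i} E_i^{(n)} &= E_i^{(n-1)} E_{\tau i} E_i - [n-1]\, E_i^{(n)} E_{\tau i}, \\
E_{\tau i}^{(n)} E_i &= E_{\tau i} E_i E_{\tau i}^{(n-1)} - [n-1]\, E_i E_{\tau i}^{(n)}.
\end{align*}
The inductive step will rewrite $E_{\tau i} E_i^{(n+1)} = [n+1]^{-1} (E_{\tau i} E_i^{(n)}) E_i$, feed in the hypothesis, apply the Serre relation once more to the resulting $E_i^{(n-1)} E_{\tau i} E_i^{(2)}$ factor, and collect terms using the $q$-integer identity $[2][n] - [n-1] = [n+1]$.

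With these auxiliary lemmas in hand, the second identity should fall out in two lines: the first lemma with $n = a$ gives
\begin{equation*}
E_{\tau i} E_{\tau i}^{(a,b,c)} = E_i^{(a-1)} \bigl(E_{\tau i} E_i E_{\tau i}^{(b)}\bigr) E_i^{(c)} - [a-1][b+1]\, E_i^{(a)} E_{\tau i}^{(b+1)} E_i^{(c)},
\end{equation*}
and the second lemma (with $i$ and $\tau i$ swapped, at index $b+1$) rewrites the parenthesized factor as $E_{\tau i}^{(b+1)} E_i + [b]\, E_i E_{\tau i}^{(b+1)}$. Substituting this in and simplifying via $E_i^{(a-1)} E_i = [a] E_i^{(a)}$ and $E_i E_i^{(c)} = [c+1] E_i^{(c+1)}$ reduces everything to the scalar cancellation $[a][b] - [a-1][b+1] = [b-a+1]$, a routine quantum-integer identity. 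The boundary cases $a = 0$ and $a = 1$, in which the $E_i^{(a-1)}$ factor is absent or scalar, will be verified directly from $E_{\tau i} E_{\tau i}^{(b)} = [b+1] E_{\tau i}^{(b+1)}$. I expect the main (but still elementary) obstacle to be the careful bookkeeping in the inductive derivation of the two auxiliary commutation lemmas; once they are in place, the lemma reduces to substitution and a single $q$-integer cancellation.
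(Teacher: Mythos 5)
Your proof is correct, and it supplies the details the paper omits (the paper simply says the second identity "follows from a standard though lengthy induction" and gives no argument). Both auxiliary commutation lemmas, the inductive step via $[2][n]-[n-1]=[n+1]$, the substitution step, and the final cancellation $[a][b]-[a-1][b+1]=[b-a+1]$ all check out, and the boundary cases $a=0,1$ are handled properly with the convention $E_i^{(-1)}=0$. One small expository slip: your second auxiliary lemma $E_{\tau i}^{(n)}E_i = E_{\tau i}E_iE_{\tau i}^{(n-1)}-[n-1]E_iE_{\tau i}^{(n)}$ is applied directly at $n=b+1$ (no further swap of $i\leftrightarrow\tau i$ is needed); the parenthetical "with $i$ and $\tau i$ swapped" is a leftover from describing how the lemma is derived from the first one (via $\sigma$ and a swap), not how it is used here. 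Also worth noting explicitly: the second auxiliary lemma is not obtained from the first by the swap $i\leftrightarrow\tau i$ alone — it requires additionally applying the anti-automorphism $\sigma$ fixing the $E$'s; if you intend to prove it by its own induction you will need the $\tau i$-version of the Serre relation, $E_iE_{\tau i}^{(2)}-E_{\tau i}E_iE_{\tau i}+E_{\tau i}^{(2)}E_i=0$, as you indicate.
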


\begin{proof}
The first identity is obvious. The second identity follows from a standard though lengthy induction.
\end{proof}

\begin{lemma}\label{lem:FE}
We have, for $a,b,c\geq 0$,
\begin{align}
\begin{split}
[F_i,E_{\tau i}^{(a,b,c)}]&=-E_{\tau i}^{(a-1,b,c)} [K_i;a-b+2c-1] - E_{\tau i}^{(a,b,c-1)} [K_i;c-1],
\\
[F_{\tau i},E_{\tau i}^{(a,b,c)}]&=-E_{\tau i}^{(a,b-1,c)}[K_{\tau i};b-c-1].
\end{split}
\end{align}
\end{lemma}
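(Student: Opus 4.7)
The plan is to prove both identities by a direct computation from the commutation relations in $\tU$, without nontrivial induction, since $F_i$ and $F_{\tau i}$ each commute with one of the two types of $E$-factors appearing in $E_{\tau i}^{(a,b,c)} = E_i^{(a)}E_{\tau i}^{(b)}E_i^{(c)}$. The key input is the standard rank-one identity
\begin{equation*}
    [F_i, E_i^{(n)}] = -E_i^{(n-1)}\,[K_i;\, n-1],
\end{equation*}
which follows by induction on $n$ from $[F_i,E_i] = (K_i-K_i')/(q-q^{-1})$ (i.e.\ the $\tU$-version of the Jantzen identity, where $[K_i;x]$ stands for $(K_i q_i^x - K_i'q_i^{-x})/(q_i-q_i^{-1})$), together with the analogous relation for $F_{\tau i}$.

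For the first identity, since $c_{i,\tau i}=-1\neq 0$ in the \emph{off-diagonal} commutator, $F_i$ commutes with $E_{\tau i}$, hence with $E_{\tau i}^{(b)}$. The Leibniz rule then gives
\begin{equation*}
    [F_i,\, E_i^{(a)} E_{\tau i}^{(b)} E_i^{(c)}]
    = [F_i, E_i^{(a)}]\, E_{\tau i}^{(b)} E_i^{(c)} + E_i^{(a)} E_{\tau i}^{(b)}\, [F_i, E_i^{(c)}],
\end{equation*}
and substituting the rank-one identity produces
\begin{equation*}
    -E_i^{(a-1)}\,[K_i;a-1]\, E_{\tau i}^{(b)} E_i^{(c)} \;-\; E_i^{(a)} E_{\tau i}^{(b)} E_i^{(c-1)}\,[K_i;c-1].
\end{equation*}
Using $K_i E_{\tau i} = q_i^{-1} E_{\tau i} K_i$, $K_i' E_{\tau i} = q_i E_{\tau i} K_i'$, $K_i E_i = q_i^{2} E_i K_i$, $K_i' E_i = q_i^{-2} E_i K_i'$, one checks the straightforward identity
\begin{equation*}
    [K_i;\,a-1]\, E_{\tau i}^{(b)} E_i^{(c)} \;=\; E_{\tau i}^{(b)} E_i^{(c)}\,[K_i;\,a-1-b+2c],
\end{equation*}
which collects the exponent shift $a-b+2c-1$ appearing in the statement. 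Re-assembling yields the first claim.

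For the second identity, $F_{\tau i}$ commutes with $E_i$, hence with both $E_i^{(a)}$ and $E_i^{(c)}$, so only the middle $E_{\tau i}^{(b)}$ contributes:
\begin{equation*}
    [F_{\tau i},\, E_i^{(a)} E_{\tau i}^{(b)} E_i^{(c)}]
    = E_i^{(a)}\,[F_{\tau i}, E_{\tau i}^{(b)}]\,E_i^{(c)}
    = -E_i^{(a)} E_{\tau i}^{(b-1)}\,[K_{\tau i};\,b-1]\,E_i^{(c)}.
\end{equation*}
Moving $[K_{\tau i}; b-1]$ past $E_i^{(c)}$ using $K_{\tau i} E_i = q_i^{-1} E_i K_{\tau i}$ and $K_{\tau i}' E_i = q_i E_i K_{\tau i}'$ shifts the second argument by $-c$, giving $[K_{\tau i}; b-c-1]$ on the right, which is exactly the asserted formula. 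The only real work is the bookkeeping of $q_i$-exponents when commuting the Cartan elements through $E_i^{(c)}$ and $E_{\tau i}^{(b)}$, and this is routine once the sign conventions for $[K_i;x]$ in $\tU$ are fixed.
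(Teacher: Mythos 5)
Your proof is correct and takes essentially the same approach as the paper: split the commutator over the three divided-power factors using $[F_i,E_{\tau i}]=[F_{\tau i},E_i]=0$, apply the rank-one identity $[F_i,E_i^{(n)}]=-E_i^{(n-1)}[K_i;n-1]$, and push the Cartan factor to the right via $[K_i;x]E_i=E_i[K_i;x+2]$ and $[K_i;x]E_{\tau i}=E_{\tau i}[K_i;x-1]$ (the paper's stated rank-one relation carries a stray factor $[m]$, which is a typo; your form is the correct one for divided powers). The only slip is a sign in your aside $[F_i,E_i]=(K_i-K_i')/(q-q^{-1})$, which should be $[F_i,E_i]=-(K_i-K_i')/(q-q^{-1})$; this is harmless since the rank-one formula you actually apply is stated with the correct sign.
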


\begin{proof}
The first identity follows from the following relations
\begin{align*}
[F_i,E_{\tau i}]=0,\qquad [F_i, E_i^{(m)}]=-[m] E_i^{(m-1)}[K_i;m-1],\qquad [K_i;m]E_i=E_i[K_i;m+2],
\end{align*}
cf. \cite[1.3,1.6]{Ja95}. One can prove the second identity via similar relations.
\end{proof}

We write $[A,B]_x:=AB-xBA$ for scalars $x$.
\begin{lemma}
\label{lem:qsy}
We have, for $a,b,c\geq 0$,
\begin{itemize}
\item[(1)] $\big[F_i, y_{i,\tau i, j;a,b,c}\big]_{q_i^{b-2a-2c-c_{ij}}} =[a+1]_i y_{i,\tau i, j;a+1,b,c}$.
\item[(2)] $\big[F_{\tau i} , y_{i,\tau i, j;a,b,c}\big]_{q_i^{-2b+a+c-c_{\tau i,j}}} = [b-a+1 ]_i y_{i,\tau i, j;a,b+1,c} + [c+1]_i y_{i,\tau i, j;a-1,b+1,c+1}$.
\item[(3)] $[E_{i}, y_{i,\tau i, j;a,b,c}] = [-c_{ij}-a+b-2c+1]_i y_{i,\tau i, j;a-1,b,c}K_i' + [ -c_{ij}-c+1]_i y_{i,\tau i, j;a,b,c-1}K_i' $.
\item[(4)] $[E_{\tau i}, y_{i,\tau i, j;a,b,c}]= [-c_{\tau i,j}-b+c+1]_i y_{i,\tau i, j;a,b-1,c} K_{\tau i}'$.
\end{itemize}
\end{lemma}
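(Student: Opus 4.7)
The plan is to reformulate $y_{i,\tau i,j;a,b,c}$ in terms of the dual element $y^*_{a,b,c} := \ad(E_{\tau i}^{(a,b,c)})(E_j) \in \tU^+$. Reading the definition $\wad := \omega\psi \circ \ad \circ \omega\psi$ as $\wad(X)(u) = \omega\psi(\ad(X)(\omega\psi\, u))$, and using $\omega\psi(F_j) = E_j$, gives the duality $y_{i,\tau i,j;a,b,c} = \omega\psi(y^*_{a,b,c})$. Since $\ad$ (and hence $\wad$) is a Hopf-algebra representation, this identification will make the harder identities~(3) and (4) tractable, while~(1) and (2) can be handled directly with $\wad$.

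For parts~(1) and~(2), the representation property together with Lemma~\ref{lem:EE} immediately gives $\wad(E_i)(y_{i,\tau i,j;a,b,c}) = [a+1]_i\, y_{i,\tau i,j;a+1,b,c}$ and the analogous expression for $\wad(E_{\tau i})$. On the other hand, unfolding the definition yields $\wad(E_i)(u) = F_i u - K_i u K_i^{-1} F_i$, so one only needs to compute $K_i\, y_{i,\tau i,j;a,b,c}\, K_i^{-1}$ from the weight $-\alpha_j - (a+c)\alpha_i - b\alpha_{\tau i}$ of $y_{i,\tau i,j;a,b,c}$. The assumption $c_{i,\tau i}=-1$ in force throughout this section produces precisely the exponent $q_i^{b-2a-2c-c_{ij}}$ appearing in~(1); identity~(2) is the same argument with $\tau i$ in place of $i$.

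For parts~(3) and~(4), I would transfer the commutator through the duality: using $E_i = \omega\psi(F_i)$ and the fact that $\omega\psi$ is an algebra involution, one has $[E_i, y_{i,\tau i,j;a,b,c}] = \omega\psi([F_i, y^*_{a,b,c}])$. The paper's formula $\ad(F_i)(v) = (F_i v - v F_i) K_i'^{-1}$ is equivalent to $[F_i, v] = \ad(F_i)(v)\, K_i'$, so that $[F_i, y^*_{a,b,c}] = \ad(F_i \cdot E_{\tau i}^{(a,b,c)})(E_j)\, K_i'$ by the representation property of $\ad$. Expanding $F_i \cdot E_{\tau i}^{(a,b,c)}$ via Lemma~\ref{lem:FE}(1), then invoking $\ad(F_i)(E_j) = 0$ (as $j \neq i$) and the direct computation $\ad([K_i;x])(E_j) = [c_{ij}+x]_i\, E_j$, yields the two-term expression $-\bigl([a-b+2c-1+c_{ij}]_i\, y^*_{a-1,b,c} + [c-1+c_{ij}]_i\, y^*_{a,b,c-1}\bigr)\, K_i'$. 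Applying $\omega\psi$ back and converting via $-[n]_i=[-n]_i$ then produces~(3); identity~(4) follows by the same scheme with $F_{\tau i}$, Lemma~\ref{lem:FE}(2), and $\ad(F_{\tau i})(E_j)=0$, now producing a single term.

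The principal obstacle is not conceptual but one of careful bookkeeping: the $q_i$-exponents produced by the weight calculation in~(1)-(2) must exactly match the stated $q$-twists (and genuinely rely on $c_{i,\tau i}=-1$), and the sign/power flips induced by the anti-linear algebra involution $\omega\psi$ when passing from the $y^*$ side back to the $y$ side in~(3)-(4) must be tracked precisely.
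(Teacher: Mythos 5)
Your proof is correct and takes essentially the same route as the paper: both treat $\wad$ as a representation, apply Lemma~\ref{lem:EE} and the unfolding of $\wad(E_i)$ for parts (1)--(2), and apply Lemma~\ref{lem:FE} together with the vanishing of $\ad(F_i)$ on $E_j$ (and the computation of $\ad([K_i;x])E_j$) for parts (3)--(4). Your passage through $y^*_{a,b,c}$ and the explicit $\omega\psi$-conjugation is just an unwinding of the definition $\wad = \omega\psi\circ\ad\circ\omega\psi$, so the two arguments are the same computation in slightly different packaging.
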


\begin{proof}

We give a detailed proof for (2).
On one hand, for $u\in \tU$, we have $\wad(E_{\tau i})u=F_{\tau i} u - K_{\tau i} u K_{\tau i}^{-1} F_{\tau i}$, which implies that
\begin{align*}
\wad(E_{\tau i})y_{i,\tau i, j;a,b,c}=\big[F_{\tau i} , y_{i,\tau i, j;a,b,c}\big]_{q_i^{-2b+a+c-c_{\tau i,j}}}.
\end{align*}
On the other hand, by definition of $y_{i,\tau i, j;a,b,c}$ and Lemma~\ref{lem:EE}, we have
\begin{align*}
&\wad(E_{\tau i})y_{i,\tau i, j;a,b,c}
\\
=&\wad(E_{\tau i})\wad(E_{ i}^{(a)}E_{\tau i}^{(b)}E_i^{(c)})F_j
\\
=&[b-a+1 ]_i \wad(E_{ i}^{(a)}E_{\tau i}^{(b+1)}E_i^{(c)})F_j+ [c+1]_i \wad(E_{ i}^{(a-1)}E_{\tau i}^{(b+1)}E_i^{(c+1)})F_j
\\
=&[b-a+1 ]_i y_{i,\tau i, j;a,b+1,c} + [c+1]_i y_{i,\tau i, j;a-1,b+1,c+1}.
\end{align*}
The identity (2) follows by above two formulas.

The identity (1) is obtained by considering the action of $\wad(E_i)$ on $y_{i,\tau i, j;a,b,c}$ via similar arguments. Identities (3)-(4) are obtained by respectively considering the action of $\wad(F_i),\wad(F_{\tau i})$ on $y_{i,\tau i,j;a,b,c}$ and using Lemma~\ref{lem:FE}. We omit details for them.
\end{proof}

\begin{lemma}
\label{lem:qsx}
We have, for $a,b,c\geq 0$,
\begin{itemize}
\item[(1)] $ x_{i,\tau i, j;a,b,c} E_i-q_i^{-b+2a+2c+c_{ij}}E_i x_{i,\tau i, j;a,b,c} =[a+1]_i x_{i,\tau i, j;a+1,b,c}$.
\item[(2)] $ \big[x_{i,\tau i, j;a,b,c}, E_{\tau i}\big]_{q_i^{ 2b-a-c+c_{\tau i,j}}} = [b-a+1 ]_i x_{i,\tau i, j;a,b+1,c} + [c+1]_i x_{i,\tau i, j;a-1,b+1,c+1}$.
\item[(3)] $[x_{i,\tau i, j;a,b,c}, F_{i}] = [-c_{ij}-a+b-2c+1]_i K_i x_{i,\tau i, j;a-1,b,c}  + [ -c_{ij}-c+1]_i K_i  x_{i,\tau i, j;a,b,c-1}$.
\item[(4)] $[x_{i,\tau i, j;a,b,c}, F_{\tau i}]= [-c_{\tau i,j}-b+c+1]_i K_{\tau i} x_{i,\tau i, j;a,b-1,c}$.
\end{itemize}
\end{lemma}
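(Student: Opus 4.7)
The plan is to deduce Lemma~\ref{lem:qsx} from Lemma~\ref{lem:qsy} by applying the anti-linear anti-involution $\sigma\omega\psi$ to each of the four identities. The key observation is the intertwining
\begin{equation*}
x_{i,\tau i,j;a,b,c}=\sigma\omega\psi(y_{i,\tau i,j;a,b,c}).
\end{equation*}
To verify this, I would unpack the definitions $\wad=\omega\psi\circ\ad\circ\omega\psi$ and $\sad=\sigma\circ\ad\circ\sigma$. Using that $\omega\psi=\psi\omega$ is an involution on $\tU$, that $\omega\psi(F_j)=E_j$, and that $\sigma(E_j)=E_j$, one computes
\begin{equation*}
\sigma\omega\psi(y_{i,\tau i,j;a,b,c})=\sigma\bigl(\ad(E_{\tau i}^{(a,b,c)})(E_j)\bigr)=\sad(E_{\tau i}^{(a,b,c)})(E_j)=x_{i,\tau i,j;a,b,c}.
\end{equation*}

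Next, I would apply $\sigma\omega\psi$ term-by-term to each of the four identities in Lemma~\ref{lem:qsy}. Note that $\psi$ is anti-linear (so $\psi(q^\alpha)=q^{-\alpha}$) and fixes each of $E_i,F_i,E_{\tau i},F_{\tau i}$; $\omega$ swaps $E_i\leftrightarrow F_i$ and $E_{\tau i}\leftrightarrow F_{\tau i}$; $\sigma$ is an anti-homomorphism that swaps $K_i\leftrightarrow K_i'$ and $K_{\tau i}\leftrightarrow K_{\tau i}'$ while fixing the $E$'s and $F$'s; and each $q$-integer $[n]_i$ is fixed by $\psi$. Concretely, $\sigma\omega\psi$ sends a bracket $[F_i,y]_{q_i^{\alpha}}$ to $xE_i-q_i^{-\alpha}E_ix$, where $x=\sigma\omega\psi(y)$. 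Thus identity (1) of Lemma~\ref{lem:qsy} with exponent $\alpha=b-2a-2c-c_{ij}$ transforms into identity (1) of Lemma~\ref{lem:qsx} with exponent $-b+2a+2c+c_{ij}$, and the right-hand side $[a+1]_iy_{a+1,b,c}$ transforms into $[a+1]_ix_{a+1,b,c}$, exactly as required. Identity (2) is handled analogously. For identities (3) and (4), the right multiplications by $K_i'$ (resp.\ $K_{\tau i}'$) become left multiplications by $K_i$ (resp.\ $K_{\tau i}$), since $\sigma$ reverses products and swaps the two Cartan pairs while $\omega\psi$ fixes the Cartan subalgebra.

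The main obstacle is the careful bookkeeping of signs, $q$-exponents, and placements of Cartan factors under the composite involution; no substantively new algebraic manipulation is needed once the intertwining $x=\sigma\omega\psi(y)$ is established. An alternative direct route would be to mimic the proof of Lemma~\ref{lem:qsy}: compute the actions of $\sad(E_i)$, $\sad(E_{\tau i})$, $\sad(F_i)$, and $\sad(F_{\tau i})$ on $x_{i,\tau i,j;a,b,c}$ using analogs of Lemmas~\ref{lem:EE}--\ref{lem:FE}, together with the defining formula $x_{i,\tau i,j;a,b,c}=\sad(E_{\tau i}^{(a,b,c)})(E_j)$. This parallel computation is lengthier but avoids invoking the three-fold involution intertwining.
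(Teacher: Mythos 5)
Your proposal is correct and matches the paper's proof exactly: the paper establishes $x_{i,\tau i,j;a,b,c}=\sigma\omega\psi(y_{i,\tau i,j;a,b,c})$ from the definitions and then obtains all four identities by applying the anti-linear anti-automorphism $\sigma\omega\psi$ to the corresponding identities in Lemma~\ref{lem:qsy}. Your additional bookkeeping (reversal of products, $q\mapsto q^{-1}$ on exponents, $K_i'\mapsto K_i$, invariance of $[n]_i$) is just a spelled-out version of what the paper leaves implicit.
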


\begin{proof}
By definition, we have
\begin{align*}
x_{i,\tau i, j;a,b,c}=\sigma \omega \psi (y_{i,\tau i, j;a,b,c}).
\end{align*}
Then these four identities are obtained by applying $\sigma\omega \psi$ to those four identities in Lemma~\ref{lem:qsy}.
\end{proof}

\begin{definition}
\label{def:qsqsBij}
Let $j\in \wI$ such that $j\neq i,\tau i$. Define $\B^\pm_{i,\tau i, j;a,b,c}$ to be the elements in $\tU$ determined by the following recursive relations
\begin{align}\notag
& B_i  \B^\pm_{i,\tau i, j;a,b,c} -q_i^{b-2a-2c-c_{ij}} \B^\pm_{i,\tau i, j;a,b,c}B_i
\\\label{def:qsqsBij1}
=&[a+1]_i\B^\pm_{i,\tau i, j;a+1,b,c}+q_i^{b-2a-2c-c_{ij}-1}[-c_{\tau i,j}-b+c+1]_i \B^\pm_{i,\tau i, j;a,b-1,c}\tk_i,
\end{align}
and
\begin{align}\notag
&\quad B_{\tau i}  \B^\pm_{i,\tau i, j;a,b,c}-q_i^{-2b+a+c-c_{\tau i,j}} \B^\pm_{i,\tau i, j;a,b,c}B_{\tau i}
\\\label{def:qsqsBij2}
&=[b-a+1 ]_i \B^\pm_{i,\tau i, j;a,b+1,c} + [c+1]_i \B^\pm_{i,\tau i, j;a-1,b+1,c+1}
\\\notag
&+q_i^{a-2b+c-c_{\tau i,j}-1} \big([-c_{ij}-a+b-2c+1]_i \B^\pm_{i,\tau i, j;a-1,b,c}  +[-c_{ij}-c+1]_i \B^\pm_{i,\tau i, j;a,b,c-1}\big)\tk_{\tau i},
\end{align}
where we set $\B^\pm_{i,\tau i, j;a,b,c}=0$ if either one of $a,b,c$ is negative, and set
\begin{align}
\B^-_{i,\tau i, j;0,0,0}=F_j,\qquad \B^+_{i,\tau i, j;0,0,0}=\tT_{\bw}(E_{\tau j}) K_j'.
\end{align}
\end{definition}

\begin{definition}
\label{def:qsqsBij'}
Let $ j\in \wI$ such that $j\neq i,\tau i$. Define $\b^\pm_{i,\tau i, j;a,b,c}$ to be the elements in $\tU$ determined by the following recursive relations
\begin{align}\notag
& \b^\pm_{i,\tau i, j;a,b,c} B_i  -q_i^{b-2a-2c-c_{ij}} B_i \b^\pm_{i,\tau i, j;a,b,c}
\\\label{def:qsqsBij1'}
=&[a+1]_i\b^\pm_{i,\tau i, j;a+1,b,c}+q_i^{b-2a-2c-c_{ij}-1}[-c_{\tau i,j}-b+c+1]_i\tk_{\tau i} \b^\pm_{i,\tau i, j;a,b-1,c},
\end{align}
and
\begin{align}\notag
& \qquad\b^\pm_{i,\tau i, j;a,b,c}B_{\tau i} -q_i^{-2b+a+c-c_{\tau i,j}} B_{\tau i}\b^\pm_{i,\tau i, j;a,b,c}
\\\label{def:qsqsBij2'}
&=[b-a+1 ]_i \b^\pm_{i,\tau i, j;a,b+1,c} + [c+1]_i  \b^\pm_{i,\tau i, j;a-1,b+1,c+1}
\\\notag
&+q_i^{a-2b+c-c_{\tau i,j}-1} \tk_{i}\big([-c_{ij}-a+b-2c+1]_i  \b^\pm_{i,\tau i, j;a-1,b,c} + [-c_{ij}-c+1]_i  \b^\pm_{i,\tau i, j;a,b,c-1}\big),
\end{align}
where we set $\b^\pm_{i,\tau i, j;a,b,c}=0$ if either one of $a,b,c$ is negative, and set
\begin{align}
\b^-_{i,\tau i, j;0,0,0}=F_j,\qquad \b^+_{i,\tau i, j;0,0,0}=\tT_{\bw}(E_{\tau j}) K_j'.
\end{align}
\end{definition}

Define $\B_{i,\tau i,j;a,b,c}:=\B^-_{i,\tau i,j;a,b,c}+\B^+_{i,\tau i,j;a,b,c}$. Similarly, define $\b_{i,\tau i,j;a,b,c}$.
Since
\begin{align*}
\B_{i,\tau i,j;0,0,0}=\b_{i,\tau i,j;0,0,0}=B_j\in \tUi,
\end{align*}
it follows from the above recursive definitions that $\B_{i,\tau i,j;a,b,c},\b_{i,\tau i,j;a,b,c}\in \tUi$ for any $a,b,c$.

Recall the anti-involution $\sigma^\imath$ on $\tUi$ from Proposition~\ref{prop:sigma}.
\begin{proposition}
\label{prop:qsqsbB}
Let $j\in \wI,j\neq i,\tau i$. Then $\b_{i,j;a,b,c}=\sigma^\imath ( \B_{i,j;a,b,c})$ for $a,b,c\geq 0$.
\end{proposition}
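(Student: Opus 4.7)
The plan is to follow the same short induction strategy used for Propositions~\ref{prop:bB} and \ref{prop:qsbB}. The key observation is that the recursive system in Definition~\ref{def:qsqsBij} together with the base case $\B_{i,\tau i,j;0,0,0}=B_j$ and the vanishing at negative indices uniquely determines the family $\{\B_{i,\tau i,j;a,b,c}\}_{a,b,c\geq 0}$ inside $\tUi$; indeed, \eqref{def:qsqsBij1} solves for $\B_{i,\tau i,j;a+1,b,c}$ in terms of strictly lower triples, and \eqref{def:qsqsBij2} evaluated at $a=0$ solves for $\B_{i,\tau i,j;0,b+1,c}$, allowing one to propagate by induction on $a+b+c$. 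The analogous statement holds for $\{\b_{i,\tau i,j;a,b,c}\}$ via Definition~\ref{def:qsqsBij'}. So it suffices to show that the image family $\{\sigma^\imath(\B_{i,\tau i,j;a,b,c})\}$ satisfies the defining recursions \eqref{def:qsqsBij1'}-\eqref{def:qsqsBij2'} of $\{\b_{i,\tau i,j;a,b,c}\}$ together with the correct initial condition.

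For the initial condition, $\B_{i,\tau i,j;0,0,0}=\B^-_{i,\tau i,j;0,0,0}+\B^+_{i,\tau i,j;0,0,0}=B_j$, and by Proposition~\ref{prop:sigma} we have $\sigma^\imath(B_j)=B_j=\b_{i,\tau i,j;0,0,0}$. For the recursive step, first sum the two $\pm$-recursions of Definition~\ref{def:qsqsBij} to obtain that the totals $\B_{i,\tau i,j;a,b,c}\in \tUi$ satisfy \eqref{def:qsqsBij1}-\eqref{def:qsqsBij2} verbatim. Now apply the anti-involution $\sigma^\imath$ of Proposition~\ref{prop:sigma}. Since $\sigma^\imath$ fixes $B_i$ and $B_{\tau i}$ and agrees with $\sigma$ on $\tU^{\imath 0}\tbU$, a direct computation gives $\sigma^\imath(\tk_i)=\sigma(K_iK'_{\tau i})=K_{\tau i}K'_i=\tk_{\tau i}$, and symmetrically $\sigma^\imath(\tk_{\tau i})=\tk_i$. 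Because $\sigma^\imath$ reverses products, the left-hand side $B_i\B_{i,\tau i,j;a,b,c}-q_i^{b-2a-2c-c_{ij}}\B_{i,\tau i,j;a,b,c}B_i$ of \eqref{def:qsqsBij1} is sent to $\sigma^\imath(\B_{i,\tau i,j;a,b,c})B_i-q_i^{b-2a-2c-c_{ij}}B_i\sigma^\imath(\B_{i,\tau i,j;a,b,c})$, while the trailing $\tk_i$ on the right-hand side migrates to a leading $\tk_{\tau i}$; this is exactly \eqref{def:qsqsBij1'}. The same manipulation applied to \eqref{def:qsqsBij2} produces \eqref{def:qsqsBij2'}. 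Induction on $a+b+c$ then yields $\sigma^\imath(\B_{i,\tau i,j;a,b,c})=\b_{i,\tau i,j;a,b,c}$ for all $a,b,c\geq 0$.

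The only subtle point to be careful about is that the halves $\B^\pm_{i,\tau i,j;a,b,c}$ and $\b^\pm_{i,\tau i,j;a,b,c}$ individually lie in $\tU$ but not in $\tUi$, so $\sigma^\imath$ cannot be applied to them one at a time. This is harmless: the $+$ and $-$ halves satisfy identical coefficient recursions and differ only in the initial datum at $(0,0,0)$, so the sum $\B_{i,\tau i,j;a,b,c}\in \tUi$ obeys the very same recursions, which is precisely what is needed to invoke $\sigma^\imath$ throughout the argument.
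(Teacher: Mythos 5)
Your proof is correct in substance and mirrors the paper's short argument: apply the anti-involution $\sigma^\imath$ to the recursions in Definition~\ref{def:qsqsBij} for the sums $\B_{i,\tau i,j;a,b,c}\in\tUi$, using $\sigma^\imath(B_i)=B_i$, $\sigma^\imath(B_{\tau i})=B_{\tau i}$, $\sigma^\imath(\tk_i)=\tk_{\tau i}$, and conclude that $\sigma^\imath(\B_{i,\tau i,j;a,b,c})$ satisfies the recursions of Definition~\ref{def:qsqsBij'} with the same initial value $B_j$. Your remark that $\sigma^\imath$ can only be applied to the sums, not to the halves $\B^\pm\in\tU$ individually, and that the sums inherit the recursions by adding the $\pm$-equations, is a helpful clarification the paper leaves implicit.

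One claim you make deserves caution, though it is a subtlety the paper's own two-line proof also glosses over. You assert that \eqref{def:qsqsBij1} together with \eqref{def:qsqsBij2} at $a=0$, plus the base case, uniquely determines the family by induction on $a+b+c$. But neither of these moves increments the third index alone: \eqref{def:qsqsBij1} produces $\B_{a+1,b,c}$ and \eqref{def:qsqsBij2} at $a=0$ produces $\B_{0,b+1,c}$, so the slice $\B_{0,0,c}$ for $c\geq 1$ is never reached from $(0,0,0)$ this way. In the paper this indeterminacy is filled by Lemma~\ref{lem:qsqsdv}, $\B_{i,\tau i,j;a,0,0}=\B_{i,\tau i,j;0,0,a}$, which is obtained from Propositions~\ref{prop:qsqsBy}--\ref{prop:qsqsBx} rather than from the recursions. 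Once one also records the companion identity $\b_{i,\tau i,j;a,0,0}=\b_{i,\tau i,j;0,0,a}$ (which follows the same way), your induction closes and the conclusion stands; that extra input should be acknowledged rather than folded into an $a+b+c$ induction.
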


\begin{proof}
By Definition~\ref{def:qsqsBij}-\ref{def:qsqsBij'}, $\sigma^\imath ( \B_{i,j;a,b,c})$ satisfies the same recursive relations as $\b_{i,j;a,b,c}$. Since $\b_{i,j;0,0,0}=B_j=\sigma^\imath ( \B_{i,j;0,0,0})$, this proposition follows.
\end{proof}

\begin{lemma}
\label{lem:qsqsBB}
We have, for $a,b,c\geq 0,j\neq i,\tau i,j\in \wI$,
\begin{align*}
\b^-_{i,\tau i, j;a,b,c}&=\sigma \big( \tfX_i^{-1} \B^-_{i,\tau i, j;a,b,c} \tfX_i \big).
\end{align*}
\end{lemma}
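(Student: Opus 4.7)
The plan is to mimic the strategy of Lemma~\ref{lem:BB} and Lemma~\ref{lem:qsBB}, now with three indices $a,b,c$ instead of one or two. First, I would introduce the subalgebra $\tU_{[i;j]}^- \subset \tU$ generated by $B_i, B_{\tau i}, \tk_i, \tk_{\tau i}, F_j$. From the recursive Definition~\ref{def:qsqsBij} it is clear by induction on $a+b+c$ that $\B^-_{i,\tau i, j;a,b,c} \in \tU_{[i;j]}^-$, and similarly $\b^-_{i,\tau i, j;a,b,c} \in \tU_{[i;j]}^-$ by Definition~\ref{def:qsqsBij'}.

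Next, I would consider the candidate map $\sigma_{ij}\colon x \mapsto \sigma(\tfX_i^{-1} x \tfX_i)$. Using Proposition~\ref{prop:fX1}, Lemma~\ref{lem:rktwo1}, and Proposition~\ref{prop:sigma}, one checks on generators that
\begin{align*}
\sigma_{ij}(B_i) &= B_i, & \sigma_{ij}(B_{\tau i}) &= B_{\tau i}, & \sigma_{ij}(F_j) &= F_j,\\
\sigma_{ij}(\tk_i) &= \tk_{\tau i}, & \sigma_{ij}(\tk_{\tau i}) &= \tk_i. & &
\end{align*}
The first two use the intertwining property $B_i \tfX_i = \tfX_i B_i^\sigma$ together with $\sigma(B_i^\sigma) = B_i$; the third uses $[F_j,\tfX_i]=0$ from Lemma~\ref{lem:rktwo1}; the Cartan identities follow from the fact that $\tk_i,\tk_{\tau i} \in \tU^{\imath 0}$ commute with $\tfX_i$ and are swapped by $\sigma$. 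Hence $\sigma_{ij}$ restricts to an anti-automorphism of $\tU_{[i;j]}^-$.

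Then I would apply the anti-automorphism $\sigma_{ij}$ term by term to the two defining recursions \eqref{def:qsqsBij1}--\eqref{def:qsqsBij2} for $\B^-_{i,\tau i, j;a,b,c}$. Because $\sigma_{ij}$ reverses products, every bracket of the form $B_i \B^- - q^? \B^- B_i$ on the left-hand side becomes $\sigma_{ij}(\B^-) B_i - q^? B_i \sigma_{ij}(\B^-)$, which is the precise shape of the left-hand sides of \eqref{def:qsqsBij1'}--\eqref{def:qsqsBij2'}. On the right-hand side, the factor $\tk_i$ (resp.\ $\tk_{\tau i}$) is moved to the left and simultaneously swapped with $\tk_{\tau i}$ (resp.\ $\tk_i$), which matches the right-hand sides of \eqref{def:qsqsBij1'}--\eqref{def:qsqsBij2'} verbatim. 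Thus the elements $\sigma_{ij}(\B^-_{i,\tau i, j;a,b,c})$ satisfy exactly the recursive system that defines $\b^-_{i,\tau i, j;a,b,c}$.

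Finally, the base case is immediate: $\sigma_{ij}(\B^-_{i,\tau i, j;0,0,0}) = \sigma_{ij}(F_j) = F_j = \b^-_{i,\tau i, j;0,0,0}$, and $\sigma_{ij}$ sends the zero elements (where any of $a,b,c$ is negative) to zero. An induction on $a+b+c$, driven by the matching recursions, then yields $\sigma_{ij}(\B^-_{i,\tau i, j;a,b,c}) = \b^-_{i,\tau i, j;a,b,c}$, which is the claimed identity. The only nontrivial step is the side-by-side verification that the transported recursions match those of $\b^-$; since the $\tk$-swap compensates the $\tfX_i$-conjugation and $\sigma$ reverses all products, this verification is bookkeeping rather than computation, and I expect no real obstacle beyond keeping track of indices.
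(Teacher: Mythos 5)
Your argument is correct and is essentially identical to the paper's own proof: both introduce the subalgebra $\tU_{[i;j]}^-$ generated by $B_i,B_{\tau i},\tk_i,\tk_{\tau i},F_j$, define the anti-automorphism $\sigma_{ij}\colon x\mapsto\sigma(\tfX_i^{-1}x\tfX_i)$ which fixes $B_i,B_{\tau i},F_j$ and swaps $\tk_i\leftrightarrow\tk_{\tau i}$, transport the recursions \eqref{def:qsqsBij1}--\eqref{def:qsqsBij2} into \eqref{def:qsqsBij'1}--\eqref{def:qsqsBij'2}, and conclude by induction from the matching base case. You spell out the generator-level verification of $\sigma_{ij}$ a bit more explicitly than the paper does, but the route and all key ingredients (Propositions~\ref{prop:fX1} and~\ref{prop:sigma}, Lemma~\ref{lem:rktwo1}) are the same.
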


\begin{proof}
Consider the subalgebra $\tU_{[i;j]}^-$ of $\tU$ generated by $B_i,B_{\tau i},\tk_i,\tk_{\tau i},F_j$. It is clear from the above definitions that $\B^-_{i,\tau i, j;a,b,c},\b^-_{i,\tau i, j;a,b,c}\in \tU_{[i;j]}^-$. By Proposition~\ref{prop:fX1} and Lemma~\ref{lem:rktwo1}, there is a well-defined anti-automorphism $\sigma_{ij}$ on $\tU_{[i;j]}^-$, which is given by
\begin{align*}
\sigma_{ij}: x \mapsto \sigma \big( \tfX_i^{-1} x \tfX_i \big).
\end{align*}
Moreover, $\sigma_{ij}$ fixes $B_i,B_{\tau i},F_j$ and sends $\tk_i\leftrightarrow \tk_{\tau i}$. Applying $\sigma_{ij}$ to \eqref{def:qsqsBij1}-\eqref{def:qsqsBij2}, it is clear that $ \sigma_{ij} (\B^-_{i,\tau i, j;a,b,c})$ satisfies the same recursive relations as $\b^-_{i,\tau i, j;a,b,c}$. Then the desired identity follows by induction.
\end{proof}

\subsection{Intertwining properties}
\label{sec:qsqsinter}

We formulate the intertwining relations between elements $\B^\pm_{i,\tau i,j;a,b,c}$ and $y_{i,\tau i,j;a,b,c},x_{\tau i, i,\tau j;a,b,c}$.
\begin{proposition}
\label{prop:qsqsBy}
We have,  for $a,b,c\geq 0,j\neq i,\tau i,j\in \wI$,
\begin{align}
\label{eq:qsqsBy1}
\B_{i,\tau i, j;a,b,c}^- \tfX_i = \tfX_i y_{i,\tau i, j;a,b,c}.
\end{align}
\end{proposition}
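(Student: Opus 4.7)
The plan is to adapt the strategy used for Propositions~\ref{prop:By} and \ref{prop:qsBy}: introduce $R_{a,b,c} := \tfX_i\, y_{i,\tau i,j;a,b,c}\, \tfX_i^{-1}$ and show that $R_{a,b,c}$ satisfies the same two recursive relations \eqref{def:qsqsBij1}--\eqref{def:qsqsBij2} and the same initial conditions as $\B^-_{i,\tau i,j;a,b,c}$. Since both $y_{i,\tau i,j;a,b,c}$ and $\B^-_{i,\tau i,j;a,b,c}$ vanish whenever any of $a,b,c$ is negative, and $R_{0,0,0} = \tfX_i F_j \tfX_i^{-1} = F_j = \B^-_{i,\tau i,j;0,0,0}$ by Lemma~\ref{lem:rktwo1}, the result will follow by a triple induction on $(a,b,c)$.

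To verify the $B_i$-recursion for $R_{a,b,c}$, I would conjugate by $\tfX_i^{-1}$ and invoke Proposition~\ref{prop:fX1}, which gives $\tfX_i^{-1} B_i \tfX_i = B_i^\sigma = F_i + K_i E_{\tau i}$ (using $\bw i = i$, so $\tT_{\bw}$ fixes $F_i$ and $E_{\tau i}$). The LHS of \eqref{def:qsqsBij1} applied to $R_{a,b,c}$ then becomes
\begin{align*}
[F_i,\, y_{i,\tau i,j;a,b,c}]_{q_i^{b-2a-2c-c_{ij}}} \;+\; K_i E_{\tau i}\, y_{i,\tau i,j;a,b,c} - q_i^{b-2a-2c-c_{ij}} y_{i,\tau i,j;a,b,c} K_i E_{\tau i}.
\end{align*}
Lemma~\ref{lem:qsy}(1) handles the $F_i$-commutator, yielding $[a+1]_i\, y_{i,\tau i,j;a+1,b,c}$. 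For the $K_i E_{\tau i}$-piece, one first commutes $K_i$ past $y_{i,\tau i,j;a,b,c}$, which has weight $-\alpha_j - (a+c)\alpha_i - b\alpha_{\tau i}$; a short computation using $c_{i,\tau i}=-1$ gives $K_i y_{i,\tau i,j;a,b,c} = q_i^{b-2a-2c-c_{ij}} y_{i,\tau i,j;a,b,c} K_i$. Miraculously the two $q$-powers cancel, leaving $K_i[E_{\tau i}, y_{i,\tau i,j;a,b,c}]$, which Lemma~\ref{lem:qsy}(4) evaluates; a final commutation of $K_i$ to the right with the index-shifted $y$ produces the factor $q_i^{b-2a-2c-c_{ij}-1}$ and assembles $K_i K'_{\tau i} = \tk_i$, matching \eqref{def:qsqsBij1} exactly. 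Conjugating back by $\tfX_i$ and using that $\tk_i \in \tU^{\imath 0}$ commutes with $\tfX_i$ (Proposition~\ref{prop:fX1}) converts this identity to the claimed recursion for $R_{a,b,c}$.

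The $B_{\tau i}$-recursion \eqref{def:qsqsBij2} is handled in parallel fashion: conjugating gives $B_{\tau i}^\sigma = F_{\tau i} + K_{\tau i} E_i$, the $F_{\tau i}$-part is evaluated via Lemma~\ref{lem:qsy}(2) (producing both the $y_{\ldots;a,b+1,c}$ and the $y_{\ldots;a-1,b+1,c+1}$ summands, since the left-hand side of Lemma~\ref{lem:qsy}(2) is already a $q$-commutator of the required form), and the $K_{\tau i} E_i$-part is handled via Lemma~\ref{lem:qsy}(3) after commuting $K_{\tau i}$ through $y_{i,\tau i,j;a,b,c}$, assembling the $\tk_{\tau i} = K_{\tau i} K'_i$ factor at the end.

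The only genuinely delicate point — and the spot most likely to harbour a sign or $q$-power slip — is verifying the cancellation of exponents when the $B_i^\sigma$-commutator is split into its $F_i$ and $K_i E_{\tau i}$ pieces. This is where the specific choice of skew-commutator exponent $q_i^{b-2a-2c-c_{ij}}$ (and the analogous $q_i^{-2b+a+c-c_{\tau i,j}}$ for $B_{\tau i}$) in Definition~\ref{def:qsqsBij} is forced by the weight of $y_{i,\tau i,j;a,b,c}$ together with $c_{i,\tau i}=-1$. Once these exponents are seen to align, the rest of the argument is a bookkeeping exercise mirroring the proofs of Propositions~\ref{prop:By} and \ref{prop:qsBy}.
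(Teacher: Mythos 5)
Your proposal is correct and follows essentially the same strategy as the paper's proof: set $R_{a,b,c}=\tfX_i\, y_{i,\tau i,j;a,b,c}\,\tfX_i^{-1}$, verify the initial and vanishing conditions, and establish the two recursions by conjugating via $\tfX_i$ to replace $B_i$ by $B_i^\sigma=F_i+K_iE_{\tau i}$ (resp.\ $B_{\tau i}$ by $B_{\tau i}^\sigma=F_{\tau i}+K_{\tau i}E_i$), splitting the $q$-commutator, and invoking Lemma~\ref{lem:qsy}(1)(4) (resp.\ (2)(3)) after the weight computation. The paper commutes $K_i$ (resp.\ $K_{\tau i}$) past $[E_{\tau i},y]$ (resp.\ $[E_i,y]$) before applying Lemma~\ref{lem:qsy}, while you apply the lemma first and commute afterward, but this is only a reordering of elementary steps and the $q$-power bookkeeping agrees.
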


\begin{proof}
Let $R_{a,b,c} $ denote $\tfX_i y_{i,\tau i, j;a,b,c}\tfX_i^{-1}$. By Lemma~\ref{lem:rktwo1}, $R_{0,0,0}=F_j=\B_{i,\tau i, j;0,0,0}^-$. Moreover, by definition, if either one of $a,b,c$ is negative, then $y_{i,\tau i, j; a,b,c}=y_{i,\tau i, j;a,b,c}=0$. Hence, it suffices to prove that $R_{a,b,c}$ satisfies the same recursive relations as $\B_{i,\tau i, j;a,b,c}^-$.

Recall that $B_i^\sigma=F_i+K_i E_{\tau i}$. We have, by Proposition~\ref{prop:fX1},
\begin{align*}
&\tfX_i^{-1}\big(B_i R_{a,b,c} -q_i^{b-2a-2c-c_{ij}} R_{a,b,c} B_i \big) \tfX_i
\\
=&B_i^\sigma y_{i,\tau i, j;a,b,c}-q_i^{b-2a-2c-c_{ij}} y_{i,\tau i, j;a,b,c}B_i^\sigma
\\
=&F_i  y_{i,\tau i, j;a,b,c}-q_i^{b-2a-2c-c_{ij}} y_{i,\tau i, j;a,b,c}F_i
\\
&+q_i^{b-2a-2c-c_{ij}-1} (E_{\tau i} y_{i,\tau i, j;a,b,c}- y_{i,\tau i, j;a,b,c}E_{\tau i}) K_i
\\
=&[a+1]_i y_{i,\tau i, j;a+1,b,c}+q_i^{b-2a-2c-c_{ij}-1}[-c_{\tau i,j}-b+c+1]_i y_{i,\tau i, j;a,b-1,c}\tk_i,
\end{align*}
where the last step follow from Lemma~\ref{lem:qsy}(1)(4).
This computation shows that the element $R_{a,b,c} $ satisfies \eqref{def:qsqsBij1}.

For $B_{\tau i}^\sigma=F_{\tau i}+K_{\tau i} E_i$, by Proposition~\ref{prop:fX1}, we similarly have
\begin{align*}
&\quad\tfX_i^{-1}\big(B_{\tau i}  R_{a,b,c} -q_i^{-2b+c+a-c_{\tau i,j}} R_{a,b,c} B_{\tau i}\big) \tfX_i
\\
& =B_{\tau i}^\sigma y_{i,\tau i, j;a,b,c}-q_i^{-2b+c+a-c_{\tau i,j}} y_{i,\tau i, j;a,b,c} B_{\tau i}^\sigma
\\
&=F_{\tau i}  y_{i,\tau i, j;a,b,c}-q_i^{-2b+c+a-c_{\tau i,j}} y_{i,\tau i, j;a,b,c} F_{\tau i}
\\
&\quad+q_i^{c+a-2b-c_{\tau i,j}-1}(E_{ i}  y_{i,\tau i, j;a,b,c}- y_{i,\tau i, j;a,b,c} E_{ i}) K_{\tau i}
\\
&=[b-a+1 ]_i y_{i,\tau i, j;a,b+1,c} + [c+1]_i y_{i,\tau i, j;a-1,b+1,c+1}
\\
&+q_i^{c+a-2b-c_{\tau i,j}-1} \big([-c_{ij}-a+b-2c+1]_i y_{i,\tau i, j;a-1,b,c}  +[-c_{ij}-c+1]_i y_{i,\tau i, j;a,b,c-1}\big)\tk_{\tau i}.
\end{align*}
This computation shows that the element $R_{a,b,c} $ satisfies \eqref{def:qsqsBij2}. Therefore, we have proved \eqref{eq:qsqsBy1} for any $a,b,c\geq 0$.
\end{proof}

\begin{proposition}
\label{prop:qsqsBx}
We have,  for $a,b,c\geq 0,j\neq i,\tau i,j\in \wI$,
\begin{align}
\label{eq:qsqsBx1}
\B^+_{i,\tau i, j;a,b,c} =(-1)^{a+b+c} q_i^{-\frac{1}{2}(a+b+c)(a+b+c-1+2c_{ij}+2c_{\tau i,j})} \tT_{\bw}(x_{\tau i,i,\tau j; a,b,c}) K_j' (K'_i)^{a+c} (K_{\tau i}')^b.
\end{align}
\end{proposition}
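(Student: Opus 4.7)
The plan is to follow the strategy of Proposition~\ref{prop:qsBx} (the type (ii) analog): denote the right-hand side of \eqref{eq:qsqsBx1} by $P_{a,b,c}$, verify the base case and the boundary vanishing, and show that $P_{a,b,c}$ satisfies the same defining recursive relations \eqref{def:qsqsBij1}--\eqref{def:qsqsBij2} as $\B^+_{i,\tau i,j;a,b,c}$; the claim then follows by induction on $a+b+c$. Set $\widetilde x_{a,b,c} := \tT_{\bw}(x_{\tau i,i,\tau j;a,b,c})$ and
\[
Q_{a,b,c} := \widetilde x_{a,b,c}\, K_j' (K_i')^{a+c}(K_{\tau i}')^{b}, \qquad P_{a,b,c}=(-1)^{a+b+c}q_i^{-\frac{1}{2}(a+b+c)(a+b+c-1+2c_{ij}+2c_{\tau i,j})}Q_{a,b,c}.
\]
Since $\bw i=i$ and $\bw\tau i=\tau i$, the automorphism $\tT_{\bw}$ fixes each of $E_i,E_{\tau i},F_i,F_{\tau i}$, so the four commutator identities of Lemma~\ref{lem:qsx} lift verbatim from $x_{\tau i,i,\tau j;a,b,c}$ to $\widetilde x_{a,b,c}$ (after the substitution $i\leftrightarrow\tau i$, $j\mapsto\tau j$, using $c_{\tau i,\tau j}=c_{ij}$, $c_{i,\tau j}=c_{\tau i,j}$, and $q_{\tau i}=q_i$).

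Next, I would compute $B_i Q_{a,b,c}-q_i^{b-2a-2c-c_{ij}}Q_{a,b,c}B_i$ by splitting $B_i=F_i+E_{\tau i}K_i'$ and moving the Cartan tail $K_j'(K_i')^{a+c}(K_{\tau i}')^{b}$ past $F_i$ and $E_{\tau i}$; each such move multiplies by a $q_i$-power that is precisely calibrated to turn the resulting expressions into a $q_i^{b-2a-2c-c_{ij}}$-commutator of $\widetilde x_{a,b,c}$ with $F_i$, and a plain commutator of $\widetilde x_{a,b,c}$ with $E_{\tau i}$. Invoking the $\tau$-twisted Lemma~\ref{lem:qsx}(1)(4), each of these commutators produces a single term involving $\widetilde x_{a+1,b,c}$ or $\widetilde x_{a,b-1,c}$ respectively, together with a Cartan factor that combines with the tail to produce $Q_{a+1,b,c}$ or $Q_{a,b-1,c}\tk_i$. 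Comparing with \eqref{def:qsqsBij1} fixes the exponent in the $q_i$-prefactor. The analogous computation for $B_{\tau i}=F_{\tau i}+E_iK_{\tau i}'$ yields three terms, one from $[F_{\tau i},\widetilde x_{a,b,c}]$ and two from a $q_i$-commutator of $\widetilde x_{a,b,c}$ with $E_i$ (by the $\tau$-twisted Lemma~\ref{lem:qsx}(2)(3)), matching the three-term right-hand side of \eqref{def:qsqsBij2}.

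The main obstacle will be bookkeeping the signs and $q_i$-exponents in the second recursion: the quadratic exponent $-\tfrac{1}{2}(a+b+c)(a+b+c-1+2c_{ij}+2c_{\tau i,j})$ must be chosen so that its increments under $a\mapsto a+1$, $b\mapsto b+1$, and $c\mapsto c+1$ exactly absorb the $q_i$-weights picked up when $F_i,F_{\tau i},E_i,E_{\tau i}$ are commuted past $(K_i')^{a+c}(K_{\tau i}')^{b}$, combined with those appearing in the relevant $q_i$-commutators of Lemma~\ref{lem:qsx}. Once this matches up, the base case $P_{0,0,0}=\tT_{\bw}(E_{\tau j})K_j'=\B^+_{i,\tau i,j;0,0,0}$ and the vanishing of $P_{a,b,c}$ whenever any one of $a,b,c$ is negative (which is evident since $\widetilde x_{a,b,c}=0$ in those cases) complete the induction.
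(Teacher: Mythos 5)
Your proposal follows essentially the same route as the paper's proof: denote the right-hand side by $P_{a,b,c}$ and the unnormalized version by $Q_{a,b,c}$, verify the base case $P_{0,0,0}=\tT_{\bw}(E_{\tau j})K_j'$ and the boundary vanishing, lift the $\tau$-twisted Lemma~\ref{lem:qsx} to $\widetilde{x}_{a,b,c}=\tT_{\bw}(x_{\tau i,i,\tau j;a,b,c})$ using $\bw i=i$ and $\bw\tau i=\tau i$, compute the two recursions for $Q_{a,b,c}$ by splitting $B_i=F_i+E_{\tau i}K_i'$ and $B_{\tau i}=F_{\tau i}+E_i K_{\tau i}'$, and then rescale to conclude $P_{a,b,c}=\B^+_{i,\tau i,j;a,b,c}$ by induction.

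One small bookkeeping slip: the right-hand side of \eqref{def:qsqsBij2} has \emph{four} terms, not three, and the split is two and two, not one and two. Explicitly, $[F_{\tau i},\widetilde{x}_{a,b,c}]$ contributes two terms via the $\tau$-twist of Lemma~\ref{lem:qsx}(3) (yielding $\widetilde{x}_{a-1,b,c}$ and $\widetilde{x}_{a,b,c-1}$ with $\tk_{\tau i}$), while the $q_i$-commutator with $E_i$ contributes the other two via the $\tau$-twist of Lemma~\ref{lem:qsx}(2) (yielding $\widetilde{x}_{a,b+1,c}$ and $\widetilde{x}_{a-1,b+1,c+1}$). This miscount does not affect the validity of your method, but you would discover it the moment you carry out the commutator computation.
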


\begin{proof}
Let $P_{a,b,c}$ denote RHS \eqref{eq:qsqsBx1} and $x_{\tau i,i,\bw\tau j; a,b,c}$ denote $\tT_{\bw}(x_{\tau i,i,\tau j; a,b,c})$. It is clear that $P_{0,0,0}=\tT_{\bw}(E_{\tau j}) K_j'=\B^+_{i,\tau i, j;0,0,0}$. Moreover, if either one of $a,b,c$ is negative, then $P_{a,b,c}=0=\B^+_{i,\tau i, j;a,b,c}$. Hence, it suffices to show that $P_{a,b,c}$ also satisfies the defining recursive relations \eqref{def:qsqsBij1}-\eqref{def:qsqsBij2} for $B^+_{i,\tau i, j;a,b,c}$.

Let $Q_{a,b,c}$ denote $x_{\tau i,i,\bw\tau j; a,b,c} K_j' (K'_i)^{a+c} (K_{\tau i}')^b$. Applying $\tau$ to Lemma~\ref{lem:qsx}(1)(4), we obtain two recursions for $x_{\tau i,i,\tau j; a,b,c}$. Since $F_i,E_{\tau i}$ are fixed by $\tT_{\bw}$, $x_{\tau i,i,\bw\tau j; a,b,c}$ also satisfies the same recursions. To this end, we have
\begin{align}
\label{eq:qsqsBx4}
\begin{split}
&x_{\tau i, i, \bw\tau j;a,b,c} E_{\tau i}-q_i^{-b+2a+2c+c_{ij}}E_{\tau i} x_{\tau i, i, \bw\tau j;a,b,c} =[a+1]_i x_{i,\tau i, \bw\tau j;a+1,b,c},
\\
&[x_{\tau i,i,\bw\tau j;a,b,c}, F_{i}]= [-c_{\tau i,j}-b+c+1]_i K_{  i} x_{\tau i,i,\bw\tau j;a,b-1,c}.
\end{split}
\end{align}

We formulate the recursive relation for $Q_{a,b,c}$ as follows
\begin{align*}
&\quad B_i Q_{a,b,c} -q_i^{b-2a-2c-c_{ij}} Q_{a,b,c} B_i
\\
&=( F_i x_{\tau i,i,\bw\tau j; a,b,c} -x_{\tau i,i,\bw\tau j; a,b,c} F_i )K_j' (K'_i)^{a+c} (K_{\tau i}')^b
\\
&+q_i^{-a-c-b-c_{\tau i,j}-c_{ij}}(q_i^{2a+2c-b+c_{ij}}E_{\tau i} x_{\tau i,i,\bw\tau j; a,b,c} -x_{\tau i,i,\bw\tau j; a,b,c} E_{\tau i})K_j' (K'_i)^{a+c+1} (K_{\tau i}')^b
\\
&= -[-c_{\tau i,j}-b+c+1]_i q_i^{-a-c+2b-2+c_{\tau i,j}} x_{\tau i,i,\bw\tau j;a,b-1,c} K_j' (K'_i)^{a+c} (K_{\tau i}')^{b-1} \tk_i
\\
&\quad-q_i^{-a-c-b-c_{\tau i,j}-c_{ij}}[a+1]_i x_{i,\bw\tau i, j;a+1,b,c} E_{\tau i})K_j' (K'_i)^{a+c+1} (K_{\tau i}')^b,
\end{align*}
where we used \eqref{eq:qsqsBx4} in the last step.

The above computation implies that $Q_{a,b,c}$ satisfies the following recursive relation
\begin{align}
&B_i Q_{a,b,c} -q_i^{b-2a-2c-c_{ij}} Q_{a,b,c} B_i
\\\notag
=&-q_i^{-a-c-b-c_{\tau i,j}-c_{ij}}[a+1]_i Q_{a+1,b,c} -q_i^{-a-c+2b-2+c_{\tau i,j}}[-c_{\tau i,j}-b+c+1]_i Q_{a,b-1,c}\tk_i.
\end{align}
Similarly, one can show that $Q_{a,b,c}$ also satisfies the following recursive relation
\begin{align}\notag
&B_{\tau i}  Q_{ a,b,c} - q_i^{-2b+a+c-c_{\tau i,j}} Q_{ a,b,c} B_{\tau i}
\\
=&-q_i^{2a+2c-2-b+c_{ij}}\big([-c_{ij}-a+b-2c+1]_i  Q_{a-1,b,c}  + [-c_{ij}-c+1]_i   Q_{a,b,c-1} \big)  \tk_{\tau i}
\\\notag
&-q_i^{-b- a- c-c_{ij}-c_{\tau i,j}}\big([b-a+1 ]_i Q_{a,b+1,c} + [c+1]_i Q_{a-1,b+1,c+1}\big).
\end{align}

Since $P_{a,b,c}=(-1)^{a+b+c} q_i^{-\frac{1}{2}(a+b+c)(a+b+c-1+2c_{ij}+2c_{\tau i,j})} Q_{a,b,c} $, we obtain that $P_{a,b,c}$ satisfy the following two relations
\begin{align}\notag
& B_i P_{a,b,c} -q_i^{b-2a-2c-c_{ij}} P_{a,b,c}B_i
\\
\label{eq:qsqsBx2}
=&[a+1]_i P_{a+1,b,c}+q_i^{b-2a-2c-c_{ij}-1}[-c_{\tau i,j}-b+c+1]_i P_{a,b-1,c}\tk_i,
\end{align}
and
\begin{align}\notag
&B_{\tau i} P_{a,b,c}-q_i^{-2b+a+c-c_{\tau i,j}} P_{a,b,c}B_{\tau i}
\\
\label{eq:qsqsBx3}
=&[b-a+1 ]_i P_{a,b+1,c} + [c+1]_i  P_{a-1,b+1,c+1}
\\\notag
&+q_i^{-2b+c+a-c_{\tau i,j}-1} \big([-c_{ij}-a+b-2c+1]_i  P_{a-1,b,c}  +[-c_{ij}-c+1]_i P_{a,b,c-1}\big)\tk_{\tau i}.
\end{align}
These two relations tell that $P_{a,b,c}$ satisfy the recursive relations \eqref{def:qsqsBij1}-\eqref{def:qsqsBij2}. Therefore, $P_{a,b,c}=\B_{i,\tau i,\tau j;a,b,c}^+$ for any $a,b,c\geq 0$.
\end{proof}

We next formulate intertwining relations between elements $\b^\pm_{i,\tau i,j;a,b,c}$ and $y'_{i,\tau i,j;a,b,c}$, $x'_{\tau i, i,\tau j;a,b,c}$.
\begin{proposition}
\label{prop:qsqsBx'}
We have, for $a,b,c\geq 0, j\neq i,\tau i,j\in \wI$,
\begin{align}
\label{eq:qsqsBy1'}
\b^-_{i,\tau i,j;a,b,c}=y'_{i,\tau i,j;a,b,c},
\\\notag
\b^+_{i,\tau i, j;a,b,c} =(-1)^{a+b+c} &q_i^{-\frac{1}{2}(a+b+c)(a+b+c-1+2c_{ij}+2c_{\tau i,j})}\times
\\
&\times \tT_{\bs_i}(\tfX_i)^{-1} \tT_{\bw}(x_{\tau i,i,\tau j; a,b,c}) K_j' (K'_i)^{a+c} (K_{\tau i}')^b \tT_{\bs_i}(\tfX_i).
\label{eq:qsqsBx1'}
\end{align}
\end{proposition}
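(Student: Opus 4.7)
The proposal breaks naturally into the two statements of the proposition.

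Part (1) is the direct analogue of Proposition~\ref{prop:qsBy'} and follows at once from the interplay between Lemma~\ref{lem:qsqsBB} and Proposition~\ref{prop:qsqsBy}. Explicitly, Proposition~\ref{prop:qsqsBy} is equivalent to $\tfX_i^{-1}\B^-_{i,\tau i,j;a,b,c}\tfX_i = y_{i,\tau i,j;a,b,c}$; applying $\sigma$ and invoking $y'_{i,\tau i,j;a,b,c} = \sigma(y_{i,\tau i,j;a,b,c})$ from Definition~\ref{def:qsqsxy} gives $\sigma(\tfX_i^{-1}\B^-_{i,\tau i,j;a,b,c}\tfX_i) = y'_{i,\tau i,j;a,b,c}$. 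Lemma~\ref{lem:qsqsBB} identifies the left-hand side with $\b^-_{i,\tau i,j;a,b,c}$, proving (1).

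For part (2) I would follow the template used for Proposition~\ref{prop:qsBx'}. Let $P_{a,b,c}$ denote the right-hand side of \eqref{eq:qsqsBx1'} and write $x'_{\tau i, i, \bw\tau j; a,b,c} := \tT_{\bw}(x'_{\tau i, i, \tau j; a,b,c})$ for brevity. The base case $P_{0,0,0} = \tT_{\bw}(E_{\tau j})K_j' = \b^+_{i,\tau i, j;0,0,0}$ is immediate from Lemma~\ref{lem:rktwo1} (specifically \eqref{eq:TEjUp}), which forces $\tT_{\bs_i}(\tfX_i)$ to commute past this factor. The vanishing $P_{a,b,c} = 0$ whenever $\min(a,b,c)<0$ is built into the definition of $x'_{\tau i, i, \tau j; a,b,c}$. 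It then suffices to verify that $P_{a,b,c}$ satisfies the two defining recursions \eqref{def:qsqsBij1'} and \eqref{def:qsqsBij2'} for $\b^+_{i,\tau i,j;a,b,c}$.

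To carry out the recursion check, I would introduce rescaled generators $\widehat{B}_i, \widehat{B}_{\tau i}$ characterized by the intertwining identities $\widehat{B}_i \tT_{\bs_i}(\tfX_i) = \tT_{\bs_i}(\tfX_i) B_i$ and $\widehat{B}_{\tau i} \tT_{\bs_i}(\tfX_i) = \tT_{\bs_i}(\tfX_i) B_{\tau i}$, in the spirit of \cite[\S 6.4]{WZ22}; each will split visibly into an $F$-piece and an $E$-piece. Conjugating the left-hand sides of \eqref{def:qsqsBij1'}-\eqref{def:qsqsBij2'} applied to $P_{a,b,c}$ by $\tT_{\bs_i}(\tfX_i)$ converts them into expressions of the form $[\widehat{B}_i, x'_{\tau i, i, \bw\tau j; a,b,c}K_j'(K_i')^{a+c}(K_{\tau i}')^b]_{q}$ and similarly for $\widehat{B}_{\tau i}$. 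Because $\bw i = i$ and $\bw\tau i = \tau i$, the twist by $\tT_{\bw}$ preserves the commutators of $E_i, E_{\tau i}, F_i, F_{\tau i}$ with $x'_{\tau i, i, \bw\tau j; a,b,c}$, so Lemma~\ref{lem:qsx} applied after the swap $i \leftrightarrow \tau i$, $j \leftrightarrow \tau j$ (and after applying $\sigma$ to translate from $x$ to $x'$) yields the four recursion coefficients exactly as they appear in \eqref{def:qsqsBij1'}-\eqref{def:qsqsBij2'}. The parallel computation for $\widehat{B}_{\tau i}$ produces the two-term $[b-a+1]_i$ and $[c+1]_i$ pieces on one side and the $[-c_{ij}-a+b-2c+1]_i$ and $[-c_{ij}-c+1]_i$ pieces on the other.

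The main obstacle is scalar bookkeeping. Unlike the $c_{i,\tau i}=0$ case treated in Proposition~\ref{prop:qsBx'}, here $\bs_i = s_i s_{\tau i} s_i$ has length three, so the explicit form of $\widehat{B}_i, \widehat{B}_{\tau i}$ is more delicate; the normalisation constants must be pinned down so that the $q$-powers $q_i^{b-2a-2c-c_{ij}}$ and $q_i^{-2b+a+c-c_{\tau i,j}}$ on the left-hand side, together with the prefactors $q_i^{b-2a-2c-c_{ij}-1}$ and $q_i^{a-2b+c-c_{\tau i,j}-1}$ on the $\tk_{\tau i}$- and $\tk_i$-terms, reproduce the overall scalar $(-1)^{a+b+c} q_i^{-\frac{1}{2}(a+b+c)(a+b+c-1+2c_{ij}+2c_{\tau i,j})}$ in each recursion step. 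Once this scalar accounting is in place, the structural identities go through as in Proposition~\ref{prop:qsBx'}, and induction on $a+b+c$ closes the argument.
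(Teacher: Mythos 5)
Your proposal is correct and matches the paper's proof essentially line by line: part (1) follows from Lemma~\ref{lem:qsqsBB} and Proposition~\ref{prop:qsqsBy} exactly as you describe, and part (2) uses the same intertwiner $\widehat{B}_i = -q_i \tT_{\bs_i}(B_i\tk_i^{-1})$ with $\widehat{B}_i\tT_{\bs_i}(\tfX_i)=\tT_{\bs_i}(\tfX_i)B_i$, conjugates the recursions, and invokes Lemma~\ref{lem:qsx}(1)(4) after applying $\sigma$ and the index swap $i\leftrightarrow\tau i$, $j\leftrightarrow\tau j$, with the scalar $(-1)^{a+b+c}q_i^{-\frac{1}{2}(a+b+c)(a+b+c-1+2c_{ij}+2c_{\tau i,j})}$ emerging from the bookkeeping just as you outline. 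The only task you leave implicit, the explicit $q$-power verification, is precisely the computational content of the paper's proof and presents no structural obstacle.
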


\begin{proof}
The firs identity is obtained by applying Lemma~\ref{lem:qsqsBB} to Proposition~\ref{prop:qsqsBy}. We prove the second identity.

Let $P_{a,b,c}$ denote the RHS \eqref{eq:qsqsBx1'} and $x'_{\tau i,i,\bw\tau j;a,b,c}$ denote $\tT_{\bw}(x_{\tau i,i,\tau j; a,b,c})$. By Lemma~\ref{lem:rktwo1}, $P_{0,0,0} =\tT_{\bw}(E_{\tau j}) K_j'=\b_{ i, \tau i, j;0,0,0}^+$. Moreover, $P_{a,b,c}=0=\b_{ i, \tau i, j;a,b,c}^+$ if either one of $a,b,c$ is negative. Hence, it suffices to show that $P_{a,b,c}$ satisfies the same recursive relations for $\b_{ i, \tau i, j;a,b,c}^+$.

Applying $\sigma$ to Lemma~\ref{lem:qsx}(1)(4) and then shifting the indices $i,j$ to $\tau i,\tau j$, we have two recursions for $x'_{\tau i, i,\tau j;a,b,c} $. Since $F_i,E_{\tau i}$ are fixed by $\tT_{\bw}$, $x'_{\tau i, i,\bw\tau j;a,b,c}$ satisfies the same recursions. To this end, we obtain
\begin{align}
\label{eq:qsx'}
\begin{split}
&- x'_{\tau i,i,\bw\tau j;a,b,c} F_{ i}+ F_{i} x'_{\tau i,i,\bw\tau j;a,b,c}= [-c_{\tau i,j}-b+c+1]_i \; x'_{\tau i,i,\bw\tau j;a,b-1,c} K'_{i},\\
&- q_i^{-b+2a+2c+c_{ij}} x'_{\tau i, i,\bw\tau j;a,b,c} E_{\tau i}+ E_{\tau i} x'_{\tau i, i,\bw\tau j;a,b,c}=[a+1]_i  x'_{\tau i,i,\bw\tau j;a+1,b,c} .
\end{split}
\end{align}

In the case $c_{i,\tau i}=-1$, set
\begin{align*}
\widehat{B}_i:= -q_i \tT_{\bs_i}(B_i \tk_i^{-1})= q_i F_i K_{\tau i} K_{\tau i}'^{-1}+ E_{\tau i} K'_i.
\end{align*}
It follows from \cite[\S 6.4]{WZ22} that $\widehat{B}_i \tT_{\bs_i}(\tfX_i)=\tT_{\bs_i}(\tfX_i) B_i$.

Let $Q_{a,b,c}$ denote $\tT_{\bs_i}(\tfX_i)^{-1} x'_{\tau i,i, \bw\tau j;a,b,c} K'_j (K_i')^{a+c} (K_{\tau i}')^{b}\tT_{\bs_i}(\tfX_i)$. We first formulate the recursive relations for $Q_{m_1,m_2}$ as follows
\begin{align*}
&\quad\tT_{\bs_i}(\tfX_i) \big( Q_{a,b,c} B_i  -q_i^{b-2a-2c-c_{ij}} B_i Q_{a,b,c}\big)\tT_{\bs_i}(\tfX_i)^{-1}
\\
&=x'_{\tau i,i, \bw\tau j;a,b,c}  K'_j (K_i')^{a+c} (K_{\tau i}')^{b} \widehat{B}_i
\\
&\quad -q_i^{b-2a-2c-c_{ij}}\widehat{B}_i  x'_{\tau i,i, \bw\tau j;a,b,c} K'_j (K_i')^{a+c} (K_{\tau i}')^{b}
\\
&=q_i^{2a+2c-b+c_{ij}+1}\big(x'_{\tau i,i, \bw\tau j;a,b,c}F_i-F_i x'_{\tau i,i,\bw \tau j;a,b,c}\big) K'_j (K_i')^{a+c-1} (K_{\tau i}')^{b-1}\tk_{\tau i}
\\
&+q_i^{-a-b-c-c_{ij}-c_{\tau i,j}}\big(q_i^{2a+2c-b+c_{ij}}x'_{\tau i, i,\bw\tau j;a,b,c} E_{\tau i}- E_{\tau i} x'_{\tau i, i,\bw\tau j;a,b,c}\big) K'_j (K_i')^{a+c+1} (K_{\tau i}')^{b}
\\
&= q_i^{2a+2c-b+c_{ij}+1}[-c_{\tau i,j}-b+c+1]_i \; x'_{\tau i,i,\bw\tau j;a,b-1,c} K'_j (K_i')^{a+c } (K_{\tau i}')^{b-1}\tk_{\tau i}
\\
&\quad-q_i^{-a-b-c-c_{ij}-c_{\tau i,j}}[a+1]_i \; x'_{\tau i,i,\bw\tau j;a+1,b,c}K'_j (K_i')^{a+c+1} (K_{\tau i}')^{b},
\end{align*}
where the last step follows by applying \eqref{eq:qsx'}.

The above computation shows that $Q_{a,b,c}$ satisfies the next recursive relation
\begin{align*}
 &Q_{a,b,c} B_i  -q_i^{b-2a-2c-c_{ij}} B_i Q_{a,b,c}
 \\
 =&q_i^{2a+2c-b+c_{ij}+1}[-c_{\tau i,j}-b+c+1]_i Q_{a,b-1,c}\tk_{\tau i} -q_i^{-a-b-c-c_{ij}-c_{\tau i,j}}[a+1]_i Q_{a+1,b,c}.
\end{align*}
Similarly, one can show that $Q_{a,b,c}$ also satisfies
\begin{align*}
&\quad Q_{ a,b,c}B_{\tau i} -q_i^{-2b+a+c-c_{\tau i,j}} B_{\tau i}Q_{ a,b,c}
\\
&=-q_i^{-a-b-c-c_{ij}-c_{\tau i,j}}\big([b-a+1 ]_i Q_{ a,b+1,c} + [c+1]_i  Q_{ a-1,b+1,c+1}\big)
\\
&-q_i^{2a-b+2c+c_{i,j}-2} \tk_{i}\big([-c_{ij}-a+b-2c+1]_i  Q_{i,\tau i, j;a-1,b,c}  +[-c_{ij}-c+1]_i  Q_{i,\tau i, j;a,b,c-1}\big).
\end{align*}

By definition, $P_{a,b,c}=(-1)^{a+b+c} q_i^{-\frac{1}{2}(a+b+c)(a+b+c-1+2c_{ij}+2c_{\tau i,j})} Q_{a,b,c}$. Then $P_{a,b,c}$ satisfies the following recursive relations
\begin{align}
\label{eq:qsqsBx2'}
 &P_{a,b,c} B_i  -q_i^{b-2a-2c-c_{ij}} B_i P_{a,b,c}
 \\\notag
 =&[a+1]_i P_{a+1,b,c}- q_i^{a+c-2b-c_{\tau i,j}+2}[-c_{\tau i,j}-b+c+1]_i P_{a,b-1,c}\tk_{\tau i}.
\end{align}
and
\begin{align}\notag
&P_{a,b,c}B_{\tau i} -q_i^{-2b+a+c-c_{\tau i,j}} B_{\tau i}P_{a,b,c}
\\\label{eq:qsqsBx3'}
=&[b-a+1 ]_i P_{i,\tau i, j;a,b+1,c} + [c+1]_i  P_{a-1,b+1,c+1}
\\\notag
&+q_i^{-2b+c+a-c_{\tau i,j}-1} \tk_{i}\big([-c_{ij}-a+b-2c+1]_i  P_{a-1,b,c}  +[-c_{ij}-c+1]_i P_{a,b,c-1}\big).
\end{align}
These two relations \eqref{eq:qsqsBx2'}-\eqref{eq:qsqsBx3'} indicate that $P_{a,b,c}$ also satisfies the defining recursive relations \eqref{def:qsqsBij1'}-\eqref{def:qsqsBij2'} for $\B^+_{i,\tau i,j;a,b,c}$. Therefore, we have proved $P_{a,b,c}=\B^+_{i,\tau i,j;a,b,c}$ for arbitrary $a,b,c\geq 0$.
\end{proof}

%The identity in Remark~\ref{rmk:qsBB} is a consequence of Propositions~\ref{prop:qsqsBx}-\ref{prop:qsqsBx'}.

%
%
\subsection{Proof of Theorem~\ref{thm:rktwo1}(iii)}
Recall $\bs_i=s_i s_{\tau i} s_i=s_{\tau i} s_i s_{\tau i}$ when $c_{i,\tau i}=-1$.
We first formulate the relation between elements $y_{i,\tau i,j;a,b,c},x_{i,\tau i,j;a,b,c}$ in Definition~\ref{def:qsqsxy} and (unrescaled) Lusztig symmetries $\tTD'_{i,-1}$ on $\tU$.

For a subset $J\subset I$, denote by $\tU_J$ the subalgebra of $\tU$ generated by $E_j,F_j,K_j,K_j'$, $j\in J$.

\begin{lemma}
\label{lem:Tad}
Let $w\in W$ with a reduced expression $w=s_{i_1}\cdots s_{i_k}$. For any $j\not\in \{i_1\cdots i_k\}$, we have
\begin{align}
\tTD'_{w,-1}(E_j)=\sad(E_{i_1}^{(a_1)}\cdots E_{i_k}^{(a_k)})E_j
\end{align}
where $a_s=-\langle s_{i_k}s_{i_{k-1}}\cdots s_{i_{s+1}}(\alpha_{i_s}^\vee),\alpha_j\rangle$ for $1\leq s \leq k$.
\end{lemma}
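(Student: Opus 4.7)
The plan is to prove Lemma \ref{lem:Tad} by induction on the length $k$ of the reduced expression $w = s_{i_1}\cdots s_{i_k}$, reducing at each step to a single-reflection identity of the form $\tTD'_{i,-1}(v) = \sad(E_i^{(n)})(v)$ for an appropriate weight vector $v$ and exponent $n = -\langle \alpha_i^\vee, \wt(v)\rangle$.

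For the base case $k = 1$, we have $w = s_i$ and $a_1 = -c_{ij}$, and \eqref{eq:TEF} gives $\tTD'_{i,-1}(E_j) = x_{i,j;-c_{ij},-1} = \sum_{r+s=-c_{ij}}(-1)^r q_i^{-r} E_i^{(r)} E_j E_i^{(s)}$. Since $\sigma$ fixes $E_i, E_j$ and is an anti-involution, $\sad(E_i^{(-c_{ij})})(E_j) = \sigma\bigl(\ad(E_i^{(-c_{ij})})(E_j)\bigr)$; expanding the iterated adjoint via $\ad(E_i)(u) = E_i u - q_i^{\langle \alpha_i^\vee, \wt(u)\rangle} u E_i$ and then applying $\sigma$ (which reverses products) recovers precisely the same divided-power sum. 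This comparison is essentially the adjoint form of the quantum Serre relation and follows from a standard $q$-binomial identity.

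For the inductive step, factor $w = s_{i_1} w'$ with $w' = s_{i_2}\cdots s_{i_k}$ reduced. Because the exponents $a_2, \ldots, a_k$ in the statement depend only on $w'$ and $j$, the inductive hypothesis gives
\[
v := \tTD'_{w',-1}(E_j) = \sad\bigl(E_{i_2}^{(a_2)}\cdots E_{i_k}^{(a_k)}\bigr)(E_j).
\]
The vector $v$ lies in $\tU^+$ and has weight $\mu = w'(\alpha_j)$, and a direct pairing calculation confirms $-\langle \alpha_{i_1}^\vee, w'(\alpha_j)\rangle = a_1$. Invoking the single-reflection identity $\tTD'_{i_1,-1}(v) = \sad(E_{i_1}^{(a_1)})(v)$ (which extends the base case to this more general $v$, see below) and using that $\sad$ is an algebra action of $\tU^+$ on $\tU$, we obtain $\tTD'_{w,-1}(E_j) = \tTD'_{i_1,-1}(v) = \sad\bigl(E_{i_1}^{(a_1)} E_{i_2}^{(a_2)}\cdots E_{i_k}^{(a_k)}\bigr)(E_j)$, as required.

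The hard part will be justifying the single-reflection identity for the general weight vector $v$ produced mid-induction, rather than only for $v = E_j$. The key input is the Serre-type vanishing $\ad(E_{i_1}^{(a_1+1)})(v) = 0$, which follows from reducedness of $s_{i_1} w'$ (so that $s_{i_1} w'(\alpha_j)$ remains a positive root), together with a uniqueness argument in the $w(\alpha_j)$-weight space of $\tU^+$: both $\tTD'_{i_1,-1}(v)$ and $\sad(E_{i_1}^{(a_1)})(v)$ satisfy the same commutator recursions with $E_{i_1}$ and $F_{i_1}$ coming from Lemma \ref{lem:Lusxy} (transferred to $\sad$ via $\sigma$), together with an explicit leading-term condition, which forces them to coincide. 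This step adapts Lusztig's PBW machinery \cite[\S 37--38]{Lus94} to the Drinfeld double $\tU$, and is routine given the transparent behavior of $\tTD'$ on the central elements $K_i K_i'$.
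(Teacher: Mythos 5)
Your overall outline matches the paper's: induction on the length of the reduced expression, reducing each step to a ``single-reflection identity'' for a vector $v$ produced mid-induction. However, the justification you sketch for that single-reflection identity — the genuinely hard part, as you rightly note — has a real gap.

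First, your stated reason for the vanishing $\sad(E_{i_1}^{(a_1+1)})(v)=0$ (note: it is $\sad$, not $\ad$, acting on $v$, and these differ because $\sigma(v)\ne v$ in general) is ``reducedness of $s_{i_1}w'$ so that $s_{i_1}w'(\alpha_j)$ remains a positive root.'' That is not the right mechanism, and it also omits the crucial companion fact $\sad(F_{i_1})(v)=0$, without which the $\sl_2$-string through $v$ is not pinned down. The paper instead passes to the irreducible $\tU_{\{i_1,\dots,i_k\}}$-submodule $M_j$ of $\sad(\tU_{\{i_1,\dots,i_k\}})E_j$: since $j\notin\{i_1,\dots,i_k\}$, $E_j$ is a lowest weight vector of $M_j$, and $v=\tTD'_{w',-1}(E_j)$ is an \emph{extremal} weight vector of this integrable module. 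Extremality gives \emph{both} $\sad(F_{i_1})v=0$ and $\sad(E_{i_1}^{(a_1+1)})v=0$ simultaneously; reducedness of $s_{i_1}w'$ only enters to guarantee that $v$ sits at the lower end of the $i_1$-string rather than the upper end.

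Second, you invoke Lemma~\ref{lem:Lusxy} for the ``commutator recursions,'' but that lemma concerns the specific rank-two elements $y_{i,j;m,e}$, $x_{i,j;m,e}$ built from a single $E_j$ or $F_j$; it does not directly apply to the more general extremal weight vector $v$. Re-establishing those recursions for $v$ is essentially equivalent to re-proving the vanishing conditions above — so the argument as written is circular at that point. What actually closes the argument in the paper is the explicit module formula
\[
\tTD'_{i,-1}v=\sum_{a-b+c=m}(-1)^b q_i^{ac-b}F_{i}^{(a)}E_{i}^{(b)}F_{i}^{(c)}v
\]
for $v$ in an integrable module, combined with the two vanishings and the Jantzen-style computation [Ja95, 8.9--8.10], which yields $\tTD'_{i_1,-1}(xv)=\sad(E_{i_1}^{(a_1)})x\cdot\tTD'_{i_1,-1}(v)$ for all $v$ in all integrable modules and hence the desired equality in $\tU$. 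Your proposal gestures toward such an argument (``adapts Lusztig's PBW machinery'') but does not supply it; filling this in is not routine without introducing $M_j$ and establishing extremality, which is the substantive content of the lemma.
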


\begin{proof}
%The first identity can be obtained applying $\sigma \omega\psi \tau$ to the second one. Hence, it suffices to prove the second identity.

We prove this lemma by induction on $k=l(w)$. For $k=1$, this result is well-known; see \cite[8.14(6)]{Ja95}.

Suppose that $k>1$. Set $w'=s_{i_2}\cdots s_{i_k}$ and $x:=\sad(E_{i_2}^{(a_2)}\cdots E_{i_k}^{(a_k)})E_j$. By induction hypothesis, we have $\tTD'_{w',-1}(E_j)= x$. It remains to show that
\begin{align}
\tTD'_{i_1,-1}(x)=\sad(E_{i_1}^{(a_1)})x.
\end{align}

We consider the $\tU_{i_1,\cdots,i_k}$-module $ \sad (\tU_{i_1,\cdots,i_k})E_j$ and denote its irreducible submodule containing $E_j$ by $M_j$. Since $j\not\in \{i_1\cdots i_k\}$, we have $\sad(F_{i_s})E_j=0$ and then $E_j$ is the lowest weight vector for the $M_j$. Note that both $x $ and $\sad(E_{i_1}^{(a_1 )})x $ are extremal weight vectors in $M_j$. Then we must have
\begin{align}
\label{eq:Tad}
\sad(F_{i_1}) x =0,\qquad \sad(E_{i_1}^{(a_1+1)}) x =0.
\end{align}

On the other hand, for any integrable $\tU$-module $V$, recall that
\begin{align*}
\tTD'_{i,-1}v=\sum_{a-b+c=m} (-1)^b q_i^{ac-b} F_{i_1}^{(a)}E_{i_1}^{(b)}F_{i_1}^{(c)}v,\qquad \forall v\in V_\lambda,
\end{align*}
where $m=\langle \alpha_i^\vee,\lambda \rangle$. Using similar arguments in \cite[8.9-8.10]{Ja95} and \eqref{eq:Tad}, we obtain
\begin{align*}
\tTD'_{i_1,-1}(xv)=\sad(E_{i_1}^{(a_1)})x\tTD'_{i_1,-1}(v)
\end{align*}
for any vector $v$ in any integrable $\tU$-module. Therefore, we have proved this lemma by induction.
\end{proof}

Write $\alpha,\beta$ for $-c_{ij},-c_{\tau i,j}$ respectively. By Lemma~\ref{lem:Tad}, we have
\begin{align}
\label{eq:qsqstT}
&\tTD'_{\bs_i,-1}(F_j)=y_{i,\tau i,j;\beta, \alpha+\beta,\alpha},\qquad
\tTD'_{\bs_i,-1}(E_{\tau j})=x_{\tau i,i,\tau j;\beta, \alpha+\beta,\alpha}.
\\
&\tTD''_{\bs_i,+1}(F_j)=y'_{i,\tau i,j;\beta, \alpha+\beta,\alpha},\qquad
\tTD''_{\bs_i,+1}(E_{\tau j})=x'_{\tau i,i,\tau j;\beta, \alpha+\beta,\alpha}.
\label{eq:qsqstT'}
\end{align}

Since $c_{i,\tau i}=-1$, $\vs_{i,\diamond}=-q_i^{-1/2}$. By \eqref{eq:qsqstT}, the action of (rescaled) symmetries $\tT'_{\bs_i,-1}$ is given by
\begin{align}
\label{eq:qsbraid1}
\begin{split}
\tT'_{\bs_i,-1}(F_j)&=y_{i,\tau i,j;\beta, \alpha+\beta,\alpha},
\\
\tT'_{\bs_i,-1} \big(\tT_{\bw}(E_{\tau j})K_j'\big)&=q_i^{\alpha+\beta}\tT_{\bw}( x_{\tau i,i,\tau j;\beta, \alpha+\beta,\alpha})K_j' (K_i' K_{\tau i}')^{\alpha+\beta},
\end{split}
\end{align}
where the second formula follows from $\tT'_{\bs_i,-1} \tT_{\bw}=\tT_{\bw}\tT'_{\bs_i,-1} $.

By \eqref{eq:qsqstT'}, we obtain analogous formulas for $\tT''_{\bs_i,+1}$ below
\begin{align}
\label{eq:qsbraid1'}
\begin{split}
\tT''_{\bs_i,+1}(F_j)&=y'_{i,\tau i,j;\beta, \alpha+\beta,\alpha},
\\
\tT''_{\bs_i,+1} \big(\tT_{\bw}(E_{\tau j})K_j'\big)&=q_i^{\alpha+\beta}\tT_{\bw}( x'_{\tau i,i,\tau j;\beta, \alpha+\beta,\alpha})K_j' (K_i' K_{\tau i}')^{\alpha+\beta},
\end{split}
\end{align}

Recall elements $\B_{i,\tau i,j; a,b,c},\b_{i,\tau i,j; a,b,c}$ defined in Definitions~\ref{def:qsqsBij}-\ref{def:qsqsBij'}.

\begin{theorem}
\label{thm:qsqs}
Let $j\in \wI, j\neq i,\tau i$.
\begin{itemize}
\item[(1)] The element $\B_{i,\tau i,j;\beta, \alpha+\beta,\alpha}$ satisfies
\begin{align}
\B_{i,\tau i,j;\beta, \alpha+\beta,\alpha} \tfX_i =\tfX_i \tT'_{\bs_i,-1}(B_j).
\end{align}
\item[(2)] The element $\b_{i,\tau i,j;\beta, \alpha+\beta,\alpha}$ satisfies
\begin{align}
\b_{i,\tau i,j;\beta, \alpha+\beta,\alpha} \tT_{\bs_i}(\tfX_i)^{-1}=\tT_{\bs_i}(\tfX_i)^{-1} \tT''_{\bs_i,+1}(B_j).
\end{align}
\item[(3)] $\b_{i,\tau i,j;\beta, \alpha\beta,\alpha}=\sigma^\imath(\B_{i,\tau i,j;\beta, \alpha\beta,\alpha})$.
\end{itemize}
\end{theorem}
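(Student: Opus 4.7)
The plan is to mirror the strategy used in Theorem~\ref{thm:split} and Theorem~\ref{thm:qsbraid}, since the entire machinery for type (iii) has been set up in parallel to types (i)--(ii). The key observation is that we have already split each root vector into two halves $\B_{i,\tau i,j;a,b,c} = \B^-_{i,\tau i,j;a,b,c} + \B^+_{i,\tau i,j;a,b,c}$, and the intertwining properties of these halves with $\tfX_i$ (namely Propositions~\ref{prop:qsqsBy} and \ref{prop:qsqsBx}) are designed precisely to match the two pieces of $B_j = F_j + \tT_{\bw}(E_{\tau j}) K_j'$ after applying $\tT'_{\bs_i,-1}$.

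For part (1), I would write
\begin{align*}
\tfX_i \tT'_{\bs_i,-1}(B_j) = \tfX_i \tT'_{\bs_i,-1}(F_j) + \tfX_i \tT'_{\bs_i,-1}\bigl(\tT_{\bw}(E_{\tau j}) K_j'\bigr),
\end{align*}
substitute the explicit formulas \eqref{eq:qsbraid1} for $\tT'_{\bs_i,-1}$, and use Lemma~\ref{lem:rktwo1} to commute the second summand through $\tfX_i$. Then I would specialize Proposition~\ref{prop:qsqsBy} at $(a,b,c)=(\beta,\alpha+\beta,\alpha)$ to identify the first summand with $\B^-_{i,\tau i,j;\beta,\alpha+\beta,\alpha} \tfX_i$, and specialize Proposition~\ref{prop:qsqsBx} at the same indices to identify the second summand with $\B^+_{i,\tau i,j;\beta,\alpha+\beta,\alpha} \tfX_i$. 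The crucial consistency check is that the scalar $(-1)^{a+b+c} q_i^{-\frac{1}{2}(a+b+c)(a+b+c-1+2c_{ij}+2c_{\tau i,j})}$ from Proposition~\ref{prop:qsqsBx}, when evaluated at $(a,b,c)=(\beta,\alpha+\beta,\alpha)$, simplifies (using $a+b+c = 2(\alpha+\beta)$ and $c_{ij}+c_{\tau i,j}=-(\alpha+\beta)$) to $q_i^{\alpha+\beta}$, precisely matching the scalar appearing in the second line of \eqref{eq:qsbraid1}. The two Cartan weight exponents $(K_i')^{a+c}(K_{\tau i}')^b = (K_i' K_{\tau i}')^{\alpha+\beta}$ likewise match.

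For part (2), the argument is formally identical, this time conjugating through by $\tT_{\bs_i}(\tfX_i)^{-1}$ instead of $\tfX_i$. The inputs are Lemma~\ref{lem:rktwo1} (which gives the needed commutation through $\tT_{\bs_i}(\tfX_i)^{-1}$) together with the two parts of Proposition~\ref{prop:qsqsBx'}: the identity $\b^-_{i,\tau i,j;a,b,c} = y'_{i,\tau i,j;a,b,c}$ handles the $F_j$ summand, and the formula \eqref{eq:qsqsBx1'} handles the $\tT_{\bw}(E_{\tau j}) K_j'$ summand. The relevant scalar and Cartan-weight bookkeeping is the same as in part (1), using \eqref{eq:qsbraid1'} in place of \eqref{eq:qsbraid1}. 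Part (3) is immediate from Proposition~\ref{prop:qsqsbB} specialized at $(a,b,c)=(\beta,\alpha+\beta,\alpha)$.

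Since all the technical work has been absorbed into the intertwining Propositions~\ref{prop:qsqsBy}--\ref{prop:qsqsBx'}, the only real thing to check at this stage is the scalar match described above; no further obstacle arises. The hard part of the entire type (iii) argument lies upstream, in establishing those intertwining propositions by induction from the recursive definitions of $\B^\pm_{i,\tau i,j;a,b,c}$ and $\b^\pm_{i,\tau i,j;a,b,c}$ together with the rank-three Serre-type identities Lemma~\ref{lem:qsy} and Lemma~\ref{lem:qsx}; once those are in hand, Theorem~\ref{thm:qsqs} is a clean consequence.
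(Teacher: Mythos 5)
Your proposal is correct and follows essentially the same route as the paper: decompose $B_j$ into the $F_j$ and $\tT_{\bw}(E_{\tau j})K_j'$ pieces, apply \eqref{eq:qsbraid1}/\eqref{eq:qsbraid1'} and Lemma~\ref{lem:rktwo1}, then match the two halves against Propositions~\ref{prop:qsqsBy}--\ref{prop:qsqsBx'} specialized at $(a,b,c)=(\beta,\alpha+\beta,\alpha)$, and invoke Proposition~\ref{prop:qsqsbB} for part (3). Your explicit verification that the scalar $(-1)^{a+b+c}q_i^{-\frac{1}{2}(a+b+c)(a+b+c-1+2c_{ij}+2c_{\tau i,j})}$ reduces to $q_i^{\alpha+\beta}$ (and that $(K_i')^{a+c}(K_{\tau i}')^b=(K_i'K_{\tau i}')^{\alpha+\beta}$) is a correct consistency check that the paper leaves implicit.
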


In other word, the element $\tTa{i}(B_j):=\B_{i,\tau i,j;\beta, \alpha\beta,\alpha}$ satisfies \eqref{eq:inter2} and the element $\tTb{i}(B_j):=\b_{i,\tau i,j;\beta, \alpha\beta,\alpha}$ satisfies \eqref{eq:inter3}. Hence, we have proved Theorem~\ref{thm:rktwo1}(iii).

\begin{proof}
We prove (1). By Lemma~\ref{lem:rktwo1} and \eqref{eq:qsbraid1}, we have
\begin{align*}
\tfX_i \tT'_{\bs_i,-1}(B_j)=&  \tfX_i \tT'_{\bs_i,-1}(F_j)+\tfX_i \tT'_{\bs_i,-1}(E_j K_j')\\
=& \tfX_i \tT'_{\bs_i,-1}(F_j)+\tT'_{\bs_i,-1}(E_j K_j') \tfX_i\\
=&\tfX_i y_{i,\tau i,j;\beta,\beta+\alpha,\alpha}+q_i^{\alpha+\beta} \tT_{\bw}(x_{\tau i,i,j;\beta,\beta+\alpha,\alpha}) K_j' (K_i' K_{\tau i}')^{\alpha+\beta}\tfX_i
\end{align*}
On the other hand, setting $a=\beta,b=\beta+\alpha,c=\alpha$ in Proposition~\ref{prop:qsqsBy}-\ref{prop:qsqsBx}, we have
\begin{align*}
& \B_{i,\tau i,j;\beta,\beta+\alpha,\alpha}^- \tfX_i=\tfX_i y_{i,\tau i,j;\beta,\beta+\alpha,\alpha},\\
& \B_{i,\tau i,j;\beta,\beta+\alpha,\alpha}^+ =  q_i^{ \alpha+\beta} \tT_{\bw}(x_{\tau i,i,j;\beta,\beta+\alpha,\alpha} ) K_j' (K_i' K_{\tau i}')^{\alpha+\beta}.
\end{align*}
Therefore, by the above formulas, we have
\begin{align*}
\tfX_i \tT'_{\bs_i,-1}(B_j)=&  \B_{i,\tau i,j;\beta,\beta+\alpha,\alpha}^-  \tfX_i+  \B_{i,\tau i,j;\beta,\beta+\alpha,\alpha}^+  \tfX_i =  \B_{i,\tau i,j;\beta,\beta+\alpha,\alpha} \tfX_i.
\end{align*}

Similarly, one can obtain (2) using Proposition~\ref{prop:qsqsBx'} and \eqref{eq:qsbraid1'}.

The statement (3) is a consequence of Proposition~\ref{prop:qsqsbB}.
\end{proof}

\subsection{A divided power formulation} In this subsection, we derive divided power formulations for root vectors $\B_{i,\tau i,j;a,b,c},\b_{i,\tau i,j;a,b,c}$ from their recursive Definitions~\ref{def:qsqsBij}-\ref{def:qsqsBij'}.

\begin{lemma}
\label{lem:qsqsdv}
We have for any $a\geq 0$,
\begin{align*}
\B_{i,\tau i,j;a,0,0}=\B_{i,\tau i,j;0,0,a}.
\end{align*}
\end{lemma}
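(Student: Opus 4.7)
The plan is to split $\B_{i,\tau i,j;a,b,c}=\B^-_{i,\tau i,j;a,b,c}+\B^+_{i,\tau i,j;a,b,c}$ along the ``$\pm$'' decomposition of Definition~\ref{def:qsqsBij} and verify the identity on each summand separately, using the explicit identifications of $\B^-$ and $\B^+$ already established in Propositions~\ref{prop:qsqsBy} and \ref{prop:qsqsBx}. Those two propositions express each constituent in $\tUi$ in terms of the rank three root vectors $y_{i,\tau i,j;a,b,c}$ and $x_{\tau i,i,\tau j;a,b,c}$ in $\tU$: the former up to conjugation by $\tfX_i$, the latter up to a scalar prefactor, a Cartan tail, and an application of $\tT_{\bw}$.

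The observation I would then leverage is that the $(a,b,c)$-dependence of these auxiliary factors is quite restricted. The scalar $(-1)^{a+b+c}q_i^{-\frac{1}{2}(a+b+c)(a+b+c-1+2c_{ij}+2c_{\tau i,j})}$ appearing in Proposition~\ref{prop:qsqsBx} depends only on $a+b+c$, and the Cartan tail $(K'_i)^{a+c}(K'_{\tau i})^{b}$ depends only on the pair $(a+c,b)$. Both of these data take identical values at $(a,b,c)=(a,0,0)$ and $(0,0,a)$, so the lemma reduces to the pair of equalities
\begin{align*}
y_{i,\tau i,j;a,0,0}&=y_{i,\tau i,j;0,0,a},
\\
x_{\tau i,i,\tau j;a,0,0}&=x_{\tau i,i,\tau j;0,0,a},
\end{align*}
holding inside $\tU$.

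Both of these I expect to read off directly from Definition~\ref{def:qsqsxy}. Indeed $\wad(E_i^{(a)}E_{\tau i}^{(b)}E_i^{(c)})F_j$ collapses at either extreme choice $(a,0,0)$ or $(0,0,a)$ to the common operator $\wad(E_i^{(a)})F_j$, since $E_i^{(0)}=E_{\tau i}^{(0)}=1$; and likewise $\sad(E_{\tau i}^{(a)}E_i^{(b)}E_{\tau i}^{(c)})E_{\tau j}$ collapses in both cases to $\sad(E_{\tau i}^{(a)})E_{\tau j}$.

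I do not anticipate any real obstacle: once the intertwining formulas of the preceding subsection are available, the argument is entirely formal, which is precisely why the lemma is stated so briefly at this point in the text. The only bit of bookkeeping worth double-checking is that the scalar and Cartan factors in Proposition~\ref{prop:qsqsBx} are manifestly invariant under the involution $(a,b,c)\leftrightarrow(c,b,a)$, and this is visible from inspection.
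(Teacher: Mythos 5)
Your proof is correct and takes essentially the same route as the paper: split into the $\pm$ halves, reduce via Propositions~\ref{prop:qsqsBy} and~\ref{prop:qsqsBx} to the corresponding identities for $y_{i,\tau i,j;a,b,c}$ and $x_{\tau i,i,\tau j;a,b,c}$, and observe that both collapse at $(a,0,0)$ and $(0,0,a)$ by Definition~\ref{def:qsqsxy}. You also explicitly check the invariance of the scalar and Cartan prefactors in Proposition~\ref{prop:qsqsBx} under $(a,0,0)\leftrightarrow(0,0,a)$, which the paper leaves implicit but is exactly the bookkeeping needed.
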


\begin{proof}
It suffices to show that $\B_{i,\tau i,j;a,0,0}^\pm=\B_{i,\tau i,j;0,0,a}^\pm$. By Definition~\ref{def:qsqsxy}, we have
\begin{align*}
y_{i,\tau i,j;a,0,0}=y_{i,\tau i,j;0,0,a},\qquad x_{i,\tau i,j;a,0,0}=x_{i,\tau i,j;0,0,a}.
\end{align*}
Then the desired identities follow from Proposition~\ref{prop:qsqsBy}-\ref{prop:qsqsBx}.
\end{proof}

%\begin{lemma}
%We have for any $c\geq 0$,
%\begin{align*}
%\B_{i,\tau i,j;0,b,c}=&\sum_{u\geq 0} \sum_{s=0}^{b-u} \sum_{r=0}^{c-u} (-1)^{r+s+u} q_i^{r(\alpha-c+u+1)+s(c+\beta-b-2u+1)+u(c+\beta-b-\frac{u-1}{2})}
%\\
%&\times \qbinom{\alpha-c+u}{u}B_{\tau i}^{(b-u-s)}B_i^{(c-u-r)} B_j B_i^{(r)}B_{\tau i}^{(s)}\tk_{\tau i}^u
%\end{align*}
%\end{lemma}

\begin{theorem}
\label{thm:qsqsdv}
The elements $\B_{i,\tau i,j;a,b,c},\b_{i,\tau i,j;a,b,c} $ for $a,b,c\geq 0$ admit the following divided power formulations
\begin{align}\notag
&\B_{i,\tau i,j;a,b,c}=\sum_{u,v\geq 0} \sum_{t=0}^{a-v} \sum_{s=0}^{b-v-u} \sum_{r=0}^{c-u} (-1)^{t+v+r+s+u}\times
\\\notag
&\quad\times  q_i^{t(b-2c+\alpha-a-2v+1)+ v(b-2c+\alpha-a-\frac{v-1}{2})+ r(\alpha-c+u+1)+s(c+\beta-b+v-2u+1)+u(c+\beta-b+v-\frac{u-1}{2})}
\\\label{eq:qsqsdv1}
&\quad\times\qbinom{\beta-b+c+v}{v} \qbinom{\alpha-c+u}{u}B_i^{(a-v-t)}B_{\tau i}^{(b-v-u-s)}B_i^{(c-u-r)} B_j B_i^{(r)} B_{\tau i}^{(s)}\tk_{\tau i}^u B_i^{(t)} \tk_i^v,
\\\notag
&\b_{i,\tau i,j;a,b,c}=\sum_{u,v\geq 0} \sum_{t=0}^{a-v} \sum_{s=0}^{b-v-u} \sum_{r=0}^{c-u} (-1)^{t+v+r+s+u}\times
\\\notag
&\quad\times  q_i^{t(b-2c+\alpha-a-2v+1)+ v(b-2c+\alpha-a-\frac{v-1}{2})+ r(\alpha-c+u+1)+s(c+\beta-b+v-2u+1)+u(c+\beta-b+v-\frac{u-1}{2})}
\\\label{eq:qsqsdv2}
&\quad\times\qbinom{\beta-b+c+v}{v} \qbinom{\alpha-c+u}{u}\tk_{\tau i}^v B_i^{(t)} \tk_{i}^u B_{\tau i}^{(s)}B_i^{(r)}B_j B_i^{(c-u-r)} B_{\tau i}^{(b-v-u-s)}B_i^{(a-v-t)}.
\end{align}
\end{theorem}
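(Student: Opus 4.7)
The strategy mirrors the proof of Proposition~\ref{thm:dvij}. Since $\sigma^\imath$ fixes each $B_i, B_{\tau i}, B_j$, interchanges $\tk_i \leftrightarrow \tk_{\tau i}$, and reverses multiplication, applying $\sigma^\imath$ to \eqref{eq:qsqsdv1} produces \eqref{eq:qsqsdv2} after a reindexing of the summation variables. Thus by Proposition~\ref{prop:qsqsbB} it suffices to prove the first formula \eqref{eq:qsqsdv1}, which in turn reduces to verifying that the proposed right-hand side satisfies the defining recursions \eqref{def:qsqsBij1}--\eqref{def:qsqsBij2} together with the correct boundary conditions. The boundary conditions are immediate: when any of $a,b,c$ is negative the sum is empty, and at $a=b=c=0$ only the summand with $t=v=r=s=u=0$ survives, producing $B_j = F_j + \tT_{\bw}(E_{\tau j})K_j'$, which coincides with $\B_{i,\tau i,j;0,0,0}^- + \B_{i,\tau i,j;0,0,0}^+$.

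The first technical step is to establish rank two commutator identities for divided powers of $B_i$ and $B_{\tau i}$, generalizing Lemma~\ref{lem:J95B} to the setting $c_{i,\tau i}=-1$. Since $B_i, B_{\tau i}$ satisfy an $\imath$-Serre relation whose right-hand side is a $\Z[q_i^{\pm 1}]$-linear combination of $B_i \tk_i$, $B_i \tk_{\tau i}$ and their swaps, a standard induction on $m$ produces formulas expressing $[B_{\tau i}, B_i^{(m)}]$ as a linear combination of $B_i^{(m-1)}B_{\tau i}$, $B_i^{(m-2)}\tk_i$, and $B_i^{(m-2)}\tk_{\tau i}$ with explicit $q$-power coefficients, together with a symmetric identity obtained by swapping $i \leftrightarrow \tau i$. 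A second ingredient is the diagonal normalization of $\tk_i^u \tk_{\tau i}^v$ past each $B_\bullet^{(r)}$, contributing purely multiplicative $q$-power factors.

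With these tools in hand, \eqref{def:qsqsBij1} is verified by substituting \eqref{eq:qsqsdv1} into $B_i \cdot \text{RHS} - q_i^{b-2a-2c-c_{ij}}\,\text{RHS}\cdot B_i$, and systematically moving the extra $B_i$ into the canonical ordered position that appears in \eqref{eq:qsqsdv1} with indices $(a+1,b,c)$ or $(a,b-1,c)$. The verification of \eqref{def:qsqsBij2} proceeds analogously using $B_{\tau i}$. After reindexing, matching coefficients within each fixed monomial reduces to standard Pascal-type $q$-binomial identities such as
\[
\qbinom{n}{k}_i = q_i^{-k}\qbinom{n-1}{k}_i + q_i^{n-k}\qbinom{n-1}{k-1}_i,
\]
exactly as in the type (ii) case. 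The identity of Lemma~\ref{lem:qsqsdv} is invoked to consistently align the boundary strata of the summation.

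The hard part is bookkeeping: the closed formula carries five nested summation indices, and each pass of $B_i$ or $B_{\tau i}$ through a monomial generates a correction for every factor it must cross. To keep the match tractable, I would stratify the verification by the $\tk$-weight $(u,v)$, reducing each recursion to a finite family of $q$-binomial identities within each stratum. Conceptually no new input beyond Lemma~\ref{lem:qsqsdv} and the $\imath$-Serre commutator identities above is needed, but the calculation is genuinely more intricate than in Proposition~\ref{thm:dvij}, owing to the extra index $c$ and the fact that $B_i, B_{\tau i}$ no longer commute.
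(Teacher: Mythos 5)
Your proposal takes a genuinely different route from the paper's, and the route you choose runs into a concrete obstruction at its key technical step.

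The paper's proof of \eqref{eq:qsqsdv1} does \emph{not} verify that the closed formula satisfies the full recursions \eqref{def:qsqsBij1}--\eqref{def:qsqsBij2}. Instead, it builds the formula constructively along the lattice path $(0,0,0)\to(a,0,0)=(0,0,a)\to(0,b,c)\to(a,b,c)$. Step (1) uses only \eqref{def:qsqsBij1} at $b=c=0$ plus Lemma~\ref{lem:qsqsdv}; step (2) uses only \eqref{def:qsqsBij2} at $a=0$, where the term $[c+1]_i\B^\pm_{a-1,b+1,c+1}$ vanishes and the outer $B_i$-factors of the target monomial are absent; step (3) uses only \eqref{def:qsqsBij1}, which wraps $B_i^{(\cdot)}\cdots B_i^{(\cdot)}\tk_i^v$ around the step (2) output. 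The crucial design point is that at no stage does the construction ever commute $B_i$ past $B_{\tau i}$: the nested structure $B_i^{(\cdot)}B_{\tau i}^{(\cdot)}B_i^{(\cdot)}B_jB_i^{(\cdot)}B_{\tau i}^{(\cdot)}\tk_{\tau i}^uB_i^{(t)}\tk_i^v$ is precisely the footprint of this derivation order.

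Your approach verifies the closed formula against \emph{both} recursions by direct substitution. For \eqref{def:qsqsBij1} this is plausible, because the left- and right-multiplying $B_i$ can absorb into the outermost $B_i^{(a-v-t)}$ and $B_i^{(t)}$ without ever meeting a $B_{\tau i}$. But for \eqref{def:qsqsBij2} you must push $B_{\tau i}$ inward through $B_i^{(a-v-t)}$ on the left and through $\tk_i^v B_i^{(t)}\tk_{\tau i}^u$ on the right before it can meet the $B_{\tau i}^{(\cdot)}$-layers. This is where the proposed ``rank two commutator identity generalizing Lemma~\ref{lem:J95B} to $c_{i,\tau i}=-1$'' is needed, and that identity does not exist. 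When $c_{i,\tau i}=0$ one has $[B_{\tau i},B_i]=\frac{\tk_i-\tk_{\tau i}}{q_i-q_i^{-1}}$, a Cartan element, and Lemma~\ref{lem:J95B} follows by a clean induction. When $c_{i,\tau i}=-1$ the $\imath$Serre relation is cubic (it relates $B_i^{(2)}B_{\tau i}$, $B_iB_{\tau i}B_i$, $B_{\tau i}B_i^{(2)}$ to Cartan corrections), so $B_iB_{\tau i}$ and $B_{\tau i}B_i$ are genuinely independent modulo lower filtration and $[B_{\tau i},B_i^{(m)}]$ has no expression as a $\Z[q_i^{\pm1},\tk_i^{\pm1},\tk_{\tau i}^{\pm1}]$-linear combination of $B_i^{(m-1)}B_{\tau i}$ and $B_i^{(m-2)}$-terms of the type you describe. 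Your proposed Step 2 therefore fails; the subsequent Pascal-identity matching cannot even begin because no normal form for $B_{\tau i}B_i^{(m)}$ exists.

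There is also a secondary gap: you treat ``the formula satisfies the recursions and the boundary conditions'' as sufficient, but the recursions \eqref{def:qsqsBij1}--\eqref{def:qsqsBij2} by themselves do not propagate the initial value $\B_{0,0,0}=B_j$ to the boundary stratum $\B_{0,0,c}$ for $c>0$. The paper supplies this missing data through Lemma~\ref{lem:qsqsdv}, whose proof rests on the identification with the root vectors $y_{i,\tau i,j;a,b,c}$, $x_{\tau i,i,\tau j;a,b,c}$ in $\tU$ (Propositions~\ref{prop:qsqsBy}--\ref{prop:qsqsBx}), not on the recursions. You mention Lemma~\ref{lem:qsqsdv} only as a device for ``aligning boundary strata,'' but it is the essential input that makes the recursion system well-posed; without explaining how uniqueness follows, ``it suffices to verify the recursions'' is an unsupported reduction.
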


\begin{proof}
Recall from Proposition~\ref{prop:qsqsbB} that $\b_{i,\tau i,j;a,b,c}=\sigma^\imath(\B_{i,\tau i,j;a,b,c})$. The second formula is obtained by applying $\sigma^\imath$ to the first one.

The first formula for $\B_{i,\tau i,j;a,b,c}$ is derived from recursive relations \eqref{def:qsqsBij1}-\eqref{def:qsqsBij2} in Definition~\ref{def:qsqsBij} via three steps.
\begin{enumerate}
\item Recall that $\B_{i,\tau i,j;0,0,0}=B_j$. Setting $b=c=0$ in \eqref{def:qsqsBij1}, we have a recursive relation for $\B_{i,\tau i,j;a,0,0}$. Using this relation and an induction on $a$, we obtain the formula of $\B_{i,\tau i,j;a,0,0}$ as follows
\begin{align*}
\B_{i,\tau i,j;a,0,0}=&\sum_{r=0}^a (-1)^r q_i^{r(\alpha-c+1)} B_i^{(a-r)} B_j B_i^{(r)}.
\end{align*}
By Lemma~\ref{lem:qsqsdv}, $\B_{i,\tau i,j;0,0,a}=\B_{i,\tau i,j;a,0,0}$ is given by the same formula.

\item Setting $a=0$ in \eqref{def:qsqsBij2}, we have a recursive relation for $\B_{i,\tau i,j;0,b,c}$. Using this relation and an induction on $b$, we can write $\B_{i,\tau i,j;0,b,c}$ in terms of $\B_{i,\tau i,j;0,0,c-u}$ for $0\leq u\leq \min(b,c)$ as follows
\begin{align*}
\B_{i,\tau i,j;0,b,c}=&\sum_{u=0}^{ \min(b,c)} \sum_{s=0}^{b-u}  (-1)^{s+u} q_i^{s(c+\beta-b-2u+1)+u(c+\beta-b-\frac{u-1}{2})}
\\
&\times \qbinom{\alpha-c+u}{u}B_{\tau i}^{(b-u-s)}\B_{i,\tau i,j;0,0,c-u} B_{\tau i}^{(s)}\tk_{\tau i}^u.
\end{align*}

\item Using \eqref{def:qsqsBij1} and an induction on $a$, we can write $\B_{i,\tau i,j;a,b,c}$ in terms of $\B_{i,\tau i,j;0,b-v,c}$ for $0\leq v\leq \min(a,b)$ as follows
\begin{align*}
\B_{i,\tau i,j;a,b,c}=&\sum_{v= 0}^{ \min(a,b)}\sum_{t=0}^{a-v} (-1)^{t+v} q_i^{t(b-2c+\alpha-a-2v+1)+ v(b-2c+\alpha-a-\frac{v-1}{2})}\times
\\
&\times \qbinom{\beta-b+c+v}{v} B_i^{(a-v-t)} \B_{i,\tau i,j;0,b-v,c} B_i^{(t)} \tk_i^v.
\end{align*}
\end{enumerate}
Now the desired formula \eqref{eq:qsqsdv1} is obtained by combining formulas in steps (1)-(3).
\end{proof}

%%%
%%5
\section{Symmetry $\tTT'_{i,-1}$ and root vectors}\label{sec:basic}

The braid group symmetries $\tTD'_{i,e},\tTD''_{i,e}$ on $\tU$ send root vectors to root vectors
\begin{align}
\label{eq:Troot}
\begin{split}
&\tTD'_{i,e}(x'_{i,j;m,e})=x_{i,j;-m-c_{ij},e},\qquad
\tTD''_{i,-e}(x_{i,j;m,e})=x'_{i,j;-m-c_{ij},e}.
\\
&\tTD'_{i,e}(y'_{i,j;m,e})=y_{i,j;-m-c_{ij},e},\qquad
\tTD''_{i,-e}(y_{i,j;m,e})=y'_{i,j;-m-c_{ij},e}.
\end{split}
\end{align}
cf. \cite[Proposition 37.2.5]{Lus94}.

We will show that our symmetry $\tTa{i}$ analogously sends root vectors in $\tUi$ to root vectors in $\tUi$. Precisely, when $ i=\tau i=\bw i$, the actions of $\tTa{i}$ on root vectors $ \b_{i,j;m}$ are formulated in Theorem~\ref{thm:basic}; when  $c_{i,\tau i}=0,i=\bw i$, the actions of  $\tTa{i}$ on root vectors $ \b_{i,\tau i,j;m_1,m_2}$ are formulated in Theorem~\ref{thm:qsbasic}.

\subsection{Case $ i=\tau i=\bw i$}
Note that $c_{ij}=c_{i,\tau j}$ when $i=\tau i$. Recall that $\tT'_{i,-1}=\tPsi_{\bvs_\dm }^{-1 } \tTD'_{i,-1}\tPsi_{\bvs_\dm }$ and $\vs_{i,\dm}=-q_i^{-2}$ in this case. Then we have
\begin{align}
\label{eq:BB'0}
\begin{split}
\tT'_{i,-1}(y'_{i,j;m})&=y_{i,j;-c_{ij}-m},\\
\tT'_{i,-1}\Big(\tT_{\bw}(x'_{i,\tau j;m}) K_j' (K_i')^m \Big)&=\vs_{i,\dm}^{c_{ij}+2m} \tT_{\bw}(x_{i,\tau j;-c_{ij}-m})K_j' (K_i')^{-c_{ij}-m},
\end{split}
\end{align}
where the second identity uses the commutativity of $\tT'_{i,-1},\tT_{\bw}$.

\begin{theorem}
\label{thm:basic}
We have, for $m\geq 0, j\neq i\in \wI$,
\begin{align}
\tTT'_{i,-1}(\b_{i,j;m})=\B_{i,j;-c_{ij}-m}.
\end{align}
\end{theorem}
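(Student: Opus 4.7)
The plan is to use the defining intertwining relation of $\tTT'_{i,-1}$ together with its uniqueness to reduce Theorem~\ref{thm:basic} to an identity relating $\B_{i,j;-c_{ij}-m}$ and $\tT'_{i,-1}(\b_{i,j;m})$ via conjugation by $\tfX_i$. Concretely, by Theorem~\ref{thm:ibraid}(1), the element $\tTT'_{i,-1}(\b_{i,j;m})$ is the unique element of $\tUi$ satisfying
\begin{equation*}
\tTT'_{i,-1}(\b_{i,j;m})\,\tfX_i \;=\; \tfX_i\,\tT'_{i,-1}(\b_{i,j;m}),
\end{equation*}
so it suffices to verify the identity $\B_{i,j;-c_{ij}-m}\,\tfX_i = \tfX_i\,\tT'_{i,-1}(\b_{i,j;m})$ and then conclude by uniqueness. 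Note that in this case $\bs_i = s_i$, so the rescaled Lusztig symmetry on $\tU$ is exactly $\tT'_{i,-1}$.

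The verification is carried out by splitting into the $\pm$-halves on both sides. On the left, decompose $\B_{i,j;-c_{ij}-m} = \B_{i,j;-c_{ij}-m}^- + \B_{i,j;-c_{ij}-m}^+$ and apply Proposition~\ref{prop:By} (for the minus half) and Proposition~\ref{prop:Bx} (for the plus half, which gives a closed formula in terms of $\tT_{\bw}(x_{i,\tau j;-c_{ij}-m})$). On the right, decompose $\b_{i,j;m} = \b_{i,j;m}^- + \b_{i,j;m}^+$: Proposition~\ref{prop:B'y} identifies $\b_{i,j;m}^-$ with $y'_{i,j;m}$, and Proposition~\ref{prop:B'x} expresses $\b_{i,j;m}^+$ as a conjugate of $\tT_{\bw}(x'_{i,\tau j;m})K'_j(K'_i)^m$ by $\tT_i(\tfX_i)$. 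For the minus parts, one obtains directly $\tfX_i\,\tT'_{i,-1}(y'_{i,j;m}) = \tfX_i\,y_{i,j;-c_{ij}-m}$ via the classical fact \eqref{eq:BB'0} (that Lusztig's symmetries send primed root vectors to unprimed ones), which matches $\B_{i,j;-c_{ij}-m}^-\,\tfX_i$ by Proposition~\ref{prop:By}.

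For the plus parts, one uses that $\tT'_{i,-1}$ and $\tT_i = \tT''_{i,+1}$ are mutually inverse: applying $\tT'_{i,-1}$ to the conjugate expression for $\b_{i,j;m}^+$ turns the $\tT_i(\tfX_i)^{\pm 1}$ conjugation into $\tfX_i^{\mp 1}$ conjugation, so that after multiplying on the left by $\tfX_i$ the outer factors of $\tfX_i$ cancel. Applying \eqref{eq:BB'0} once more, this becomes $c\cdot\tT_{\bw}(x_{i,\tau j;-c_{ij}-m})K'_j(K'_i)^{-c_{ij}-m}\,\tfX_i$ for a scalar $c$, which is exactly the form of $\B_{i,j;-c_{ij}-m}^+\,\tfX_i$ from Proposition~\ref{prop:Bx}. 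The remaining task, and the only real obstacle, is a bookkeeping check that the scalars match: the product $(-1)^m q_i^{-2m(c_{ij}+m-1)}\cdot \vs_{i,\dm}^{c_{ij}+2m}$ coming from Proposition~\ref{prop:B'x} and \eqref{eq:BB'0} (with $\vs_{i,\dm} = -q_i^{-2}$) must equal $(-1)^{-c_{ij}-m} q_i^{-2(-c_{ij}-m)(-m-1)}$, and both sides collapse to $(-1)^{c_{ij}+m} q_i^{-2(c_{ij}+m)(m+1)}$.

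Combining the matched $\pm$ parts yields $\B_{i,j;-c_{ij}-m}\,\tfX_i = \tfX_i\,\tT'_{i,-1}(\b_{i,j;m})$, and uniqueness of the intertwiner completes the proof. Since all key inputs are the rank-two intertwining relations proved in \S\ref{sec:sinter} together with the classical behavior of Lusztig's braid symmetries on root vectors, no further recursions are required; the argument is fundamentally a compatibility check between two intertwining formulas.
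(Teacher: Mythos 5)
Your proof is correct and follows essentially the same route as the paper: both split $\b_{i,j;m}$ into its $\pm$ halves, handle the minus half via Propositions~\ref{prop:B'y}, \eqref{eq:BB'0}, and \ref{prop:By}, handle the plus half via Proposition~\ref{prop:B'x}, the mutual-inverse property of $\tT'_{i,-1}$ and $\tT_i$, \eqref{eq:BB'0}, and Proposition~\ref{prop:Bx}, and then match scalars. The only cosmetic difference is that you phrase the conclusion via uniqueness of the intertwiner while the paper writes it as a direct chain of equalities for $\tfX_i\,\tT'_{i,-1}(\b_{i,j;m})\,\tfX_i^{-1}$, which is the same computation.
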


\begin{proof}
By Proposition~\ref{prop:B'x} and \eqref{eq:BB'0}, we have
\begin{align}\notag
&\quad\tfX_i  \tT'_{i,-1}(\b_{i,j;m}^+)  \tfX_i ^{-1}\\\notag
& =(-1)^m  q_i^{ -2m(c_{ij}+m-1)} \tT'_{i,-1}\Big(\tT_{\bw}(x'_{i,\tau j;m}) K_j' (K_i')^m\Big)\\
&= (-1)^{c_{ij}+m} q_i^{-2 (c_{ij}+m)(m+1)} \tT_{\bw}(x_{i,\tau j;-c_{ij}-m})K_j' (K_i')^{-c_{ij}-m}.\label{eq:BB'1}
\end{align}
By the definition of $\tTT'_{i,-1}$, we have
\begin{align*}
\tTT'_{i,-1}(\b_{i,j;m})%\overset{\quad}{=}\tTT'_{i,-1}(\b_{i,j;m}^+)+\tTT'_{i,-1}(\b_{i,j;m}^-)
&\overset{\quad}{=}\tfX_i \big(\tT'_{i,-1}(\b_{i,j;m}^+)+\tT'_{i,-1}(\b_{i,j;m}^-)\big) \tfX_i^{-1}\\
&\overset{(\star)}{=}\tfX_i \big(\tT'_{i,-1}(\b_{i,j;m}^+)+\tT'_{i,-1}(y'_{i,j;m} )\big) \tfX_i^{-1}\\
&\overset{(\dagger)}{=}(-1)^{c_{ij}+m} q_i^{-2 (c_{ij}+m)(m+1)} x_{i,\tau j;-c_{ij}-m}K_j' (K_i')^{-c_{ij}-m}\\
&\quad +\tfX_i y_{i,j;-c_{ij}-m}\tfX_i^{-1}\\
&\overset{(\ddagger)}{=}\B_{i,j;-c_{ij}-m}^+ + \B_{i,j;-c_{ij}-m}^-\\
&\overset{\quad}{=}\B_{i,j;-c_{ij}-m},
\end{align*}
where the equality ($\star$) follows from Proposition~\ref{prop:B'y}; the equality ($\dagger$) follows from \eqref{eq:BB'1} and \eqref{eq:BB'0}; the equality ($\ddagger$) follows form Propositions~\ref{prop:By}-\ref{prop:Bx}.
\end{proof}

\subsection{Case $c_{i,\tau i}=0,\bw i=i$}
Recall the root vectors $y'_{i,\tau i,j;m_1,m_2},x'_{\tau i, i,\tau j;m_1,m_2}$ in Definition~\ref{def:xy'}. We first show that the symmetry $\tT'_{\bs_i,-1}$ on $\tU$ sends these root vectors to root vectors, as a generalization of \cite[Proposition 37.2.5]{Lus94}. We then formulate the $\imath$analog of this property for the $\tTT'_{i,-1}$-action on $\tUi$ in Theorem~\ref{thm:qsbasic}.
\begin{proposition}
\label{prop:qsxy}
Let $i,j\in \I,j\neq i,\tau i$. We have
\begin{align}
\label{eq:qsyy}
&\tT'_{\bs_i,-1}(y'_{i,\tau i,j;m_1,m_2})=y_{i,\tau i, j; -c_{ij}-m_1,-c_{\tau i,j}-m_2},\\
&\tT'_{\bs_i,-1}(x'_{\tau i, i,\tau j;m_1,m_2} )=\vs_{i,\dm}^{(c_{ij}+c_{\tau i,j})/2+m_1+m_2} x_{\tau i, i, \tau j; -c_{ij}-m_1,-c_{\tau i,j}-m_2},\\
&\tT'_{\bs_i,-1}(K'_j (K_i')^{m_1} (K_{\tau i}')^{m_2})=\vs_{i,\dm}^{(c_{ij}+c_{\tau i,j})/2+m_1+m_2}K'_j (K_i')^{-c_{ij}-m_1} (K_{\tau i}')^{-c_{\tau i,j}-m_2}.
\end{align}
\end{proposition}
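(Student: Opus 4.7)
The plan is to reduce this proposition to Lusztig's rank-two identity \eqref{eq:Troot}, exploiting two consequences of $c_{i,\tau i}=0$: the simple reflections $s_i,s_{\tau i}$ commute, giving $\bs_i=s_is_{\tau i}$ and $\tTD'_{\bs_i,-1}=\tTD'_{i,-1}\tTD'_{\tau i,-1}=\tTD'_{\tau i,-1}\tTD'_{i,-1}$; and $F_i,F_{\tau i}$ (resp.\ $E_i,E_{\tau i}$) commute, so the mixed root vectors of Definition~\ref{def:qsxy} admit the two ``nested'' rank-two presentations of Remark~\ref{rmk:qsxy}. Under the rescaling $\tT'_{\bs_i,-1}=\tPsi_{\bvs_\dm}^{-1}\tTD'_{\bs_i,-1}\tPsi_{\bvs_\dm}$, elements of $\tU^-$ are unaffected, while the $K'$- and $E$-generators pick up powers of $\vs_{i,\dm}^{1/2}=\vs_{\tau i,\dm}^{1/2}$.

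I would first dispatch the Cartan identity by a direct weight computation: $\bs_i$ sends $\alpha_j+m_1\alpha_i+m_2\alpha_{\tau i}$ to $\alpha_j+(-c_{ij}-m_1)\alpha_i+(-c_{\tau i,j}-m_2)\alpha_{\tau i}$, so the unrescaled $\tTD'_{\bs_i,-1}$ gives $K'_j(K'_i)^{-c_{ij}-m_1}(K'_{\tau i})^{-c_{\tau i,j}-m_2}$; conjugating through two copies of $\tPsi_{\bvs_\dm}$ and using $\vs_{i,\dm}=\vs_{\tau i,\dm}$ produces the announced scalar $\vs_{i,\dm}^{(c_{ij}+c_{\tau i,j})/2+m_1+m_2}$.

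For the $y$-identity, I would rewrite $y'_{i,\tau i,j;m_1,m_2}=\sigma(y_{i,\tau i,j;m_1,m_2})$ using the $F_{\tau i}$-outer nesting of Remark~\ref{rmk:qsxy}, so that $y'_{i,j;m_1,-1}$ sits as the inner datum sandwiched between divided powers of $F_{\tau i}$. Since $c_{i,\tau i}=0$, $\tTD'_{i,-1}$ fixes each $F_{\tau i}^{(s)}$ and, by \eqref{eq:Troot}, converts $y'_{i,j;m_1,-1}$ into $y_{i,j;-c_{ij}-m_1,-1}$; re-reading the resulting nested sum identifies it as $y_{i,\tau i,j;-c_{ij}-m_1,m_2}$. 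A second application of $\tTD'_{\tau i,-1}$, now using the complementary $F_i$-outer nesting around the inner datum $y'_{\tau i,j;m_2,-1}$, replaces $m_2$ by $-c_{\tau i,j}-m_2$. Since both the input and output lie in $\tU^-$, the rescaling contributes nothing. The $x$-identity proceeds in parallel from the dual $E$-side nested presentations of $x'_{\tau i,i,\tau j;m_1,m_2}$ and the rank-two identity $\tTD'_{i,-1}(x'_{i,j;m,-1})=x_{i,j;-c_{ij}-m,-1}$; the scalar $\vs_{i,\dm}^{(c_{ij}+c_{\tau i,j})/2+m_1+m_2}$ then appears through the rescaling, because the $E$-weight of the image differs from the input by exactly the shift computed in the Cartan identity.

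The main obstacle, as I see it, is the delicate sign and $q_i$-exponent bookkeeping needed to reconcile the nested sum produced by sequential application of $\tTD'_{i,-1}$ and $\tTD'_{\tau i,-1}$ with the intrinsic nested presentation of the target; for the $x$-identity this further requires a reindexing of the form $s\mapsto m_2-s$ whose compensating $q_i$-factor must merge cleanly with the $\vs_{i,\dm}$-scalar produced by the rescaling. Once these invariances are verified, the remainder reduces either to \eqref{eq:Troot} or to direct Weyl-group weight calculations.
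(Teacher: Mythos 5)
Your proposal is correct and follows essentially the same route as the paper: rewrite $y'_{i,\tau i,j;m_1,m_2}$ in the $F_{\tau i}$-outer nested form (the $\sigma$-twist of Remark~\ref{rmk:qsxy}, valid because $c_{i,\tau i}=0$ makes $F_i,F_{\tau i}$ commute), apply $\tT'_{i,-1}$ to the inner $y'_{i,j;m_1,-1}$ via \cite[Proposition 37.2.5]{Lus94}, re-read the result in the complementary $F_i$-outer nesting around $y'_{\tau i,j;m_2,-1}$, apply $\tT'_{\tau i,-1}$, and handle the Cartan and $x$-identities by parallel (weight/rescaling) computations. The paper likewise only writes out the $y$-case and invokes ``similar computations'' for the other two, so there is no substantive divergence.
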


\begin{proof}
All three formulas are proved by similar computations; we only give the proof for the first formula here. Recall that, since $c_{i,\tau i}=0$, we have $\bs_i=s_i s_{\tau i}$ and $\vs_{i,\dm}=-q_i^{-1}$. The element $y'_{i,\tau i,j;m_1,m_2}$ admits a reformulation similar to Remark~\ref{rmk:qsxy}
\begin{align}
\label{eq:qsyy2}
y'_{i,\tau i,j;m_1,m_2}=\sum_{s=0}^{m_2} (-1)^{s}  q_i^{-s(m_2 +c_{\tau i,j}-1) }F_{\tau i}^{(s)}  y'_{  i,j;m_1,-1} F_{\tau i}^{(m_2 -s)}
\end{align}
By \cite[Proposition 37.2.5]{Lus94}, we have $\tT'_{i,-1}(y'_{  i,j;m_1,-1})=y_{  i,j;-c_{ij}-m_1,-1}$. By \eqref{eq:qsyy2}, we have
\begin{align*}
\tT'_{i,-1}(y'_{i,\tau i,j;m_1,m_2})&=\sum_{s=0}^{m_2} (-1)^{s}  q_i^{-s(m_2 +c_{\tau i,j}-1) }F_{\tau i}^{(s)} \tT'_{i,-1}(y'_{  i,j;m_1,-1}) F_{\tau i}^{(m_2 -s)}\\
&=\sum_{s=0}^{m_2} (-1)^{s}  q_i^{-s(m_2 +c_{\tau i,j}-1) }F_{\tau i}^{(s)} y_{  i,j;-c_{ij}-m_1,-1}  F_{\tau i}^{(m_2 -s)}\\
&=\sum_{r=0}^{-c_{ij}-m_1} (-1)^{r}  q_i^{r(m_1 +1) }F_i^{(-c_{ij}-m_1 -r)} y'_{\tau i,j;m_2,-1} F_i^{(r)}.
\end{align*}
Now applying $\tT'_{\tau i,-1}$ to the above formula and using \cite[Proposition 37.2.5]{Lus94} again, we obtain
\begin{align*}
\tT'_{\bs_i,-1}(y'_{i,\tau i,j;m_1,m_2})&=\sum_{r=0}^{-c_{ij}-m_1} (-1)^{r}  q_i^{r(m_1 +1) }F_i^{(-c_{ij}-m_1 -r)} \tT'_{\tau i,-1}(y'_{\tau i,j;m_2,-1}) F_i^{(r)}\\
&=\sum_{r=0}^{-c_{ij}-m_1} (-1)^{r}  q_i^{r(m_1 +1) }F_i^{(-c_{ij}-m_1 -r)}  y_{\tau i,j;-c_{\tau i,j}-m_2,-1}  F_i^{(r)}\\
&=y_{i,\tau i, j; -c_{ij}-m_1,-c_{\tau i,j}-m_2},
\end{align*}
where the last equality follows by Remark~\ref{rmk:qsxy}.
\end{proof}

Using  $\tT'_{\bs_i,-1} \tT_{\bw}=\tT_{\bw}\tT'_{\bs_i,-1}$ and Proposition~\ref{prop:qsxy}, we have the following formula
\begin{align}\notag
&\tT'_{\bs_i,-1}(\tT_{\bw}(x'_{\tau i, i,\tau j;m_1,m_2})K'_j (K_i')^{m_1} (K_{\tau i}')^{m_2})\\
=&\vs_{i,\dm}^{c_{ij}+c_{\tau i,j}+2m_1+2m_2} \tT_{\bw}(x_{\tau i, i, \tau j; -c_{ij}-m_1,-c_{\tau i,j}-m_2})K'_j (K_i')^{-c_{ij}-m_1} (K_{\tau i}')^{-c_{\tau i,j}-m_2}.
\label{eq:qsxx}
\end{align}

\begin{theorem}
\label{thm:qsbasic}
Let $i,j\in \I,j\neq i,\tau i$. We have
\begin{align}
\tTT'_{i,-1}(\b_{ i, \tau i, j;m_1,m_2})=\B_{i,\tau i,j;-c_{ij}-m_1, -c_{\tau i,j}-m_2}.
\end{align}
\end{theorem}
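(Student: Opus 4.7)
The plan is to adapt the strategy of Theorem~\ref{thm:basic} to the quasi-split rank two setting, splitting $\b_{i,\tau i, j; m_1, m_2}$ into its $-$ and $+$ halves and treating each via the intertwining results already established in Section~\ref{sec:qs}. Since $\b_{i,\tau i, j; m_1, m_2} \in \tUi$, the defining intertwining relation \eqref{eq:inter} gives
\begin{equation*}
\tTT'_{i,-1}(\b_{i,\tau i, j; m_1, m_2}) = \tfX_i \, \tT'_{\bs_i,-1}(\b_{i,\tau i, j; m_1, m_2}) \, \tfX_i^{-1},
\end{equation*}
so the task reduces to computing the right-hand side on each of $\b^-_{i,\tau i, j; m_1, m_2}$ and $\b^+_{i,\tau i, j; m_1, m_2}$ and matching with $\B^\mp_{i,\tau i, j; -c_{ij}-m_1, -c_{\tau i,j}-m_2}$.

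For the $-$ part, I would first invoke Proposition~\ref{prop:qsBy'} to rewrite $\b^-_{i,\tau i, j; m_1, m_2} = y'_{i,\tau i, j; m_1, m_2}$. Applying $\tT'_{\bs_i,-1}$ and using the first identity of Proposition~\ref{prop:qsxy} gives $y_{i,\tau i, j; -c_{ij}-m_1, -c_{\tau i,j}-m_2}$; then conjugation by $\tfX_i$ and Proposition~\ref{prop:qsBy} yield precisely $\B^-_{i,\tau i, j; -c_{ij}-m_1, -c_{\tau i,j}-m_2}$.

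For the $+$ part, Proposition~\ref{prop:qsBx'} expresses $\b^+_{i,\tau i, j; m_1, m_2}$ as a conjugate by $\tT_{\bs_i}(\tfX_i)$, so applying $\tT'_{\bs_i,-1}$ (the inverse of $\tT_{\bs_i} = \tT''_{\bs_i,+1}$) replaces $\tT_{\bs_i}(\tfX_i)$ by $\tfX_i$; the remaining conjugation by $\tfX_i$ then cancels cleanly, leaving $\tT'_{\bs_i,-1}\bigl(\tT_{\bw}(x'_{\tau i, i, \tau j; m_1, m_2}) K'_j (K_i')^{m_1} (K_{\tau i}')^{m_2}\bigr)$ multiplied by the scalar from Proposition~\ref{prop:qsBx'}. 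Using \eqref{eq:qsxx} this becomes a scalar multiple of $\tT_{\bw}(x_{\tau i, i, \tau j; -c_{ij}-m_1, -c_{\tau i,j}-m_2}) K'_j (K_i')^{-c_{ij}-m_1} (K_{\tau i}')^{-c_{\tau i,j}-m_2}$, and by Proposition~\ref{prop:qsBx} this last element equals (up to scalar) $\B^+_{i,\tau i, j; -c_{ij}-m_1, -c_{\tau i,j}-m_2}$.

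The only nontrivial part is then bookkeeping the scalars. Writing $s=m_1+m_2$ and $c=c_{ij}+c_{\tau i,j}$, the coefficient produced is
\begin{equation*}
(-1)^{s}\, q_i^{-s(c+s-1)}\, \vs_{i,\dm}^{c+2s},
\end{equation*}
and with $\vs_{i,\dm}=-q_i^{-1}$ this simplifies to $(-1)^{s+c}\, q_i^{-(c+s)(s+1)}$, which is exactly the scalar attached to $\B^+_{i,\tau i, j; -c_{ij}-m_1, -c_{\tau i,j}-m_2}$ in Proposition~\ref{prop:qsBx}. Summing the $\pm$ contributions gives the claimed identity. I expect this scalar verification to be the only real computational step; everything else is a direct concatenation of previously established intertwining properties.
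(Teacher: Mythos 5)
Your proposal is correct and follows essentially the same approach as the paper's proof: splitting $\b_{i,\tau i,j;m_1,m_2}$ into its $\pm$ halves, handling $\b^-$ via Propositions~\ref{prop:qsBy'}, \ref{prop:qsxy}, \ref{prop:qsBy}, handling $\b^+$ via Propositions~\ref{prop:qsBx'}, \eqref{eq:qsxx}, \ref{prop:qsBx}, and checking the scalars. Your scalar bookkeeping is also correct: $(-1)^s q_i^{-s(c+s-1)}\vs_{i,\dm}^{c+2s}=(-1)^{s+c}q_i^{-(c+s)(s+1)}$ matches the coefficient in Proposition~\ref{prop:qsBx} evaluated at $(-c_{ij}-m_1,-c_{\tau i,j}-m_2)$.
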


\begin{proof}
Recall that $\tTT'_{i,-1}(x)= \tfX_i \tT'_{\bs_i,-1}(x) \tfX_i^{-1}$ for any $x\in \tUi$. By Proposition~\ref{prop:qsBx'} and \eqref{eq:qsxx}, we have
\begin{align*}
&\tfX_i\tT'_{\bs_i,-1}(\b_{ i, \tau i, j;m_1,m_2}^+)\tfX_i^{-1}\\
= &(-1)^{m_1+m_2} q_i^{-(m_1+m_2)(c_{ij}+c_{\tau i,j}+m_1+m_2-1)}  \tT'_{\bs_i,-1}\big( \tT_{\bw}(x'_{\tau i,i, \tau j;m_1,m_2}) K'_j (K_i')^{m_1} (K_{\tau i}')^{m_2}\big)\\
= &(-1)^{m_1+m_2+c_{ij}+c_{\tau i,j}} q_i^{-(m_1+m_2+1)(c_{ij}+c_{\tau i,j}+m_1+m_2 )}\times \\
 &\times \tT_{\bw}(x_{\tau i, i, \tau j; -c_{ij}-m_1,-c_{\tau i,j}-m_2})K'_j (K_i')^{-c_{ij}-m_1} (K_{\tau i}')^{-c_{\tau i,j}-m_2}\\
=&\B_{i,\tau i,j; -c_{ij}-m_1,-c_{\tau i,j}-m_2}^+,
\end{align*}
where the last step follows from Proposition~\ref{prop:qsBx}.

On the other hand, by Proposition~\ref{prop:qsBy'}, \eqref{eq:qsyy} and Proposition~\ref{prop:qsBy}, we have
\begin{align*}
&\tfX_i \tT'_{\bs_i,-1}(\b_{ i, \tau i, j;m_1,m_2}^-)\tfX_i^{-1}\\
=& \tfX_i \tT'_{\bs_i,-1}(y'_{ i, \tau i, j;m_1,m_2})\tfX_i^{-1}\\
=& \tfX_i  y_{i,\tau i, j; -c_{ij}-m_1,-c_{\tau i,j}-m_2} \tfX_i^{-1}\\
=& \B_{i,\tau i,j; -c_{ij}-m_1,-c_{\tau i,j}-m_2}^-.
\end{align*}

Using the above two formulas, we have
\begin{align*}
\tTT'_{i,-1}(\b_{ i, \tau i, j;m_1,m_2})=&\tfX_i\tT'_{\bs_i,-1}(\b_{ i, \tau i, j;m_1,m_2}^- +\b_{ i, \tau i, j;m_1,m_2}^+)\tfX_i^{-1}\\
=&\B_{i,\tau i,j; -c_{ij}-m_1,-c_{\tau i,j}-m_2}^- + \B_{i,\tau i,j; -c_{ij}-m_1,-c_{\tau i,j}-m_2}^+\\
=& \B_{i,\tau i,j; -c_{ij}-m_1,-c_{\tau i,j}-m_2},
\end{align*}
as desired.
\end{proof}

\end{document}